\theoremstyle{plain}
\newtheorem{theorem}{Theorem}[subsection]
\newtheorem{prop}[theorem]{Proposition}
\newtheorem{corollary}[theorem]{Corollary}
\newtheorem{claim}[theorem]{Claim}
\theoremstyle{remark}
\let\a\alpha
\let\b\beta
\let\z\zeta
\let\k\kappa
\let\l\lambda
\let\m\mu
\let\f\varphi
\let\L\Lambda
\let\Om\Omega
\def\lra{\longrightarrow}
\def\egal{\ar@{=}}
\def\surj{\ar@{->>}}
\def\A{\mathcal A}
\def\B{\mathcal B}
\def\C{\mathbb C}
\def\CC{\mathcal C}
\def\E{\mathcal E}
\def\F{\mathcal F}
\def\G{\mathcal G}
\def\HH{\mathcal H}
\def\I{\mathcal I}
\def\J{\mathcal J}
\def\K{\mathcal K}
\def\P{\mathbb P}
\def\S{\mathcal S}
\def\O{\mathcal O}
\def\TT{\mathcal T}
\def\U{\mathbb U}
\def\W{\mathbb W}
\def\Ker{{\mathcal Ker}}
\def\Coker{{\mathcal Coker}}
\def\Im{{\mathcal Im}}
\def\D{{\scriptscriptstyle \operatorname{D}}}
\def\T{{\scriptscriptstyle \operatorname{T}}}
\def\ss{{\scriptstyle \operatorname{ss}}}
\def\st{{\scriptstyle \operatorname{s}}}
\def\EE{\operatorname{E}}
\def\H{\operatorname{H}}
\def\h{\operatorname{h}}
\def\M{\operatorname{M}_{{\mathbb P}^2}}
\def\N{\operatorname{N}}
\def\pp{\operatorname{p}}
\def\PP{\operatorname{P}}
\def\SS{\operatorname{S}}
\def\Hom{\operatorname{Hom}}
\def\Aut{\operatorname{Aut}}
\def\Ext{\operatorname{Ext}}
\def\GL{\operatorname{GL}}
\def\length{\operatorname{length}}
\def\mult{\operatorname{mult}}
\def\rank{\operatorname{rank}}
\def\Hilb{\operatorname{Hilb}_{{\mathbb P}^2}}
\def\tensor{\otimes}
\def\isom{\simeq}
\newcommand{\noi}{\noindent}
\def\ds{\displaystyle}
\def\ba{\begin{array}}
\def\ea{\end{array}}
\begin{document}

\subjclass{Primary 14D20, 14D22}

\title[The classification of semi-stable plane sheaves supported on sextics]
{The classification of semi-stable plane sheaves supported on sextic curves}

\author[mario maican]{mario maican}

\address{Mario Maican \\
Institute of Mathematics of the Romanian Academy \\
Calea Grivi\c tei 21 \\
010702 Bucharest \\
Romania}

\email{mario.maican@imar.ro}

\begin{abstract}
We classify all Gieseker semi-stable sheaves on the complex projective plane
that have dimension 1 and multiplicity 6.
We decompose their moduli spaces into strata which occur naturally as quotients
modulo actions of certain algebraic groups. In most cases we give concrete
geometric descriptions of the strata.
\end{abstract}

\maketitle

\begin{center}
Keywords: semi-stable sheaves; moduli space; sheaves supported on plane curves
\end{center}


\section*{Acknowledgements.} \noindent The author was supported by the
Consiliul Na\c tional al Cercet\u arii \c Stiin\c tifice, grant PN II--RU 169/2010 PD--219.


\section{Introduction and summary of results}

Let $\M(r,\chi)$ denote the moduli space of Gieseker semi-stable sheaves on $\P^2(\C)$
with Hilbert polynomial $\PP(m)=rm+\chi$, $r$ and $\chi$ being fixed integers, $r \ge 1$.
Le Potier \cite{lepotier} found that $\M(r,\chi)$ is an irreducible projective variety of dimension $r^2+1$,
smooth at points given by stable sheaves and rational if $\chi \equiv 1$ or $2 \mod r$.
In \cite{drezet-maican} and \cite{mult_five} were classified all semi-stable
sheaves giving points in $\M(4,\chi)$ and $\M(5,\chi)$, for all values of $\chi$.
These moduli spaces were shown to have natural stratifications given by cohomological conditions
on the sheaves involved.
In this paper we apply the same methods to the study of sheaves giving points in the moduli spaces
$\M(6,\chi)$ and we succeed in finding a complete classification for such sheaves.
We refer to the introductory section of \cite{drezet-maican}
for a motivation of the problem and for a brief historical context.

In view of the obvious isomorphism $\M(r,\chi) \isom \M(r,\chi + r)$ and of the duality isomorphism
$\M(r,\chi) \isom \M(r,-\chi)$ of \cite{maican-duality}, it is enough, when $r=6$, to consider only the cases
when $\chi = 1, 2, 3, 0$. Each of these cases is dealt with in sections 3, 4, 5, respectively 6.
In section 2 we gather some general results for later use and, for the convenience of the reader,
we review the Beilinson monad and spectral sequences.
For a more detailed description of the techniques that we use the reader is referred to the
preliminaries section of \cite{drezet-maican}.
In the remaining part of this section we summarise our classification results.

\subsection{Notations and conventions} 

\begin{align*}
\M(r,\chi) =
& \text{ the moduli space of Gieseker semi-stable sheaves on $\P^2$} \\
& \text{ with Hilbert polynomial $\PP(m)=rm+\chi$;} \\
\N(n,p,q) =
& \text{ the Kronecker moduli space of semi-stable $q \times p$-matrices with} \\
& \text{ entries in a fixed $n$-dimensional vector space over $\C$, cf. 2.4 \cite{drezet-maican};} \\
\Hilb(n) =
& \text{ the Hilbert scheme of $n$ points in $\P^2$;} \\
\Hilb(d,n) =
& \text{ the flag Hilbert scheme of curves of degree $d$ in $\P^2$} \\
& \text{ containing $n$ points;} \\
V =
& \text{ a fixed vector space of dimension $3$ over $\C$;} \\
\P^2 =
& \text{ the projective plane of lines in $V$;} \\
\O (d) =
& \text{ the structure sheaf of $\P^2$ twisted by $d$;} \\
n \A =
& \text{ the direct sum of $n$ copies of the sheaf $\A$;} \\
\{ X, Y, Z \} =
& \text{ basis of $V^*$;} \\
\{ R, S, T \} =
& \text{ basis of $V^*$;} \\
[\F] =
& \text{ the stable-equivalence class of a sheaf $\F$;} \\
\F^\D =
& \text{ ${\mathcal Ext}^1(\F, \omega_{\P^2})$ if $\F$ is a one-dimensional sheaf on $\P^2$;} \\
X^\D =
& \text{ the image in $\M(r,-\chi)$ or in $\M(r,r-\chi)$, as may be the case,} \\
& \text{ of a set $X \subset \M(r,\chi)$ under the duality morphism;} \\
X^\st =
& \text{ the open subset of points given by stable sheaves inside a set $X$;} \\
\PP_{\F} =
& \text{ the Hilbert polynomial of a sheaf $\F$;} \\
\pp(\F) =
& \text{ $\chi/r$, the slope of a sheaf $\F$, where $\PP_{\F}(m)=rm+\chi$;} \\
\C_x, \C_y, \C_z =
& \text{ the structure sheaves of closed points $x, y, z \in \P^2$;} \\
\O_L =
& \text{ the structure sheaf of a line $L \subset \P^2$.}
\end{align*}
We say that a morphism $\f \colon p\O(m) \to q \O(n)$ is \emph{semi-stable as a Kronecker module}
if it is semi-stable in the sense of GIT for the canonical action by conjugation of
$(\GL(p,\C) \times \GL(q,\C))/\C^*$.
We represent $\f$ by a $q \times p$-matrix with entries in $\SS^{n-m}V^*$.
Semi-stability means that for every zero-submatrix of size $q' \times p'$ of any matrix representing $\f$
we have the relation
\[
\frac{p'}{p} + \frac{q'}{q} \le 1.
\]
We will often encounter the case when $q = p+1$. In this case $\f$ is semi-stable as a Kronecker module
precisely if it is not in the orbit of a morphism of the form
\[
\left[
\ba{cc}
\star & \psi \\
\star & 0
\ea
\right],
\]
where $\psi \colon r\O(m) \to r\O(n)$, $1 \le r \le p$.
A morphism $\f \colon 2\O(-1) \to 3\O$ is semi-stable precisely if the maximal minors of any matrix
representing $\f$ are linearly independent.
We refer to 2.4 \cite{drezet-maican} for a more general discussion about Kronecker modules
and their moduli spaces.

\subsection{The moduli space $\M(6,1)$} 

This moduli space can be decomposed into five strata: an open stratum $X_0$,
two locally closed irreducible strata $X_1, X_2$ of codimensions $2$, respectively $4$,
a locally closed stratum that is the disjoint union of two irreducible locally closed subsets
$X_3$ and $X_4$, each of codimension $6$, and a closed irreducible stratum $X_5$ of codimension $8$.
The stratum $X_0$ is an open subset inside a fibre bundle with fibre $\P^{17}$ and base $\N(3,5,4)$;
$X_2$ is an open subset inside a fibre bundle with fibre $\P^{21}$ and base $Y \times \P^2$,
where $Y$ is the smooth projective variety of dimension $10$ constructed at \ref{3.2.1};
$X_3$ is an open subset inside a fibre bundle with fibre $\P^{23}$ and base $\P^2 \times \N(3,2,3)$.
The closed stratum $X_5$ is isomorphic to $\Hilb(6,2)$.

Each locally closed subset $X_i \subset \M(6,1)$ is defined by the cohomological conditions
listed in the second column of Table 1 below.
We equip $X_i$ with the canonical induced reduced structure.
In the third column of Table 1 we describe, by means of locally free resolutions of length $1$,
all semi-stable sheaves $\F$ on $\P^2$ whose stable-equivalence class is in $X_i$.
Thus, for each $X_i$ there are sheaves $\A_i$, $\B_i$ on $\P^2$, that are direct sums of line bundles,
such that each sheaf $\F$ giving a point in $X_i$ is the cokernel of some morphism $\f \in \Hom(\A_i, \B_i)$.
The linear algebraic group $G_i = (\Aut(\A_i) \times \Aut(\B_i))/\C^*$ acts by conjugation on
the finite dimensional vector space $\W_i = \Hom(\A_i,\B_i)$.
Here $\C^*$ is identified with the subgroup of homotheties of $\Aut(\A_i) \times \Aut(\B_i)$.
Let $W_i \subset \W_i$ be the locally closed subset of injective morphisms $\f$ 
satisfying the conditions from the third column of the table.
We equip $W_i$ with the canonical induced reduced structure.
In each case we shall prove that the map $W_i \to X_i$ defined by $\f \mapsto [\Coker(\f)]$
is a geometric quotient map for the action of $G_i$.

\begin{table}[!hpt]{Table 1. Summary for $\M(6,1)$.}
\begin{center}
{\small
\begin{tabular}{|c|c|c|c|}
\hline \hline
{}
&
{\tiny cohomological conditions}
&
{\tiny classification of sheaves $\F$ giving points in $X_i$}
&
{\tiny codim.}
\\
\hline
$X_0$
&
\begin{tabular}{r}
$\h^0(\F(-1))=0$ \\
$\h^1(\F)=0$\\
$\h^0(\F \tensor \Om^1(1))=0$
\end{tabular}
&
\begin{tabular}{c}
{} \\
$0 \lra 5\O(-2) \stackrel{\f}{\lra} 4\O(-1) \oplus \O \lra \F \lra 0$ \\
$\f_{11}$ is semi-stable as a Kronecker module \\
{}
\end{tabular}
&
0
\\
\hline
$X_1$
&
\begin{tabular}{r}
{} \\
{} \\
$\h^0(\F(-1))=0$ \\
$\h^1(\F)=1$\\
$\h^0(\F \tensor \Om^1(1))=0$ \\
{} \\
{}
\end{tabular}
&
\begin{tabular}{c}
$0 \lra \O(-3) \oplus 2\O(-2) \stackrel{\f}{\lra} \O(-1) \oplus 2\O \lra \F \lra 0$ \\
$\f$ is not equivalent to a morphism of the form \\
${\ds
\left[
\ba{ccc}
\star & 0 & 0 \\
\star & \star & \star \\
\star & \star & \star
\ea
\right], \left[
\ba{ccc}
\star & \star & 0 \\
\star & \star & 0 \\
\star & \star & \star
\ea
\right], \left[
\ba{ccc}
\star & \star & \star \\
\star & \star & \star \\
\star & 0 & 0
\ea
\right], \left[
\ba{ccc}
0 & 0 & \star \\
\star & \star & \star \\
\star & \star & \star
\ea
\right]
}$
\end{tabular}
&
2
\\
\hline
$X_2$
&
\begin{tabular}{r}
$\h^0(\F(-1))=0$ \\
$\h^1(\F)=1$\\
$\h^0(\F \tensor \Om^1(1))=1$
\end{tabular}
&
\begin{tabular}{c}
{} \\
$0 \to \O(-3) \oplus 2\O(-2) \oplus \O(-1) \stackrel{\f}{\to} 2\O(-1) \oplus 2\O \to \F \to 0$ \\
${\ds \f = \left[
\ba{cccc}
q_1 & \ell_{11} & \ell_{12} & 0 \\
q_2 & \ell_{21} & \ell_{22} & 0 \\
f_1 & q_{11} & q_{12} & \ell_1 \\
f_2 & q_{21} & q_{22} & \ell_2
\ea
\right]}$ \\
$\ell_1, \ell_2$ are linearly independent, $d = \ell_{11} \ell_{22} - \ell_{12} \ell_{21} \neq 0$, \\
${\ds \left|
\ba{cc}
q_1 & \ell_{11} \\
q_2 & \ell_{21}
\ea
\right|}$, ${\ds \left|
\ba{cc}
q_1 & \ell_{12} \\
q_2 & \ell_{22}
\ea
\right| }$ are linearly independent modulo $d$ \\
{}
\end{tabular}
&
4
\\
\hline
$X_{3}$
&
\begin{tabular}{r}
$\h^0(\F(-1))=0$ \\
$\h^1(\F)=2$\\
$\h^0(\F \tensor \Om^1(1))=2$
\end{tabular}
&
\begin{tabular}{c}
{} \\
$0 \lra 2\O(-3) \oplus 2\O(-1) \stackrel{\f}{\lra} \O (-2) \oplus 3\O \lra \F \lra 0$ \\
$\f_{11}$ has linearly independent entries \\
$\f_{22}$ has linearly independent maximal minors \\
{}
\end{tabular}
&
6
\\
\hline
$X_{4}$
&
\begin{tabular}{r}
$\h^0(\F(-1))=1$ \\
$\h^1(\F)=2$\\
$\h^0(\F \tensor \Om^1(1))=3$
\end{tabular}
&
\begin{tabular}{c}
{} \\
$0 \to 2\O(-3) \oplus \O(-2) \stackrel{\f}{\to} \O (-2) \oplus \O(-1) \oplus \O(1) \to \F \to 0$ \\
${\ds \f= \left[
\ba{ccc}
0 & 0 & 1 \\
q_1 & q_2 & 0 \\
g_1 & g_2 & 0
\ea
\right], }$ \\
where $q_1, q_2$ have no common factor or \\
${\ds \f= \left[
\ba{ccc}
\ell_1 & \ell_2 & 0 \\
q_1 & q_2 & \ell \\
g_1 & g_2 & h
\ea
\right] }$, \\
where $\ell_1, \ell_2$ are linearly independent, $\ell \neq 0$ \\
and $\f$ is not equivalent to a morphism of the form \\
${\ds \left[
\ba{ccc}
\star & \star & 0 \\
0 & 0 & \star \\
\star & \star & \star
\ea
\right]}$ \\
{}
\end{tabular}
&
6
\\
\hline
$X_{5}$
&
\begin{tabular}{r}
$\h^0(\F(-1))=1$ \\
$\h^1(\F)=3$\\
$\h^0(\F \tensor \Om^1(1))=4$
\end{tabular}
&
\begin{tabular}{c}
{} \\
$0 \lra \O(-4) \oplus \O(-1) \stackrel{\f}{\lra} \O \oplus \O(1) \lra \F \lra 0$ \\
$\f_{12} \neq 0$, $\f_{12} \nmid \f_{22}$ \\
{}
\end{tabular}
&
8
\\
\hline \hline
\end{tabular}
}
\end{center}
\end{table}

\subsection{The moduli space $\M(6,2)$} 

This moduli space can also be decomposed into five strata:
an open stratum $X_0$; a locally closed stratum that is the disjoint union of two irreducible
locally closed subsets $X_1$ and $X_2$, each of codimension $3$; a locally closed stratum that is
the disjoint union of two irreducible locally closed subsets $X_3$ and $X_4$, each of codimension $5$;
an irreducible locally closed stratum $X_5$ of codimension $7$ and a closed irreducible stratum
$X_6$ of codimension $9$.
For some of these sets we have concrete geometric descriptions:
$X_1$ is a certain open subset inside a fibre bundle with fibre $\P^{20}$ and base $\N(3,4,3) \times \P^2$;
$X_3$ is an open subset of a fibre bundle with fibre $\P^{22}$ and base $\Hilb(2) \times \N(3,2,3)$;
$X_5$ is an open subset of a fibre bundle with fibre $\P^{24}$ and base $\P^2 \times \Hilb(2)$;
the closed stratum $X_6$ is isomorphic to the universal sextic in $\P^2 \times \P(\SS^6 V^*)$.
The classification of sheaves in $\M(6,2)$ is summarised in Table 2 below, which is organised
as Table 1.

\begin{table}[!hpt]{Table 2. Summary for $\M(6,2)$.}
\begin{center}
{\small
\begin{tabular}{|c|c|c|c|}
\hline \hline
{}
&
{\tiny cohomological conditions}
&
{\tiny classification of sheaves $\F$ giving points in $X_i$}
&
{\tiny codim.}
\\
\hline
$X_0$
&
\begin{tabular}{r}
{} \\
$\h^0(\F(-1))=0$ \\
$\h^1(\F)=0$\\
$\h^0(\F \tensor \Om^1(1))=0$ \\
{}
\end{tabular}
&
\begin{tabular}{c}
{} \\
$0 \lra 4\O(-2) \stackrel{\f}{\lra} 2\O(-1) \oplus 2\O \lra \F \lra 0$ \\
$\f$ is not equivalent to a morphism of any of the forms \\
${\ds
\left[
\ba{cccc}
\star & 0 & 0 & 0 \\
\star & \star & \star & \star \\
\star & \star & \star & \star \\
\star & \star & \star & \star
\ea
\right], \quad \left[
\ba{cccc}
\star & \star & 0 & 0 \\
\star & \star & 0 & 0 \\
\star & \star & \star & \star \\
\star & \star & \star & \star
\ea
\right], \quad \left[
\ba{cccc}
\star & \star & \star & 0 \\
\star & \star & \star & 0 \\
\star & \star & \star & 0 \\
\star & \star & \star & \star
\ea
\right]
}$ \\
{}
\end{tabular}
&
0
\\
\hline
$X_1$
&
\begin{tabular}{r}
$\h^0(\F(-1))=0$ \\
$\h^1(\F)=0$\\
$\h^0(\F \tensor \Om^1(1))=1$
\end{tabular}
&
\begin{tabular}{c}
{} \\
$0 \lra 4\O(-2) \oplus \O(-1) \stackrel{\f}{\lra} 3\O(-1) \oplus 2\O \lra \F \lra 0$ \\
$\f_{12}=0$, $\f_{11}$ and $\f_{22}$ are semi-stable as Kronecker modules \\
{}
\end{tabular}
&
3
\\
\hline
$X_2$
&
\begin{tabular}{r}
{} \\
$\h^0(\F(-1))=0$ \\
$\h^1(\F)=1$\\
$\h^0(\F \tensor \Om^1(1))=1$ \\
{}
\end{tabular}
&
\begin{tabular}{c}
{} \\
$0 \lra \O(-3) \oplus \O(-2) \oplus \O(-1) \stackrel{\f}{\lra} 3\O \lra \F \lra 0$ \\
$\f$ is not equivalent to a morphism of any of the forms \\
${\ds
\left[
\ba{ccc}
\star & \star & \star \\
\star & \star & 0 \\
\star & \star & 0
\ea
\right], \quad \left[
\ba{ccc}
\star & \star & \star \\
\star & 0 & \star \\
\star & 0 & \star
\ea
\right], \quad \left[
\ba{ccc}
\star & \star & \star \\
\star & \star & \star \\
\star & 0 & 0
\ea
\right]
}$ \\
{}
\end{tabular}
&
3
\\
\hline
$X_{3}$
&
\begin{tabular}{r}
{} \\
$\h^0(\F(-1))=0$ \\
$\h^1(\F)=1$\\
$\h^0(\F \tensor \Om^1(1))=2$ \\
{}
\end{tabular}
&
\begin{tabular}{c}
{} \\
$0 \to \O(-3) \oplus \O(-2) \oplus 2\O(-1) \stackrel{\f}{\to} \O (-1) \oplus 3\O \to \F \to 0$ \\
$\f_{13}=0$, $\f_{12} \neq 0$ and does not divide $\f_{11}$ \\
$\f_{23}$ has linearly independent maximal minors \\
{}
\end{tabular}
&
5
\\
\hline
$X_{4}$
&
\begin{tabular}{r}
{} \\
$\h^0(\F(-1))=1$ \\
$\h^1(\F)=1$\\
$\h^0(\F \tensor \Om^1(1))=3$ \\
{}
\end{tabular}
&
\begin{tabular}{c}
{} \\
$0 \lra \O(-3) \oplus 2\O(-2) \stackrel{\f}{\lra} 2\O(-1) \oplus \O(1) \lra \F \to 0$ \\
$\f$ is not equivalent to a morphism of any of the forms \\
${\ds
\left[
\ba{ccc}
\star & 0 & 0 \\
\star & \star & \star \\
\star & \star & \star
\ea
\right], \left[
\ba{ccc}
\star & \star & 0 \\
\star & \star & 0 \\
\star & \star & \star
\ea
\right], \left[
\ba{ccc}
0 & 0 & \star \\
\star & \star & \star \\
\star & \star & \star
\ea
\right], \left[
\ba{ccc}
0 & \star & \star \\
0 & \star & \star \\
\star & \star & \star
\ea
\right]
}$ \\
{}
\end{tabular}
&
5
\\
\hline
$X_{5}$
&
\begin{tabular}{r}
{} \\
$\h^0(\F(-1))=1$ \\
$\h^1(\F)=2$\\
$\h^0(\F \tensor \Om^1(1))=4$ \\
{}
\end{tabular}
&
\begin{tabular}{c}
{} \\
$0 \lra 2\O(-3) \oplus \O(-1) \stackrel{\f}{\lra} \O(-2) \oplus \O \oplus \O(1) \lra \F \lra 0$ \\
$\f_{11}$ has linearly independent entries \\
$\f_{22} \neq 0$ and does not divide $\f_{32}$ \\
{}
\end{tabular}
&
7
\\
\hline
$X_6$
&
\begin{tabular}{r}
$\h^0(\F(-1))=2$ \\
$\h^1(\F)=3$ \\
$\h^0(\F \tensor \Om^1(1))=6$
\end{tabular}
&
\begin{tabular}{c}
{} \\
$0 \lra \O(-4) \oplus \O \lra 2\O(1) \lra \F \lra 0$ \\
$\f_{12}$ has linearly independent entries \\
{}
\end{tabular}
&
9
\\
\hline \hline
\end{tabular}
}
\end{center}
\end{table}

\subsection{The moduli space $\M(6,3)$} 

Here we have seven strata.
The open stratum $X_0$ is isomorphic to an open subset of $\N(6,3,3)$.
The locally closed stratum $X_1$ has codimension $1$ and is birational to $\P^{36}$.
The codimension $4$ stratum is the union of three irreducible locally closed subsets
$X_2$, $X_3$, $X_3^\D$.
Here $X_2$ is an open subset of a fibre bundle over $\N(3,3,2) \times \N(3,2,3)$
with fibre $\P^{21}$ and $X_3^\st$ isomorphic to an open subset
of a fibre bundle over $\N(3,3,4)$ with fibre $\P^{21}$.
The open subset $X_4^\st$ of the locally closed stratum $X_4$ of codimension $5$
is isomorphic to an open subset of a tower of bundles with fibre $\P^{21}$
and base a fibre bundle over $\P^5$ with fibre $\P^6$.
The locally closed stratum $X_5$ of codimension $6$
is isomorphic to an open subset of a fibre bundle over $\Hilb(2) \times \Hilb(2)$ with fibre $\P^{23}$.
The locally closed stratum $X_6$ is an open subset of a fibre bundle over $\P^2 \times \P^2$
with fibre $\P^{25}$ and has codimension $8$.
Finally, we have a closed stratum $X_7$ consisting of all sheaves of the form $\O_C(2)$
for $C \subset \P^2$ a sextic curve. Thus $X_7 \isom \P^{27}$.
The map $W_0 \to X_0$ is a good quotient map.
The map $W_1 \to X_1$ is a categorical quotient map.
The maps $W_3^\st \to X_3^\st$, $(W_3^\D)^\st \to (X_3^\D)^\st$, $W_4^\st \to X_4^\st$,
$W_i \to X_i$, $i = 2, 5, 6, 7$, are geometric quotient maps.

\begin{table}[!hpt]{Table 3. Summary for $\M(6,3)$.}
\begin{center}
{\small
\begin{tabular}{|c|c|c|c|}
\hline \hline
{}
&
{\tiny cohomological conditions}
&
{\tiny classification of sheaves $\F$ giving points in $X_i$}
&
{\tiny codim.}
\\
\hline
$X_0$
&
\begin{tabular}{r}
$\h^0(\F(-1))=0$ \\
$\h^1(\F)=0$\\
$\h^0(\F \tensor \Om^1(1))=0$
\end{tabular}
&
\begin{tabular}{c}
{} \\
$0 \lra 3\O(-2) \stackrel{\f}{\lra} 3\O \lra \F \lra 0$ \\
{}
\end{tabular}
&
$0$
\\
\hline
$X_1$
&
\begin{tabular}{r}
$\h^0(\F(-1))=0$ \\
$\h^1(\F)=0$\\
$\h^0(\F \tensor \Om^1(1))=1$
\end{tabular}
&
\begin{tabular}{c}
{} \\
$0 \lra 3\O(-2) \oplus \O(-1) \stackrel{\f}{\lra} \O(-1) \oplus 3\O \lra \F \lra 0$ \\
$\f_{12}=0$ \\
$\f$ is not equivalent to a morphism of any of the forms \\
${\ds
\left[
\ba{cccc}
\star & 0 & 0 & 0 \\
\star & \star & \star & \star \\
\star & \star & \star & \star \\
\star & \star & \star & \star
\ea
\right], \quad \left[
\ba{cccc}
\star & \star & 0 & 0 \\
\star & \star & 0 & 0 \\
\star & \star & \star & \star \\
\star & \star & \star & \star
\ea
\right], \quad \left[
\ba{cccc}
\star & \star & \star & 0 \\
\star & \star & \star & 0 \\
\star & \star & \star & 0 \\
\star & \star & \star & \star
\ea
\right]
}$ \\
{}
\end{tabular}
&
$1$
\\
\hline
$X_2$
&
\begin{tabular}{r}
$\h^0(\F(-1))=0$ \\
$\h^1(\F)=0$\\
$\h^0(\F \tensor \Om^1(1))=2$
\end{tabular}
&
\begin{tabular}{c}
{} \\
$0 \lra 3\O(-2) \oplus 2\O(-1) \stackrel{\f}{\lra} 2\O(-1) \oplus 3\O \lra \F \lra 0$ \\
$\f_{12}=0$ \\
$\f_{11}$ and $\f_{22}$ are semi-stable as Kronecker modules \\
{}
\end{tabular}
&
$4$
\\
\hline
$X_3$
&
\begin{tabular}{r}
$\h^0(\F(-1))=0$ \\
$\h^1(\F)=1$\\
$\h^0(\F \tensor \Om^1(1))=3$
\end{tabular}
&
\begin{tabular}{c}
{} \\
$0 \lra \O(-3) \oplus 3\O(-1) \stackrel{\f}{\lra} 4\O \lra \F \lra 0$ \\
$\f_{12}$ is semi-stable as a Kronecker module \\
{}
\end{tabular}
&
$4$
\\
\hline
${\ds X_3^\D}$
&
\begin{tabular}{r}
$\h^0(\F(-1))=1$ \\
$\h^1(\F)=0$\\
$\h^0(\F \tensor \Om^1(1))=3$
\end{tabular}
&
\begin{tabular}{c}
{} \\
$0 \lra 4\O(-2) \stackrel{\f}{\lra} 3\O(-1) \oplus \O(1) \lra \F \lra 0$ \\
$\f_{11}$ is semi-stable as a Kronecker module \\
{}
\end{tabular}
&
$4$
\\
\hline
$X_4$
&
\begin{tabular}{r}
$\h^0(\F(-1))=1$ \\
$\h^1(\F)=1$\\
$\h^0(\F \tensor \Om^1(1))=3$
\end{tabular}
&
\begin{tabular}{c}
{} \\
$0 \lra \O(-3) \oplus \O(-2) \stackrel{\f}{\lra} \O \oplus \O(1) \lra \F \lra 0$ \\
$\f_{12} \neq 0$ \\
{}
\end{tabular}
&
$5$
\\
\hline
$X_5$
&
\begin{tabular}{r}
$\h^0(\F(-1))=1$ \\
$\h^1(\F)=1$ \\
$\h^0(\F \tensor \Om^1(1))=4$
\end{tabular}
&
\begin{tabular}{c}
{} \\
$0 \to \O(-3) \oplus \O(-2) \oplus \O(-1) \stackrel{\f}{\to} \O(-1) \oplus \O \oplus \O(1) \to \F \to 0$ \\
$\f_{13}=0$, $\f_{12} \neq 0$, $\f_{23} \neq 0$, $\f_{12} \nmid \f_{11}$, $\f_{23} \nmid  \f_{33}$\\
{}
\end{tabular}
&
$6$
\\
\hline
$X_6$
&
\begin{tabular}{r}
$\h^0(\F(-1))=2$ \\
$\h^1(\F)=2$ \\
$\h^0(\F \tensor \Om^1(1))=6$
\end{tabular}
&
\begin{tabular}{c}
{} \\
$0 \lra 2\O(-3) \oplus \O \stackrel{\f}{\lra} \O(-2) \oplus 2\O(1) \lra \F \lra 0$ \\
$\f_{11}$ has linearly independent entries \\
$\f_{22}$ has linearly independent entries \\
{}
\end{tabular}
&
$8$
\\
\hline
$X_7$
&
\begin{tabular}{r}
$\h^0(\F(-1))=3$ \\
$\h^1(\F)=3$ \\
$\h^0(\F \tensor \Om^1(1))=8$
\end{tabular}
&
\begin{tabular}{c}
{} \\
$0 \lra \O(-4) \stackrel{\f}{\lra} \O(2) \lra \F \lra 0$ \\
{}
\end{tabular}
&
$10$
\\
\hline \hline
\end{tabular}
}
\end{center}
\end{table}

\subsection{The moduli space $\M(6,0)$} 

Here we have five strata: $X_0$, $X_1$, $X_2$, $X_3 \cup X_3^\D$ and $X_4$,
of codimensions given in Table 4 below.
The map $W_0 \to X_0$ is a good quotient map.
The maps $W_1 \to X_1$ and $W_2 \to X_2$ are categorical quotient maps away from
the points of the form $[\O_{C_1} \oplus \O_{C_2}]$, where $C_1$, $C_2$ are cubic curves.
The maps $W_3 \to X_3$, $W_3^\D \to X_3^\D$ and $W_4 \to X_4$ are geometric
quotient maps away from properly semi-stable points,
i.e. points of the form $[\O_L(-1) \oplus \O_Q(1)]$, where $L$ is a line and $Q$ is a quintic curve.
Thus $X_3^{\st}$ and $(X_3^\D)^\st$ are isomorphic to the open subset of $\Hilb(6,3)$
of pairs $(C,Z)$, where $C$ is a sextic curve, $Z \subset C$ is a zero-dimensional subscheme
of length $3$ that is not contained in a line.
Moreover, $X_4^\st$ is isomorphic to the locally closed subscheme of $\Hilb(6,3)$
given by the condition that $Z$ be contained in a line $L$ that is not a component of $C$.

\begin{table}[!hpt]{Table 4. Summary for $\M(6,0)$.}
\begin{center}
{\small
\begin{tabular}{|c|c|c|c|}
\hline \hline
{}
&
{\tiny cohomological conditions}
&
{\tiny classification of sheaves $\F$ giving points in $X_i$}
&
{\tiny codim.}
\\
\hline
$X_0$
&
\begin{tabular}{r}
$\h^0(\F(-1))=0$ \\
$\h^1(\F)=0$\\
$\h^1(\F(1))=0$
\end{tabular}
&
\begin{tabular}{c}
{} \\
$0 \lra 6\O(-2) \stackrel{\f}{\lra} 6\O(-1) \lra \F \lra 0$ \\
{}
\end{tabular}
&
0
\\
\hline
$X_1$
&
\begin{tabular}{r}
$\h^0(\F(-1))=0$ \\
$\h^1(\F)=1$\\
$\h^1(\F(1))=0$
\end{tabular}
&
\begin{tabular}{c}
{} \\
$0 \lra \O(-3) \oplus 3\O(-2) \stackrel{\f}{\lra} 3\O(-1) \oplus \O \lra \F \lra 0$ \\
$\f_{12}$ is semi-stable as a Kronecker module \\
{}
\end{tabular}
&
1
\\
\hline
$X_2$
&
\begin{tabular}{r}
$\h^0(\F(-1))=0$ \\
$\h^1(\F)=2$ \\
$\h^1(\F(1))=0$
\end{tabular}
&
\begin{tabular}{c}
{} \\
$0 \to 2\O(-3) \oplus \O(-2) \oplus \O(-1) \stackrel{\f}{\to} \O(-2) \oplus \O(-1) \oplus 2\O \to \F \to 0$ \\
where $\f$ has one of the following forms: \\
${\ds \left[
\ba{cccc}
0 & 0 & 1 & 0 \\
0 & 0 & 0 & 1 \\
f_{11} & f_{12} & 0 & 0 \\
f_{21} & f_{22} & 0 & 0
\ea
\right], \quad \left[
\ba{cccc}
\ell_1 & \ell_2 & 0 & 0 \\
0 & 0 & 0 & 1 \\
f_{11} & f_{12} & q_1 & 0 \\
f_{21} & f_{22} & q_2 & 0
\ea
\right]}$, \\
${\ds \left[
\ba{cccc}
0 & 0 & 1 & 0 \\
q_1 & q_2 & 0 & 0 \\
f_{11} & f_{12} & 0 & \ell_1 \\
f_{21} & f_{22} & 0 & \ell_2
\ea
\right], \qquad \left[
\ba{cccc}
\ell_1 & \ell_2 & 0 & 0 \\
p_1 & p_2 & \ell & 0 \\
f_{11} & f_{12} & p_1' & \ell_1' \\
f_{21} & f_{22} & p_2' & \ell_2'
\ea
\right]}$, \\
where $q_1, q_2$ are linearly independent, $\ell \neq 0$, \\
$\ell_1, \ell_2$ are linearly independent and the same for $\ell_1', \ell_2'$
\\
{}
\end{tabular}
&
4
\\
\hline
$X_3$
&
\begin{tabular}{r}
$\h^0(\F(-1))=0$ \\
$\h^1(\F)=3$\\
$\h^1(\F(1))=1$
\end{tabular}
&
\begin{tabular}{c}
{} \\
$0 \lra \O(-4) \oplus 2\O(-1) \stackrel{\f}{\lra} 3\O \lra \F \lra 0$ \\
$\f_{12}$ has linearly independent maximal minors \\
{}
\end{tabular}
&
7
\\
\hline
$X_3^\D$
&
\begin{tabular}{r}
$\h^0(\F(-1))=1$ \\
$\h^1(\F)=3$\\
$\h^1(\F(1))=0$
\end{tabular}
&
\begin{tabular}{c}
{} \\
$0 \lra 3\O(-3) \stackrel{\f}{\lra} 2\O (-2) \oplus \O(1) \lra \F \lra 0$ \\
$\f_{11}$ has linearly independent maximal minors \\
{}
\end{tabular}
&
7
\\
\hline
$X_4$
&
\begin{tabular}{r}
$\h^0(\F(-1))=1$ \\
$\h^1(\F)=3$ \\
$\h^1(\F(1))=1$ 
\end{tabular}
&
\begin{tabular}{c}
{} \\
$0 \lra \O(-4) \oplus \O(-2) \stackrel{\f}{\lra} \O(-1) \oplus \O(1) \lra \F \lra 0$ \\
$\f_{12} \neq 0$ \\
{}
\end{tabular}
&
8
\\
\hline \hline
\end{tabular}
}
\end{center}
\end{table}



\section{Preliminaries}

\subsection{The Beilinson monad and spectral sequences} 

In this subsection $\F$ will be a coherent sheaf on $\P^2$ with support of dimension $1$.
The $\EE^1$-term of the Beilinson spectral sequence I converging to $\F$ has display diagram
\[
\tag{2.1.1}
\xymatrix
{
\H^1(\F(-2)) \tensor \O(-1) \ar[r]^-{\f_1} & \H^1(\F(-1)) \tensor \Om^1(1) \ar[r]^-{\f_2} & \H^1(\F) \tensor \O \\
\H^0(\F(-2)) \tensor \O(-1) \ar[r]^-{\f_3} & \H^0(\F(-1)) \tensor \Om^1(1) \ar[r]^-{\f_4} & \H^0(\F) \tensor \O
}.
\]
The spectral sequence degenerates at $\EE^3$, which shows that $\f_2$ is surjective and that we have the exact
sequenes
\[
\tag{2.1.2}
0 \to \H^0(\F(-2)) \tensor \O(-1) \stackrel{\f_3}{\to} \H^0(\F(-1)) \tensor \Om^1(1) \stackrel{\f_4}{\to} \H^0(\F) \tensor \O
\to \Coker(\f_4) \to 0,
\]
\[
\tag{2.1.3}
0 \lra \Ker(\f_1) \stackrel{\f_5}{\lra} \Coker(\f_4) \lra \F \lra \Ker(\f_2)/\Im(\f_1) \lra 0.
\]
The $\EE^1$-term of the Beilinson spectral sequence II converging to $\F$ has display diagram
\[
\tag{2.1.4}
\xymatrix
{
\H^1(\F(-1)) \tensor \O(-2) \ar[r]^-{\f_1} & \H^1(\F \tensor \Om^1(1)) \tensor \O(-1) \ar[r]^-{\f_2} & \H^1(\F) \tensor \O \\
\H^0(\F(-1)) \tensor \O(-2) \ar[r]^-{\f_3} & \H^0(\F \tensor \Om^1(1)) \tensor \O(-1) \ar[r]^-{\f_4} & \H^0(\F) \tensor \O
}.
\]
As above, this spectral sequence degenerates at $\EE^3$ and yields the exact sequences
\[
\tag{2.1.5}
0 \to \H^0(\F(-1)) \tensor \O(-2) \stackrel{\f_3}{\to} \H^0(\F \tensor \Om^1(1)) \tensor \O(-1)
\stackrel{\f_4}{\to} \H^0(\F) \tensor \O \to \Coker(\f_4) \to 0,
\]
\[
\tag{2.1.6}
0 \lra \Ker(\f_1) \stackrel{\f_5}{\lra} \Coker(\f_4) \lra \F \lra \Ker(\f_2)/\Im(\f_1) \lra 0.
\]
The Beilinson free monad associated to $\F$ is a sequence
\[
\tag{2.1.7}
0 \lra \CC^{-2} \lra \CC^{-1} \lra \CC^0 \lra \CC^1 \lra \CC^2 \lra 0,
\]
\[
\CC^p = \bigoplus_{i+j=p} \H^j (\F \tensor \Om^{-i}(-i)) \tensor \O(i),
\]
that is exact, except at $\CC^0$, where the cohomology is $\F$.
Note that $\CC^2=0$ because $\F$ is assumed to have dimension $1$.
The maps
\[
\H^0(\F \tensor \Om^{-i}(-i)) \tensor \O(i) \lra \H^1(\F \tensor \Om^{-i}(-i)) \tensor \O(i),
\]
$i = 0, -1, -2$, occurring in the monad are zero, cf., for instance, \cite{maican-duality}, lemma 1.

\subsection{Cohomology bounds} 

\begin{prop} \hfill
\label{2.2.1}
\begin{enumerate}
\item[(i)] Let $\F$ give a point in $\M(r,\chi)$, where $0 \le \chi < r$. Assume that $\h^1(\F) > 0$.
Then $\h^1(\F(1)) > 2 \h^1(\F) - \h^1(\F(-1))$.
\item[(ii)] Let $\F$ give a point in $\M(r,\chi)$, where $0 < \chi \le r$. Assume that $\h^0(\F(-1)) > 0$.
Then $\h^0(\F(-2)) > 2 \h^0(\F(-1)) - \h^0(\F)$.
\end{enumerate}
\end{prop}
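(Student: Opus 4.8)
The plan is first to note that the two statements are interchanged by Serre duality, so that only one of them needs a direct proof. For a pure one-dimensional sheaf on $\P^2$ one has $\h^i(\F(t)) = \h^{1-i}(\F^\D(-t))$. Applying this to the twisted dual $\G = \F^\D(1)$, which gives a point of $\M(r,r-\chi)$, the three quantities $\h^1(\F(1)),\h^1(\F),\h^1(\F(-1))$ turn into $\h^0(\G(-2)),\h^0(\G(-1)),\h^0(\G)$, and the hypotheses of (i) ($0\le\chi<r$, $\h^1(\F)>0$) turn into exactly those of (ii) for $\G$ ($0<r-\chi\le r$, $\h^0(\G(-1))>0$); moreover the target inequality of (i) becomes that of (ii) for $\G$. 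Hence (i) for $\F$ is (ii) for $\G$, and I would prove (ii) only.

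To prove (ii) I would restrict to a general line. Since $\F$ is semi-stable it is pure of dimension one, so for general $\ell\in V^*$ multiplication by $\ell$ is injective and yields $0\to\F(t-1)\to\F(t)\to\F_L(t)\to 0$, where $\F_L=\F\tensor\O_L$ is zero-dimensional of length $r$; thus $\h^0(\F_L(t))=r$ and $\h^1(\F_L(t))=0$ for all $t$. Writing $j_t=\h^0(\F(t))-\h^0(\F(t-1))$, the long exact sequence identifies $j_t$ with the rank of the restriction map $\H^0(\F(t))\to\H^0(\F_L(t))$, so $0\le j_t\le r$, and a short computation using $\chi(\F(t))=rt+\chi$ shows that the asserted inequality $\h^0(\F(-2))>2\h^0(\F(-1))-\h^0(\F)$ is literally $j_0>j_{-1}$. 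The non-strict bound $j_0\ge j_{-1}$ is immediate: for general $\ell'$ the map $\F_L(t)\to\F_L(t+1)$ is an isomorphism carrying the image of $\H^0(\F(t))$ into that of $\H^0(\F(t+1))$, so the jumps $j_t$ are non-decreasing.

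It then remains to exclude the equality $j_0=j_{-1}$. First, $j_{-1}=r$ is impossible: it forces $j_t=r$ for all $t\ge -1$, hence $\h^1(\F(t))=\h^1(\F(t-1))$ for $t\ge 0$ and therefore $\h^1(\F(-1))=0$ (as $\h^1$ vanishes for $t\gg 0$), whence $\h^0(\F(-1))=\chi-r\le 0$, contradicting the hypothesis. So I may assume $j:=j_0=j_{-1}$ with $1\le j\le r-1$. In this case the images $A_t=\mathrm{im}(\H^0(\F(t))\to\H^0(\F_L(t)))$ satisfy $A_0=\ell' A_{-1}$ for every general $\ell'$; comparing two such $\ell'$ forces $A_{-1}$ to be the space of sections supported on a proper length-$j$ subscheme $W$ of the finite set $L\cap\mathrm{Supp}(\F)$. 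Consequently the subsheaf $\G\subseteq\F$ generated by $\H^0(\F(-1))$ has $\G\tensor\O_L$ supported on $W$, so its multiplicity is at most $j<r$ and $\G$ is proper. Since $\G(-1)$ is globally generated by the sections furnished by $\h^0(\F(-1))>0$, one checks that $\pp(\G)>\chi/r=\pp(\F)$, contradicting semi-stability; this gives $j_0>j_{-1}$ and proves (ii), hence (i).

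The genuinely delicate point is this last step: everything up to $j_0\ge j_{-1}$ is formal, and the whole content lies in converting the equality $j_0=j_{-1}$ into a violation of semi-stability. The hard part will be the slope estimate for the generated subsheaf $\G$ — making precise that sections of $\F(-1)$ concentrated on a proper subcurve force $\chi(\G)/\,\mathrm{mult}(\G)>\chi/r$ — together with the bookkeeping over the homogeneous coordinate ring of $L$ that identifies $A_{-1}$ with a subscheme of $\F_L$. I expect the hypotheses $0<\chi\le r$ (that is $\pp(\F)\le 1$) and $\h^0(\F(-1))>0$ to enter precisely here, as the numerical input that makes the slope comparison strict.
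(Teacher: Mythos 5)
Your reduction of (i) to (ii) by duality, the translation of the desired inequality into the strict jump inequality $j_0 > j_{-1}$ for the Hilbert function along a general line, the monotonicity $j_0 \ge j_{-1}$, and the exclusion of $j_{-1}=r$ are all correct, and this is a genuinely different route from the paper's: the paper instead writes down the Beilinson free monad for $\F$, uses semi-stability of a quotient read off from the induced resolution to prove the bound $\h^0(\F \tensor \Om^1(1)) < \h^1(\F) + \h^0(\F(-1)) + \chi$, and then computes $\h^1(\F(1))$ directly. However, the step you yourself flag as the crux has a genuine gap, in two places. First, invariance of $A_{-1}$ under the units $\ell'/\ell''$ does not force $A_{-1}$ to be ``the sections supported on a length-$j$ subscheme $W$''; it only forces $A_{-1}$ to be a module over the coordinate ring of the schematic support of $\F_L$, i.e.\ the section space of some length-$j$ \emph{subsheaf} of $\F_L(-1)$ (if $\F_L$ has two copies over one point, a diagonal subspace is invariant but is not cut out by a support condition). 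This part is repairable, and still yields $\mult(\G) \le j$ because $\G \tensor \O_L \to \F_L$ is injective for a general line. Second, and more seriously, the concluding slope estimate is a non sequitur: global generation of $\G(-1)$ together with $\mult(\G) \le j < r$ does \emph{not} imply $\pp(\G) > \pp(\F)$. A globally generated pure sheaf of multiplicity $d$ can have slope as low as $-(d-3)/2$ (namely $\O_C$ for a curve $C$ of degree $d$), so after twisting, $\pp(\G)$ can be as small as $(5-d)/2 \le 1$ once $d \ge 3$; since your hypothesis allows $\chi = r$, i.e.\ $\pp(\F)=1$, the contradiction evaporates exactly there (take $\G(-1) \isom \O_C$ with $C$ a cubic inside $\F = \O_{C}(1) \oplus \O_{C'}(1)$: proper, multiplicity $3$, globally generated after the twist, equal slopes), and for smaller $\chi/r$ the inequality still does not follow from the properties you cite.

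The gap can be closed inside your own framework, but the missing idea is to run your jump argument a second time, on $\G$ instead of $\F$. Every section of $\F(-1)$ and of $\F(-2)$ factors through $\G$ (for $\F(-2)$ use that $V^* \tensor \O(1) \to \O(2)$ is surjective, so the image of a section $\O(2) \to \F$ is generated by the images of its multiples by linear forms, which lie in $\G$ by definition). Hence $\h^0(\G(-1)) = q$ and $\h^0(\G(-2)) = q - j$, so the jump of $\G$ at $t=-1$ equals $j$; but jumps never exceed the multiplicity, so $j \le \mult(\G)$, which combined with $\mult(\G) \le j$ gives $\mult(\G) = j$ and maximal jumps for $\G$ at every $t \ge -1$. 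Exactly as in your exclusion of $j_{-1}=r$, this forces $\h^1(\G(-2)) = 0$, whence $\chi(\G) = \h^0(\G(-2)) + 2j = q + j$ and $\pp(\G) = 1 + q/j > 1 \ge \pp(\F)$, the desired violation of semi-stability. (You should also rule out $j=0$, which you silently excluded: if $j_{-1}=0$ then $\G \neq 0$ would satisfy $\G \tensor \O_L = 0$ for a general line $L$, impossible for a non-zero sheaf with one-dimensional support.)
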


\begin{proof}
Part (ii) is equivalent to (i) by duality, so we concentrate on (i).
Write $p = \h^1(\F)$, $q= \h^0(\F(-1))$, $m = \h^0(\F \tensor \Om^1(1))$.
The Beilinson free monad (2.1.7)  for $\F$ takes the form
\begin{multline*}
0 \lra q\O(-2) \stackrel{\psi}{\lra} (q+r-\chi) \O(-2) \oplus m\O(-1) \lra \\
(m+r-2\chi)\O(-1) \oplus (p+\chi)\O \stackrel{\eta}{\lra} p\O \lra 0
\end{multline*}
and yields a resolution
\[
0 \lra (q+r-\chi)\O(-2) \oplus \Coker(\psi_{21}) \stackrel{\f}{\lra} \Ker(\eta_{11}) \oplus (p+\chi)\O \lra \F \lra 0
\]
in which $\f_{12}=0$. Since $\F$ maps surjectively to $\Coker(\f_{11})$ we have the inequality
\[
m+r-2\chi -p = \rank(\Ker(\eta_{11})) \le q+r-\chi.
\]
If the inequality is not strict, then $\Coker(\f_{11})$ has negative slope, contradicting the
semi-stability of $\F$. Thus $m < p+q+\chi$.
We have
\begin{align*}
\h^0(\F(1)) & = \h^0((p+\chi)\O(1)) + \h^0(\Ker(\eta_{11})(1)) - \h^0(\Coker(\psi_{21})(1)) \\
& \ge \h^0((p+\chi)\O(1)) - \h^0(\Coker(\psi_{21})(1)) \\
& = 3p + 3\chi - m \\
& > 2p + 2\chi - q, \\
\h^1(\F(1)) & = \h^0(\F(1)) - r - \chi > 2p+ \chi - q - r = 2\h^1(\F) - \h^1(\F(-1)).
\qedhere
\end{align*}
\end{proof}

\begin{corollary}
\label{2.2.2}
There are no sheaves $\F$ giving points
\begin{enumerate}
\item[(i)] in $\M(6,1)$ and satisfying
$\h^0(\F(-1)) \le 1$, $\h^1(\F) \ge 3$, $\h^1(\F(1))=0$;
\item[(ii)] in $\M(6,1)$ and satisfying
$\h^0(\F(-1)) = 1$, $\h^1(\F) = 1$;
\item[(iii)] in $\M(6,1)$ and satisfying
$\h^0(\F(-1)) = 2$, $\h^1(\F(1)) = 0$;
\item[(iv)] in $\M(6,2)$ and satisfying
$\h^0(\F(-1)) \le 1$, $\h^1(\F) \ge 3$, $\h^1(\F(1))=0$;
\item[(v)] in $\M(6,2)$ and satisfying
$\h^0(\F(-1)) = 0$, $\h^1(\F) = 2$, $\h^1(\F(1))=0$;
\item[(vi)] in $\M(6,3)$ and satisfying
$\h^0(\F(-1)) \le 1$, $\h^1(\F) \ge 2$, $\h^1(\F(1))=0$;
\item[(vii)] in $\M(6,0)$ and satisfying
$\h^0(\F(-1)) = 0$, $\h^1(\F) \ge 3$, $\h^1(\F(1))=0$.
\end{enumerate}
\end{corollary}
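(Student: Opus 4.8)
The plan is to reduce every case to Proposition~\ref{2.2.1}, after rewriting the hypotheses by Riemann--Roch. For a sheaf $\F$ giving a point in $\M(6,\chi)$ one has $\h^0(\F(k)) - \h^1(\F(k)) = 6k+\chi$; in particular $\h^1(\F(-1)) = \h^0(\F(-1)) + 6 - \chi$ and $\h^0(\F) = \h^1(\F) + \chi$. The slope is $\pp(\F) = \chi/6$, and recall that a Gieseker semi-stable sheaf of dimension $1$ is pure, so every nonzero subsheaf is again of dimension $1$.

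The five cases (i), (iv), (v), (vi), (vii) should follow at once from part (i) of Proposition~\ref{2.2.1}. In each of them the hypothesis gives $\h^1(\F) > 0$, so the proposition applies and yields $\h^1(\F(1)) > 2\h^1(\F) - \h^1(\F(-1))$. Substituting $\h^1(\F(-1)) = \h^0(\F(-1)) + 6 - \chi$ together with the assumed bounds on $\h^0(\F(-1))$ and $\h^1(\F)$, the right-hand side is $\ge 0$ in every one of these cases, so the strict inequality forces $\h^1(\F(1)) > 0$; for instance in (vii) one has $\chi = 0$, hence $\h^1(\F(-1)) = 6$ and $\h^1(\F) \ge 3$, giving $\h^1(\F(1)) > 0$. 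This contradicts the assumption $\h^1(\F(1)) = 0$. These are routine substitutions and I expect them to be one line each.

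The remaining cases (ii) and (iii), both in $\M(6,1)$, carry the genuine content, and the first thing I would record is a vanishing forced by semi-stability: if $\F$ gives a point in $\M(6,1)$ then $\h^0(\F(-2)) = 0$. Indeed, a nonzero section of $\F(-2)$ is a nonzero morphism $\O(2) \to \F$, whose image, $\F$ being pure of dimension $1$, is of the form $\O_D(2)$ for a curve $D \subset \P^2$ of degree $e$ with $1 \le e \le 6$. Since $\chi(\O_D) = 1 - (e-1)(e-2)/2$, one computes $\pp(\O_D(2)) = 2 + (1 - (e-1)(e-2)/2)/e \ge 1/2$, the minimum being attained at $e = 6$. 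As $1/2 > 1/6 = \pp(\F)$, this subsheaf destabilises $\F$, a contradiction. With this in hand, case (ii) is immediate: since $\h^0(\F(-1)) = 1 > 0$, part (ii) of Proposition~\ref{2.2.1} gives $\h^0(\F(-2)) > 2\h^0(\F(-1)) - \h^0(\F) = 2 - 2 = 0$, contradicting the vanishing just established.

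For case (iii) I would chain the two parts of the proposition. Here $\h^0(\F(-1)) = 2 > 0$, so part (ii) gives $0 = \h^0(\F(-2)) > 4 - \h^0(\F)$, whence $\h^0(\F) \ge 5$ and $\h^1(\F) \ge 4$. Now part (i) applies, and with $\h^1(\F(-1)) = \h^0(\F(-1)) + 5 = 7$ it yields $\h^1(\F(1)) > 2 \cdot 4 - 7 = 1 > 0$, contradicting $\h^1(\F(1)) = 0$. The main obstacle is precisely this pair of $\M(6,1)$ cases: the numerology of Proposition~\ref{2.2.1} alone does \emph{not} close them, because at slope $1/6$ the convexity inequality of part (i) is just barely non-binding when $\h^0(\F(-1))$ is as large as $2$. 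The essential extra input is the semi-stability vanishing $\h^0(\F(-2)) = 0$, which activates part (ii) and thereby forces $\h^1(\F)$ large enough for part (i) to produce the contradiction.
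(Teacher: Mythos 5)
Your proof is correct and takes essentially the same route as the paper: parts (i), (iv)--(vii) are direct applications of Proposition~\ref{2.2.1}(i), while (ii) and (iii) hinge on the vanishing $\h^0(\F(-2))=0$ for sheaves in $\M(6,1)$ combined with Proposition~\ref{2.2.1}(ii) (and, for (iii), a second application of part (i)), which is exactly the paper's argument. The only difference is that you establish $\h^0(\F(-2))=0$ by a self-contained slope computation, whereas the paper simply cites 2.1.3 of \cite{drezet-maican} for this fact.
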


\begin{proof}
Let $\F$ give a point in $\M(6,1)$. According to 2.1.3 \cite{drezet-maican}, $\h^0(\F(-2))=0$.
In view of \ref{2.2.1}(ii) we have $\h^0(\F) > 2\h^0(\F(-1))$.
This proves (ii).
Assume now that $\F$ satisfies the cohomological conditions from (iii).
Then $\h^1(\F) = \h^0(\F) -1 \ge 4$.
On the other hand, by \ref{2.2.1}(i), we have $7 = \h^1(\F(-1)) > 2\h^1(\F)$.
This yields a contradiction and proves (iii). All other parts of the corollary are direct
applications of \ref{2.2.1}(i).
\end{proof}

\subsection{Stability criteria} 

\begin{prop}
\label{2.3.1}
Let $n$ be a positive integer and let $d_1 \le \cdots \le d_n$, $e_1 \le \cdots \le e_n$
be integers satisfying the relations
\begin{align*}
\tag{i}
e_1 - d_1 & \ge e_2 + \cdots + e_n - d_2 - \cdots - d_n, \\
\tag{ii}
e_1 + d_1 & \le \frac{e_2^2 + \cdots + e_n^2 - d_2^2 - \cdots - d_n^2}{e_2 + \cdots + e_n - d_2 - \cdots - d_n}.
\end{align*}
Let $\F$ be a sheaf on $\P^2$ having resolution
\[
0 \lra \O(d_1) \oplus \cdots \oplus \O(d_n) \stackrel{\f}{\lra} \O(e_1) \oplus \cdots \oplus \O(e_n) \lra \F \lra 0.
\]
Assume that the maximal minors of the restriction of $\f$ to $\O(d_2) \oplus \cdots \oplus \O(d_n)$
have no common factor and that none of them has degree zero.
Then $\F$ is stable, unless the ratio
\[
r = \frac{e_1^2 + \cdots + e_n^2 - d_1^2 - \cdots - d_n^2}{e_1 + \cdots + e_n - d_1 - \cdots - d_n}
\]
is an integer and $\F$ has a subsheaf $\S$ given by a resolution
\[
0 \lra \O(d_1) \lra \O(r-d_1) \lra \S \lra 0.
\]
In this case $\pp(\S) = \pp(\F)$ and $\F$ is properly semi-stable.
Note that condition (ii) can be replaced by the requirement that $e_i \ge d_i$ for $2 \le i \le n$.
\end{prop}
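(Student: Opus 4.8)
The plan is to exploit the hypothesis on the maximal minors through the Hilbert--Burch theorem, reducing the whole question to the study of rank-one subsheaves of a twisted ideal sheaf. First I would record the numerics: writing $\delta = e_1 + \cdots + e_n - d_1 - \cdots - d_n$ for the multiplicity of $\F$ and computing Euler characteristics term by term from the resolution, one finds $\pp(\F) = (r+3)/2$, where $r$ is the ratio in the statement. The same computation applied to the displayed resolution $0 \to \O(d_1) \to \O(r-d_1) \to \S \to 0$ gives $\pp(\S) = (r+3)/2 = \pp(\F)$, which already accounts for the last two sentences of the statement.

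The key structural step is to split $\f = (\f_1 \mid \psi)$, where $\f_1 \colon \O(d_1) \to \bigoplus_j \O(e_j)$ is the first column and $\psi$ is the restriction to $\O(d_2)\oplus\cdots\oplus\O(d_n)$. Since $\psi$ is injective and its maximal minors have no common factor (and none is a nonzero constant), the Hilbert--Burch theorem identifies $\E := \Coker(\psi)$ with a twisted ideal sheaf $\I_Z(k)$ of a zero-dimensional scheme $Z$, where $k = \delta + d_1$; in particular $\E$ is torsion-free of rank one. The induced map $\O(d_1) \to \E$ is injective, so I obtain
\[
0 \lra \O(d_1) \lra \I_Z(k) \lra \F \lra 0 .
\]
A one-line argument (any rank-one torsion-free subsheaf of $\I_Z(k)$ whose reflexive hull equals $\O(d_1)$ must coincide with $\O(d_1)$) then shows $\F$ has no zero-dimensional torsion, i.e. $\F$ is pure of dimension one, so that slope is the correct invariant to test.

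Next I would test stability. Let $\S \subset \F$ be a saturated subsheaf with $\F/\S$ pure and $\pp(\S) \ge \pp(\F)$, which we may assume to be semi-stable of maximal slope. Its preimage in $\I_Z(k)$ is a rank-one subsheaf $\tilda\S \supseteq \O(d_1)$ with $\tilda\S/\O(d_1) = \S$; its reflexive hull is a line bundle $\O(k')$ with $d_1 < k' < k$, and $\tilda\S = \I_{Z''}(k')$ for some zero-dimensional $Z''$. The inclusion $\O(d_1) \hookrightarrow \O(k')$ is multiplication by a form cutting out the support $C'$ of $\S$ (of degree $k'-d_1$), and a direct Euler-characteristic count yields
\[
\pp(\S) = \frac{k'+d_1+3}{2} - \frac{\length(Z'')}{k'-d_1} .
\]
Thus $\pp(\S) \ge \pp(\F)$ is equivalent to $(k'+d_1-r)(k'-d_1) \ge 2\,\length(Z'')$, and equality with $\length(Z'')=0$ and $k'+d_1 = r$ produces exactly the exceptional subsheaf $0 \to \O(d_1)\to\O(r-d_1)\to\S\to 0$ of the statement.

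It remains --- and this is the main obstacle --- to show that $(k'+d_1-r)(k'-d_1) < 2\,\length(Z'')$ strictly for every other saturated proper subsheaf, so that no genuine destabilization occurs. The point is that $\tilda\S = \I_{Z''}(k') \hookrightarrow \I_Z(k)$ is multiplication by a form of degree $k-k'$, and purity of $\F/\S$ forces $Z''$ to absorb the part of $Z$ lying off this form; hence $\length(Z'')$ is bounded below in terms of $\length(Z) = \tfrac12\!\left(k^2 - \sum_j e_j^2 + \sum_{i\ge2} d_i^2\right)$ and of how $C'$ meets the degree-$(k-k')$ form. This is where hypotheses (i) and (ii) enter, supplying the numerical inequalities that bound $\length(Z'')$ from below relative to $(k'+d_1-r)(k'-d_1)$: (i) is the Chern-class/positivity constraint controlling the admissible range of $k'$, while (ii) is the slope inequality making the estimate tight exactly in the borderline case $k'+d_1 = r$, and the alternative hypothesis $e_i \ge d_i$ for $2 \le i \le n$ is a convenient sufficient condition for it. I expect the delicate part to be precisely this bookkeeping relating $\length(Z'')$, $\length(Z)$ and the intersection of $C'$ with the degree-$(k-k')$ form, together with checking that equality forces the single configuration already isolated above.
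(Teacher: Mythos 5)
Your setup is sound and, modulo a dualization, coincides with the paper's: the identification $\Coker(\psi)\isom\I_Z(k)$ with $k=\delta+d_1$, the presentation $0\to\O(d_1)\to\I_Z(k)\to\F\to 0$, the purity of $\F$, the slope formulas $\pp(\F)=(r+3)/2$ and $\pp(\S)=\frac{k'+d_1+3}{2}-\frac{\length(Z'')}{k'-d_1}$, and the reduction of semi-stability to the numerical inequality $(k'+d_1-r)(k'-d_1)\le 2\length(Z'')$ are all correct. (The paper works instead with $\F\isom\J_Z(e)\subset\O_C(e)$ and saturates $\S$ inside $\O_C(e)$ via lemma 6.7 of \cite{maican}, producing the complementary subcurve of degree $s$; your $k'$ is its $e-s$, so the two pictures carry the same information.) But what you have written is a reduction of the proposition to its essential content, not a proof of it. The lower bound on $\length(Z'')$ for every destabilizing candidate is precisely the mathematical substance of the statement, and you explicitly defer it (``I expect the delicate part to be precisely this bookkeeping''). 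Nothing in your proposal actually uses hypotheses (i) and (ii), nor the hypotheses that the maximal minors of $\psi$ have no common factor and that none of them has degree zero; a complete proof must invoke all of these, and yours never does.

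For comparison, here is how the paper closes exactly this gap. Assuming the destabilizing inequalities, it produces, from a diagram comparing $\O_C(e)/\S$ with $\O_Z$, a subscheme $Y\subset Z$ of explicitly bounded-below length lying both on the support curve $S$ of the quotient (degree $s$) and on the curve $\z_1=0$ --- the latter for free, because $Z$ is cut out by the maximal minors $\z_i$, a fact your mechanism (``purity of $\F/\S$ forces $Z''$ to absorb the part of $Z$ lying off the form'') never exploits. Condition (i) is exactly what yields $\length(Y)>s\deg(\z_1)$, so B\'ezout's theorem forces $S$ and $\{\z_1=0\}$ to share a component; since $\gcd(\z_1,\ldots,\z_n)=1$, after elementary operations one may assume $\z_1$ irreducible, whence $\z_1$ divides the equation of $S$ and $\deg(\z_1)\le s$ (this is also where ``no minor has degree zero'' matters); substituting this back produces an inequality contradicting condition (ii). The same strict-inequality argument is what rules out equality except when $\S$ equals its saturation ($Z''=\emptyset$ in your notation) and $k'+d_1=r$, i.e.\ the exceptional subsheaf $0\to\O(d_1)\to\O(r-d_1)\to\S\to 0$; in your outline this case analysis is likewise only asserted. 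Until you supply an argument of this kind --- some replacement for the B\'ezout step tying $\length(Z'')$ to $Z$ and to the minors --- the proposal is a correct framework with the proof missing.
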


\begin{proof}
Let $C \subset \P^2$ be the curve given by the equation $\det(\f)=0$.
Its degree is $d = e_1 + \cdots + e_n - d_1 - \cdots - d_n$.
Let $\psi$ denote the restriction of $\f$ to $\O(d_2) \oplus \cdots \oplus \O(d_n)$
and let $\z_i$ be the maximal minor of the matrix representing $\psi$ obtained by deleting the $i$-th row.
We have an exact sequence
\[
0 \lra \O(d_2) \oplus \cdots \oplus \O(d_n) \stackrel{\psi}{\lra} \O(e_1) \oplus \cdots \oplus \O(e_n)
\stackrel{\z}{\lra} \O(e) \lra \CC \lra 0,
\]
\[
\z = \left[
\ba{cccc}
\z_1 & -\z_2 & \cdots & (-1)^{n+1} \z_n
\ea
\right], \qquad e = d+ d_1.
\]
The Hilbert polynomial of $\CC$ is a constant,
namely ${\ds \frac{d^2}{2} + dd_1 + \sum_{i=1}^n \frac{d_i^2 - e_i^2}{2}}$, showing that $\CC$ is the structure
sheaf of a zero-dimensional scheme $Z \subset \P^2$, that $\Coker(\psi) \isom \I_Z(e)$ and $\F \isom \J_Z(e)$,
where $\J_Z \subset \O_C$ is the ideal sheaf of $Z$ in $C$.
Clearly $\F$ has no zero-dimensional torsion.
Let $\S \subset \F$ be a subsheaf of multiplicity at most $d-1$.
According to \cite{maican}, lemma 6.7, there is a sheaf $\A$
such that $\S \subset \A \subset \O_C(e)$, $\A/\S$ is supported on finitely many points
and $\O_C(e)/\A \isom \O_S(e)$ for a curve $S \subset \P^2$ of degree $s$, $1 \le s \le d-1$.
We have the relations
\begin{align*}
\PP_{\S}(m) & = \PP_{\O_C(e)}(m) - \PP_{\O_S(e)}(m) - \h^0(\A/\S) \\
& = dm + de - \frac{d(d-3)}{2} - sm - se + \frac{s(s-3)}{2} - \h^0(\A/\S), \\
\pp(\S) & = e + \frac{3}{2} - \frac{d+s}{2} - \frac{\h^0(\A/\S)}{d-s}, \\
\PP_{\F}(m) & = dm + \frac{3d}{2} + \sum_{i=1}^n \frac{e_i^2 - d_i^2}{2}, \\
\pp(\F) & = \frac{3}{2} + \sum_{i=1}^n \frac{e_i^2 - d_i^2}{2d}.
\end{align*}
In order to show that $\F$ is semi-stable we will prove that $\pp(\S) \le \pp(\F)$.
This is equivalent to the inequality
\[
d_1 + \frac{d}{2} + \sum_{i=1}^n \frac{d_i^2 - e_i^2}{2d} \le \frac{s}{2} + \frac{\h^0(\A/\S)}{d-s}.
\]
Assume that
\[
\frac{s}{2} < d_1 + \frac{d}{2} + \sum_{i=1}^n \frac{d_i^2 - e_i^2}{2d} \quad \text{and} \quad
\h^0(\A/\S) \le (d-s) \Big( d_1 + \frac{d}{2} - \frac{s}{2} + \sum_{i=1}^n \frac{d_i^2 - e_i^2}{2d} \Big).
\]
We have a commutative diagram
\[
\xymatrix
{
0 \ar[r] & \A/\S \egal[d] \ar[r] & \O_C(e)/\S \surj[d] \ar[r] & \O_S(e) \surj[d] \ar[r] & 0 \\
& \A/\S \ar[r] & \O_Z \ar[r] & \O_Y \ar[r] & 0
}
\]
in which $Y$ is a subscheme of $Z$ of length at least
\[
\frac{d^2}{2} + dd_1 + \sum_{i=1}^n \frac{d_i^2 - e_i^2}{2} -
(d-s) \Big( d_1 + \frac{d}{2} - \frac{s}{2} + \sum_{i=1}^n \frac{d_i^2 - e_i^2}{2d} \Big)
= s \Big( d + d_1 - \frac{s}{2} + \sum_{i=1}^n \frac{d_i^2 - e_i^2}{2d} \Big).
\]
We claim that $\length(Y) > s \deg(\z_1) = s(d-e_1 + d_1)$.
This follows from the equivalent inequalities
\begin{align*}
d + d_1 - \frac{s}{2} + \sum_{i=1}^n \frac{d_i^2 - e_i^2}{2d} & > d-e_1 + d_1, \\
e_1 + \sum_{i=1}^n \frac{d_i^2 - e_i^2}{2d} & > \frac{s}{2},
\end{align*}
which follow from the inequality
\[
e_1 + \sum_{i=1}^n \frac{d_i^2 - e_i^2}{2d} \ge
d_1 + \frac{d}{2} + \sum_{i=1}^n \frac{d_i^2 - e_i^2}{2d}.
\]
The latter is equivalent to condition (i) from the hypothesis.
This proves the claim.
Since $Y$ is a subscheme of $S$ and also of the curve given by the equation $\z_1 = 0$,
we can apply B\'ezout's Theorem to deduce that
$S$ and the curve given by the equation $\z_1 =0$ have a common component.
Since $\gcd(\z_1, \ldots, \z_n)=1$, we may perform elementary column operations on the matrix
representing $\f$ to ensure that $\z_1$ is irreducible.
Thus $\z_1$ divides the equation defining $S$. In particular, $\deg(\z_1) \le s$.
It follows that
\begin{align*}
d - e_1 + d_1 = \deg(\z_1) & < 2 d_1 + d + \sum_{i=1}^n \frac{d_i^2 - e_i^2}{d}, \\
\sum_{i=1}^n (e_i^2 - d_i^2) & < d(d_1+e_1) = e_1^2 - d_1^2 + (d_1 + e_1) \sum_{i=2}^n (e_i - d_i), \\
\sum_{i=2}^n (e_i^2 - d_i^2) & < (d_1 + e_1) \sum_{i=2}^n (e_i - d_i).
\end{align*}
The last inequality contradicts condition (ii) from the hypothesis.
The above discussion shows that $\pp(\S) < \pp(\F)$ unless $\S= \A$ and
\[
\frac{s}{2} = d_1 + \frac{d}{2} + \sum_{i=1}^n \frac{d_i^2 - e_i^2}{2d},
\]
in which case $\pp(\S) = \pp(\F)$ and $\F$ is semi-stable but not stable.
Applying the snake lemma to the commutative diagram
\[
\xymatrix
{
0 \ar[r] & \O(e-d) \ar[r] \ar[d] & \O(e) \ar[r] \egal[d] & \O_C(e) \ar[r] \ar[d] & 0 \\
0 \ar[r] & \O(e-s) \ar[r] & \O(e) \ar[r] & \O_S(e) \ar[r] & 0
}
\]
yields the exact sequence
\[
0 \lra \O(e-d) \lra \O(e-s) \lra \S \lra 0.
\]
We have $e-d = d_1$, $e-s= r-d_1$.
\end{proof}

\begin{corollary}
\label{2.3.2}
Let $d_1 \le d_2 < e_1 \le e_2$ be integers satisfying the condition $e_1 - d_1 \ge e_2 - d_2$.
Let $\F$ be a sheaf on $\P^2$ having resolution
\[
0 \lra \O(d_1) \oplus \O(d_2) \stackrel{\f}{\lra} \O(e_1) \oplus \O(e_2) \lra \F \lra 0.
\]
Assume that $\f_{12}$ and $\f_{22}$ have no common factor. Then $\F$ is stable, unless
$e_1 - d_1 = e_2 - d_2$ and
${\ds
\f \sim \left[
\ba{cc}
0 & \star \\
\star & \star
\ea
\right]
}$,
in which case $\F$ is semi-stable but not stable.
\end{corollary}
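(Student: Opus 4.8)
The plan is to derive the statement from Proposition~\ref{2.3.1} applied with $n=2$, and then to analyse its exceptional (properly semi-stable) case by hand. Write $a_i = e_i - d_i$; since $d_2 < e_1 \le e_2$ we have $a_1, a_2 > 0$, and condition~(i) of \ref{2.3.1} is precisely the hypothesis $a_1 \ge a_2$. The restriction of $\f$ to $\O(d_2)$ is the second column, whose maximal minors are $\f_{12}$ and $\f_{22}$; by hypothesis they have no common factor, and neither has degree $0$ since $\deg \f_{12} = e_1 - d_2 > 0$ and $\deg \f_{22} = e_2 - d_2 > 0$. Condition~(ii) holds by the remark closing \ref{2.3.1}, as $e_2 > d_2$. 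Hence \ref{2.3.1} gives that $\F$ is semi-stable, and stable unless the ratio $r = (e_1^2+e_2^2-d_1^2-d_2^2)/(e_1+e_2-d_1-d_2)$ is an integer and $\F$ has a subsheaf $\S$ with resolution $0 \to \O(d_1) \to \O(r-d_1) \to \S \to 0$. Everything then reduces to identifying this exceptional case with the condition ``$a_1 = a_2$ and $\f \sim \left[\ba{cc} 0 & \star \\ \star & \star \ea\right]$''.

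For the implication that this condition forces proper semi-stability, I would assume $a_1 = a_2$ and, after the $G$-action, $\f_{11}=0$. Then $\det\f = -\f_{12}\f_{21} \neq 0$, so $\f_{12},\f_{21} \neq 0$, and the inclusions of $\O(d_1)$ and $\O(e_2)$ as first, respectively second, summands form a commutative square with $\f$ (this is exactly where $\f_{11}=0$ is used). The snake lemma yields $0 \to \S \to \F \to \Coker(\f_{12}) \to 0$ with $\S = \Coker(\O(d_1) \xrightarrow{\f_{21}} \O(e_2)) \isom \O_{C_1}(e_2)$, where $C_1$ is the curve $\f_{21}=0$. A computation of reduced Hilbert polynomials, as in the proof of \ref{2.3.1}, gives $2(\pp(\S)-\pp(\F)) = (a_2-a_1)(e_1-d_2)/(a_1+a_2)$, which vanishes exactly when $a_1=a_2$; so in that case $\pp(\S)=\pp(\F)$ and, $\F$ being semi-stable, it is properly semi-stable.

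For the converse I would start from a non-stable $\F$, so the exceptional subsheaf $\S$ above exists, and lift its inclusion $\S \hookrightarrow \F$ to a map of the two locally free resolutions: $v \colon \O(r-d_1) \to \O(e_1)\oplus\O(e_2)$ and $u \colon \O(d_1) \to \O(d_1)\oplus\O(d_2)$ with $\f u = v w$, where $w$ is the first map in the resolution of $\S$. Since $\S \neq 0$ we have $v \neq 0$; the components of $v$ have degrees $e_1-r+d_1$ and $e_2-r+d_1$, so $v \neq 0$ forces $r \le e_2+d_1$. The key numerical identity $r-(e_2+d_1) = (a_1-a_2)(e_1-d_2)/(a_1+a_2)$ shows, conversely, $r \ge e_2+d_1$ with equality iff $a_1=a_2$. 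Combining the two, $a_1=a_2$ and $r-d_1=e_2$. Then $\deg v_1 = e_1-e_2 \le 0$, so after an automorphism of $\O(e_1)\oplus\O(e_2)$ we may take $v=(0,1)$; the relation $\f u = (0,w)$ gives $\f_{11}u_1 + \f_{12}u_2 = 0$ with $u_1 \in \C^*$ (else $\f_{12}u_2=0$ forces $u=0$, whence $w=0$), and the column operation $\mathrm{col}_1 \mapsto u_1\,\mathrm{col}_1 + u_2\,\mathrm{col}_2$, an automorphism of determinant $u_1$, kills the $(1,1)$-entry. Thus $\f \sim \left[\ba{cc} 0 & \star \\ \star & \star \ea\right]$.

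I expect the converse to be the main obstacle: the real content is producing the two opposing bounds $r \le e_2+d_1$ (from non-vanishing of the lift $v$) and $r \ge e_2+d_1$ (from the numerical identity), whose equality pins down $a_1=a_2$. The subsequent normalization of $v$ and the column operation are routine, the only care being the boundary case $e_1=e_2$ (equivalently $d_1=d_2$), where one uses the larger automorphism group of $\O(e_1)\oplus\O(e_2)$ to bring $v$ to $(0,1)$.
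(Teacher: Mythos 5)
Your proof is correct and follows essentially the same route as the paper's: both reduce to Proposition \ref{2.3.1} and, in the non-stable case, lift the inclusion of the exceptional subsheaf $\S$ into $\F$ to a map of the two resolutions, where the degree bound $r-d_1 \le e_2$ forced by the non-vanishing of the lift plays against the reverse inequality coming from $e_1-d_1 \ge e_2-d_2$ (your closed-form identity is just the paper's inequality chain rewritten) to pin down $r-d_1=e_2$, $e_1-d_1=e_2-d_2$, and the vanishing $(1,1)$-entry. The only difference is completeness, not method: the paper states the final normalization tersely and leaves the easy converse (special form $\Rightarrow$ properly semi-stable, which you get via the snake lemma and a slope computation) implicit, whereas you prove both directions explicitly.
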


\begin{proof}
According to the proposition above, $\F$ is stable unless the ratio
\[
r = \frac{e_1^2 + e_2^2 - d_1^2 - d_2^2}{e_1 + e_2 - d_1 - d_2}
\]
is an integer and $\F$ has a subsheaf $\S$ given by a certain resolution.
We have a commutative diagram
\[
\xymatrix
{
0 \ar[r] & \O(d_1) \ar[r] \ar[d]^-{\b} & \O(r-d_1) \ar[r] \ar[d]^-{\a} & \S \ar[r] \ar[d] & 0 \\
0 \ar[r] & \O(d_1) \oplus \O(d_2) \ar[r]^-{\f} & \O(e_1) \oplus \O(e_2) \ar[r] & \F \ar[r] & 0
}
\]
in which $\a$ and $\b$ are injective. Thus $r-d_1 \le e_2$, that is
\begin{align*}
e_1^2 + e_2^2 - d_1^2 - d_2^2 & \le (e_1 + e_2 -d_1 - d_2)(d_1 + e_2), \\
(e_1-d_2) (e_1 + d_2) & \le (e_1 - d_2) (d_1 + e_2), \\
e_1 + d_2 & \le d_1 + e_2.
\end{align*}
Thus $e_1-d_1 = e_2 - d_2$, $r-d_1 = e_2$ and $\f$ has the special form given above.
\end{proof}


\section{The moduli space $\M(6,1)$}

\subsection{Classification of sheaves}  

\begin{prop}
\label{3.1.1}
Every sheaf $\F$ giving a point in $\M(6,1)$ and satisfying the condition
$\h^1(\F)=0$ also satisfies the condition $\h^0(\F(-1))=0$.
These sheaves are precisely the sheaves having a resolution of the form
\[
0 \lra 5\O(-2) \stackrel{\f}{\lra} 4\O(-1) \oplus \O \lra \F \lra 0,
\]
where $\f_{11}$ is semi-stable as a Kronecker module.
\end{prop}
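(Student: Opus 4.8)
The plan is to prove the two assertions in turn. \emph{First assertion.} I would argue by contradiction. By 2.1.3 of \cite{drezet-maican} we have $\h^0(\F(-2))=0$. Since $\M(6,1)$ has $0<\chi\le r$, Proposition \ref{2.2.1}(ii) applies: if $\h^0(\F(-1))>0$ it gives $\h^0(\F(-2))>2\h^0(\F(-1))-\h^0(\F)$, hence $\h^0(\F)>2\h^0(\F(-1))\ge 2$. But $\h^0(\F)-\h^1(\F)=\PP_\F(0)=1$ (the sheaf being one-dimensional, $\h^2(\F)=0$), so $\h^1(\F)=0$ forces $\h^0(\F)=1$, a contradiction. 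Therefore $\h^0(\F(-1))=0$.

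\emph{The resolution.} With $p=\h^1(\F)=0$ and $q=\h^0(\F(-1))=0$ I would insert these values into the Beilinson free monad, exactly as in the proof of \ref{2.2.1}. Writing $m=\h^0(\F\tensor\Om^1(1))$, the terms $\CC^{-2}=q\O(-2)$ and $\CC^1=p\O$ vanish, so the monad degenerates to a single short exact sequence
\[
0 \lra 5\O(-2)\oplus m\O(-1) \stackrel{\f}{\lra} (m+4)\O(-1)\oplus\O \lra \F \lra 0,
\]
in which the scalar block $\f_{12}\colon m\O(-1)\to(m+4)\O(-1)$ vanishes by \cite{maican-duality}, Lemma 1. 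Projecting the target onto $(m+4)\O(-1)$ and using $\f_{12}=0$ gives a surjection $\F\to\Coker(\f_{11})$ with $\f_{11}\colon 5\O(-2)\to(m+4)\O(-1)$. A rank count forces $m+4\le 5$ (otherwise $\F$ would have a two-dimensional quotient), and if $m=1$ then $\f_{11}$ is a square $5\times 5$ matrix of linear forms whose cokernel, when $\f_{11}$ is injective, has slope $0<1/6=\pp(\F)$, contradicting semi-stability; non-injectivity of $\f_{11}$ again produces a two-dimensional quotient of $\F$. Hence $m=0$ and the resolution takes the asserted form $0\to 5\O(-2)\to 4\O(-1)\oplus\O\to\F\to 0$.

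\emph{The Kronecker condition and the converse.} Any sheaf with such a resolution has $\PP_\F(k)=6k+1$ by a direct computation, and the long exact sequences obtained from the resolution and from its twist by $-1$ give $\h^1(\F)=0$ and $\h^0(\F(-1))=0$ using only the vanishing of the cohomology of line bundles on $\P^2$; thus the cohomological conditions hold automatically. The real content is the equivalence: $\F$ is semi-stable if and only if $\f_{11}$ is semi-stable as a Kronecker module. For the direction ``$\F$ semi-stable $\Rightarrow\f_{11}$ semi-stable'' I would argue contrapositively, showing that a zero-submatrix of $\f_{11}$ of size $q'\times p'$ with $\tfrac{p'}{5}+\tfrac{q'}{4}>1$ produces a destabilizing sub- or quotient sheaf: after acting by $G_0=(\Aut(5\O(-2))\times\Aut(4\O(-1)\oplus\O))/\C^*$ one brings $\f_{11}$ to block form and reads off from the complementary block either a quotient of slope $\le\pp(\F)$ or a subsheaf of larger slope, several block shapes (for instance $q'=4$, $p'\ge 2$) being excluded at once by the injectivity of $\f$. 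Conversely, a destabilizing subsheaf of $\F$ is traced back through the resolution to an invariant zero-block of $\f_{11}$ breaking the Kronecker inequality, as in the parallel classification steps of \cite{drezet-maican} and \cite{mult_five}.

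The hard part is precisely this equivalence of the two semi-stability notions. The delicate issue is to match the combinatorial GIT condition on the $4\times 5$ matrix $\f_{11}$ with the geometric condition on subsheaves and quotients of $\F$, accounting for the interaction of $\f_{11}$ with the quadratic block $5\O(-2)\to\O$ and with the extra summand $\O$, and confirming that no destabilizing sub- or quotient sheaf falls outside the finite list of block shapes. I expect the cleanest route is to invoke the general correspondence between sheaf semi-stability and Kronecker-module semi-stability developed in \cite{drezet-maican}, after checking that the present numerical data ($\pp(\F)=1/6$, multiplicity $6$) lie in its range, rather than re-deriving the correspondence by hand.
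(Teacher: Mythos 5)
Your first two steps are sound. The vanishing argument ($\h^0(\F(-2))=0$ from 2.1.3 of \cite{drezet-maican}, then \ref{2.2.1}(ii) to force $\h^0(\F)>2\h^0(\F(-1))$, against $\h^0(\F)=\chi(\F)=1$ when $\h^1(\F)=0$) is exactly the mechanism the paper itself uses in \ref{2.2.2}, and your derivation of the resolution from the Beilinson free monad (2.1.7) — $\CC^{-2}$ and $\CC^1$ vanish, $\f_{12}=0$ by lemma 1 of \cite{maican-duality}, then the rank/slope count killing $m\ge 2$ and $m=1$ — is correct and parallels the way the paper handles other strata (\ref{3.1.3}, \ref{3.1.5}). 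Note, though, that the paper's own proof of \ref{3.1.1} is a single line: the whole statement, resolution and semi-stability equivalence included, follows by duality (via $\M(6,1)\isom\M(6,5)$) from 4.2 of \cite{maican}, so none of this machinery is actually deployed there.

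The gap is in your last step, which is where all the content of the proposition sits: the equivalence ``$\F$ semi-stable $\iff$ $\f_{11}$ semi-stable as a Kronecker module'' is only sketched, and the sketch does not close. The forward direction (a violating zero-submatrix yields a destabilising subsheaf or quotient) is a finite, if tedious, case check; but the converse — that Kronecker semi-stability of the linear block, together with injectivity of $\f$, rules out \emph{every} destabilising subsheaf of $\F$ — is the genuinely hard direction, and ``traced back through the resolution to an invariant zero-block'' is precisely the assertion that needs proof: one must show that an arbitrary destabilising subsheaf admits a compatible resolution fitting into a commutative diagram with the given one, and control the induced morphisms, which is a several-page argument, not a remark. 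Your fallback — invoking ``the general correspondence between sheaf semi-stability and Kronecker-module semi-stability developed in \cite{drezet-maican}'' — fails because no such general theorem exists there: that paper treats multiplicity $4$ by ad hoc case analysis, and there is no statement whose ``range'' covers $\M(6,1)$. The result that does cover it is 4.2 of \cite{maican} (applied to $\F^\D(1)$, whose resolution $0\lra\O(-2)\oplus 4\O(-1)\lra 5\O\lra\F^\D(1)\lra 0$ is the dual of yours), and that citation-plus-duality is the paper's entire proof. So to complete your proposal you must either import that result explicitly, with the duality step made precise, or reproduce its comparison of the two semi-stability notions; as written, the proposition is not established. (A minor symptom of the sketchiness: a quotient of slope $\le\pp(\F)$ does not destabilise — you need strict inequality, or else you are on the properly semi-stable boundary, which for $\chi=1$ and $r=6$ cannot occur but should be said.)
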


\begin{proof}
The statement follows by duality from 4.2 \cite{maican}.
\end{proof}

\begin{claim}
\label{3.1.2}
Consider an exact sequence of sheaves on $\P^2$
\[
0 \lra \O(-3) \oplus 2\O(-2) \stackrel{\f}{\lra} 2\O(-1) \oplus \O(1) \lra \F \lra 0,
\]
\[
\f = \left[
\ba{ccc}
q_1 & \ell_{11} & \ell_{12} \\
q_2 & \ell_{21} & \ell_{22} \\
f & g_1 & g_2
\ea
\right],
\]
where $\ell_{11} \ell_{22} - \ell_{12} \ell_{21} \neq 0$ and the images of
$q_1 \ell_{21} - q_2 \ell_{11}$ and $q_1 \ell_{22} - q_2 \ell_{12}$
in $\SS^3 V^*/(\ell_{11} \ell_{22} - \ell_{12} \ell_{21})V^*$ are linearly independent.
Then $\F$ gives a stable point in $\M(6,2)$.
\end{claim}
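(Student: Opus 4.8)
The plan is to reduce the stability of $\F$ to Proposition \ref{2.3.1} by passing to the dual sheaf, and then to handle by hand the single degenerate configuration that Proposition \ref{2.3.1} does not cover.

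First I would record the numerical data. Since the displayed sequence is exact, $\f$ is injective, so $\det \f$ is a nonzero form; a degree count gives $\deg \det \f = (-1-1+1)-(-3-2-2)=6$, hence $\F$ is supported on a sextic and has multiplicity $6$. Because $\F$ admits a locally free resolution of length $1$, its homological dimension is at most $1$, so $\F$ has no zero-dimensional torsion and is pure of dimension $1$. Computing Euler characteristics of the two middle terms gives $\chi(\F)=2$, so $\pp(\F)=1/3$, and it remains only to prove that $\F$ is \emph{stable}, i.e. that every proper nonzero subsheaf has slope $<1/3$. By the duality isomorphism $\M(6,2)\isom\M(6,-2)$ it is equivalent to prove that $\F^\D$ is stable. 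Dualizing the given resolution produces
\[
0 \lra \O(-4) \oplus 2\O(-2) \stackrel{\f^\D}{\lra} 2\O(-1) \oplus \O \lra \F^\D \lra 0,
\]
with $\f^\D$ the transpose of $\f$, re-graded. A direct computation shows that the three maximal minors of the restriction of $\f^\D$ to the summand $2\O(-2)$ are exactly $d=\ell_{11}\ell_{22}-\ell_{12}\ell_{21}$ and the two cubics $\mu_1=q_1\ell_{21}-q_2\ell_{11}$, $\mu_2=q_1\ell_{22}-q_2\ell_{12}$ from the hypothesis, of degrees $2,3,3$.

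The generic case is then immediate. The dual resolution has $(d_1,d_2,d_3)=(-4,-2,-2)$ and $(e_1,e_2,e_3)=(-1,-1,0)$; condition (i) of Proposition \ref{2.3.1} holds with equality ($3=3$), and condition (ii) holds in the form $e_i\ge d_i$ for $i=2,3$. Hence, provided $d,\mu_1,\mu_2$ have no common factor, Proposition \ref{2.3.1} applies to $\F^\D$; since none of the minors has degree zero and the ratio $r=(\sum e_i^2-\sum d_i^2)/(\sum e_i-\sum d_i)=-11/3$ is not an integer, the exceptional clause cannot occur, so $\F^\D$ is stable and therefore so is $\F$.

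The main obstacle is that the hypothesis does \emph{not} force $\gcd(d,\mu_1,\mu_2)=1$: taking $\ell_{11}=\ell_{22}=X$, $\ell_{12}=\ell_{21}=0$, $q_1=Y^2$, $q_2=Z^2$ gives $d=X^2$, $\mu_1=-XZ^2$, $\mu_2=XY^2$, which are linearly independent modulo $d$ yet share the factor $X$. In such a case Proposition \ref{2.3.1} does not apply: writing $d=h\,e$ with $h=\gcd(d,\mu_1,\mu_2)$ nonconstant, the support sextic acquires $\{h=0\}$ as a component and $\F$ carries a nonzero subsheaf $\S$ supported there. The crux is to show that linear independence of $\mu_1,\mu_2$ modulo $d$—which is strictly weaker than $\gcd(d,\mu_1,\mu_2)=1$—still bounds $\chi(\S)$ strictly below $\pp(\F)\cdot\mult(\S)$, so that $\S$ does not destabilize. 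Here I expect the identities $\ell_{22}\mu_1-\ell_{21}\mu_2=-q_2 d$ and $\ell_{12}\mu_1-\ell_{11}\mu_2=-q_1 d$ to convert the independence hypothesis into the required length estimate through a B\'ezout argument in the spirit of the proof of Proposition \ref{2.3.1}. Establishing this estimate in the reducible-support case, and verifying that no subsheaf of equal slope exists so that semi-stability is in fact stability, is the part that requires genuine work.
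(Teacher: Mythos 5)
Your numerical setup, the passage to $\F^\D$, and the generic case are all correct, and they coincide exactly with the paper's own argument: the paper likewise notes that the hypothesis only forbids a common \emph{quadratic} factor of the three minors $d,\mu_1,\mu_2$ of the top $2\times 3$ block $\psi$, and when these minors have no common factor at all it invokes Proposition \ref{2.3.1} through duality, just as you do. The problem is the case you isolate at the end and explicitly defer as ``the part that requires genuine work'': a common \emph{linear} factor, which (as your own example shows) is compatible with the hypothesis. That case is not a loose end --- it is where essentially all of the paper's proof takes place --- and your proposal offers only a heuristic plan for it (two algebraic identities plus an unspecified B\'ezout estimate), so the proof is genuinely incomplete. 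For comparison, the paper's treatment runs as follows: if $\gcd(d,\mu_1,\mu_2)=\ell$ is linear, then $\Ker(\psi)\isom\O(-4)$, and the snake lemma yields an extension $0 \lra \O_C(1) \lra \F \lra \Coker(\psi) \lra 0$, where $C$ is a \emph{quintic} curve and $\Coker(\psi)$ has Hilbert polynomial $m+2$, is supported on the line $L=\{\ell=0\}$, and has zero-dimensional torsion of length at most $1$. Stability is then proved by estimating the slope of an \emph{arbitrary} subsheaf $\F'\subset\F$: one intersects with $\O_C(1)$, projects to $\Coker(\psi)$, and applies lemma 6.7 of \cite{maican} to get $\pp(\F')\le\frac{s^2-5s+4}{2(6-s)}<\frac{1}{3}=\pp(\F)$, supplemented by diagram arguments excluding subsheaves isomorphic to $\O_L$ or $\O_L(1)$ and, when the torsion is nonzero, by the fact that its preimage in $\F$ gives a point in $\M(5,1)$ (3.1.5 of \cite{mult_five}).

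A further caution about your sketched plan: in the degenerate configuration the line component carries a \emph{quotient} of $\F$ (namely $\Coker(\psi)$), while the distinguished subsheaf is $\O_C(1)$ on the residual quintic; the maximal subsheaf $\S$ supported on $L$ does exist, but bounding $\chi(\S)$ for that single subsheaf would not suffice for stability. One must control subsheaves of every multiplicity $1\le s\le 5$, including those mapping isomorphically onto pieces of $\Coker(\psi)$ (the potential $\O_L$ and $\O_L(1)$ subsheaves are exactly of this kind and are the borderline cases), which is what the lemma-6.7 filtration argument in the paper is designed to do. So even granting your identities $\ell_{22}\mu_1-\ell_{21}\mu_2=-q_2d$ and $\ell_{12}\mu_1-\ell_{11}\mu_2=-q_1d$, the route you propose would need to be restructured along these lines before it could close the gap.
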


\begin{proof}
By hypothesis the maximal minors of the matrix
\[
\psi = \left[
\ba{ccc}
q_1 & \ell_{11} & \ell_{12} \\
q_2 & \ell_{21} & \ell_{22}
\ea
\right]
\]
cannot have a common quadratic factor.
If they have no common factor, then the claim follows
by duality from \ref{2.3.1}. Assume that they have a common linear factor.
Then $\Ker(\psi) \isom \O(-4)$ and $\Coker(\psi)$ is supported on a line $L$.
From the snake lemma we get an extension
\[
0 \lra \O_C(1) \lra \F \lra \Coker(\psi) \lra 0,
\]
where $C$ is a quintic curve. Because of the conditions on $\psi$ it is easy to check that
$\Coker(\psi)$ has zero-dimensional torsion of length at most $1$.
Assume that $\Coker(\psi)$ has no zero-dimensional torsion, i.e. $\Coker(\psi) \isom \O_L(1)$.
Let $\F' \subset \F$ be a non-zero subsheaf of multiplicity at most $5$.
Denote by $\CC$ its image in $\O_L(1)$ and put $\K= \F' \cap \O_C(1)$.
If $\CC=0$, then $\pp(\F') \le 0$ because $\O_C$ is stable.
Assume that $\CC \neq 0$, i.e. that $\CC$ has multiplicity $1$.
If $\K=0$ and $\F'$ destabilises $\F$, then $\F'\isom \O_L$ or $\F' \isom \O_L(1)$.
Both situations can be ruled out using diagrams analogous to diagram (8) at \ref{3.1.3} below.
Thus we may assume that $1 \le \mult(\K) \le 4$.
According to \cite{maican}, lemma 6.7, there is a sheaf $\A$ such that $\K \subset \A \subset \O_C(1)$,
$\A/\K$ is supported on finitely many points and $\O_C(1)/\A \isom \O_S(1)$
for a curve $S \subset \P^2$ of degree $s$, $1 \le s \le 4$.
Thus
\begin{align*}
\PP_{\F'}(m) & = \PP_{\K}(m) + \PP_{\CC}(m) \\
& = \PP_{\A}(m) - \h^0(\A/\K) + \PP_{\O_L(1)}(m) - \h^0(\O_L(1)/\CC) \\
& = (5-s)m + \frac{s^2-5s}{2} + m + 2 - \h^0(\A/\K) - \h^0(\O_L(1)/\CC), \\
\pp(\F') & = \frac{1}{6-s}\left( \frac{s^2-5s}{2} + 2 - \h^0(\A/\K) - \h^0(\O_L(1)/\CC) \right) \\
& \le \frac{s^2-5s+4}{2(6-s)} < \frac{1}{3} = \pp(\F).
\end{align*}
We see that in this case $\F$ is stable.
Assume next that $\Coker(\psi)$ has a zero-dimensional subsheaf $\TT$ of length $1$.
Let $\E$ be the preimage of $\TT$ in $\F$.
According to 3.1.5 \cite{mult_five}, $\E$ gives a point in $\M(5,1)$.
Let $\F'$ and $\CC$ be as above. If $\CC \subset \TT$, then $\F' \subset \E$, hence
$\pp(\F') \le \pp(\E) < \pp(\F)$. If $\CC$ is not a subsheaf of $\TT$, then we can estimate
$\pp(\F')$ as above concluding again that it is less than the slope of $\F$.
\end{proof}

\begin{prop}
\label{3.1.3}
The sheaves $\F$ giving points in $\M(6,1)$ and satisfying the conditions
$\h^0(\F(-1)) = 0$, $\h^1(\F) = 1$, $\h^1(\F(1)) = 0$
are precisely the sheaves having a resolution of the form
\[
\tag{i}
0 \lra \O(-3) \oplus 2\O(-2) \stackrel{\f}{\lra} \O(-1) \oplus 2\O \lra \F \lra 0,
\]
\[
\f= \left[
\ba{ccc}
q & \ell_1 & \ell_2 \\
f_1 & q_{11} & q_{12} \\
f_2 & q_{21} & q_{22}
\ea
\right],
\]
where $\f$ is not equivalent to a morphism represented by a matrix of one of the
following four forms:
\[
\f_1 = \left[
\ba{ccc}
\star & 0 & 0 \\
\star & \star & \star \\
\star & \star & \star
\ea
\right], \ \f_2= \left[
\ba{ccc}
\star & \star & 0 \\
\star & \star & 0 \\
\star & \star & \star
\ea
\right], \ \f_3 = \left[
\ba{ccc}
\star & \star & \star \\
\star & \star & \star \\
\star & 0 & 0
\ea
\right], \ \f_4 = \left[
\ba{ccc}
0 & 0 & \star \\
\star & \star & \star \\
\star & \star & \star
\ea
\right],
\]
or the sheaves having a resolution of the form
\[
\tag{ii}
0 \lra \O(-3) \oplus 2\O(-2) \oplus \O(-1) \stackrel{\f}{\lra} 2\O(-1) \oplus 2\O \lra \F \lra 0,
\]
\[
\f = \left[
\ba{cccc}
q_1 & \ell_{11} & \ell_{12} & 0 \\
q_2 & \ell_{21} & \ell_{22} & 0 \\
f_1 & q_{11} & q_{12} & \ell_1 \\
f_2 & q_{21} & q_{22} & \ell_2
\ea
\right],
\]
where $\ell_1, \ell_2$ are linearly independent one-forms,
$\ell_{11} \ell_{22} - \ell_{12} \ell_{21} \neq 0$ and the images of
$q_1 \ell_{21} - q_2 \ell_{11}$ and $q_1 \ell_{22} - q_2 \ell_{12}$
in $\SS^3 V^*/(\ell_{11} \ell_{22} - \ell_{12} \ell_{21})V^*$ are linearly independent.
\end{prop}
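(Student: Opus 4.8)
The plan is to construct a locally free resolution of $\F$ from the Beilinson free monad (2.1.7), reduce it to two terms, and then read off the two normal forms (i) and (ii) according to the value of the single undetermined invariant $m := \h^0(\F \tensor \Om^1(1))$. First I would fix all cohomology by Riemann--Roch: since $\chi(\F(k)) = 6k+1$, the hypotheses give $\h^1(\F(-1)) = 5$, $\h^0(\F) = 2$ and $\h^0(\F(1)) = 7$, while the Euler sequence $0 \to \F \tensor \Om^1(1) \to V^* \tensor \F \to \F(1) \to 0$ gives $\chi(\F \tensor \Om^1(1)) = 3\chi(\F) - \chi(\F(1)) = -4$, so that $\h^1(\F \tensor \Om^1(1)) = m+4$ and $m$ is the only free number.

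With these dimensions the free monad (2.1.7) becomes
\[
0 \lra m\O(-1) \oplus 5\O(-2) \lra 2\O \oplus (m+4)\O(-1) \lra \O \lra 0,
\]
exact except in the middle, where the cohomology is $\F$. The vanishing of the maps $\H^0(\F \tensor \Om^{-i}(-i)) \to \H^1(\F \tensor \Om^{-i}(-i))$ recorded at the end of 2.1 forces the block $m\O(-1) \to (m+4)\O(-1)$ of the first differential, and the block $2\O \to \O$ of the second differential, to be zero; in particular the summand $m\O(-1)$ of the source maps only into $2\O$. The surjection $(m+4)\O(-1) \to \O$ is given by linear forms, so after an automorphism of the source it becomes $(X,Y,Z,0,\dots,0)$ and, by the Euler sequence, its kernel is $\Om^1 \oplus (m+1)\O(-1)$. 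Thus $\F$ is the quotient of $2\O \oplus \Om^1 \oplus (m+1)\O(-1)$ by the image of the first differential. Substituting the Koszul presentation $0 \to \O(-3) \to 3\O(-2) \to \Om^1 \to 0$ and cancelling the three resulting copies of $\O(-2)$ against three of the five in the source produces a resolution
\[
0 \lra \O(-3) \oplus 2\O(-2) \oplus m\O(-1) \stackrel{\f}{\lra} (m+1)\O(-1) \oplus 2\O \lra \F \lra 0
\]
in which, because $m\O(-1)$ still maps only into $2\O$, the block from $m\O(-1)$ to $(m+1)\O(-1)$ vanishes; I would check that the three cohomological hypotheses are exactly what make this cancellation complete.

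The decisive point is that $m \le 1$, and I expect this to be the main obstacle. I would establish it by semi-stability: projecting the target onto $2\O$ and composing with $\f$ realises a quotient sheaf $\Q$ of $\F$ whose support and Hilbert polynomial are controlled by the $2 \times m$ matrix of linear forms coming from $m\O(-1)$, together with the quadratic and cubic columns. When $m \ge 2$ these linear columns cut $\Q$ down to a quotient of a sheaf supported on a conic with negative twist, forcing $\pp(\Q) < 1/6 = \pp(\F)$ and contradicting semi-stability. This leaves $m=0$, giving resolution (i) (the vanishing block is empty), and $m=1$, giving resolution (ii) (the vanishing block is the last column $(0,0,\ell_1,\ell_2)$).

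Finally I would match each normal form against the stated conditions, in both directions. For (i) the task is to show that $\F = \Coker(\f)$ gives a point of $\M(6,1)$ with the prescribed cohomology precisely when $\f$ is injective and avoids the four shapes $\f_1,\dots,\f_4$: each forbidden shape yields either a non-injective $\f$, a jump in one of $\h^0(\F(-1))$, $\h^1(\F)$, $\h^1(\F(1))$, or a destabilising sub- or quotient sheaf, while a subsheaf analysis in the spirit of \ref{2.3.1} and \ref{2.3.2} shows the remaining morphisms are semi-stable with the right cohomology. For (ii) the conditions on $\ell_1,\ell_2$, on $d = \ell_{11}\ell_{22}-\ell_{12}\ell_{21}$, and on the two minors modulo $d$ coincide with those in \ref{3.1.2}; I would deduce injectivity and semi-stability by re-running the estimate of \ref{3.1.2} on the shared upper block, verifying that the extra column $(\ell_1,\ell_2)$ lowers $\chi$ by one without creating a destabilising subsheaf, and computing $(\h^0(\F(-1)),\h^1(\F),\h^0(\F \tensor \Om^1(1))) = (0,1,1)$ directly from the resolution. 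The two hardest pieces will be the slope estimate forcing $m \le 1$ and the exact identification of the four excluded shapes with the non-semi-stable locus.
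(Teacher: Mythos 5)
Your derivation of the two normal forms is a genuinely different route from the paper's: the paper runs the Beilinson spectral sequence I for $\F(1)$ and reaches the resolution $0 \to \O(-3) \oplus 2\O(-2) \oplus \O(-1) \to 2\O(-1) \oplus 2\O \to \F \to 0$ directly, the bound on your invariant $m$ being absorbed into a rank estimate justified by semi-stability, whereas you use the free monad (2.1.7) for $\F$ itself; your dimension counts, zero blocks and Koszul substitution are all correct. However, the step you yourself call decisive is argued by a mechanism that fails. Projecting onto $2\O$ gives $\Q = \Coker\bigl(\O(-3) \oplus 2\O(-2) \oplus m\O(-1) \to 2\O\bigr)$, and when $m=2$ with $\det(\f_{23}) \neq 0$ the sheaf $\Coker(\f_{23})$ is a line bundle of degree $1$ on the conic $\det(\f_{23})=0$ (Hilbert polynomial $2m+2$), which the images of the cubic and quadratic columns generically cut down to a sheaf of \emph{finite length}. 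A zero-dimensional quotient never contradicts semi-stability (its kernel has slope $< 1/6$, which is allowed), so this projection detects nothing; indeed for $\f_{21}, \f_{22}$ generic it cannot see the instability at all. The projection that works is the other one: since the block $m\O(-1) \to (m+1)\O(-1)$ vanishes, $\F$ surjects onto $\Coker\bigl(\O(-3) \oplus 2\O(-2) \to (m+1)\O(-1)\bigr)$; for $m \ge 3$ this has positive rank, impossible for a quotient of a one-dimensional sheaf, and for $m = 2$ it is one-dimensional with Hilbert polynomial $4m-1$, hence of slope $-1/4 < 1/6 = \pp(\F)$, violating semi-stability. The same correction applies to your cancellation of $3\O(-2)$: its completeness is forced by semi-stability (a partial cancellation would leave $\F$ surjecting onto the cokernel of a map $\O(-3) \to k\O(-2)$ with $k \ge 1$), not by the three cohomological hypotheses as you suggest.

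The converse for resolution (i) is also a genuine gap, not a routine verification. Proposition \ref{2.3.1} simply does not apply to this resolution: its hypothesis (i) reads $e_1 - d_1 \ge e_2 + e_3 - d_2 - d_3$, which for the degrees $(-3,-2,-2)$ and $(-1,0,0)$ is $2 \ge 4$, false; and avoidance of $\f_1, \dots, \f_4$ does not make the maximal minors of $\f$ restricted to $2\O(-2)$ coprime --- for instance $\ell_1 = X$, $\ell_2 = Y$, $q_{11} = XZ$, $q_{21} = X^2$ makes all three minors divisible by $X$ while $\f$ still avoids all four shapes for generic remaining entries --- so no B\'ezout argument ``in the spirit of'' \ref{2.3.1} and \ref{2.3.2} can cover the whole locus. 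What is actually needed, and what the paper does, is input from lower multiplicity: take a semi-stable destabilising subsheaf $\E \subset \F$, observe that $\h^0(\E) \le 2$, $\h^0(\E(-1)) = 0$ and $\h^0(\E \tensor \Om^1(1)) = 0$, invoke the classification of $\M(r,1)$ and $\M(r,2)$ for $1 \le r \le 5$ from \cite{drezet-maican} and \cite{mult_five} to list the seven possible resolutions of $\E$, and rule out each by a diagram chase. Your plan for (ii) is essentially sound and close to the paper's, which dualises so that the snake lemma yields a clean extension $0 \to \G' \to \F^\D(1) \to \C_x \to 0$ with $\G'$ stable by \ref{3.1.2}; but for (i) the missing classification input is essential, and without it the sufficiency of the four excluded shapes is unproved.
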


\begin{proof}
Let $\F$ give a point in $\M(6,1)$ and satisfy the above cohomological conditions.
Display diagram (2.1.1) for the Beilinson spectral sequence I converging to $\F(1)$ reads
\[
\xymatrix
{
5\O(-1) \ar[r]^-{\f_1} & \Om^1(1) & 0 \\
0 & 2\Om^1(1) \ar[r]^-{\f_4} & 7\O
}.
\]
Resolving $\Om^1(1)$ yields the exact sequence
\[
0 \lra \Ker(\f_1) \lra \O(-2) \oplus 5\O(-1) \stackrel{\sigma}{\lra} 3\O(-1) \lra \Coker(\f_1) \lra 0.
\]
Notice that $\F(1)$ maps surjectively to $\Coker(\f_1)$.
Thus $\rank(\sigma_{12})=3$, otherwise $\Coker(\f_1)$ would have positive rank
or would be isomorphic to $\O_L(-1)$ violating the semi-stability of $\F(1)$.
We have shown that $\Coker(\f_1)=0$ and $\Ker(\f_1) \isom \O(-2) \oplus 2\O(-1)$.
Combining the exact sequences (2.1.2) and (2.1.3) we obtain the resolution
\[
0 \lra \O(-2) \oplus 2\O(-1) \oplus 2\Om^1(1) \lra 7\O \lra \F(1) \lra 0,
\]
hence a resolution
\[
0 \lra \O(-2) \oplus 2\O(-1) \oplus 6\O \stackrel{\rho}{\lra} 7\O \oplus 2\O(1) \lra \F(1) \lra 0.
\]
Notice that $\rank(\rho_{13}) \ge 5$ otherwise $\F(1)$ would map surjectively to
the cokernel of a morphism $\O(-2) \oplus 2\O(-1) \to 3\O$, in violation of semi-stability.
Canceling $5\O$ and tensoring with $\O(-1)$ we arrive at the resolution
\[
0 \lra \O(-3) \oplus 2\O(-2) \oplus \O(-1) \stackrel{\f}{\lra} 2\O(-1) \oplus 2\O \lra \F \lra 0.
\]
From this we get resolution (i) or (ii), depending on whether $\f_{13} \neq 0$ or $\f_{13} =0$.

\medskip

\noi
Conversely, we assume that $\F$ has resolution (i) and we need to show that there are no
destabilising subsheaves $\E$. We argue by contradiction, i.e. we assume that there is such
a subsheaf $\E$. We may assume that $\E$ is semi-stable.
As $\h^0(\E) \le 2$, $\E$ gives a point in $\M(r,1)$ or $\M(r,2)$ for some $r$, $1 \le r \le 5$.
The cohomology groups $\H^0(\E(-1))$ and $\H^0(\E \tensor \Om^1(1))$ vanish
because the corresponding cohomology groups for $\F$ vanish.
From the description of $\M(r,1)$ and $\M(r,2)$, $1 \le r \le 5$, found in \cite{drezet-maican}
and \cite{mult_five}, we see that $\E$ may have one of the following resolutions:
\[
\tag{1}
0 \lra \O(-2) \lra \O  \lra \E \lra 0,
\]
\[
\tag{2}
0 \lra 2\O(-2) \lra \O(-1) \oplus \O \lra \E \lra 0,
\]
\[
\tag{3}
0 \lra 3\O(-2) \lra 2\O(-1) \oplus \O \lra \E \lra 0,
\]
\[
\tag{4}
0 \lra 2\O(-2) \lra 2\O \lra \E \lra 0,
\]
\[
\tag{5}
0 \lra 4\O(-2) \lra 3\O(-1) \oplus \O \lra \E \lra 0,
\]
\[
\tag{6}
0 \lra \O(-3) \oplus \O(-2) \lra 2\O \lra \E \lra 0,
\]
\[
\tag{7}
0 \lra 3\O(-2) \lra \O(-1) \oplus 2\O \lra \E \lra 0.
\]
Resolution (1) must fit into a commutative diagram
\[
\tag{8}
\xymatrix
{
0 \ar[r] & \O(-2) \ar[r]^-{\psi} \ar[d]^-{\b} & \O \ar[r] \ar[d]^-{\a} & \E \ar[r] \ar[d] & 0 \\
0 \ar[r] & \O(-3) \oplus 2\O(-2) \ar[r]^-{\f} & \O(-1) \oplus 2\O \ar[r] & \F \ar[r] & 0
}
\]
in which $\a$ is injective (being injective on global sections).
Thus $\b$ is injective, too, and $\f \sim \f_2$, contradicting our hypothesis on $\f$.
Similarly, every other resolution must fit into a commutative diagram in which
$\a$ and $\a(1)$ are injective on global sections.
This rules out resolution (7) because in that case $\a$ must be injective,
hence $\Ker(\b)=0$, which is absurd.
If $\E$ has resolution (5), then $\a$ is equivalent to a morphism
represented by a matrix having one of the following two forms:
\[
\left[
\ba{cccc}
1 & 0 & 0 & 0 \\
0 & u_1 & u_2 & 0 \\
0 & 0 & 0 & 1
\ea
\right] \quad \text{or} \quad \left[
\ba{cccc}
0 & 0 & 0 & 0 \\
u_1 & u_2 & u_3 & 0 \\
0 & 0 & 0 & 1
\ea
\right],
\]
where $u_1, u_2, u_3$ are linearly independent one-forms.
In the first case $\Ker(\b) \isom \O(-2)$, in the second case $\Ker(\b) \isom \Om^1$.
Both situations are absurd.
Assume that $\E$ has resolution (3).
Since $\b$ cannot be injective, we see that $\a$ is equivalent to a morphism
represented by a matrix of the form
\[
\left[
\ba{ccc}
0 & 0 & 0 \\
u_1 & u_2 & 0 \\
0 & 0 & 1
\ea
\right],
\]
hence $\Ker(\a) \isom \O(-2)$, hence $\f \sim \f_1$, which is a contradiction.
For resolutions (2), (4) and (6) $\a$ and $\b$ must be injective and we get
the contradictory conclusions that $\f \sim \f_3$, $\f \sim \f_1$, or $\f \sim \f_4$.

\medskip

\noi
Assume now that $\F$ has resolution (ii). The sheaf $\G= \F^\D(1)$ is the cokernel of the transpose of $\f$.
From the snake lemma we have an extension
\[
0 \lra \G' \lra \G \lra \C_x \lra 0,
\]
where $\G'$ is the cokernel of a morphism $\psi \colon \O(-3) \oplus 2\O(-1) \to 2\O \oplus \O(1)$,
\[
\psi = \left[
\ba{ccc}
\star & \ell_{22} & \ell_{12} \\
\star & \ell_{21} & \ell_{11} \\
\star & q_2 & q_1
\ea
\right].
\]
From \ref{3.1.2} we know that $\G'$ gives a stable point in $\M(6,4)$.
It is now straightforward to check that any destabilising subsheaf $\E$ of $\G$
must give a point in $\M(1,1)$ or $\M(2,2)$.
The existence of such sheaves can be ruled out as above using diagrams
analogous to diagram (8).
\end{proof}

\begin{claim}
\label{3.1.4}
Let $\F$ be a sheaf having a resolution
\[
0 \lra \O(-4) \oplus 2\O(-1) \stackrel{\psi}{\lra} 3\O \lra \F \lra 0
\]
in which $\psi_{12}$ has linearly independent maximal minors.
Then $\F$ gives a point in $\M(6, 0)$. If the maximal minors of $\psi_{12}$ have no common factor,
then $\F$ is stable. If they have a common linear factor $\ell$, then $\O_L(-1) \subset \F$
is the unique proper subsheaf of slope zero, where $L \subset \P^2$ is the line with equation $\ell =0$.
\end{claim}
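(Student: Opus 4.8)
The plan is to analyze the sheaf $\F$ via the curve $C$ cut out by $\det(\psi)$ and via the auxiliary morphism $\psi_{12} \colon 2\O(-1) \to 3\O$. First I would check that $\F$ gives a point in $\M(6,0)$: from the resolution, the Hilbert polynomial is $\PP_\F(m) = 6m$, so the multiplicity is $6$ and $\chi = 0$, as required. The real content is semi-stability together with the structural statements. Since $\psi_{12}$ has linearly independent maximal minors, the three $2\times 2$ minors $\z_1, \z_2, \z_3$ of $\psi_{12}$ (each a quadratic form) are linearly independent; in particular none vanishes and, generically, they have no common factor. This lets me invoke \ref{2.3.1} directly. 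Here $n = 3$, $(d_1,d_2,d_3) = (-4,-1,-1)$ and $(e_1,e_2,e_3) = (0,0,0)$, and the restriction of $\psi$ to $\O(-1)\oplus\O(-1)$ is exactly $\psi_{12}$, whose maximal minors are the $\z_i$. I would verify conditions (i) and (ii) of \ref{2.3.1} numerically for this tuple; since $e_i \ge d_i$ for $i = 2,3$, condition (ii) is automatic by the final remark of that proposition, and (i) reduces to a routine inequality. The ratio $r = \tfrac{16+1+1}{6} = 3$ is an integer, so \ref{2.3.1} tells me $\F$ is stable \emph{unless} it has a subsheaf $\S$ with resolution $0 \to \O(-4) \to \O(7) \to \S \to 0$; but such $\S$ would have multiplicity $11 > 6$, so it cannot embed in $\F$, and hence $\F$ is stable when $\gcd(\z_1,\z_2,\z_3) = 1$.

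Next I would treat the case where the $\z_i$ share a common linear factor $\ell$, with $L = \{\ell = 0\}$. Writing $\z_i = \ell \, w_i$ with $w_i$ linear, the linear independence of the $\z_i$ forces $w_1, w_2, w_3$ to be linearly independent, so $(w_1,w_2,w_3)$ defines a surjection and $\Ker(\psi_{12}) \isom \O(-4)$ while $\Coker(\psi_{12})$ is supported on $L$. The snake lemma applied to the pair $(\psi, \psi_{12})$, exactly as in the proof of \ref{3.1.2}, should yield an exact sequence
\[
0 \lra \O_C(?) \lra \F \lra \Coker(\psi_{12}) \lra 0,
\]
where $C$ is the curve of degree $6$ cut out by the full determinant; computing degrees and using that $\Coker(\psi_{12})$ restricts to a rank-one sheaf on $L$, I would identify the line-supported quotient and extract $\O_L(-1)$ as the relevant subobject. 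The key point is that $\O_L(-1)$ has Hilbert polynomial $m$, hence slope $0 = \pp(\F)$, so it is a candidate destabilizing (properly semi-stable) subsheaf; I must show it actually embeds in $\F$ and that no other subsheaf of slope $\ge 0$ exists.

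For uniqueness I would argue as in the slope estimates of \ref{3.1.2}: any nonzero subsheaf $\F' \subset \F$ of multiplicity at most $5$ maps to the line-supported quotient with kernel $\K$ inside the $C$-supported part, and a length-computation via \cite{maican}, lemma 6.7, bounds $\pp(\F')$ strictly below $0$ unless $\F'$ is precisely the copy of $\O_L(-1)$ coming from the common factor $\ell$. The main obstacle I anticipate is bookkeeping in the common-factor case: correctly identifying the twist on the $C$-supported subsheaf, ruling out zero-dimensional torsion in $\Coker(\psi_{12})$ (which could perturb the slope count), and confirming that the embedded $\O_L(-1)$ is genuinely unique rather than one of a family. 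I expect this to parallel the torsion analysis in \ref{3.1.2} closely, so the same diagram-chase and B\'ezout-style argument should settle it, with the linear independence of the $w_i$ guaranteeing that $\Coker(\psi_{12})$ is torsion-free on $L$, i.e. $\isom \O_L(-1)$.
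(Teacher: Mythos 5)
Your overall plan (split on whether the minors of $\psi_{12}$ have a common factor, use \ref{2.3.1} in the first case, and an extension plus slope estimates in the second) is the same as the paper's, but both halves contain genuine errors that would derail the proof. In the first case you have a sign error: in \ref{2.3.1} the ratio is $r = \bigl(\sum e_i^2 - \sum d_i^2\bigr)/\bigl(\sum e_i - \sum d_i\bigr)$, and with $(d_1,d_2,d_3)=(-4,-1,-1)$, $(e_1,e_2,e_3)=(0,0,0)$ this gives $r = -18/6 = -3$, not $+3$. Hence the exceptional subsheaf permitted by \ref{2.3.1} has resolution $0 \lra \O(-4) \lra \O(r-d_1) = \O(1) \lra \S \lra 0$, i.e. $\S \isom \O_Q(1)$ for a quintic curve $Q$: a sheaf of multiplicity $5$ and slope $0$, which could perfectly well embed in $\F$. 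Your dismissal of the exceptional case (a subsheaf of multiplicity $11 > 6$) rests entirely on the wrong value of $r$, so stability does not follow as you claim. One must actually rule out $\O_Q(1) \subset \F$; this is easy — the resolution gives $\h^0(\F(-1))=0$, while $\h^0(\O_Q(1)(-1)) = \h^0(\O_Q) = 1$ — but it is a step your argument is missing.

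In the common-factor case your structural sequences are backwards. Note $\psi_{12}$ is a $3\times 2$ matrix and is injective, so $\Ker(\psi_{12}) = 0$, not $\O(-4)$ (you are pattern-matching from \ref{3.1.2}, where the relevant matrix is $2\times 3$); and $\Coker(\psi_{12})$ is not isomorphic to $\O_L(-1)$ but is a rank-one sheaf on $\P^2$ whose torsion subsheaf is $\O_L(-1)$ (this purity is where linear independence of the $w_i$ enters) and whose torsion-free quotient is $\O(1)$. The snake lemma yields $0 \lra \O(-4) \lra \Coker(\psi_{12}) \lra \F \lra 0$, and dividing out gives $0 \lra \O_L(-1) \lra \F \lra \O_C(1) \lra 0$ with $C$ a \emph{quintic}: the line-supported sheaf is the subobject and the curve-supported sheaf is the quotient — the opposite of what you wrote, and your version with $C$ of degree $6$ is also numerically impossible, since the multiplicities would add to $7$. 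This direction is not mere bookkeeping; it is the content of the uniqueness statement. Since $\O_L(-1)$ and $\O_C(1)$ are both stable of slope zero, they are the Jordan--H\"older factors of $\F$, so any proper subsheaf of slope zero must coincide with one of them; $\O_C(1)$ is excluded as a subsheaf precisely because $\h^0(\F(-1))=0$, leaving $\O_L(-1)$, which does embed because it is the torsion of $\Coker(\psi_{12})$. Your plan of realizing $\O_L(-1)$ as a quotient and then separately proving that "it actually embeds" cannot be completed in the form described.
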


\begin{proof}
When the maximal minors of $\psi_{12}$ have no common factor
the claim follows from \ref{2.3.1}
Assume that the maximal minors of $\psi_{12}$ have a common linear factor $\ell$.
We have an extension
\[
0 \lra \O_L(-1) \lra \F \lra \O_C(1) \lra 0,
\]
where $L$ is the line with equation $\ell=0$ and $C$ is a quintic curve.
Thus $\F$ is semi-stable and $\O_L(-1)$, $\O_C(1)$ are its stable factors.
The latter cannot be a subsheaf of $\F$ because $\H^0(\F(-1))$ vanishes.
\end{proof}

\begin{prop}
\label{3.1.5}
The sheaves $\F$ giving points in $\M(6,1)$ and satisfying the conditions
$\h^0(\F(-1))=0$, $\h^1(\F)=2$, $\h^1(\F(1))=0$ are precisely the sheaves having a resolution
\[
0 \lra 2\O(-3) \oplus 2\O(-1) \stackrel{\f}{\lra} \O(-2) \oplus 3\O \lra \F \lra 0
\]
in which $\f_{11}$ has linearly independent entries and $\f_{22}$ has linearly independent maximal minors.
\end{prop}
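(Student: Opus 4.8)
The plan is to prove the two implications in turn.

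\medskip
\noi \emph{Existence of the resolution.} I would start from $\PP_\F(m)=6m+1$ and 2.1.3 \cite{drezet-maican}, which give $\h^0(\F)=3$, $\h^0(\F(-2))=0$, $\h^1(\F(-1))=5$, $\h^0(\F(1))=7$. The first thing to settle is that $\h^0(\F\tensor\Om^1(1))=2$. The Euler sequence $0\to\F\tensor\Om^1(1)\to V^*\tensor\F\to\F(1)\to 0$ shows that $\h^0(\F\tensor\Om^1(1))-2$ equals the corank of the multiplication map $V^*\tensor\H^0(\F)\to\H^0(\F(1))$, so $\h^0(\F\tensor\Om^1(1))\ge 2$; and injectivity of the block $\f_4$ of the Beilinson monad (which holds because $\h^0(\F(-1))=0$) forces $\h^0(\F\tensor\Om^1(1))\le 3$. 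If it were $3$, then $\f_4\colon 3\O(-1)\to 3\O$ would be an injective square matrix, so $\Coker(\f_4)$ would be one-dimensional of multiplicity $3$ and slope $1$; since $\Ker(\f_1)$ is torsion free yet sits inside the one-dimensional $\Coker(\f_4)$, it would vanish, and (2.1.6) would exhibit $\Coker(\f_4)$ as a subsheaf of $\F$ of slope $1>\pp(\F)$, contradicting semi-stability. Hence $\h^0(\F\tensor\Om^1(1))=2$ and the free monad (2.1.7) reads
\[
0\lra 5\O(-2)\oplus 2\O(-1)\lra 6\O(-1)\oplus 3\O\lra 2\O\lra 0,
\]
with cohomology $\F$ in the middle and the two $\H^0\to\H^1$ connecting maps equal to zero. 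Rolling this monad up---using that the pure one-dimensional sheaf $\F$ has projective dimension $1$---would yield a length-one resolution, whose terms, determined by a computation in $K(\P^2)$, are forced to be those of $0\to 2\O(-3)\oplus 2\O(-1)\to\O(-2)\oplus 3\O\to\F\to 0$. The conditions on $\f_{11}$ and $\f_{22}$ then follow from stability: were the entries of $\f_{11}$ proportional, or the maximal minors of $\f_{22}$ linearly dependent, one would build a subsheaf of slope $\ge\pp(\F)$ by the diagram method used in the proof of \ref{3.1.3} (recall $\gcd(6,1)=1$, so $\M(6,1)$ consists entirely of stable points).

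\medskip
\noi \emph{The converse.} Now suppose $\F=\Coker(\f)$ with $\f$ as stated. Since $\Hom(\O(-1),\O(-2))=0$ the block $\f_{12}$ vanishes, so $\f$ is lower triangular, and I would apply the snake lemma to the filtration
\[
0\to 2\O(-1)\to 2\O(-3)\oplus 2\O(-1)\to 2\O(-3)\to 0,\qquad
0\to 3\O\to\O(-2)\oplus 3\O\to\O(-2)\to 0,
\]
whose vertical maps are $\f_{22}$, $\f$, $\f_{11}$. Because $\f_{11}$ has independent entries, $\Ker(\f_{11})\isom\O(-4)$ and $\Coker(\f_{11})\isom\C_x$ is the skyscraper at the point $x$ cut out by the entries of $\f_{11}$, while $\f_{22}$ is injective, so the snake sequence reads $0\to\O(-4)\stackrel{\d}{\lra}\Coker(\f_{22})\to\F\to\C_x\to 0$; exactness of the given resolution forces $\d$ to be injective. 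Splicing $\d$ with $\f_{22}$ presents $\G:=\Coker(\d)$ as the cokernel of a morphism $\O(-4)\oplus 2\O(-1)\to 3\O$ whose restriction to $2\O(-1)$ is $\f_{22}$; by \ref{3.1.4} this $\G$ gives a point in $\M(6,0)$. I thus obtain a short exact sequence
\[
0\lra\G\lra\F\lra\C_x\lra 0,
\]
which shows $\F$ to be pure, and the three cohomological conditions drop out of the long exact sequences attached to the resolution.

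\medskip
\noi \emph{Stability.} Since $\pp(\F)=\tfrac16$ it suffices to prove $\pp(\E)<\tfrac16$ for every proper subsheaf $\E\subset\F$. If $\E\subset\G$, then $\pp(\E)\le\pp(\G)=0$. Otherwise $\E$ surjects onto $\C_x$, so $\E\cap\G$ is a subsheaf of $\G$ of slope $\le 0$ and multiplicity $r'=\mult(\E)$, and $\PP_\E=\PP_{\E\cap\G}+1$; one then checks $\pp(\E)<\tfrac16$ unless $\E\cap\G$ has slope exactly $0$. By \ref{3.1.4} the latter occurs only when the maximal minors of $\f_{22}$ have a common linear factor $\ell$, in which case $\E\cap\G\isom\O_L(-1)$ for $L=\{\ell=0\}$, whence $\E\isom\O_L$. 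This case cannot occur: an inclusion $\O_L\hookrightarrow\F$ would lift through the resolution to constant vectors $b'\in\C^2$ and $a'\in\C^3\setminus\{0\}$ with $\f_{22}\,b'=\ell\,a'$, and expanding the three maximal minors $m_1,m_2,m_3$ of $\f_{22}$ would yield the relation $a_1'm_1-a_2'm_2+a_3'm_3=0$ among them, contradicting their linear independence. Hence $\F$ is stable.

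\medskip
\noi The step I expect to be the main obstacle is this last one: recognising that the linear independence of the maximal minors of $\f_{22}$ is precisely the condition obstructing a destabilising line subsheaf. By contrast, the roll-up of the monad in the first part, although notationally heavy, is routine once $\h^0(\F\tensor\Om^1(1))=2$ has been established.
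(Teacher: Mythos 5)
Your forward direction contains a genuine gap, and it sits exactly at the step you declare routine. Rolling up the monad $0 \to 5\O(-2)\oplus 2\O(-1) \to 6\O(-1)\oplus 3\O \stackrel{\eta}{\lra} 2\O \to 0$ gives $0 \to 5\O(-2)\oplus 2\O(-1) \to \Ker(\eta_1)\oplus 3\O \to \F \to 0$, where $\eta=(\eta_1,0)$ by the vanishing of the $\H^0\to\H^1$ components; but $\Ker(\eta_1)$ is a rank-$4$ bundle that is \emph{never} a direct sum of line bundles: a splitting $\Ker(\eta_1)\isom\bigoplus_{i=1}^4\O(a_i)$ would force $\sum a_i=-6$ and $\sum a_i^2=6$, contradicting $(\sum a_i)^2\le 4\sum a_i^2$. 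So the roll-up does not directly produce a length-one resolution by line bundles. Worse, the claim that the terms of such a resolution are ``forced by a computation in $K(\P^2)$'' is false: in $K(\P^2)$ one has $[\O(-2)]+3[\O]-2[\O(-3)]-2[\O(-1)]=4[\O(-1)]+[\O]-5[\O(-2)]$, so the resolution asserted in \ref{3.1.5} and the resolution $0\to 5\O(-2)\to 4\O(-1)\oplus\O\to\F\to 0$ of the generic stratum (\ref{3.1.1}) have the \emph{same} class; K-theory cannot distinguish the strata. What actually pins down the shape is finer information: one must identify $\Ker(\eta_1)$ up to isomorphism (a surjection $6\O(-1)\to 2\O$ can have kernel $\O(-1)\oplus(\text{rank }3)$ rather than $2\Om^1$, and excluding the degenerate forms requires semi-stability of $\F$), then resolve the resulting $\Om^1$-summands and justify every cancellation of superfluous line-bundle summands by a rank argument resting on semi-stability and on $\h^1(\F(1))=0$. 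That is precisely the content of the paper's proof, which for this reason works instead with the Beilinson spectral sequence I of $\F(1)$: it establishes $\Ker(\f_1)\isom\O(-3)$ and $\Coker(\f_1)\isom\C_x$, applies the horseshoe lemma, cancels $\O(-3)$ because $\h^1(\F(1))=0$, and shows $\rank(\rho_{22})=7$ by semi-stability before cancelling $7\O(-1)$. Your preliminary computation $\h^0(\F\tensor\Om^1(1))=2$ is correct and cleanly argued, but it is where the work begins, not where it ends.

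Your converse, by contrast, is sound and is essentially the paper's argument: the snake lemma applied to the triangular $\f$ yields $0\to\G\to\F\to\C_x\to 0$ with $\G$ handled by \ref{3.1.4}, and the only potentially destabilising subsheaf $\O_L$ is excluded by lifting $\O_L\hookrightarrow\F$ through the two resolutions; your explicit relation $a_1'm_1-a_2'm_2+a_3'm_3=0$ among the maximal minors of $\f_{22}$ is a nice concrete rendering of the contradiction the paper extracts from its diagram (8). One slip there: purity of $\F$ does not follow from the extension $0\to\G\to\F\to\C_x\to 0$ (an extension of $\C_x$ by a pure sheaf need not be pure, e.g. $\G\oplus\C_x$); it follows from the resolution itself, since projective dimension $1$ on a smooth surface forces depth $1$ at every point of the support, hence no zero-dimensional torsion.
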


\begin{proof}
Let $\F$ give a point in $\M(6,1)$ and satisfy the above cohomological conditions.
Display diagram (2.1.1) for the Beilinson spectral sequence I converging to $\F(1)$ reads
\[
\xymatrix
{
5\O(-1) \ar[r]^-{\f_1} & 2\Om^1(1) & 0 \\
0 & 3\Om^1(1) \ar[r]^-{\f_4} & 7\O
}.
\]
Resolving $2\Om^1(1)$ yields the exact sequence
\[
0 \lra \Ker(\f_1) \lra 2\O(-2) \oplus 5\O(-1) \stackrel{\sigma}{\lra} 6\O(-1) \lra \Coker(\f_1) \lra 0.
\]
Arguing as in the proof of \ref{3.1.3}, we see that $\rank(\sigma_{12})=5$,
$\Ker(\f_1) \isom \O(-3)$ and $\Coker(\f_1) \isom \C_x$.
From (2.1.2) we get the exact sequence
\[
0 \lra \O(-3) \oplus 3\Om^1(1) \lra 7\O \lra \Coker(\f_5) \lra 0,
\]
hence the resolution
\[
0 \lra \O(-3) \oplus 9\O \lra 7\O \oplus 3\O(1) \lra \Coker(\f_5) \lra 0.
\]
From (2.1.3) we get the extension
\[
0 \lra \Coker(\f_5) \lra \F(1) \lra \C_x \lra 0.
\]
We apply the horseshoe lemma to the above extension, to the above resolution of $\Coker(\f_5)$
and to the standard resolution of $\C_x$ tensored with $\O(-1)$.
We obtain the exact sequence
\[
0 \lra \O(-3) \lra \O(-3) \oplus 2\O(-2) \oplus 9\O \lra \O(-1) \oplus 7\O \oplus 3\O(1) \lra \F(1) \lra 0.
\]
The map $\O(-3) \to \O(-3)$ is non-zero because $\h^1(\F(1))=0$.
Canceling $\O(-3)$ and tensoring with $\O(-1)$ yields the resolution
\[
0 \lra 2\O(-3) \oplus 9\O(-1) \stackrel{\rho}{\lra} \O(-2) \oplus 7\O(-1) \oplus 3\O \lra \F \lra 0.
\]
Notice that $\rank(\rho_{22})=7$, otherwise $\F$ would map surjectively to the cokernel of a morphism
$2\O(-3) \to \O(-2) \oplus \O(-1)$, in violation of semi-stability. Canceling $7\O(-1)$ we arrive
at a resolution as in the proposition.

\medskip

\noi
Conversely, we assume that $\F$ has a resolution as in the proposition and we need to show that
there are no destabilising subsheaves. From the snake lemma we get an extension
\[
0 \lra \F' \lra \F \lra \C_x \lra 0,
\]
where $\F'$ has a resolution
\[
0 \lra \O(-4) \oplus 2\O(-1) \stackrel{\psi}{\lra} 3\O \lra \F' \lra 0
\]
in which $\psi_{12}=\f_{22}$.
According to \ref{3.1.4}, $\F'$ is semi-stable and the only possible
subsheaf of $\F'$ of slope zero must be of the form $\O_L(-1)$.
It follows that for every subsheaf $\E \subset \F$ we have $\pp(\E) \le 0$ excepting, possibly,
subsheaves that fit into an extension of the form
\[
0 \lra \O_L(-1) \lra \E \lra \C_x \lra 0.
\]
In this case $\E \isom \O_L$ because $\E$ has no zero-dimensional torsion
and we have a diagram similar to diagram (8), leading to a contradiction.
\end{proof}

\begin{prop}
\label{3.1.6}
The sheaves $\F$ giving points in $\M(6,1)$ and satisfying the conditions
$\h^0(\F(-1)) =1$, $\h^1(\F) = 2$
are precisely the sheaves having a resolution of the form
\[
\tag{i}
0 \lra 2\O(-3) \stackrel{\f}{\lra} \O(-1) \oplus \O(1) \lra \F \lra 0,
\]
\[
\f = \left[
\ba{cc}
q_1 & q_2 \\
g_1 & g_2
\ea
\right],
\]
where $q_1$, $q_2$ have no common factor, or the sheaves having a resolution of the form
\[
\tag{ii}
0 \lra 2\O(-3) \oplus \O(-2) \stackrel{\f}{\lra} \O(-2) \oplus \O(-1) \oplus \O(1) \lra \F \lra 0,
\]
\[
\f= \left[
\ba{ccc}
\ell_1 & \ell_2 & 0 \\
q_1 & q_2 & \ell \\
g_1 & g_2 & h
\ea
\right], \qquad \text{where} \qquad \f \nsim \left[
\ba{ccc}
\star & \star & 0 \\
0 & 0 & \star \\
\star & \star & \star
\ea
\right],
\]
$\ell_1$, $\ell_2$ are linearly independent one-forms and $\ell \neq 0$.
\end{prop}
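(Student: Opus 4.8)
The plan is to prove both inclusions: the passage from the cohomological conditions to resolutions (i) and (ii) via a Beilinson spectral sequence, and the converse via the stability criteria of Section 2, with duality doing most of the work. The first observation, which lets me run a single Beilinson analysis, is that the two hypotheses already force $\h^1(\F(1))=0$. By duality this equals $\h^0(\F^\D(-1))$, so I would show any nonzero $s\colon\O(1)\lra\F^\D$ induces an \emph{injective} map $\H^0(s)\colon\H^0(\O(1))\lra\H^0(\F^\D)$: if $\H^0(s)$ had a kernel, $s$ would factor through some $\O_L(1)$, giving a nonzero $\O_L(1)\lra\F^\D$ whose image, being $1$-dimensional in the pure sheaf $\F^\D$, must be $\O_L(1)$ itself, and this destabilises $\F^\D\in\M(6,-1)$ since $\pp(\O_L(1))=2>-1/6$. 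Hence $\h^1(\F)=\h^0(\F^\D)\ge 3$, contradicting $\h^1(\F)=2$. I would also record $\h^0(\F(-2))=0$ from 2.1.3 \cite{drezet-maican}.

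With $\h^1(\F(1))=0$, display diagram (2.1.1) of the Beilinson spectral sequence I converging to $\F(1)$ has rows $6\O(-1)\lra 2\Om^1(1)\lra 0$ and $\O(-1)\lra 3\Om^1(1)\lra 7\O$. Mirroring the proofs of \ref{3.1.3} and \ref{3.1.5}, I would resolve $2\Om^1(1)$ by $0\lra 2\O(-2)\lra 6\O(-1)\lra 2\Om^1(1)\lra 0$, use that $\F(1)$ maps onto $\Coker(\f_1)$ together with semi-stability to pin down the ranks of the constant blocks (hence $\Ker(\f_1)$ and $\Coker(\f_1)$), and combine (2.1.2) and (2.1.3). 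After canceling trivial direct summands this yields the intermediate resolution $0\lra 2\O(-3)\oplus\O(-2)\lra\O(-2)\oplus\O(-1)\oplus\O(1)\lra\F\lra 0$. According as the scalar entry $\O(-2)\lra\O(-2)$ is nonzero or zero, one cancels the two copies of $\O(-2)$ to reach resolution (i), or one is left with resolution (ii). The nondegeneracy conditions (coprimality of $q_1,q_2$ in case (i); independence of $\ell_1,\ell_2$ and $\ell\neq 0$ in case (ii), together with the exclusion of the forbidden form) are then read off from the injectivity of $\f$ and from semi-stability, each forbidden shape being exactly a configuration that splits off a destabilising summand.

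For the converse in case (i) I would dualise: the transpose of $\f$ gives a resolution $0\lra\O(-4)\oplus\O(-2)\lra 2\O\lra\F^\D\lra 0$ in which the restriction to $\O(-2)$ has maximal minors $q_1,q_2$. These are coprime of positive degree, the integers $d_1=-4\le d_2=-2<e_1=e_2=0$ satisfy conditions (i) and (ii) of \ref{2.3.1} (here $4\ge 2$ and $e_2\ge d_2$), and the ratio $r=-10/3$ is not an integer; hence \ref{2.3.1} shows that $\F^\D$, and therefore $\F$, is stable.

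In case (ii) I would set $\G=\F^\D(1)$, a point of $\M(6,5)$, and apply the snake lemma as in \ref{3.1.3} to get an extension $0\lra\G'\lra\G\lra\C_x\lra 0$ with $\G'\in\M(6,4)$; since $\M(6,4)\isom\M(6,2)$ by duality, $\G'$ is stable by \ref{3.1.2}. Every destabilising subsheaf of $\G$ then gives a point in $\M(1,1)$ or $\M(2,2)$, and these are ruled out by commutative diagrams analogous to diagram (8) of \ref{3.1.3}, the excluded forbidden form being precisely the semi-stable-but-split configuration that would permit such a subsheaf. The main obstacle is this stability analysis in the converse: it is painless for (i), where \ref{2.3.1} applies to the dual verbatim once the hypothesis ``$q_1,q_2$ have no common factor'' is recognised as the condition on maximal minors; the delicate point is (ii), where one must carry out the snake-lemma reduction to \ref{3.1.2}, eliminate the sporadic destabilisers supported on a point or a line, and verify that the forbidden form matches the split locus exactly, including the degenerate cases where the support sextic becomes reducible.
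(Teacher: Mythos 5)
Your forward direction and your converse for case (i) are essentially fine, though the forward route is not the paper's: you first prove $\h^1(\F(1))=0$ (the factorisation-through-$\O_L(1)$ argument is correct) and then run Beilinson I on $\F(1)$, which parallels the paper's proofs of \ref{4.1.4} and \ref{5.1.4}, whereas the paper proves \ref{3.1.6} by applying Beilinson II to $\G=\F^\D(1)$ and first pinning down $\h^0(\F\tensor\Om^1(1))=3$. One caveat on your route: the ranks of the constant blocks are \emph{not} uniquely determined by semi-stability --- both $\Coker(\f_1)=0$ (with $\Ker(\f_1)\isom 2\O(-2)$) and $\Coker(\f_1)\isom\C_x$ (with $\Ker(\f_1)\isom\O(-3)\oplus\O(-1)$) survive, and that two-case analysis is exactly where the dichotomy (i)/(ii) enters; more case work hides there than your sketch admits. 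Your converse for (i), dualising to $0\to\O(-4)\oplus\O(-2)\to 2\O\to\F^\D\to 0$ and invoking \ref{2.3.1}, is correct and is what the paper's citation of \ref{2.3.2} amounts to.

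The genuine gap is the converse for case (ii). You transplant the proof of \ref{3.1.3}(ii), but the zero-patterns do not match. In \ref{3.1.3}(ii) the matrix has a zero \emph{column} $(0,0,\ell_1,\ell_2)^\T$, so the \emph{transpose} contains a Koszul row and it is the dual sheaf $\G=\F^\D(1)$ that acquires a quotient $\C_x$, with kernel a sheaf of the shape covered by \ref{3.1.2}. In \ref{3.1.6}(ii) the zero sits in the \emph{row} $(\ell_1,\ell_2,0)$, so it is $\F$ itself, not $\G$, that surjects onto $\C_x$: the snake lemma gives $0\to\F'\to\F\to\C_x\to 0$ with $\F'=\Coker\bigl(\psi\colon\O(-4)\oplus\O(-2)\to\O(-1)\oplus\O(1)\bigr)$, $\psi_{12}=\ell$, $\psi_{22}=h$ --- a sheaf of slope $0$, not a sheaf in $\M(6,4)$, and \ref{3.1.2} never becomes applicable, since no subsheaf of $\G$ produced here has the resolution shape that \ref{3.1.2} requires. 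Dualising this extension only exhibits $\G$ as a \emph{subsheaf} of $(\F')^\D(1)$, which is semi-stable of slope $1$; so on your route destabilising subsheaves of $\G\in\M(6,5)$ could a priori lie in $\M(r,r)$ for any $r\le 5$, and your restriction to $\M(1,1)$ and $\M(2,2)$ is unjustified --- in \ref{3.1.3} that restriction came from having a \emph{stable} $\G'\in\M(6,4)$ inside $\G$, which you do not have. The working argument stays with $\F$: when $\ell\nmid h$, \ref{2.3.2} shows $\F'$ is semi-stable with $\O_C(1)$ ($C$ a quintic) as its only possible slope-zero subsheaf, so any destabilising subsheaf of $\F$ must be an extension of $\C_x$ by $\O_C(1)$, and its resolution $0\to 2\O(-3)\to\O(-2)\oplus\O(1)\to\E\to 0$ forces, via a diagram analogous to diagram (8) in the proof of \ref{3.1.3}, exactly the forbidden form of $\f$; the remaining case $\ell\mid h$ must be treated separately (it yields a non-split extension $0\to\O_L(-1)\to\F\to\E\to 0$ with $\E$ stable in $\M(5,1)$), and your sketch never reaches it.
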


\begin{proof}
Let $\F$ give a point in $\M(6,1)$ and satisfy the above cohomological conditions.
Denote $m = \h^0(\F \tensor \Om^1(1))$.
The Beilinson tableau (2.1.4) for the sheaf $\G=\F^\D(1)$ reads
\[
\xymatrix
{
3\O(-2) \ar[r]^-{\f_1} & m\O(-1) \ar[r]^-{\f_2} & \O \\
2\O(-2) \ar[r]^-{\f_3} & (m+4)\O(-1) \ar[r]^-{\f_4} & 6\O
}.
\]
Since $\f_2$ is surjective, $m \ge 3$.
Since $\G$ maps surjectively to $\CC = \Ker(\f_2)/\Im(\f_1)$, $m \le 4$.
If $m=4$, then $\pp(\CC)= -1/2$, violating the semi-stability of $\G$. Thus $m=3$.
As at 2.2.4 \cite{mult_five}, we have $\Ker(\f_2) = \Im(\f_1)$ and $\Ker(\f_1) \isom \O(-3)$.
As at 3.2.5 \cite{mult_five}, it can be shown that $\Coker(\f_3) \isom 2\Om^1(1) \oplus \O(-1)$.
Combining the exact sequences (2.1.5) and (2.1.6) we obtain the resolution
\[
0 \lra \O(-3) \oplus 2\Om^1(1) \oplus \O(-1) \lra 6\O \lra \G \lra 0.
\]
Dualising and resolving $2\Om^1$ leads to the resolution
\[
0 \lra 2\O(-3) \oplus 6\O(-2) \stackrel{\rho}{\lra} 6\O(-2) \oplus \O(-1) \oplus \O(1) \lra \F \lra 0.
\]
Note that $\rank(\rho_{12}) \ge 5$, otherwise $\F$ would map surjectively to the cokernel
of a morphism $2\O(-3) \to 2\O(-2)$, in violation of semi-stability.
When $\rank(\rho)=5$ we get resolution (ii).
When $\rank(\rho)=6$ we get resolution (i).

Conversely, if $\F$ has resolution (i), then, in view of \ref{2.3.2}, $\F$ is stable.
Assume now that $\F$ has resolution (ii).
We examine first the case when $\ell$ does not divide $h$.
From the snake lemma we have an extension
\[
0 \lra \F' \lra \F \lra \C_x \lra 0,
\]
where $\F'$ is the cokernel of a morphism $\psi \colon \O(-4) \oplus \O(-2) \to \O(-1) \oplus \O(1)$
for which $\psi_{12}$ does not divide $\psi_{22}$.
In view of \ref{2.3.2}, $\F'$ is semi-stable and the only possible subsheaf of $\F'$
of slope zero must be of the form $\O_C(1)$, for a quintic curve $C \subset \P^2$.
It follows that every proper subsheaf of $\F$ has non-positive slope except, possibly,
extensions $\E$ of $\C_x$ by $\O_C(1)$.
According to 3.1.5 \cite{mult_five}, we have a resolution
\[
0 \lra 2\O(-3) \lra \O(-2) \oplus \O(1) \lra \E \lra 0.
\]
This forms part of a diagram analogous to diagram (8),
leading to a contradiction.

Assume now that $\ell$ divides $h$. We may assume that $h=0$.
Let $L$ be the line given by the equation $\ell=0$.
From the snake lemma we get a non-split extension
\[
0 \lra \O_L(-1) \lra \F \lra \E \lra 0,
\]
where $\E$ is as above.
According to loc.cit., $\E$ is stable. It is easy to see now that $\F$ is stable as well
\end{proof}

\begin{prop}
\label{3.1.7}
\emph{(i)} The sheaves $\G$ giving points in $\M(6,4)$ and satisfying the condition $\h^0(\G(-2)) > 0$
are precisely the sheaves having a resolution of the form
\[
0 \lra 2\O(-3) \stackrel{\f}{\lra} \O(-2) \oplus \O(2) \lra \G \lra 0,
\]
\[
\f = \left[
\ba{cc}
\ell_1 & \ell_2 \\
f_1 & f_2
\ea
\right],
\]
where $\ell_1, \ell_2$ are linearly independent one-forms.

\medskip

\noi
\emph{(ii)} By duality, the sheaves $\F$ giving points in $\M(6,2)$ and satisfying the condition
$\h^1(F(1)) > 0$ are precisely the sheaves having resolution
\[
0 \lra \O(-4) \oplus \O \stackrel{\f^\T}{\lra} 2\O(1) \lra \F \lra 0.
\]
These are precisely the sheaves of the form $\J_x(2)$, where $\J_x \subset \O_C$
is the ideal sheaf of a closed point $x$ inside a sextic curve $C \subset \P^2$.
\end{prop}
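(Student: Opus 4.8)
The plan is to prove (i) directly from the section supplied by the hypothesis $\h^0(\G(-2))>0$, and then to read off (ii) by the duality isomorphism $\M(6,4)\isom\M(6,2)$. Throughout I would use that a sheaf $\G$ giving a point in $\M(6,4)$ is pure of dimension one with $\pp(\G)=2/3$.

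For the forward implication of (i), I would interpret a non-zero element of $\H^0(\G(-2))$ as a non-zero morphism $s\colon\O(2)\to\G$ and study its image $\G_1$. Being a pure one-dimensional quotient of $\O(2)$, it is of the form $\O_D(2)$ for a curve $D$ of degree $d$, with $\pp(\O_D(2))=(7-d)/2$. Semi-stability forces $(7-d)/2\le 2/3$, hence $d\ge 6$, while $d\le 6$ since $\G$ has multiplicity six; thus $d=6$, $\G_1\isom\O_C(2)$ for the supporting sextic $C$, and $\G/\O_C(2)\isom\C_x$ has length $\chi(\G)-\chi(\O_C(2))=1$. The main technical point is to convert this into the asserted length-one resolution. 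Since $\Ext^1(\O(-2),\O_C(2))\isom\H^1(\O_C(4))=0$, the surjection $\O(-2)\to\C_x$ lifts to $t\colon\O(-2)\to\G$, so that $\Phi=(s,t)\colon\O(-2)\oplus\O(2)\to\G$ is surjective. As $\G$ has homological dimension one, $K=\Ker(\Phi)$ is locally free of rank two and fits into $0\to\O(-4)\to K\to\I_x(-2)\to 0$, the sub being $\Ker(s)$ and the quotient the image of $K$ in $\O(-2)$. Feeding the Koszul resolution $0\to\O(-4)\to 2\O(-3)\to\I_x(-2)\to 0$ into the horseshoe lemma would give $0\to\O(-4)\to\O(-4)\oplus 2\O(-3)\to K\to 0$; local freeness of $K$ forces the scalar component $\O(-4)\to\O(-4)$ to be invertible, and cancelling it yields $K\isom 2\O(-3)$. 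The induced map $\f_{11}\colon 2\O(-3)\to\O(-2)$ is then the pair $(\ell_1,\ell_2)$ generating $\I_x$, so $\ell_1,\ell_2$ are automatically linearly independent. Pinning down $\Ker(\Phi)$ in this way is the hardest step.

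For the converse I would first twist the resolution by $\O(-2)$ and take cohomology to get $\h^0(\G(-2))=1>0$. Stability is then a slope count. From the resolution the summand $\O(2)$ maps onto $\O_C(2)$, where $C=\{\det\f=0\}$, with kernel $\O(-4)$ and cokernel $\C_x$ at $x=\{\ell_1=\ell_2=0\}$; here linear independence of $\ell_1,\ell_2$ is exactly what makes this image have full multiplicity six, for were they dependent the image would be $\O_{C'}(2)$ on a quintic, of slope $1$, destabilising $\G$. Now any subsheaf of $\O_C(2)$ of multiplicity $d'\le 5$ has Euler characteristic at most $3-(6-d')(1+d')/2$, the maximum being attained by the kernel of a restriction $\O_C(2)\to\O_{C''}(2)$ to a complementary subcurve. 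Hence a proper subsheaf $\G'\subset\G$ either lies in $\O_C(2)$, where $\pp(\G')\le 1/2<2/3$, or surjects onto $\C_x$, in which case $\chi(\G')=\chi(\G'\cap\O_C(2))+1$ and the displayed bound gives $\pp(\G')<2/3$ for $1\le d'\le 5$; the multiplicity-six case is immediate. Thus $\G$ is stable.

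Finally, for (ii), I would dualise the resolution of (i) and twist by $\O(1)$ to obtain exactly $0\to\O(-4)\oplus\O\stackrel{\f^\T}{\lra}2\O(1)\to\F\to 0$ with $\F=\G^\D(1)\in\M(6,2)$, while the duality dictionary $\h^1(\F(1))=\h^0(\G(-2))$ (cf. \cite{maican-duality}) matches the two cohomological conditions; since $\F\mapsto\F^\D$ is an isomorphism of moduli spaces, the two classes of sheaves correspond. To identify $\F$, the complementary map $2\O(1)\to\O(2)$ with entries $(\ell_2,-\ell_1)$ descends to a monomorphism $\F\to\O_C(2)$ with cokernel $\C_x$, exhibiting $\F\isom\J_x(2)$ with $x\in C$. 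Conversely every pair $(x,C)$ with $x\in C$ occurs, because $x\in C$ means $\det(\f)$ lies in the ideal $(\ell_1,\ell_2)$, so the quintics $f_1,f_2$ can be chosen with $\ell_1f_2-\ell_2f_1$ the equation of $C$. This matches the description of $\F$ as the ideal sheaf of a point on a sextic.
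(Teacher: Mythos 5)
Your proof is correct and is essentially the argument the paper invokes, since the paper's own proof consists of a citation of the analogous statement in \cite{mult_five} (proposition 3.1.5 there): semi-stability forces the section of $\G(-2)$ to generate a subsheaf $\O_C(2)$ with $C$ a sextic and quotient $\C_x$, the length-one resolution is then assembled from this extension via the horseshoe lemma together with a cancellation argument (your local-freeness-of-the-kernel variant of it is a clean way to rule out the degenerate component), the converse is a slope estimate on subsheaves using the bound $\chi(\S)\le 3-(6-d')(1+d')/2$ coming from lemma 6.7 of \cite{maican}, and part (ii) follows by the duality dictionary $\h^1(\F(1))=\h^0(\G(-2))$ of \cite{maican-duality} plus the Koszul identification of $\F$ with $\J_x(2)$. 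All the computations you supply (the vanishing $\Ext^1(\O(-2),\O_C(2))\isom\H^1(\O_C(4))=0$, the exclusion of the split case $K\isom\O(-4)\oplus\I_y(-2)$, and the slope bounds for $1\le d'\le 6$) check out.
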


\begin{proof}
The argument is entirely analogous to the argument at 3.1.5 \cite{mult_five}.
\end{proof}

\begin{prop}
\label{3.1.8}
The sheaves $\F$ giving points in $\M(6,1)$ and satisfying the condition $\h^1(\F(1)) > 0$
are precisely the sheaves having a resolution of the form
\[
0 \lra \O(-4) \oplus \O(-1) \stackrel{\f}{\lra} \O \oplus \O(1) \lra \F \lra 0,
\]
\[
\f= \left[
\ba{cc}
h & \ell \\
g & q
\ea
\right],
\]
where $\ell \neq 0$ and $\ell$ does not divide $q$.
These are precisely the sheaves of the form $\J_Z(2)$, where $\J_Z \subset \O_C$
is the ideal sheaf of a zero dimensional subscheme $Z$ of length $2$ inside a sextic
curve $C \subset \P^2$.
\end{prop}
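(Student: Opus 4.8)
The plan is to prove the statement as a chain of equivalences — cohomological condition $\Longleftrightarrow$ the displayed resolution $\Longleftrightarrow$ the sheaves $\J_Z(2)$ — in three movements: extract the resolution from the hypothesis by a Beilinson computation, read off the geometry directly from the shape of $\f$, and prove the converse (stability) by a slope estimate. For the forward direction I would imitate the argument of \ref{3.1.6}: pass to $\G = \F^\D(1)$ and write down the Beilinson spectral sequence II, display (2.1.4), for $\G$. By Serre duality the hypothesis $\h^1(\F(1)) > 0$ becomes $\h^0(\G(-2)) > 0$ (here $\h^1(\F(1)) = \h^0(\F^\D(-1)) = \h^0(\G(-2))$, and $\G \in \M(6,5)$), and it is this non-vanishing that forces the extreme twists $\O(-4)$ and $\O(1)$ to appear after one dualises and resolves $\Om^1(1)$. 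As in \ref{3.1.3} and \ref{3.1.5}, the semi-stability of $\G$ (equivalently of $\F$) forces the connecting maps of the monad to have maximal rank, so that the kernels and cokernels are as small as possible; cancelling the resulting trivial summands collapses the monad to
\[
0 \lra \O(-4) \oplus \O(-1) \stackrel{\f}{\lra} \O \oplus \O(1) \lra \F \lra 0 .
\]
The conditions $\ell = \f_{12} \neq 0$ and $\ell \nmid \f_{22} = q$ are then forced, since otherwise $Z = \{\ell = q = 0\}$ would fail to be zero-dimensional and $\Coker(\f)$ would contain a subsheaf supported on the line $\{\ell = 0\}$ of slope exceeding $1/6$, contradicting the stability of $\F$. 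This bookkeeping — correctly locating the entries of the display diagram from $\h^1(\F(1)) > 0$ and justifying every cancellation — is the main obstacle; the remaining two steps are clean.

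The geometric identification is the heart of the matter and works in both directions from the matrix form alone. Restrict $\f$ to the summand $\O(-1)$, obtaining $\psi = (\ell, q)^\T \colon \O(-1) \to \O \oplus \O(1)$. Since $\ell$ is a nonzero linear form not dividing $q$, the forms $\ell, q$ are coprime, hence a regular sequence, and the Koszul complex
\[
0 \lra \O(-1) \stackrel{\psi}{\lra} \O \oplus \O(1) \stackrel{[\,q,\ -\ell\,]}{\lra} \O(2) \lra \O_Z \lra 0
\]
is exact, where $Z = \{\ell = q = 0\}$ has length $2$ by B\'ezout. Thus $\Coker(\psi) \isom \J_Z(2)$, realised as the subsheaf $(\ell,q)\O(2) \subset \O(2)$. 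Now $\F = \Coker(\f)$ is the further quotient of $\Coker(\psi)$ by the image of the remaining column $(h,g)^\T \colon \O(-4) \to \O \oplus \O(1)$; composing with $[\,q,\ -\ell\,]$ sends this column to $qh - \ell g = \det(\f) =: w$, a sextic. So inside $\O(2)$ we are dividing $(\ell,q)\O(2)$ by $w \cdot \O(-4) = \J_C(2)$, where $C = \{w = 0\}$, and since $w \in (\ell,q)$ we have $Z \subset C$ and
\[
\F \isom \J_Z(2)/\J_C(2) \isom \J_Z(2),
\]
the last $\J_Z \subset \O_C$ being the ideal of $Z$ in the sextic $C$.

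For the converse I would show directly that any such $\F = \J_Z(2) \subset \O_C(2)$ gives a point in $\M(6,1)$; since $\gcd(6,1)=1$ it suffices to prove stability. The sheaf is pure, being a subsheaf of the pure sheaf $\O_C(2)$ ($C$ is a Cartier divisor). Let $\F' \subset \F$ be a proper nonzero subsheaf of multiplicity $s$; it is again a subsheaf of $\O_C(2)$. If $s = 6$ then $\F/\F'$ has positive finite length, so $\chi(\F') \le 0$ and $\pp(\F') \le 0 < 1/6 = \pp(\F)$. If $1 \le s \le 5$, then by \cite{maican}, lemma 6.7, there is $\A$ with $\F' \subset \A \subset \O_C(2)$, $\A/\F'$ of finite length and $\O_C(2)/\A \isom \O_S(2)$ for a curve $S$ of degree $s$, whence
\[
\pp(\F') \le \frac{\chi(\A)}{s} = \frac{\chi(\O_C(2)) - \chi(\O_S(2))}{s} = \frac{3 - \big(2s + 1 - \tfrac{(s-1)(s-2)}{2}\big)}{s},
\]
which is $\le 0$ for every $s \in \{1,\dots,5\}$, in particular $< 1/6$. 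Hence $\pp(\F') < \pp(\F)$ in all cases and $\F$ is stable.

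Finally, to see that these sheaves genuinely satisfy the cohomological hypothesis and so give the equivalence, note that comparing Hilbert polynomials gives $\PP_{\J_Z(2)}(m) = 6m+1$, so $\F \in \M(6,1)$, while adjunction for the plane sextic gives $\O_C(3) = \omega_C$, so $\h^1(\O_C(3)) = \h^0(\O_C) \ge 1$; the sequence $0 \to \J_Z(3) \to \O_C(3) \to \O_Z \to 0$ then yields $\h^1(\F(1)) = \h^1(\J_Z(3)) \ge 1 > 0$. Together with the forward direction this establishes that the sheaves with $\h^1(\F(1)) > 0$ are exactly the $\J_Z(2)$, as claimed.
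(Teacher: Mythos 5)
Your second and third movements are fine, but the first one --- the forward implication, which is the real content of the proposition --- is left as a sketch, and the route you sketch would not work as written. You propose to read the resolution off the Beilinson display (2.1.4) for $\G = \F^\D(1)$, imitating \ref{3.1.3}, \ref{3.1.5}, \ref{3.1.6}. But display (2.1.4) for $\G$ is built from $\h^i(\G(-1))$, $\h^i(\G \tensor \Om^1(1))$ and $\h^i(\G)$, and the sole hypothesis $\h^0(\G(-2)) > 0$ is not an entry of that diagram and determines none of those six numbers. In \ref{3.1.3}, \ref{3.1.5} and \ref{3.1.6} the hypotheses are exact cohomological values that populate the display; here the corresponding values ($\h^0(\F(-1))=1$, $\h^1(\F)=3$, $\h^0(\F \tensor \Om^1(1))=4$ in Table 1) are consequences of the proposition, not inputs, and deriving them is exactly the work your sketch defers. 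The paper's proof of \ref{3.1.8} is in fact not a Beilinson argument at all: from $\h^0(\G(-2)) > 0$ one gets an injection $\O_C \to \G(-2)$; stability of $\G$ in $\M(6,5)$ forces $\deg C = 6$, so $\CC = \G/\O_C(2)$ is zero-dimensional of length $2$; filtering $\CC$ by $\C_x$ and $\C_y$, the preimage $\G'$ of $\C_x$ is a torsion-free extension of $\C_x$ by $\O_C(2)$ and hence by \ref{3.1.7} has resolution $0 \to 2\O(-3) \to \O(-2) \oplus \O(2) \to \G' \to 0$; an application of the horseshoe lemma to $0 \to \G' \to \G \to \C_y \to 0$ plus two cancellation arguments (the map $\O(-3) \to 2\O(-3)$ is non-zero, else $\C_y$ would split off $\G$; the map $2\O(-2) \to \O(-2)$ is non-zero, else $\G$ would have a destabilising quotient $\O_L(-2)$) yield a resolution of $\G$ which dualises to the stated one for $\F$. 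Without this argument, or an honest determination of the cohomology table, your chain of equivalences is missing its first link.

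The other two movements are correct and diverge from the paper in a reasonable way: your Koszul identification of $\Coker(\f)$ with $\J_Z(2)$ makes explicit what the paper gets by citing \ref{2.3.1}--\ref{2.3.2}, and your direct slope estimate reproves the relevant case of \ref{2.3.2}, with the useful addition of verifying $\h^1(\F(1)) > 0$ by adjunction, which the paper leaves tacit. One slip in the estimate: since $\A/\F'$ has finite length, $\A$ has the same multiplicity $s$ as $\F'$, so the curve $S$ with $\O_C(2)/\A \isom \O_S(2)$ has degree $6-s$, not $s$; the bound should read $\pp(\F') \le \chi(\A)/s$ with $\chi(\A) = 3 - (6-s)(1+s)/2$. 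The corrected values, namely $-2, -\tfrac{3}{2}, -1, -\tfrac{1}{2}, 0$ for $s = 1, \ldots, 5$, are still all at most $0 < \tfrac{1}{6}$, so your conclusion stands after the bookkeeping is repaired.
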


\begin{proof}
Let $\F$ give a point in $\M(6,1)$ and satisfy the condition $\h^1(\F(1)) > 0$.
Denote $\G = \F^\D(1)$. According to \cite{maican-duality}, $\G$ gives a point in $\M(6,5)$
and $\h^0(\G(-2)) > 0$.
As in the proof of 2.1.3 \cite{drezet-maican}, there is an injective morphism $\O_C \to \G(-2)$,
where $C \subset \P^2$ is a curve.
Clearly $C$ has degree $6$, otherwise $\O_C$ would destabilise $\G(-2)$.
The quotient sheaf $\CC= \G/\O_C(2)$ has support of dimension zero and length $2$.
Write $\CC$ as an extension of $\O_{\P^2}$-modules of the form
\[
0 \lra \C_x \lra \CC \lra \C_y \lra 0.
\]
Let $\G'$ be the preimage of $\C_x$ in $\G$.
This subsheaf has no zero-dimensional torsion and is an extension of $\C_x$ by $\O_C(2)$
hence, in view of \ref{3.1.7}, it has a resolution of the form
\[
0 \lra 2\O(-3) \lra \O(-2) \oplus \O(2) \lra \G' \lra 0.
\]
We construct a resolution of $\G$ from the above resolution
of $\G'$ and from the standard resolution of $\C_y$ tensored with $\O(-1)$:
\[
0 \lra \O(-3) \lra 2\O(-3) \oplus 2\O(-2) \lra \O(-2) \oplus \O(-1) \oplus \O(2) \lra \G \lra 0.
\]
If the morphism $\O(-3) \to 2\O(-3)$ were zero, then it could be shown, as in the proof
of 2.3.2 \cite{mult_five}, that $\C_y$ is a direct summand of $\G$.
This would contradict our hypothesis. Thus we may cancel $\O(-3)$ to get the resolution
\[
0 \lra \O(-3) \oplus 2\O(-2) \lra \O(-2) \oplus \O(-1) \oplus \O(2) \lra \G \lra 0.
\]
If the morphism $2\O(-2) \to \O(-2)$ were zero, then $\G$ would have a destabilising quotient
sheaf of the form $\O_L(-2)$.
Thus we may cancel $\O(-2)$ to get a resolution
\[
0 \lra \O(-3) \oplus \O(-2) \stackrel{\psi}{\lra} \O(-1) \oplus \O(2) \lra \G \lra 0,
\]
\[
\psi = \left[
\ba{cc}
q & \ell \\
g & h
\ea
\right],
\]
in which $\ell \neq 0$ and $\ell$ does not divide $q$.
Dualising, we get a resolution for $\F$ as in the proposition.
The converse follows from \ref{2.3.2}.
\end{proof}

\noi
In the remaining part of this subsection we shall prove that there are no
sheaves $\F$ giving points in $\M(6,1)$ beside the sheaves we have discussed so far.
In view of \ref{3.1.8} we may restrict our attention to the case when $\H^1(\F(1))=0$.
Assume that $\h^0(\F(-1)) \le 1$. According to \ref{2.2.2}(i), (ii), and \ref{3.1.1}
the pair $(\h^0(\F(-1)), \h^1(\F))$ may be one of the following: $(0, 0)$, $(0, 1)$, $(0, 2)$, $(1, 2)$.
Each of these situations has already been examined.
The following concludes the classification of sheaves in $\M(6,1)$:

\begin{prop}
\label{3.1.9}
Let $\F$ be a sheaf giving a point in $\M(6,1)$. Then $\h^0(\F(-1))=0$ or $1$.
\end{prop}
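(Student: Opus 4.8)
The plan is to argue by contradiction: suppose $\F$ gives a point in $\M(6,1)$ with $q := \h^0(\F(-1)) \ge 2$, and derive a contradiction. The first move is to eliminate the case $\h^1(\F(1)) > 0$. By \ref{3.1.8} such an $\F$ is of the form $\J_Z(2)$ with resolution $0 \to \O(-4) \oplus \O(-1) \to \O \oplus \O(1) \to \F \to 0$; twisting by $\O(-1)$ and taking global sections, the source $\O(-5)\oplus\O(-2)$ contributes no $\H^0$ and no $\H^1$, while in the target $\O(-1)\oplus\O$ only the $\O$-summand has sections, so $\h^0(\F(-1))=1$. Thus the hypothesis $q \ge 2$ already forces $\h^1(\F(1)) = 0$ (this is exactly the standing reduction stated before the proposition), and the entire task is to show that $q \ge 2$ together with $\h^1(\F(1)) = 0$ is impossible.

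Next I would record the numerical input. Since $\F$ has Hilbert polynomial $6m+1$ and is one-dimensional, $\h^2$ of every twist vanishes, so Riemann--Roch gives $\h^1(\F(-1)) = \h^0(\F(-1)) + 5 = q+5$ and $\h^1(\F) = \h^0(\F) - 1$. The external fact I would use is that $\h^0(\F(-2)) = 0$ for every sheaf giving a point in $\M(6,1)$, namely 2.1.3 of \cite{drezet-maican}, already invoked in the proof of \ref{2.2.2}. Feeding $\h^0(\F(-1)) = q > 0$ into \ref{2.2.1}(ii), which applies because $0 < 1 \le 6$, yields $0 = \h^0(\F(-2)) > 2q - \h^0(\F)$, hence $\h^0(\F) > 2q$ and therefore $\h^1(\F) = \h^0(\F) - 1 \ge 2q > 0$.

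Finally I would invoke \ref{2.2.1}(i), available since $0 \le 1 < 6$ and $\h^1(\F) > 0$. It gives $\h^1(\F(1)) > 2\h^1(\F) - \h^1(\F(-1)) \ge 2\cdot 2q - (q+5) = 3q-5$. For $q \ge 2$ the right-hand side is at least $1$, so $\h^1(\F(1)) \ge 2 > 0$, contradicting the reduction $\h^1(\F(1)) = 0$. This rules out $q \ge 2$ and proves that $\h^0(\F(-1)) \in \{0,1\}$.

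The computation is short, and the real content is organisational rather than technical. The main point to get right is the \emph{order} in which the two halves of \ref{2.2.1} are combined: one first uses (ii), together with the vanishing $\h^0(\F(-2))=0$, to force $\h^0(\F)$ (hence $\h^1(\F)$) to be large, and only then uses (i) to convert this into a lower bound on $\h^1(\F(1))$. The secondary point is that the case $\h^1(\F(1)) > 0$ must be removed in advance, because the final inequality $\h^1(\F(1)) > 3q-5$ produces no contradiction once $\h^1(\F(1))$ is allowed to be positive; this is precisely what \ref{3.1.8} supplies. Since both bounds are linear in $q$, the argument disposes of all $q \ge 2$ simultaneously, rather than only the boundary value $q=2$ handled in \ref{2.2.2}(iii).
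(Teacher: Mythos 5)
Your proof is correct, and it takes a genuinely different route from the paper's own argument. The paper shares your opening move --- invoking \ref{3.1.8} to dispose of the case $\h^1(\F(1))>0$ --- but from there it proceeds geometrically: it takes an injective morphism $\O_C \to \F(-1)$, splits into the cases $\deg(C)=5$ (where the quotient $\F(-1)/\O_C$ is identified with $\O_L(-1)$, giving $\h^0(\F(-1))=1$ directly) and $\deg(C)=6$, and in the sextic case it uses \ref{2.2.2}(iii) only to promote $\h^0(\F(-1))\ge 2$ to $\h^0(\F(-1))\ge 3$; it then constructs a global section whose image in the length-$4$ quotient $\F(-1)/\O_C$ generates a subsheaf $\O_Z$ with $\length(Z)\le 3$, and reaches a contradiction by showing, via resolutions of $\O_Z$ and the argument of 2.3.2 of \cite{mult_five}, that $\O_Z$ would split off as a direct summand of a subsheaf of $\F(-1)$, contradicting the absence of zero-dimensional torsion. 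Your argument instead takes the purely numerical mechanism behind the paper's own \ref{2.2.2}(iii) --- which there handles only the boundary value $q=2$ --- and runs it uniformly for all $q\ge 2$: \ref{2.2.1}(ii) together with $\h^0(\F(-2))=0$ forces $\h^0(\F)\ge 2q+1$, hence $\h^1(\F)\ge 2q>0$, and then \ref{2.2.1}(i) gives $\h^1(\F(1)) > 4q-(q+5) = 3q-5 \ge 1$, contradicting $\h^1(\F(1))=0$. All your inputs are legitimate and non-circular: the hypotheses $0<\chi\le r$ and $0\le\chi<r$ hold for $(r,\chi)=(6,1)$; the vanishing $\h^0(\F(-2))=0$ from 2.1.3 of \cite{drezet-maican} is exactly the input the paper uses when proving \ref{2.2.2}; and \ref{3.1.8} is established before and independently of \ref{3.1.9}. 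What your route buys is brevity and uniformity --- the degree case analysis, the section construction, and the direct-summand argument all become unnecessary, and the linear-in-$q$ bounds kill every $q\ge 2$ at once. What the paper's route buys is structural information invisible to the numerics (for instance the identification of $\F(-1)/\O_C$ in the quintic case), which fits the explicit classification pursued throughout the section, but none of which is needed for the bare statement of the proposition.
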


\begin{proof}
Assume that $\F$ gives a point in $\M(6,1)$ and $\h^0(\F(-1))>0$.
As in the proof of 2.1.3 \cite{drezet-maican}, there is an injective morphism $\O_C \to \F(-1)$
for a curve $C \subset \P^2$. From the semi-stability of $\F$ we see that $C$ has degree $5$ or $6$.
In the first case $\F(-1)/\O_C$ has Hilbert polynomial $\PP(m)=m$ and has no zero-dimensional torsion.
Indeed, the pull-back in $\F(-1)$ of any non-zero subsheaf of $\F(-1)/\O_C$ supported on finitely
many points would destabilise $\F(-1)$. We deduce that $\F(-1)/\O_C$ is isomorphic to
$\O_L(-1)$, hence $\h^0(\F(-1))=1$.

Assume now that $C$ is a sextic curve and $\H^1(\F(1))=0$.
The quotient sheaf $\CC= \F(-1)/\O_C$ has support of dimension zero and length $4$.
Assume that $\h^0(\F(-1))> 1$. Then, in view of \ref{2.2.2}(iii), we have $\h^0(\F(-1)) \ge 3$.
We claim that there is a global section $s$ of $\F(-1)$ such that its image in $\CC$
generates a subsheaf isomorphic to $\O_Z$, where $Z \subset \P^2$ is a zero-dimensional
scheme of length $1$, $2$ or $3$.
Indeed, as $\h^0(\O_C)=1$ and $\h^0(\F(-1)) \ge 3$
there are global sections $s_1$ and $s_2$ of $\F(-1)$ such that their images in $\CC$
are linearly independent.
Consider a subsheaf $\CC' \subset \CC$ of length $3$.
Choose $c_1, c_2 \in \C$, not both zero, such that the image of $c_1 s_1 + c_2 s_2$
under the composite map $\F(-1) \to \CC \to \CC/\CC'$ is zero.
Then $s=c_1 s_1 + c_2 s_2$ satisfies our requirements.

Let $\F' \subset \F(-1)$ be the preimage of $\O_Z$. Assume first that $Z$ is not contained in a line,
so, in particular, it has length $3$. According to \cite{modules-alternatives}, proposition 4.5, we have
a resolution
\[
0 \lra 2\O(-3) \lra 3\O(-2) \lra \O \lra \O_Z \lra 0.
\]
Combining this with the standard resolution of $\O_C$ we obtain the exact sequence
\[
0 \lra 2\O(-3) \lra \O(-6) \oplus 3\O(-2) \lra 2\O \lra \F' \lra 0.
\]
As the morphism $2\O(-3) \to \O(-6)$ in the above complex is zero and as $\Ext^1(\O_Z, \O)$
vanishes, we can show, as in the proof of 2.3.2 \cite{mult_five},
that $\O_Z$ is a direct summand of $\F'$.
This is absurd, by hypothesis $\F(-1)$ has no zero-dimensional torsion.
The same argument applies if $Z$ is contained in a line and has length $3$,
except that this time we use the resolution
\[
0 \lra \O(-4) \lra \O(-3) \oplus \O(-1) \lra \O \lra \O_Z \lra 0.
\]
The cases when $\length(Z)= 1$ or $2$ are analogous.
Thus $\h^0(\F(-1))=1$.
\end{proof}

\subsection{The strata as quotients} 

In the previous subsection we classified all sheaves giving points in $\M(6,1)$, namely we showed
that this moduli space can be decomposed into six subsets $X_0, \ldots, X_5$, cf. Table 1.
Recall the notations $\W_i$, $W_i$, $G_i$, $0 \le i \le 5$, from subsection 1.2.
The fibres of the canonical maps $\rho_i \colon W_i \to X_i$ are precisely the $G_i$-orbits.
Given $[\F] \in X_i$, we constructed $\f \in \rho_i^{-1}[\F]$ starting from the Beilinson spectral sequence
I or II associated to $\F$ or some twist of this sheaf and performing algebraic operations.
This construction is local in the sense that it can be done for flat families of sheaves that are
in a sufficiently small neighbourhood of $[\F]$. This allows us to deduce, as at 3.1.6 \cite{drezet-maican},
that the maps $\rho_i$ are categorical quotient maps.
Applying \cite{mumford}, remark 2, page 5, it follows that $X_i$ is normal.
From \cite{popov-vinberg}, theorem 4.2, we conclude that each $\rho_i$ is a geometric quotient map.

Some of these quotients have concrete descriptions.
The quotient $W_5/G_5$ is isomorphic to the flag Hilbert scheme of pairs $(C,Z)$, where $C \subset \P^2$
is a curve of degree $6$ and $Z \subset C$ is a zero-dimensional scheme of length $2$.
Let $W_0' \subset \W_0$ be the set of morphisms $\f$ for which $\f_{11}$ is semi-stable
as a Kronecker module and $\f_{21} \neq v \f_{11}$ for any $v \in \Hom(4\O(-1), \O)$.
Clearly $W_0 \subsetneqq W_0'$, being the subset of injective morphisms.
According to 9.3 \cite{drezet-trautmann}, the geometric quotient
$W_0'/G_0$ exists and is the projectivisation of a certain vector bundle over $\N(3,5,4)$ of rank $18$.
Clearly $W_0/G_0$ is a proper open subset of $W_0'/G_0$.

The quotient $W_3/G_3$ can be constructed as at 2.2.2 \cite{mult_five}.
Let $W_3' \subset \W_3$ be the subset given by the following conditions:
$\f_{12}=0$, $\f_{11}$ has linearly independent entries, $\f_{22}$ has linearly independent maximal minors,
$\f_{21} \neq \f_{22} u + v \f_{11}$ for any $u \in \Hom(2\O(-3), 2\O(-1))$ and $v \in \Hom(\O(-2), 3\O)$.
Clearly $W_3 \subsetneqq W_3'$, being the subset of injective morphisms.
Let $U_3$ be the set of pairs $(\f_{11},\f_{22})$ satisfying the above properties
and let $\Gamma_3$ be the canonical group acting on $U_3$.
Applying the method of loc.cit. one can show that the quotient $W_3'/G_3$ exists and is the projectivisation
of a vector bundle of rank $24$ over $U_3/\Gamma_3 \isom \P^2 \times \N(3,2,3)$.
Thus $W_3/G_3$ is a proper open subset of $W_3'/G_3$.
Analogously one can construct the quotient $W_2/G_2$ except that this time one has to pay special
attention to the fact that the canonical group acting on the space of triples $(\f_{11}, \f_{12}, \f_{23})$
satisfying the properties of \ref{3.1.3}(ii) is non-reductive.

\begin{claim}
\label{3.2.1}
Let $\U = \Hom(\O(-3) \oplus 2\O(-2), 2\O(-1))$ and let $U \subset \U$ be the set of morphisms
\[
\psi = \left[
\ba{ccc}
q_1 & \ell_{11} & \ell_{12} \\
q_2 & \ell_{21} & \ell_{22}
\ea
\right]
\]
that satisfy the conditions of \ref{3.1.3}(ii).
Let $G$ be the canonical group acting by conjugation on $U$.
Then there exists a geometric quotient $U/G$,
which is a smooth projective variety of dimension $10$.
\end{claim}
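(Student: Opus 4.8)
The plan is to construct $U/G$ in two stages, following the standard treatment of quotients by a non-reductive group (as in 2.2.2 \cite{mult_five} and \cite{drezet-trautmann}): first divide by the unipotent radical of $G$, where an explicit description is available, then form a geometric quotient of the resulting smooth variety by the reductive Levi factor, matching the conditions of \ref{3.1.3}(ii) with stability. To set up, write $A = \O(-3) \oplus 2\O(-2)$ and $E = 2\O(-1)$. The group $\Aut(A)$ has unipotent radical $G_u \isom \Hom(\O(-3), 2\O(-2)) \isom V^* \tensor \C^2$ of dimension $6$ and Levi factor $\C^* \times \GL(2,\C)$, while $\Aut(E) = \GL(2,\C)$; hence $G = G_u \rtimes G_\ell$ with $G_\ell = (\C^* \times \GL(2,\C) \times \GL(2,\C))/\C^*$ reductive of dimension $8$. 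Since $\dim \U = 24$ and $\dim G = 6 + 8 = 14$, the expected dimension of the quotient is $10$. Conditions (i) and (ii) are $G$-invariant, so $U$ is a $G$-stable open subset of $\U$, smooth of dimension $24$.

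I would then carry out the unipotent quotient explicitly. Write $\psi = [\,Q \mid L\,]$, with $Q$ the first column and $L$ the $2 \times 2$ block of one-forms, put $d = \det L$, and let $c_1, c_2$ be the two maximal minors of $\psi$ that involve $Q$ (the cubics appearing in condition (ii)). The elements of $G_u$ act by $Q \mapsto Q + Lv$, and a direct computation shows that under this action $c_1, c_2$ change only by multiples of $d$; thus their classes $\bar{c}_1, \bar{c}_2 \in \SS^3 V^*/dV^*$ are $G_u$-invariant, while the syzygy $dQ = c_2 L_1 - c_1 L_2$ (with $L_1, L_2$ the columns of $L$) recovers $Q$ from $(L, \bar{c}_1, \bar{c}_2)$. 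Because $d \neq 0$ the map $v \mapsto Lv$ is injective, so $G_u$ acts freely, and $U/G_u$ is an open subset of a rank-$6$ vector bundle over the smooth variety $N = \{\, L : \det L \neq 0 \,\}$ (the fibre being the cokernel of the injective bundle map $(V^*)^2 \to (\SS^2 V^*)^2$, $v \mapsto Lv$). In particular $U/G_u$ is smooth and $U \to U/G_u$ is a geometric quotient.

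It remains to divide $U/G_u$ by $G_\ell$. The central $\C^*$ of $G_\ell$ scales $Q$, hence scales the bundle fibres; projectivising them presents $U/G_u$ as an open subset of a $\P^5$-bundle over $N$ on which the reductive group $(\GL(2,\C) \times \GL(2,\C))/\C^*$ acts. Viewing $L$ as a Kronecker module $\C^2 \to V^* \tensor \C^2$, the condition $\det L \neq 0$ is a semi-stability condition and the reductive quotient of the base is (an open subset of) $\N(3,2,2)$, a projective variety of dimension $5$, the invariant being $L \mapsto [\det L] \in \P(\SS^2 V^*)$. I would choose the GIT linearisation so that (i) and (ii) single out the properly stable locus, obtaining a geometric quotient $U/G_\ell$ of dimension $5 + 5 = 10$. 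The decisive computation is that (i) and (ii) force the stabiliser of every $\psi \in U$ to be trivial: (i) kills the stabiliser in $G_u$, while (ii) — the independence of $\bar{c}_1, \bar{c}_2$ — forces the stabiliser of $\psi$ in $G_\ell$ down to the homotheties even at points where $L$ alone has a positive-dimensional stabiliser (for instance when $d$ is a reducible or a double conic). Thus the action of $G$ on the smooth variety $U$ is free, which yields both the smoothness of the quotient and the dimension $10$.

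The main obstacle is projectivity, together with the behaviour over the discriminant locus in $\N(3,2,2)$, where $L$ becomes strictly semi-stable and its stabiliser in $\GL(2,\C) \times \GL(2,\C)$ jumps. The point to exploit is that although $L$ alone may be only strictly semi-stable, the extra datum $(\bar{c}_1, \bar{c}_2)$ rigidifies the morphism: by the stabiliser computation above there are no properly semi-stable morphisms in $U$, so no S-equivalence identifications occur and the categorical quotient is in fact geometric with complete total space. Making this rigorous — verifying that every $\psi \notin U$ (with $\det L = 0$, or with $\bar{c}_1, \bar{c}_2$ dependent) is GIT-unstable, so that $U$ is exactly the stable locus and the quotient inherits projectivity — is precisely where one invokes the explicit stability analysis for spaces of morphisms of \cite{drezet-trautmann}, exactly as in 2.2.2 \cite{mult_five}.
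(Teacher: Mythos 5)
Your preparatory analysis is correct as far as it goes, and I verified the key computations: under $Q \mapsto Q + Lv$ the minors transform as $c_1 \mapsto c_1 - v_2 d$, $c_2 \mapsto c_2 + v_1 d$, so $\bar c_1, \bar c_2$ are indeed $G_u$-invariants; $\{\det L \neq 0\}/G_u$ is the cokernel bundle you describe; the dimension count $24 - 14 = 10$ is right; and the stabiliser in $G$ of any $\psi \in U$ is reduced to the homotheties (independence of $\bar c_1, \bar c_2$ forces $A$ to be scalar, then $hL = LA$ with $\det L \neq 0$ forces $h$ scalar, and then $hQ = tQ + Lv$ forces $v = 0$ and $t = \mu$, since otherwise $Q \in L(V^*)^2$ would kill both minors). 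But all of this is only a collection of necessary conditions; none of it produces the quotient, its smoothness, or its projectivity.

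The genuine gap is the step you defer at the end, and it is not a formality: one must show that $U$ is \emph{exactly} the stable locus, and moreover equal to the semi-stable locus, of the $G$-action on $\U$ for an explicit polarisation. That identification is the entire content of the paper's proof. The paper first checks that the conditions of \ref{3.1.3}(ii) are equivalent to $\psi$ not being equivalent to a matrix having one of four zero-block shapes, and it is this combinatorial reformulation that makes King's criterion \cite{king} applicable, yielding $U = \U^{\st}(\L)$ for a polarisation $\L = (\l_1, \l_2, \m_1)$ with $1/4 < \l_2 < 1/2$; it then needs the sharper range $3/7 < \l_2 < 1/2$, because that is where propositions 6.1.1, 7.2.2 and 8.1.3 of \cite{drezet-trautmann} guarantee a geometric quotient $\U^{\st}(\L)/G$ that is smooth and quasi-projective; projectivity finally follows because for this $\L$ the stable and semi-stable loci coincide. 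Freeness of the action cannot substitute for any of this: a free action need not admit a geometric quotient at all, and the assertion ``there are no properly semi-stable morphisms in $U$'' has no content until $U$ has been exhibited as a semi-stable locus. Note also that your two-stage route cannot by itself deliver projectivity: the base $N$ is a non-complete open subset of the affine space of matrices $L$, and over the singular-determinant locus the good quotient $N \to \N(3,2,2)$ identifies non-closed orbits, so a relative GIT argument on the $\P^5$-bundle would in any case have to be converted into a polarised analysis of the linear action on all of $\U$ --- which is precisely the paper's one-step proof.
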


\begin{proof}
It is straightforward to check that the conditions defining $U$ are equivalent to saying that
$\psi$ be not equivalent to a morphism represented by a matrix having one of the following forms:
\[
\left[
\ba{ccc}
\star & 0 & 0 \\
\star & \star & \star
\ea
\right], \qquad \left[
\ba{ccc}
\star & \star & 0 \\
\star & \star & 0
\ea
\right], \qquad \left[
\ba{ccc}
0 & 0 & \star \\
\star & \star & \star
\ea
\right], \qquad \left[
\ba{ccc}
0 & \star & \star \\
0 & \star & \star
\ea
\right].
\]
This allows us to interpret $U$ as the set of semi-stable points in the sense of \cite{drezet-trautmann}.
Adopting the notations of op.cit., let $\L=(\l_1, \l_2, \m_1)$ be a polarisation for the action of $G$ on $\U$
satisfying the condition $1/4 < \l_2 < 1/2$. Using King's criterion of semi-stability \cite{king}
and the above alternate description of $U$ we deduce that $U$ is the set $\U^\st(\L)$ of morphisms
that are stable relative to $\L$ (cf. \cite{drezet-trautmann}). According to op.cit., propositions 6.1.1, 7.2.2,
8.1.3, there exists a geometric quotient $\U^\st(\L)/G$, which is a smooth quasi-projective variety,
provided $3/7 < \l_2 < 1/2$, which we assume to be the case.
This quotient is projective because $\U^\st(\L)$ coincides with the set of semi-stable points
in $\U$ relative to $\L$.
\end{proof}

\noi
The quotient $W_2/G_2$ is an open subset of the projectivisation of a vector bundle
over $(U/G) \times \P^2$ of rank $22$.

\subsection{Generic sheaves}  

Let $C \subset \P^2$ denote an arbitrary smooth sextic curve and let $P_i$ denote distinct
points on $C$. According to \cite{modules-alternatives}, propositions 4.5 and 4.6,
the cokernels of morphisms $4\O(-5) \to 5 \O(-4)$ whose maximal minors have no common factor
are precisely the ideal sheaves $\I_Z \subset \O_{\P^2}$
of zero-dimensional schemes $Z \subset \P^2$ of length $10$
that are not contained in a cubic curve.
It follows that the generic sheaves giving points in $X_0$
are of the form $\O_C(P_1 + \cdots + P_{10})$,
where $P_i$, $1 \le i \le 10$, are not contained in a cubic curve.

According to loc.cit., the cokernels of morphisms $2\O(-3) \to 3\O(-2)$
whose maximal minors have no common factor
are precisely the ideal sheaves $\I_Z \subset \O_{\P^2}$
of zero-dimensional schemes $Z \subset \P^2$ of length $3$
that are not contained in a line.
It follows that the generic sheaves in $X_3$ have the form $\O_C(2)(-P_1 - P_2 - P_3 + P_4)$,
where $P_1, P_2, P_3$ are non-colinear.

Obviously, the generic sheaves in $X_4$ have the form $\O_C(1)(P_1+P_2+P_3+P_4)$,
where no three points among $P_1, P_2, P_3, P_4$ are colinear.
Also, the generic sheaves in $X_5$ are of the form $\O_C(2)(-P_1-P_2)$.
According to claim \ref{3.3.1} below,
the generic sheaves in $X_1$ have the form $\O_C(3)(-P_1 - \cdots - P_8)$,
where no four points among $P_1, \ldots, P_8$ are colinear
and no seven of them lie on a conic curve.
According to claim \ref{3.3.2} below,
the generic sheaves in $X_2$ have the form $\O_C(1)(P_1+ \cdots + P_5 - P_6)$,
where no three points among $P_1, \ldots, P_5$ are colinear.

\begin{claim}
\label{3.3.1}
Let $U \subset \Hom(2\O(-2), \O(-1) \oplus 2\O)$ be the set of morphisms
represented by matrices
\[
\left[
\ba{cc}
\ell_1 & \ell_2 \\
q_{11} & q_{12} \\
q_{21} & q_{22}
\ea
\right]
\]
for which the maximal minors
$\ell_1 q_{12} - \ell_2 q_{11}$ and $\ell_1 q_{22} - \ell_2 q_{21}$ have no common factor.
The cokernels of the morphisms in $U$ are precisely the sheaves of the form $\I_Z(3)$,
where $\I_Z \subset \O_{\P^2}$ is the ideal sheaf of a zero-dimensional subscheme $Z \subset \P^2$
of length $8$, no subscheme of length $4$ of which is contained in a line and no subscheme
of length $7$ of which is contained in a conic curve.
\end{claim}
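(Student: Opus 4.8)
The plan is to prove the two inclusions separately: the forward one (cokernels of $\f \in U$ are of the stated form) via the Hilbert--Burch theorem, and the backward one via a Cayley--Bacharach argument.

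\emph{Cokernels of $\f \in U$.} Given $\f \in U$, the two cubic maximal minors $\ell_1 q_{12} - \ell_2 q_{11}$ and $\ell_1 q_{22} - \ell_2 q_{21}$ are coprime, so all three maximal minors of $\f$ (these two together with the quartic $q_{11} q_{22} - q_{12} q_{21}$) have no common factor and cut out a zero-dimensional scheme $Z$. First I would invoke the Hilbert--Burch theorem: $\f$ is injective and $\Coker(\f) \isom \I_Z \tensor \O(3)$, the twist $\O(3)$ being forced by $\det(\O(-1) \oplus 2\O) \tensor \det(2\O(-2))^{-1}$; a Hilbert polynomial count then gives $\length(Z) = 8$. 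The two position conditions follow by contraposition. Since $\ell_1 q_{12} - \ell_2 q_{11}$ and $\ell_1 q_{22} - \ell_2 q_{21}$ span $\H^0(\I_Z(3))$, a length-$4$ subscheme of $Z$ lying on a line $L$ would force both cubics to contain $L$ (their restrictions to $L \isom \P^1$ are cubic forms vanishing to order $4$), producing a common linear factor, while a length-$7$ subscheme lying on an irreducible conic $Q$ would force both to contain $Q$ (their restrictions to $Q \isom \P^1$ are sextic forms vanishing to order $7$), producing a common quadratic factor. Both contradict coprimality; a reducible conic carrying $7$ points of $Z$ carries $\ge 4$ of them on one of its lines, reducing to the first case.

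\emph{Every such $\I_Z(3)$ is a cokernel of some $\f \in U$.} Let $Z$ have length $8$, no length-$4$ subscheme on a line and no length-$7$ subscheme on a conic. From the second hypothesis no conic passes through all of $Z$, so $\h^0(\I_Z(2)) = 0$. The essential point is that $Z$ \emph{imposes independent conditions on cubics}, i.e. $\h^0(\I_Z(3)) = 2$; this is the step I expect to be the main obstacle. I would argue by contradiction: if $\h^0(\I_Z(3)) \ge 3$, choose two independent cubics $c_1, c_2 \in \H^0(\I_Z(3))$. If they had no common component, then $V(c_1) \cap V(c_2)$ would be a complete intersection $\Gamma$ of length $9$ containing $Z$, and the Cayley--Bacharach theorem for $(3,3)$ complete intersections---every cubic through a length-$8$ subscheme of $\Gamma$ passes through all of $\Gamma$---would give $\h^0(\I_Z(3)) = \h^0(\I_\Gamma(3)) = 2$, a contradiction. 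Hence $c_1, c_2$ share $D = \gcd(c_1, c_2)$ with $1 \le \deg D \le 2$. If $\deg D = 2$, then $c_1, c_2$ are $D$ times two distinct lines, so $V(c_1) \cap V(c_2)$ is $D$ together with one further point and a subscheme of $Z$ of length $\ge 7$ lies on the conic $D$, contradicting the second hypothesis; if $\deg D = 1$, the residual conics are coprime and meet in at most $4$ points, so a subscheme of $Z$ of length $\ge 4$ lies on the line $D$, contradicting the first hypothesis. Thus $\h^0(\I_Z(3)) = 2$.

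\emph{Conclusion.} Knowing $\h^0(\I_Z(2)) = 0$ and $\h^0(\I_Z(3)) = 2$, the Hilbert function of $Z$ is $1, 3, 6, 8, 8, \ldots$, and a Hilbert-series computation pins down the minimal graded free resolution to
\[
0 \lra 2\O(-5) \lra \O(-4) \oplus 2\O(-3) \lra \I_Z \lra 0
\]
(two cubic and one quartic minimal generators, two syzygies in degree $5$). Twisting by $\O(3)$ presents $\I_Z(3)$ as the cokernel of a morphism $2\O(-2) \to \O(-1) \oplus 2\O$ whose matrix has exactly the shape in the claim, its two cubic maximal minors being the two cubic generators, hence a basis of $\H^0(\I_Z(3))$. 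By the contrapositive argument of the first part these two cubics have no common factor, so the presenting matrix lies in $U$, completing the proof.
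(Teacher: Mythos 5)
Your forward inclusion is essentially the paper's: a Hilbert--Burch presentation whose minors cut out $Z$, a Hilbert polynomial count giving $\length(Z)=8$, and B\'ezout-type restriction arguments for the two position conditions. Your converse, however, takes a genuinely different route. The paper runs the Beilinson spectral sequence I on $\I_Z(2)$: it extracts a monad $0 \to 5\O(-2) \to 8\O(-1) \to 2\O \to 0$ with middle cohomology $\I_Z(2)$, identifies the kernel of the right-hand map as $2\Om^1 \oplus 2\O(-1)$ by eliminating canonical forms of its transpose, then performs a rank case analysis on the induced presentation, using the position hypotheses to exclude the degenerate ranks, and only at the end checks coprimality of the minors via B\'ezout. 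You replace all of this with classical projective geometry: Cayley--Bacharach for the $(3,3)$ complete intersection to prove $\h^0(\I_Z(3))=2$, then minimal graded free resolutions to produce the presentation. Your route is shorter and avoids the Beilinson machinery entirely; the paper's route is uniform with the technique used for every other stratum and does not need to quote Cayley--Bacharach. Your residual-scheme analysis of the cases $\deg D = 1,2$ (via the colon ideal, so that lengths add) is correct and is in fact the key geometric input.

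There is one step that does not hold as written: ``a Hilbert-series computation pins down the minimal graded free resolution.'' Graded Betti numbers are not determined by the Hilbert function. The numerator here is $1 - 2t^3 - t^4 + 2t^5$, which is equally consistent with generators in degrees $3,3,4,4$ and syzygies in degrees $4,5,5$; this alternative actually occurs for a scheme with the same Hilbert function $1,3,6,8,8,\dots$, namely seven points on a smooth conic plus one point off it, where the two cubics through $Z$ share a quadratic factor. So the position hypotheses must enter the resolution step, not just the final membership check. Concretely you need: (a) the two cubics spanning $\H^0(\I_Z(3))$ are coprime, which kills the degree-$4$ (linear) syzygy and yields exactly one quartic minimal generator --- note this is \emph{not} ``the contrapositive argument of the first part'' (that argument proves coprimality implies the position conditions), but its converse, which is precisely the $\gcd$/residual case analysis you already carry out inside the Cayley--Bacharach step; and (b) $I_Z$ has no minimal generator in degree $\ge 5$ --- e.g. because $\h^1(\I_Z(3))=0$ and $\h^2(\I_Z(2))=0$ make $\I_Z$ $4$-regular, or because by the Hilbert series such a generator would have degree at least that of every syzygy, forcing a zero row in the minimal syzygy matrix and hence a free direct summand of the torsion-free, non-principal ideal $I_Z$, which is absurd. (Similarly, asserting the Hilbert function equals $8$ in all degrees $\ge 3$ uses $\h^1(\I_Z(d))=0$ for $d \ge 4$, which follows from $4$-regularity or from the fact that $\h^1$ of an ideal sheaf of points is non-increasing in the twist.) With (a) and (b) the generators are in degrees $3,3,4$ and the Hilbert series then forces two syzygies in degree $5$, so your resolution and the conclusion stand; the repairs use only ingredients already present in your write-up.
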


\begin{proof}
Let $\psi \in U$ and let $\z_i$ denote the maximal minor of $\psi$ obtained by deleting the $i$-th row.
Since $\z_1, \z_2, \z_3$ have no common factor,
there is an exact sequence of the form
\[
0 \lra 2\O(-2) \stackrel{\psi}{\lra} \O(-1) \oplus 2\O \stackrel{\z}{\lra} \O(3) \lra \CC \lra 0,
\]
\[
\z = \left[
\ba{ccc}
\z_1 & - \z_2 & \phantom{-} \z_3
\ea
\right].
\]
The Hilbert polynomial of $\CC$ is $8$, hence $\CC$ is the structure sheaf of a zero-dimensional
scheme $Z$ of length $8$ and $\Coker(\psi) \isom \I_Z(3)$.
If four of the points of $Z$ were on the line with equation $\ell = 0$,
then, by B\'ezout's theorem, $\ell$ would divide $\z_2$ and $\z_3$, contrary to our hypothesis.
Similarly, if seven of the points of $Z$ lay on the irreducible conic curve with equation $q = 0$,
then $q$ would divide $\z_2$ and $\z_3$.

For the converse we use the method of 4.5 \cite{modules-alternatives}.
Assume that $Z \subset \P^2$ is a subscheme as in the proposition.
The Beilinson spectral sequence I with $\EE^1$-term
\[
\EE^1_{ij} = \H^j (\I_Z(2) \tensor \Om^{-i}(-i)) \tensor \O(i)
\]
converges to $\I_Z(2)$. By hypothesis $\H^0(\I_Z(2))= 0$, hence also
$\H^0(\I_Z(3) \tensor \Om^1)=0$ and $\H^0(\I_Z(1)) = 0$.
Using Serre duality we can show that $\H^2(\I_Z(2))$, $\H^2(\I_Z(1))$ and $\H^2(\I_Z(3) \tensor \Om^1)$
vanish. The middle row of the display diagram for the Beilinson
spectral sequence yields a monad
\[
0 \lra 5\O(-2) \stackrel{\a}{\lra} 8\O(-1) \stackrel{\b}{\lra} 2\O \lra 0
\]
with middle cohomology $\I_Z(2)$. Denote $\B = {\mathcal Hom}(\Ker(\b), \O(-1))$.
Applying the functor ${\mathcal Hom}(\_\_, \O(-1))$ we get the exact sequences
\[
0 \lra 2\O(-1) \stackrel{\b^\T}{\lra} 8\O \lra \B \lra 0,
\]
\[
0 \lra {\mathcal Hom}(\I_Z(2), \O(-1)) \lra \B \lra 5\O(1).
\]
From the first exact sequence we see that $\h^0(\B)=8$ and from the second exact
sequence we see that $\B$ is torsion-free.
It follows that the morphism $8\O \to \B$ cannot factor through $7\O \oplus \C_x$.
This allows us to deduce, as at 2.1.4 \cite{mult_five}, that any matrix representing $\b^\T$
has at least three linearly independent entries on each column,
in other words, that $\b^\T$ has one of the following canonical forms:
\[
\left[
\ba{cc}
0 & 0 \\
0 & 0 \\
0 & 0 \\
0 & 0 \\
0 & 0 \\
X & R \\
Y & S \\
Z & T
\ea
\right], \qquad \left[
\ba{cc}
0 & 0 \\
0 & 0 \\
0 & 0 \\
0 & 0 \\
X & 0 \\
Y & R \\
Z & S \\
0 & T
\ea
\right], \qquad \left[
\ba{cc}
0 & 0 \\
0 & 0 \\
0 & 0 \\
X & 0 \\
Y & 0 \\
Z & R \\
0 & S \\
0 & T
\ea
\right], \qquad \left[
\ba{cc}
0 & 0 \\
0 & 0 \\
X & 0 \\
Y & 0 \\
Z & 0 \\
0 & X \\
0 & Y \\
0 & Z
\ea
\right].
\]
Moreover, the morphism $8\O \to \B$ cannot factor through $6\O \oplus \O_L(1)$.
This allows us to deduce, as at 3.1.3 \cite{mult_five}, that the first three canonical forms
are unfeasible.
Thus $\Ker(\b) \isom 2\Om^1 \oplus 2\O(-1)$, so we have a resolution
\[
0 \lra 5\O(-2) \lra 2\Om^1 \oplus 2\O(-1) \lra \I_Z(2) \lra 0,
\]
hence a resolution
\[
0 \lra 2\O(-3) \oplus 5\O(-2) \stackrel{\rho}{\lra} 6\O(-2) \oplus 2\O(-1) \lra \I_Z(2) \lra 0.
\]
Notice that $\rank(\rho_{12}) \ge 3$, otherwise $\I_Z(2)$ would map surjectively
onto the cokernel of a morphism $2\O(-3) \to 4\O(-2)$, which is impossible,
because $\rank(\I_Z(2))=1$.
Assume that $\rank(\rho_{12}) = 3$. We get a resolution
\[
0 \lra 2\O(-3) \oplus 2\O(-2) \stackrel{\eta}{\lra} 3\O(-2) \oplus 2\O(-1) \lra \I_Z(2) \lra 0
\]
with $\eta_{12}=0$. Clearly $\eta_{22}$ is injective and $\Coker(\eta_{22})$ maps injectively
to $\I_Z(2)$. This is absurd, $\I_Z(2)$ is a torsion-free sheaf whereas $\Coker(\eta_{22})$
is a torsion sheaf.
Assume that $\rank(\rho_{12})= 4$.
We have a resolution
\[
0 \lra 2\O(-3) \oplus \O(-2) \stackrel{\eta}{\lra} 2\O(-2) \oplus 2\O(-1) \lra \I_Z(2) \lra 0
\]
with $\eta_{12}=0$. The entries of $\eta_{22}$ are linearly independent,
otherwise $\I_Z(2)$ would have a subsheaf of the form $\O_L(-1)$, which is absurd.
Let $x$ be the common zero of the entries of $\eta_{22}$.
The points of $Z$ distinct from $x$ lie on the conic curve with equation $\det(\eta_{11})=0$,
contradicting our hypothesis on $Z$.
We conclude that $\rank(\rho_{12}) = 5$ and we arrive at the resolution
\[
0 \lra 2\O(-3) \stackrel{\psi}{\lra} \O(-2) \oplus 2\O(-1) \lra \I_Z(2) \lra 0,
\]
\[
\psi = \left[
\ba{cc}
\ell_1 & \ell_2 \\
q_{11} & q_{12} \\
q_{21} & q_{22}
\ea
\right].
\]
We will show that $\psi$ satisfies the conditions defining $U$.
Assume that $\gcd(\z_2, \z_3)$ is a linear form $\ell$.
By hypothesis, at least five points of $Z$ do not lie on the line given by the equation
$\ell =0$. These points must be then in the common zero-set of $\z_2/\ell$ and $\z_3/\ell$,
which, by B\'ezout's theorem, is impossible.
Likewise, $\gcd(\z_2, \z_3)$ cannot be a quadratic form.
If $\z_2$ divided $\z_3$, then, performing possibly row operations on $\psi$,
we may assume that $\z_3=0$.
It would follow that
\[
\psi \sim \left[
\ba{cc}
\star & \star \\
0 & 0 \\
\star & \star
\ea
\right] \qquad \text{or} \qquad \psi \sim \left[
\ba{cc}
\star & 0 \\
\star & 0 \\
\star & \star
\ea
\right].
\]
In each case $\I_Z(2)$ would have a torsion subsheaf, which is absurd.
\end{proof}

\begin{claim}
\label{3.3.2}
Let $U \subset \Hom(2\O(-1), 2\O \oplus \O(1))$ be the set of morphisms
represented by matrices
\[
\left[
\ba{cc}
\ell_{11} & \ell_{12} \\
\ell_{21} & \ell_{22} \\
q_1 & q_2
\ea
\right]
\]
such that $\z_3 =\ell_{11} \ell_{22} - \ell_{12} \ell_{21}$ is irreducible and does not divide any
of the other maximal minors. The cokernels of the morphisms in $U$
are precisely the sheaves of the form $\I_Z(3)$,
where $\I_Z \subset \O_{\P^2}$ is the ideal sheaf of a zero-dimensional subscheme $Z \subset \P^2$
of length $5$, no subscheme of length $3$ of which is contained in a line.
\end{claim}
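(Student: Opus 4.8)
The plan is to prove both implications, handling the forward direction by a Hilbert--Burch type computation and the converse by the Beilinson technique of \ref{3.3.1}. For the forward implication, fix $\psi \in U$ and let $\z_1, \z_2, \z_3$ be its maximal minors obtained by deleting rows $1,2,3$, so that $\deg \z_1 = \deg \z_2 = 3$ and $\deg \z_3 = 2$. I would first observe that the defining conditions force $\gcd(\z_1,\z_2,\z_3)=1$: any common divisor divides the irreducible conic $\z_3$, hence is a scalar or a scalar multiple of $\z_3$, and the latter is excluded because $\z_3 \nmid \z_1$. Consequently the complex
\[
0 \lra 2\O(-1) \stackrel{\psi}{\lra} 2\O \oplus \O(1) \stackrel{\z}{\lra} \O(3) \lra \CC \lra 0, \qquad \z = [\, \z_1 \ -\z_2 \ \z_3 \,],
\]
is exact. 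A Riemann--Roch count gives $\chi(\CC)=5$, so $\CC \isom \O_Z$ for a length-$5$ scheme $Z$ and $\Coker(\psi)\isom \I_Z(3)$. Finally $Z \subset V(\z_3)$, an irreducible conic, so by B\'ezout any line meets $V(\z_3)$ in a length-$2$ scheme; hence no length-$3$ subscheme of $Z$ lies on a line, which is the required configuration.

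For the converse, let $Z$ be as in the statement. The non-collinearity hypothesis yields the Hilbert function $(1,3,5,5,\dots)$; in particular $\h^0(\I_Z(1))=0$ and $\h^0(\I_Z(2))=1$, the unique conic through $Z$ being irreducible, since a reducible conic would put at least three of the five points on a line. From the sequences $0\to \I_Z(k)\to \O(k)\to \O_Z\to 0$ one computes $\h^1(\I_Z)=4$ and $\h^1(\I_Z(1))=2$, and tensoring the Euler sequence with $\I_Z$ gives $\h^0(\I_Z\tensor\Om^1(2))=0$, $\h^1(\I_Z\tensor\Om^1(2))=7$. Feeding these numbers into the Beilinson free monad (2.1.7) for $\F=\I_Z(1)$ leaves only the terms $\CC^{-1}=4\O(-2)$, $\CC^0=7\O(-1)$, $\CC^1=2\O$, producing a monad
\[
0 \lra 4\O(-2) \stackrel{\a}{\lra} 7\O(-1) \stackrel{\b}{\lra} 2\O \lra 0
\]
with $\a$ injective, $\b$ surjective, and middle cohomology $\I_Z(1)$.

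The hard part will be to identify $\Ker(\b)$. Following \ref{3.3.1}, I would dualise $0\to \Ker(\b)\to 7\O(-1)\to 2\O\to 0$ by ${\mathcal Hom}(\_\_,\O(-1))$ to get $0\to 2\O(-1)\stackrel{\b^\T}{\to} 7\O\to \B\to 0$ with $\B={\mathcal Hom}(\Ker(\b),\O(-1))$ torsion-free and $\h^0(\B)=7$. The surjection $7\O\to \B$ cannot factor through $6\O\oplus\C_x$, nor through $5\O\oplus\O_L(1)$, and it is precisely here that the hypothesis that $Z$ lies on no line enters: these exclusions force every column of $\b^\T$ to span $V^*$ and rule out all the degenerate canonical forms, leaving $\Ker(\b)\isom 2\Om^1\oplus\O(-1)$. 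Resolving each copy of $\Om^1$ by $0\to \O(-3)\to 3\O(-2)\to \Om^1\to 0$ converts the presentation $0\to 4\O(-2)\to 2\Om^1\oplus\O(-1)\to \I_Z(1)\to 0$ into
\[
0 \lra 2\O(-3)\oplus 4\O(-2) \stackrel{\rho}{\lra} 6\O(-2)\oplus\O(-1) \lra \I_Z(1) \lra 0 .
\]
The scalar block $\rho_{12}\colon 4\O(-2)\to 6\O(-2)$ must have rank $4$, since a smaller rank would, after cancellation, leave a summand $\O(-2)$ mapping into $\O(-1)$ by a single linear form and hence produce a torsion subsheaf $\O_L(-1)\subset \I_Z(1)$, which is absurd. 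Cancelling $4\O(-2)$ and twisting by $\O(2)$ gives the desired resolution $0\to 2\O(-1)\stackrel{\psi}{\to} 2\O\oplus\O(1)\to \I_Z(3)\to 0$.

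It remains to verify $\psi\in U$. Here $\z_3$ is a scalar multiple of the unique conic through $Z$, hence irreducible by the remark above, and $\z_3$ divides neither $\z_1$ nor $\z_2$, for otherwise one of the three minors would be redundant, contradicting the minimality of the resolution (equivalently, $V(\z_2,\z_3)$ would have length $6\neq 5$ by B\'ezout). This completes the converse. I would also note that, once the Hilbert function $(1,3,5,5,\dots)$ is in hand, the Hilbert--Burch theorem produces the minimal resolution $0\to 2\O(-4)\to \O(-2)\oplus 2\O(-3)\to \I_Z\to 0$ at once, giving a shorter route that bypasses the monad analysis.
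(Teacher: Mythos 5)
Your proof is correct, and its core coincides with the paper's own argument: the same Beilinson monad $0 \lra 4\O(-2) \lra 7\O(-1) \lra 2\O \lra 0$ for $\I_Z(1)$, the same identification $\Ker(\b) \isom 2\Om^1 \oplus \O(-1)$ deferred to the method of \ref{3.3.1}, the same cancellation and rank analysis of the constant block $\rho_{12}$, and the same final verification that $\psi$ lies in $U$. What you add is worthwhile. The forward implication, which the paper dismisses as obvious, is done cleanly via $Z \subset V(\z_3)$ with $\z_3$ irreducible plus B\'ezout. In the rank analysis you and the paper share the same small elision: the induced map $\Coker(\eta_{22}) \to \I_Z(1)$ is in fact necessarily \emph{zero} (its image is a torsion subsheaf of a torsion-free sheaf), so the genuine contradiction is that then $\O(-1) \subseteq \Im(\eta) \isom 2\O(-3)\oplus\O(-2)$, which is impossible since $\Hom(\O(-1),\O(-2)) = 0 = \Hom(\O(-1),\O(-3))$; this one line completes both your argument and the paper's. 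Your closing remark is also sound and is arguably the better route: the hypotheses force the Hilbert function $(1,3,5,5,\dots)$, so $I_Z$ has exactly one quadric and two cubic minimal generators (ghost generator--syzygy pairs in equal degrees are excluded by minimality), and Hilbert--Burch then yields $0 \lra 2\O(-4) \lra \O(-2)\oplus 2\O(-3) \lra \I_Z \lra 0$ directly, bypassing the monad. One small inaccuracy: the full no-three-collinear hypothesis is not what drives the identification of $\Ker(\b)$ --- there the exclusions follow from torsion-freeness of $\B$ and $\h^0(\B)=7$ --- it is needed for $\h^0(\I_Z(2))=1$, for the irreducibility of the conic, and in the final verification of the conditions defining $U$.
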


\begin{proof}
As in the proof of \ref{3.3.1}, one direction is obvious.
Assume now that $Z \subset \P^2$ is a scheme as above.
By hypothesis $\H^0(\I_Z(1))$ vanishes, hence also
$\H^0 (\I_Z(2) \tensor \Om^1)$ and $\H^0(\I_Z)$ vanish.
By Serre duality, $\H^2(\I_Z(1))$, $\H^2(\I_Z(2) \tensor \Om^1)$ and $\H^2(\I_Z)$ vanish.
The middle row of the display diagram for the Beilinson spectral sequence I converging
to $\I_Z(1)$ yields a monad
\[
0 \lra 4\O(-2) \stackrel{\a}{\lra} 7\O(-1) \stackrel{\b}{\lra} 2\O \lra 0
\]
with middle cohomology $\I_Z(1)$.
As at \ref{3.3.1}, $\Ker(\b) \isom 2\Om^1 \oplus \O(-1)$,
so we have a resolution
\[
0 \lra 4\O(-2) \lra 2\Om^1 \oplus \O(-1) \lra \I_Z(1) \lra 0,
\]
leading to a resolution
\[
0 \lra 2\O(-3) \oplus 4\O(-2) \stackrel{\rho}{\lra} 6\O(-2) \oplus \O(-1) \lra \I_Z(1) \lra 0.
\]
Note that $\rank(\rho_{12}) \ge 3$, otherwise $\I_Z(1)$ would map surjectively onto the cokernel
of a morphism $2\O(-3) \to 4\O(-2)$, which is impossible, due to the fact that $\rank(\I_Z(1))=1$.
Assume that $\rank(\rho_{12})=3$. We obtain a resolution
\[
0 \lra 2\O(-3) \oplus \O(-2) \stackrel{\eta}{\lra} 3\O(-2) \oplus \O(-1) \lra \I_Z(1) \lra 0
\]
with $\eta_{12} = 0$. Clearly $\Coker(\eta_{22})$ maps injectively to $\I_Z(1)$.
This is absurd, $\I_Z(1)$ is a torsion-free sheaf whereas $\Coker(\eta_{22})$ is a torsion sheaf.
We conclude that $\rank(\rho_{12})=4$ and we arrive at the resolution
\[
0 \lra 2\O(-3) \stackrel{\psi}{\lra} 2\O(-2) \oplus \O(-1) \lra \I_Z(1) \lra 0.
\]
We will show that $\psi$ satisfies the conditions defining $U$.
Clearly, the maximal minors of $\psi$ have no common factor and generate the ideal sheaf of $Z$.
If the conic curve given by the equation $\z_3=0$ were reduced, then at least three points of $Z$
would lie on one of its components.
If $\z_3$ divided $\z_1$, then it would also divide $\z_2$.
\end{proof}


\section{The moduli space $\M(6,2)$}

\subsection{Classification of sheaves}    

\begin{prop}
\label{4.1.1}
Every sheaf $\F$ giving a point in $\M(6,2)$ and satisfying the condition $\h^1(\F)=0$
also satisfies the condition $\h^0(\F(-1))=0$.
For these sheaves $\h^0(\F \tensor \Om^1(1))= 0$ or $1$.
The sheaves from the first case are precisely the sheaves having a resolution of the form
\[
\tag{i}
0 \lra 4\O(-2) \stackrel{\f}{\lra} 2\O(-1) \oplus 2\O \lra \F \lra 0,
\]
where $\f$ is not equivalent, modulo the action of the natural group of automorphisms,
to a morphism represented by a matrix having one of the following forms:
\[
\left[
\ba{cccc}
\star & 0 & 0 & 0 \\
\star & \star & \star & \star \\
\star & \star & \star & \star \\
\star & \star & \star & \star
\ea
\right], \qquad \left[
\ba{cccc}
\star & \star & 0 & 0 \\
\star & \star & 0 & 0 \\
\star & \star & \star & \star \\
\star & \star & \star & \star
\ea
\right], \qquad \left[
\ba{cccc}
\star & \star & \star & 0 \\
\star & \star & \star & 0 \\
\star & \star & \star & 0 \\
\star & \star & \star & \star
\ea
\right].
\]
The sheaves in the second case are precisely the sheaves with resolution of the form
\[
\tag{ii}
0 \lra 4\O(-2) \oplus \O(-1) \stackrel{\f}{\lra} 3\O(-1) \oplus 2\O \lra \F \lra 0,
\]
where $\f_{12}=0$, $\f_{11}$ is semi-stable as a Kronecker module and
$\f_{22}$ has linearly independent entries.
\end{prop}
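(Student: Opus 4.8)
The plan is to feed the cohomological data into the Beilinson free monad (2.1.7), which collapses to a two-term complex and hands us resolutions (i) and (ii) directly, the two cases being separated by the single invariant $m = \h^0(\F \tensor \Om^1(1))$; the semi-stability half of the statement is then modelled on the argument of \ref{3.1.3}. First I would establish the auxiliary vanishing $\h^0(\F(-1)) = 0$. Since $\pp(\F) = 1/3$, the sheaf $\F(-2)$ has slope $-5/3$, while the structure sheaf $\O_C$ of any curve $C$ of degree $d$, $1 \le d \le 6$, has slope strictly greater than $-5/3$; as $\F(-2)$ is pure, no such $\O_C$ can be a subsheaf, so $\h^0(\F(-2)) = 0$. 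Because $\h^1(\F) = 0$ forces $\h^0(\F) = \chi(\F) = 2$, Proposition \ref{2.2.1}(ii) then gives $\h^0(\F) > 2\h^0(\F(-1))$, whence $\h^0(\F(-1)) = 0$.

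Next I would assemble the monad. The vanishing of $\h^0(\F(-1))$ kills $\CC^{-2}$ and the vanishing of $\h^1(\F)$ kills $\CC^1$, so (2.1.7) reduces to a single injection
\[
0 \lra \CC^{-1} \lra \CC^0 \lra \F \lra 0.
\]
With $m = \h^0(\F \tensor \Om^1(1))$ and $\chi(\F \tensor \Om^1(1)) = 3\chi(\F) - \chi(\F(1)) = -2$, so that $\h^1(\F \tensor \Om^1(1)) = m + 2$, one reads off $\CC^{-1} = 4\O(-2) \oplus m\O(-1)$ and $\CC^0 = (m+2)\O(-1) \oplus 2\O$. The block $m\O(-1) \to (m+2)\O(-1)$ of the differential is the map $\H^0(\F \tensor \Om^1(1)) \to \H^1(\F \tensor \Om^1(1))$, which vanishes by the remark following (2.1.7); this is the relation $\f_{12} = 0$ of (ii). Restricting the injective differential to the summand $m\O(-1)$ produces an injection $m\O(-1) \hookrightarrow 2\O$, so $m \le 2$; and $m = 2$ is impossible, since then $\F$ would surject onto $\Coker(\f_{11})$ with $\f_{11} \colon 4\O(-2) \to 4\O(-1)$, a one-dimensional quotient with $\chi = 0$ and hence slope $0 < 1/3$. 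Thus $m \in \{0, 1\}$, and the monad is exactly resolution (i) for $m = 0$ and resolution (ii) for $m = 1$.

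To close the classification I would run the converse. Tensoring each resolution with $\Om^1(1)$ and using $\h^0(\Om^1(k)) = 0$ for $k \in \{-1,0,1\}$, $\h^1(\Om^1) = 1$, $\h^1(\Om^1(\pm 1)) = 0$, confirms that $\h^1(\F) = 0$, $\h^0(\F(-1)) = 0$, and $m = 0$ (resp. $m = 1$), so the cohomological labels hold. For semi-stability I would proceed as in \ref{3.1.3}: a hypothetical destabilising subsheaf $\E$ may be assumed semi-stable, it inherits the vanishing of $\H^0(\E(-1))$ and $\H^0(\E \tensor \Om^1(1))$ from $\F$, and therefore appears on the short lists of sheaves in $\M(r,\chi)$, $r \le 5$, compiled in \cite{drezet-maican} and \cite{mult_five}. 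For each candidate $\E$ one fits its resolution into a commutative diagram analogous to diagram (8) of \ref{3.1.3}, and the injectivity of the comparison maps forces $\f$ to be equivalent to one of the three excluded shapes in case (i), or forces $\f_{11}$ to fail Kronecker semi-stability or the entries of $\f_{22}$ to become dependent in case (ii). The reverse implications are immediate: each excluded shape exhibits the destabilising subsheaf explicitly.

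The main obstacle is precisely this last matching: one must enumerate the finitely many admissible $\E$ and check that the listed block shapes are exactly the ones produced by the destabilising diagrams — in case (ii) distinguishing which $\E$ force a rank drop in $\f_{11}$ from those forcing a dependency among the entries of $\f_{22}$. By contrast, the monad computation, the evaluation of $m$, and the bound $m \le 1$ are routine once the vanishing $\h^0(\F(-1)) = 0$ is in hand.
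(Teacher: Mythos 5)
Your proposal is correct in its worked-out parts and sound in outline, but it takes a completely different route from the paper: the paper's entire proof of \ref{4.1.1} is a citation --- the vanishing statement is proposition 6.4 of \cite{maican}, and the classification is proposition 4.3 of \cite{maican} applied to the dual sheaf $\F^\D(1)$ under the isomorphism $\M(6,2) \isom \M(6,4)$ --- whereas you prove everything in-house. Your vanishing step (purity gives $\h^0(\F(-2))=0$, then \ref{2.2.1}(ii) with $\h^0(\F)=2$) is precisely the mechanism of the paper's \ref{2.2.2}, and your monad computation is correct: (2.1.7) collapses to $0 \to 4\O(-2) \oplus m\O(-1) \to (m+2)\O(-1) \oplus 2\O \to \F \to 0$, the block $m\O(-1) \to (m+2)\O(-1)$ vanishes by the remark after (2.1.7), injectivity of the differential gives $m \le 2$, and $m=2$ is excluded by the slope-zero quotient $\Coker(\f_{11})$; this is the same technique the paper itself uses inside the proofs of \ref{2.2.1} and \ref{3.1.3}. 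What your route leaves as a programme is exactly what the citation buys: the equivalence of semi-stability of $\Coker(\f)$ with the stated matrix conditions is the general ``two notions of semistability'' theorem of \cite{maican}, and you propose to re-prove this special case by enumerating destabilising subsheaves and chasing diagrams as in \ref{3.1.3} and \ref{4.1.2}. That enumeration does close: in case (i) a semi-stable destabilising subsheaf $\E$ in $\M(r,\chi)$ must satisfy $\chi/r > 1/3$, $\h^0(\E(-1))=0$ and $\h^0(\E \tensor \Om^1(1)) = 0$; the last forces $\chi(\E \tensor \Om^1(1)) = 2\chi - r \le 0$, and $r/3 < \chi \le r/2$ with $r \le 5$ leaves only $(r,\chi) = (2,1), (4,2), (5,2)$, matching exactly the three excluded shapes. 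Two caveats: this matching is the bulk of the labour, not a footnote; and in case (ii) your phrase that $\E$ ``inherits the vanishing of $\H^0(\E \tensor \Om^1(1))$'' is imprecise --- there $\h^0(\F \tensor \Om^1(1)) = 1$, so a subsheaf only inherits $\h^0(\E \tensor \Om^1(1)) \le 1$ and the candidate list is longer (compare the paper's proof of \ref{4.1.2}, which handles the same issue). In sum, your approach buys self-containedness and avoids duality altogether, using only tools already present in the paper; the paper's approach imports the hard GIT comparison from a general theorem and is accordingly two lines long.
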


\begin{proof}
The first statement follows from 6.4 \cite{maican}.
The rest of the proposition follows by duality from 4.3 op.cit.
\end{proof}

\begin{prop}
\label{4.1.2}
The sheaves $\F$ giving points in $\M(6,2)$ and satisfying the conditions
$\h^0(\F(-1))=0$, $\h^1(\F)=1$, $\h^1(\F(1))=0$ are precisely
the sheaves having a resolution of the form
\[
\tag{i}
0 \lra \O(-3) \oplus \O(-2) \oplus \O(-1) \stackrel{\f}{\lra} 3\O \lra \F \lra 0,
\]
where $\f$ is not equivalent to a morphism of any of the following forms
\[
\f_1 = \left[
\ba{ccc}
\star & \star & \star \\
\star & \star & 0 \\
\star & \star & 0
\ea
\right], \qquad \f_2 = \left[
\ba{ccc}
\star & \star & \star \\
\star & 0 & \star \\
\star & 0 & \star
\ea
\right], \qquad \f_3 = \left[
\ba{ccc}
\star & \star & \star \\
\star & \star & \star \\
\star & 0 & 0
\ea
\right],
\]
or the sheaves having a resolution of the form
\[
\tag{ii}
0 \lra \O(-3) \oplus \O(-2) \oplus 2\O(-1) \stackrel{\f}{\lra} \O(-1) \oplus 3\O \lra \F \lra 0,
\]
where $\f_{12} \neq 0$, $\f_{13} = 0$, $\f_{11}$ is not divisible by $\f_{12}$ and
$\f_{23}$ has linearly independent maximal minors.
\end{prop}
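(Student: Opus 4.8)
The plan is to follow the proof of \ref{3.1.3} almost verbatim, since the cohomological hypotheses there are identical and only the value of $\chi$ differs. From $\chi(\F(t))=6t+2$ and the hypotheses one reads off $\h^0(\F)=3$, $\h^1(\F(-1))=4$, $\h^0(\F(1))=8$, so that the display diagram (2.1.1) of the Beilinson spectral sequence I converging to $\F(1)$ is
\[
\xymatrix
{
4\O(-1) \ar[r]^-{\f_1} & \Om^1(1) & 0 \\
0 & 3\Om^1(1) \ar[r]^-{\f_4} & 8\O
}.
\]
Presenting $\Om^1(1)$ as $\Coker(\O(-2)\to 3\O(-1))$ turns $\f_1$ into an exact sequence $0\to\Ker(\f_1)\to\O(-2)\oplus 4\O(-1)\stackrel{\sigma}{\to}3\O(-1)\to\Coker(\f_1)\to 0$. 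As $\F(1)$ surjects onto $\Coker(\f_1)=\Ker(\f_2)/\Im(\f_1)$, semi-stability forces $\rank(\sigma_{12})=3$, whence $\Coker(\f_1)=0$ and $\Ker(\f_1)\isom\O(-2)\oplus\O(-1)$. Combining (2.1.2) and (2.1.3), and noting $\Ext^1(\O(-2)\oplus\O(-1),\Om^1(1))=0$, I obtain $0\to\O(-2)\oplus\O(-1)\oplus 3\Om^1(1)\to 8\O\to\F(1)\to 0$; replacing $3\Om^1(1)$ by its Euler embedding $3\Om^1(1)\hookrightarrow 9\O$ gives $0\to\O(-2)\oplus\O(-1)\oplus 9\O\stackrel{\rho}{\to}8\O\oplus 3\O(1)\to\F(1)\to 0$.

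Next I would bound $\rho_{13}\colon 9\O\to 8\O$. If $\rank(\rho_{13})\le 6$, then $\F(1)$ surjects onto the cokernel of a morphism $\O(-2)\oplus\O(-1)\to k\O$ with $k\ge 2$, a sheaf of slope $\le 2/3<4/3=\pp(\F(1))$ (or of positive rank), contradicting semi-stability; hence $\rank(\rho_{13})\in\{7,8\}$. Cancelling the common trivial summands and twisting by $\O(-1)$, the case $\rank=8$ produces resolution (i) and the case $\rank=7$ produces resolution (ii) with $\f_{13}=0$.

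For the converse of (i) the three forbidden shapes are exactly the destabilising configurations: a matrix of type $\f_3$ exhibits the subsheaf $\Coker(\O(-2)\oplus\O(-1)\to 2\O)$ of slope $2/3>1/3$, while types $\f_1,\f_2$ exhibit the quotients $\Coker(\O(-3)\oplus\O(-2)\to 2\O)$ and $\Coker(\O(-3)\oplus\O(-1)\to 2\O)$ of slopes $1/5$ and $1/4$. For the reverse implication I would assume a destabilising subsheaf $\E\subset\F$, taken semi-stable; it inherits $\h^0(\E(-1))=0$ together with the bound on $\h^0(\E\tensor\Om^1(1))$, so the classification of $\M(r,\chi)$ for $r\le 5$ leaves only a short list of resolutions for $\E$ (of multiplicities $1,2,3$ and slopes $1,\tfrac12,\tfrac23$). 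Fitting each into a commutative diagram over $\f$, with the lift on $3\O$ injective on global sections as in the diagram (8) of \ref{3.1.3}, forces $\f$ into one of the shapes $\f_1,\f_2,\f_3$, the desired contradiction.

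For (ii) I would imitate \ref{3.1.3}(ii): set $\G=\F^\D(1)$, a point of $\M(6,4)$ presented by $\f^\T$, and use the vanishing block $\f_{13}=0$ to peel off a point by the snake lemma, $0\to\G'\to\G\to\C_x\to 0$, where $\G'$ is the cokernel of a morphism whose stability is supplied by \ref{3.1.2} (or \ref{2.3.1}). The hypotheses $\f_{12}\neq 0$, $\f_{11}$ not divisible by $\f_{12}$, and $\f_{23}$ with linearly independent maximal minors are exactly what force $\G'$ to be a genuine stable sheaf of the predicted type; the remaining potential destabilisers of $\G$ give points in $\M(1,1)$ or $\M(2,2)$ and are ruled out by diagrams analogous to (8). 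I expect the converse to be the main obstacle: in (i) one must confirm that the admissible subsheaves $\E$ truly exhaust all destabilisers and that each matches a single forbidden shape, and in (ii) one must check that peeling the point leaves a core covered by \ref{3.1.2}/\ref{2.3.1} and that the residual small-multiplicity cases genuinely do not occur.
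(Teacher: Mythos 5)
Your forward direction coincides with the paper's: the same display diagram for the spectral sequence of $\F(1)$, the same identification $\Coker(\f_1)=0$, $\Ker(\f_1)\isom\O(-2)\oplus\O(-1)$, the same bound $\rank(\rho_{13})\ge 7$, and the dichotomy $\rank(\rho_{13})=8$ or $7$ producing (i) or (ii). Both halves of your converse, however, have genuine gaps. In (i), your list of candidate destabilisers, ``multiplicities $1,2,3$ and slopes $1,\frac{1}{2},\frac{2}{3}$,'' is incomplete: the constraints $\h^0(\E(-1))=0$, $\h^0(\E\tensor\Om^1(1))\le 1$, $\h^0(\E)<3$, $\pp(\E)>\frac{1}{3}$ also admit semi-stable sheaves in $\M(4,2)$ (resolution $0\to 2\O(-2)\to 2\O\to\E\to 0$) and in $\M(5,2)$ (three resolutions, e.g. $0\to 3\O(-2)\to\O(-1)\oplus 2\O\to\E\to 0$). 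These are exactly the cases the paper must treat by separate arguments: for the $\M(5,2)$ candidates the comparison maps $\a$, $\b$ are \emph{not} injective, and the contradiction is not that $\f$ acquires a forbidden shape but, for instance, that $\Ker(\a)\isom\O(-1)$ would have to embed into $3\O(-2)$, which is impossible. So your step ``fitting each into a diagram forces $\f$ into one of the shapes $\f_1,\f_2,\f_3$'' fails precisely for the candidates you dropped.

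In (ii), the dualisation strategy copied from \ref{3.1.3}(ii) does not transfer. Dualising resolution (ii) gives $\G=\F^\D(1)$ with a resolution $0\to\O(-1)\oplus 3\O(-2)\to\O(1)\oplus\O\oplus 2\O(-1)\to\G\to 0$, where the zero block now sits against the block $\f_{23}^\T\colon 3\O(-2)\to 2\O(-1)$; the snake lemma there does not peel off a single point $\C_x$ (one gets a kernel term $\O(-4)$ and a cokernel of length $3$ when the minors of $\f_{23}$ have no common factor), and claim \ref{3.1.2}, which concerns morphisms $\O(-3)\oplus 2\O(-2)\to 2\O(-1)\oplus\O(1)$, does not apply to this shape. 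The paper instead works directly with $\F$: since $\f_{12}\nmid\f_{11}$, the ideal $(\f_{11},\f_{12})$ cuts out a length-$2$ scheme $Z$, and the snake lemma yields $0\to\F'\to\F\to\O_Z\to 0$ with $\F'$ having resolution $0\to\O(-4)\oplus 2\O(-1)\to 3\O\to\F'\to 0$, which is exactly the situation of claim \ref{3.1.4} ($\F'$ gives a point in $\M(6,0)$ whose only possible slope-zero subsheaf is $\O_L(-1)$); the residual destabilisers then have Hilbert polynomial $2m+1$, $m+2$ or $m+1$ --- not points of $\M(1,1)$ or $\M(2,2)$ as you predict --- and are ruled out by diagrams. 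To repair your proposal you would need to replace the dual-side peeling by this direct length-$2$ peeling (or redo the analysis of the dual shape from scratch) and to complete the destabiliser list in (i).
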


\begin{proof}
Let $\F$ give a point in $\M(6,2)$ and satisfy the above cohomological conditions.
Display diagram (2.1.1) for the Beilinson spectral sequence I converging to $\F(1)$ reads
\[
\xymatrix
{
4\O(-1) \ar[r]^-{\f_1} & \Om^1(1) & 0 \\
0 & 3\Om^1(1) \ar[r]^-{\f_4} & 8\O
}.
\]
As in the proof of \ref{3.1.3}, we have $\Coker(\f_1)=0$, $\Ker(\f_1) \isom \O(-2) \oplus \O(-1)$.
Performing the same steps as at loc.cit. we arrive at the resolution
\[
0 \lra \O(-2) \oplus \O(-1) \oplus 9\O \stackrel{\rho}{\lra} 8\O \oplus 3\O(1) \lra \F(1) \lra 0.
\]
Notice that $\rank(\rho_{13}) \ge 7$, otherwise $\F(1)$ would map surjectively to the cokernel
of a morphism $\O(-2) \oplus \O(-1) \to 2\O$, in violation of semi-stability.
From here on we get resolution (i) or (ii), depending on whether $\rank(\rho_{13})=8$ or $7$.

\medskip

\noi
Conversely, we assume that $\F$ has resolution (i) and we need to show that
there are no destabilising subsheaves. Assume that $\E \subset \F$ is a destabilising subsheaf.
We may take $\E$ to be semi-stable.
As $\F$ is generated by global sections, we have $\h^0(\E) < \h^0(\F)$.
Thus $\E$ gives a point in $\M(r,1)$ or $\M(r,2)$ for some $r$, $1 \le r \le 5$.
The case when $\PP_{\E}(m)=3m+1$ can be easily ruled out.
Moreover, we have $\h^0(\E(-1))=0$, $\h^0(\E \tensor \Om^1(1)) \le 1$.
From the results in \cite{drezet-maican} and \cite{mult_five} we see that $\E$ may have
one of the following resolutions:
\[
\tag{1}
0 \lra \O(-1) \lra \O \lra \E \lra 0,
\]
\[
\tag{2}
0 \lra \O(-2) \lra \O \lra \E \lra 0,
\]
\[
\tag{3}
0 \lra \O(-2) \oplus \O(-1) \lra 2\O \lra \E \lra 0,
\]
\[
\tag{4}
0 \lra 2\O(-2) \lra 2\O \lra \E \lra 0,
\]
\[
\tag{5}
0 \lra 2\O(-2) \oplus \O(-1) \lra \O(-1) \oplus 2\O \lra \E \lra 0,
\]
\[
\tag{6}
0 \lra 3\O(-2) \lra \O(-1) \oplus 2\O \lra \E \lra 0,
\]
\[
\tag{7}
0 \lra 3\O(-2) \oplus \O(-1) \lra 2\O(-1) \oplus 2\O \lra \E \lra 0.
\]
Each of these resolutions must fit into a commutative diagram like diagram (8) at \ref{3.1.3}
in which $\a$ is injective on global sections.
For the first four resolutions $\a$ must be injective and we get the contradictory conclusions
that $\f \sim \f_1$, $\f \sim \f_2$ or $\f \sim \f_3$.
If $\E$ has resolution (5), then $\b$ cannot be injective, hence $\a$ is not injective,
hence $\Ker(\a) \isom \Ker(\b) \isom \O(-1)$ and we conclude, as in the case of resolution (4),
that $\f \sim \f_3$. If $\E$ has resolution (6), then, again, $\Ker(\a) \isom \O(-1) \isom \Ker(\b)$,
which is absurd, because $\O(-1)$ cannot be isomorphic to a subsheaf of $3\O(-2)$.
For resolution (7) we arrive at a contradiction in a similar manner.

\medskip

\noi
Assume now that $\F$ has resolution (ii).
Assume that there is a destabilising subsheaf $\E \subset \F$.
We may assume that $\E$ is semi-stable.
From the snake lemma we obtain an extension
\[
0 \lra \F' \lra \F \lra \O_Z \lra 0,
\]
where $Z$ is the zero-dimensional scheme of length $2$
given by the ideal $(\f_{11}, \f_{12})$ and $\F'$ has a resolution
\[
0 \lra \O(-4) \oplus 2\O(-1) \stackrel{\psi}{\lra} 3\O \lra \F' \lra 0
\]
in which $\psi_{12}= \f_{23}$. According to \ref{3.1.4}, $\F'$ gives a point in
$\M(6,0)$ and the only subsheaf of $\F'$ of slope zero, if there is one, must be of the form $\O_L(-1)$.
It follows that $\E$ must have Hilbert polynomial $\PP_{\E}(m) = 2m+1$, $m+2$ or $m+1$.
If $\PP_{\E}(m)=2m+1$, then $\E$ is the structure sheaf of some conic curve $C \subset \P^2$.
We obtain a commutative diagram with exact rows and injective vertical maps
\[
\xymatrix
{
0 \ar[r] & \O(-2) \ar[r] \ar[d]^-{\b} & \O \ar[r] \ar[d]^-{\a} & \O_C \ar[r] \ar[d] & 0 \\
0 \ar[r] & \O(-3) \oplus \O(-2) \oplus 2\O(-1) \ar[r] & \O(-1) \oplus 3\O \ar[r] & \F \ar[r] & 0
}.
\]
Taking into account the possible canonical forms for $\b$, we see that $\f$ is represented by
a matrix having one of the following forms:
\[
\left[
\ba{cccc}
\star & 0 & 0 & 0 \\
\star & 0 & \star & \star \\
\star & 0 & \star & \star \\
\star & \star & \star & \star
\ea
\right], \qquad \left[
\ba{cccc}
\star & \star & 0 & 0 \\
\star & \star & \star & 0 \\
\star & \star & \star & 0 \\
\star & \star & \star & \star
\ea
\right], \qquad \left[
\ba{cccc}
\star & \star & 0 & 0 \\
\star & \star & 0 & 0 \\
\star & \star & \star & \star \\
\star & \star & \star & \star
\ea
\right].
\]
In each of these situations the hypothesis on $\f$ gets contradicted.
If $\PP_{\E}(m)=m+1$, then $\E$ is the structure sheaf of some line $L \subset \P^2$
and we obtain a contradiction as above.
The case in which $\PP_{\E}(m)= m+2$ is not feasible because in this case $\E \isom \O_L(1)$,
yet $\H^0(\E(-1))$ must vanish because the corresponding group for $\F$ vanishes.
\end{proof}

\begin{prop}
\label{4.1.3}
The sheaves $\F$ giving points in $\M(6,2)$ and satisfying the conditions
$\h^0(\F(-1))=1$ and $\h^1(\F)=1$ are precisely the sheaves having a resolution of the form
\[
0 \lra \O(-3) \oplus 2\O(-2) \stackrel{\f}{\lra} 2\O(-1) \oplus \O(1) \lra \F \lra 0,
\]
where $\f$ satisfies the conditions of claim \ref{3.1.2}.
\end{prop}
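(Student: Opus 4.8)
The plan is to prove the two implications separately, reserving the Beilinson machinery for the forward one. The converse is almost immediate: if $\f$ satisfies the conditions of \ref{3.1.2}, that claim already tells us $\F$ is a stable sheaf giving a point in $\M(6,2)$, so only the cohomological conditions remain. These follow from a direct computation with the resolution: twisting by $\O(-1)$ gives
\[
0 \lra \O(-4) \oplus 2\O(-3) \lra 2\O(-2) \oplus \O \lra \F(-1) \lra 0,
\]
whence $\h^0(\F(-1)) = \h^0(\O) = 1$, while from the untwisted resolution one reads $\h^1(\F) = \h^2(\O(-3) \oplus 2\O(-2)) = 1$, all intermediate $\H^1$-terms of the line bundles vanishing and $\H^2$ of the target being zero.

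For the forward implication I would pass to the dual $\G = \F^\D(1)$, which gives a point in $\M(6,4)$, and mimic the argument of \ref{3.1.6}. Duality together with Riemann--Roch fixes all the invariants: $\h^0(\G) = \h^1(\F(-1)) = 5$, $\h^1(\G) = \h^0(\F(-1)) = 1$, $\h^0(\G(-1)) = \h^1(\F) = 1$, $\h^1(\G(-1)) = \h^0(\F) = 3$, and $\chi(\G \tensor \Om^1(1)) = 3\chi(\G) - \chi(\G(1)) = 2$. Writing $a = \h^1(\G \tensor \Om^1(1))$, the tableau (2.1.4) for $\G$ reads
\[
\xymatrix
{
3\O(-2) \ar[r]^-{\f_1} & a\O(-1) \ar[r]^-{\f_2} & \O \\
\O(-2) \ar[r]^-{\f_3} & (a+2)\O(-1) \ar[r]^-{\f_4} & 5\O
}.
\]
Surjectivity of $\f_2$ forces its linear entries to span $V^*$, so $a \ge 3$; since $\G$ maps onto $\CC = \Ker(\f_2)/\Im(\f_1)$, which must be torsion, $a \le 4$; and $a = 4$ yields $\pp(\CC) = -1/2$, contradicting semi-stability, exactly as in \ref{3.1.6}. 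Hence $a = 3$, $\Ker(\f_2) = \Im(\f_1)$, $\Ker(\f_1) \isom \O(-3)$, and, as at 3.2.5 \cite{mult_five}, $\Coker(\f_3) \isom \Om^1(1) \oplus 2\O(-1)$. Combining (2.1.5) and (2.1.6) then gives
\[
0 \lra \O(-3) \oplus \Om^1(1) \oplus 2\O(-1) \lra 5\O \lra \G \lra 0.
\]

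Next I would dualise this resolution and twist by $\O(1)$; since dualising turns $\Om^1(1)$ into $\Om^1(-1)$, this produces
\[
0 \lra 5\O(-2) \lra \O(1) \oplus \Om^1 \oplus 2\O(-1) \lra \F \lra 0.
\]
Resolving $\Om^1$ by $0 \to \O(-3) \to 3\O(-2) \to \Om^1 \to 0$ and forming the mapping cone yields
\[
0 \lra \O(-3) \oplus 5\O(-2) \lra 3\O(-2) \oplus \O(1) \oplus 2\O(-1) \lra \F \lra 0.
\]
The scalar block $5\O(-2) \to 3\O(-2)$ must have rank $3$, for otherwise $\F$ would map onto a quotient of the form $\O_L(-2)$, of slope $-1$, in violation of semi-stability. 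Cancelling the three copies of $\O(-2)$ produces precisely
\[
0 \lra \O(-3) \oplus 2\O(-2) \stackrel{\f}{\lra} 2\O(-1) \oplus \O(1) \lra \F \lra 0.
\]
Because the cancellation rank is pinned to $3$, no other resolution type can arise; in particular the alternative $2 \times 2$ resolution occurring in \ref{3.1.6} is excluded here, as it would force $\chi(\F) = 1$.

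It remains to verify that this $\f$ satisfies the conditions of \ref{3.1.2}, and I expect this to be the main obstacle. The task is to translate the semi-stability of $\F$ into the algebraic requirements that $\ell_{11}\ell_{22} - \ell_{12}\ell_{21} \neq 0$ and that the images of $q_1\ell_{21} - q_2\ell_{11}$ and $q_1\ell_{22} - q_2\ell_{12}$ be linearly independent modulo the determinant. I would argue by contradiction: a vanishing determinant, or a dependence modulo it, permits elementary row and column operations bringing $\f$ to a degenerate block form, each of which exhibits a subsheaf or quotient of $\F$ of slope at least $\pp(\F) = 1/3$, contradicting stability. Note that the cohomological invariants alone do not detect this degeneration (one checks $\h^0(\F(-1)) = 1$ for the whole family), so the argument genuinely rests on a careful case analysis of the upper $2 \times 3$ block, rather than on the spectral-sequence bookkeeping, which is routine once one follows \ref{3.1.6}.
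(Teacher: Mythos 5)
Your proposal is correct and takes essentially the same route as the paper: the paper's proof likewise applies the Beilinson spectral sequence (2.1.4) to the dual sheaf $\G = \F^\D(1)$ in $\M(6,4)$, obtains the same tableau, pins $\h^1(\G \tensor \Om^1(1)) = 3$ by the argument of \ref{3.1.6}, extracts a resolution of $\G$, dualises to reach the stated resolution of $\F$, and cites claim \ref{3.1.2} for the converse. The remaining differences are cosmetic: the paper eliminates $\Om^1(1)$ on the $\G$-side (via $\Coker(\Om^1(1) \to 5\O) \isom 2\O \oplus \O(1)$) before dualising, whereas you dualise first and cancel $3\O(-2)$ by a rank argument; and the non-degeneracy check you flag as the main obstacle is left implicit in the paper as well, your sketched row/column-operation argument being the standard way to supply it, with the small correction that the degenerate block forms produce \emph{quotients} of slope strictly less than $\pp(\F)=1/3$ (for quotients the destabilising inequality goes the opposite way from the one you wrote), which is what contradicts semi-stability.
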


\begin{proof}
Let $\F$ give a point in $\M(6,2)$ and satisfy the above cohomological conditions.
Denote $m=\h^0(\F \tensor \Om^1(1))$.
The Beilinson diagram (2.1.4) for the dual sheaf $\G= \F^\D(1)$ giving
a point in $\M(6,4)$ reads
\[
\xymatrix
{
3\O(-2) \ar[r]^-{\f_1} & m\O(-1) \ar[r]^-{\f_2} & \O \\
\O(-2) \ar[r]^-{\f_3} & (m+2)\O(-1) \ar[r]^-{\f_4} & 5\O
}.
\]
Arguing as in the proof of \ref{3.1.6}, we can show that $m=3$, that
$\Ker(\f_2)= \Im(\f_1)$ and $\Ker(\f_1) \isom \O(-3)$.
Combining the exact sequences (2.1.5) and (2.1.6) we get the resolution
\[
0 \lra \O(-2) \stackrel{\psi}{\lra} \O(-3) \oplus 5\O(-1) \lra 5\O \lra \G \lra 0.
\]
As in the proof of 2.1.4 \cite{mult_five}, we have $\Coker(\psi) \isom \O(-3) \oplus 2\O(-1) \oplus \Om^1(1)$
and the cokernel of the induced morphism $\Om^1(1) \to 5\O$ is isomorphic to  $2\O \oplus \O(1)$.
We finally arrive at the resolution dual to the resolution in the proposition:
\[
0 \lra \O(-3) \oplus 2\O(-1) \lra 2\O \oplus \O(1) \lra \G \lra 0.
\]
The converse is the object of claim \ref{3.1.2}.
\end{proof}

\begin{prop}
\label{4.1.4}
The sheaves $\F$ giving points in $\M(6,2)$ and satisfying the conditions
$\h^0(\F(-1))=1$, $\h^1(\F)=2$, $\h^1(\F(1))=0$
are precisely the sheaves having a resolution of the form
\[
0 \lra 2\O(-3) \oplus \O(-1) \stackrel{\f}{\lra} \O(-2) \oplus \O \oplus \O(1) \lra \F \lra 0,
\]
where $\f_{11}$ has linearly independent entries, $\f_{22} \neq 0$ and does not divide $\f_{32}$.
\end{prop}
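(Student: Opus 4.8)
The plan is to prove the two implications separately, treating the forward direction through the Beilinson spectral sequence of the dual sheaf and the converse through the snake lemma.

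\emph{Forward direction.} Since $\h^0(\F(-1))=1>0$, the spectral sequence I for a twist of $\F$ is not clean, so I would dualise. Set $\G=\F^\D(1)$, which by \cite{maican-duality} gives a point in $\M(6,4)$. The duality relations $\h^0(\G(k))=\h^1(\F(-k-1))$ and $\h^1(\G(k))=\h^0(\F(-k-1))$, combined with Riemann--Roch, turn the hypotheses into
\[
\h^0(\G)=5,\quad \h^1(\G)=1,\quad \h^0(\G(-1))=2,\quad \h^1(\G(-1))=4,\quad \h^0(\G(-2))=0.
\]
Writing $m=\h^0(\F\tensor\Om^1(1))$, the Beilinson display (2.1.4) for $\G$ then reads
\[
\xymatrix{
4\O(-2) \ar[r]^-{\f_1} & m\O(-1) \ar[r]^-{\f_2} & \O \\
2\O(-2) \ar[r]^-{\f_3} & (m+2)\O(-1) \ar[r]^-{\f_4} & 5\O
}.
\]

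Next I would determine $m$, as in the proofs of \ref{3.1.6} and \ref{4.1.3}. Surjectivity of $\f_2$ gives $m\ge 3$, while \ref{2.2.1} applied to $\F$ gives $m\le 4$. Since $\G$ is pure of dimension $1$, its quotient $\CC=\Ker(\f_2)/\Im(\f_1)$ has dimension at most $1$, which forces $\rank\Im(\f_1)=m-1$ and $\rank\Ker(\f_1)=5-m$. To decide between $m=3$ and $m=4$ I would argue as in \ref{3.1.3} and \ref{3.1.5}: after resolving the $\Om^1(1)$ that appears upon reduction, the maximality of the rank of the relevant submatrix---forced by the surjection of $\G$ onto $\Coker(\f_1)$ and by semi-stability---shows that $\Ker(\f_1)$ admits no direct summand of degree exceeding $-4$, whence $\Ker(\f_1)\isom\O(-4)$, $m=4$, and $\CC$ has length $2$. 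With these data I would combine (2.1.5) and (2.1.6) and, since now $\CC\neq 0$, apply the horseshoe lemma to the extension $0\to\Coker(\f_5)\to\G\to\CC\to 0$ using a resolution of the length-$2$ sheaf $\CC$, precisely as in the proofs of \ref{3.1.5} and \ref{3.1.8}. Dualising, resolving the resulting $\Om^1$-terms and cancelling the redundant line-bundle summands---each cancellation justified by a rank inequality ruling out a destabilising quotient of $\F$---yields the asserted resolution, the end conditions being that $\f_{11}$ have linearly independent entries, $\f_{22}\neq 0$, and $\f_{22}\nmid\f_{32}$ (the last two forced by $\h^0(\G(-2))=0$ and semi-stability).

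\emph{Converse.} Assume $\F$ has the stated resolution. Since $\f_{11}=[\ell_1,\ell_2]$ has linearly independent entries, the morphism $2\O(-3)\to\O(-2)$ has kernel $\O(-4)$ and cokernel $\C_x$, where $x$ is the common zero of $\ell_1,\ell_2$; moreover $\f_{12}=0$ for degree reasons. Feeding the inclusion $\O(-4)\oplus\O(-1)\hookrightarrow 2\O(-3)\oplus\O(-1)$ into the snake lemma produces an exact sequence $0\to\F'\to\F\to\C_x\to 0$, where $\F'=\Coker(\psi)$ for a morphism $\psi\colon\O(-4)\oplus\O(-1)\to\O\oplus\O(1)$ with $\psi_{12}=\f_{22}$ and $\psi_{22}=\f_{32}$. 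Because $\f_{22}\neq 0$ and $\f_{22}\nmid\f_{32}$, \ref{3.1.8} shows $\F'$ gives a point in $\M(6,1)$, and \ref{2.3.2} shows it is in fact stable, of slope $1/6$. Any subsheaf $\E\subset\F$ then meets $\F'$ in a proper (hence slope $<1/6$) subsheaf and maps to $\C_x$; a slope estimate bounds $\pp(\E)$ by $\pp(\F)=1/3$ except for finitely many low-multiplicity possibilities such as $\E\isom\O_L$, which are excluded by diagrams analogous to diagram (8) of \ref{3.1.3}. Hence $\F$ is semi-stable, and the three cohomology conditions follow from the resolution together with the sequence $0\to\F'\to\F\to\C_x\to 0$ by a direct computation. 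I expect the main obstacle to be the forward construction: pinning down $m=4$ (equivalently $\Ker(\f_1)\isom\O(-4)$, ruling out $m=3$) and carrying out the horseshoe-plus-cancellation bookkeeping in the presence of the nonzero torsion sheaf $\CC$.
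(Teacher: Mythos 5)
Your converse is essentially the paper's own argument and is fine: the snake-lemma extension $0 \to \F' \to \F \to \C_x \to 0$, the identification of $\F'$ as a stable point of $\M(6,1)$ via \ref{3.1.8} and \ref{2.3.2}, and the elimination of the remaining low-multiplicity subsheaves by diagrams of type (8) from \ref{3.1.3} is exactly what the paper does. The forward direction, however, has a genuine gap at the step where you pin down $m=\h^0(\F\tensor\Om^1(1))=4$. The mechanism you invoke is borrowed from \ref{3.1.3} and \ref{3.1.5}, where the sheaf surjects onto $\Coker(\f_1)$; but that surjection holds there only because the top-right term of the display vanishes ($\h^1(\F(1))=0$ in spectral sequence I). In your display for $\G=\F^\D(1)$ the top-right term is $\H^1(\G)\tensor\O\isom\O\neq 0$, since $\h^1(\G)=\h^0(\F(-1))=1$; hence $\f_2\neq 0$ and $\G$ surjects only onto $\CC=\Ker(\f_2)/\Im(\f_1)$, not onto $\Coker(\f_1)$. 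Semi-stability then genuinely fails to exclude $m=3$: in that case surjectivity of $\f_2$ forces $\Ker(\f_2)\isom\Om^1$, and the configuration $\Im(\f_1)=\Ker(\f_2)$, $\Ker(\f_1)\isom\O(-2)\oplus\O(-3)$, $\CC=0$ passes every slope test -- the resulting quotient $\Coker(\f_4)/\Ker(\f_1)$ even has the correct Euler characteristic, $\h^0$ and $\h^1$. So no rank or slope argument carried out inside the display can decide between $m=3$ and $m=4$.

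The missing ingredient is external to the spectral sequence: from the Euler sequence $0\to\F\tensor\Om^1(1)\to V^*\tensor\F\to\F(1)\to 0$ one gets $m\ge 3\h^0(\F)-\h^0(\F(1))=12-8=4$, which combined with $m\le 4$ gives $m=4$. (Concerning $m\le 4$: this is not the statement of \ref{2.2.1} but the inequality $m<p+q+\chi$ established inside its proof, so your citation should point there.) Note that the estimate must be made on the $\F$-side; the identical argument applied to $\G$ only yields $m\ge 3\h^1(\G)-\h^1(\G(1))=3$, which is exactly why your dual route runs into this problem. This is also why the paper's proof takes a different path: it runs spectral sequence I on $\F(1)$, whose display involves only cohomology of twists of $\F$ and never $m$, so the question does not arise; there $\Coker(\f_1)\isom\C_x$ is forced by semi-stability, and the rest is the horseshoe argument. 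Finally, if you do persist with the dual route, expect more bookkeeping than in \ref{3.1.6} or \ref{4.1.3}: your torsion quotient $\CC$ has length $2$ and need not be of the form $\O_Z$, so it has to be filtered by length-one subsheaves (as the paper does in \ref{3.1.8}) before the horseshoe lemma can be applied.
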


\begin{proof}
Let $\F$ give a point in $\M(6,2)$ and satisfy the above cohomological conditions.
Display diagram (2.1.1) for the Beilinson spectral sequence I converging to $\F(1)$ reads
\[
\xymatrix
{
5\O(-1) \ar[r]^-{\f_1} & 2\Om^1(1) & 0 \\
\O(-1) \ar[r]^-{\f_3} & 4\Om^1(1) \ar[r]^-{\f_4} & 8\O
}.
\]
Arguing as in the proof of \ref{3.1.5} we see that $\Ker(\f_1) \isom \O(-3)$ and $\Coker(\f_1) \isom \C_x$.
From (2.1.3) we have an extension
\[
0 \lra \F' \lra \F \lra \C_x \lra 0,
\]
where $\F' = \Coker(\f_5)(-1)$. From (2.1.2) we get the exact sequence
\[
0 \lra \O(-2) \lra \O(-4) \oplus 4\Om^1 \lra 8\O(-1) \lra \F' \lra 0,
\]
hence the resolution
\[
0 \lra \O(-2) \lra \O(-4) \oplus 12\O(-1) \stackrel{\rho}{\lra} 8\O(-1) \oplus 4\O \lra \F' \lra 0.
\]
If $\rank(\rho_{12}) \le 7$, then $\F'$ would have a subsheaf of slope $4/3$
that would destabilise $\F$.
Thus $\rank(\rho_{12}) =8$ and we have the resolution
\[
0 \lra \O(-2) \stackrel{\psi}{\lra} \O(-4) \oplus 4\O(-1) \lra 4\O \lra \F' \lra 0.
\]
Arguing as at 2.1.4 \cite{mult_five}, we can show that $\Coker(\psi_{21}) \isom \O(-1) \oplus \Om^1(1)$
and that the cokernel of the induced morphism $\Om^1(1) \to 4\O$ is isomorphic to $\O \oplus \O(1)$.
We obtain the resolution
\[
0 \lra \O(-4) \oplus \O(-1) \lra \O \oplus \O(1) \lra \F' \lra 0.
\]
Combining this with the standard resolution of $\C_x$ tensored with $\O(-2)$ we obtain the exact sequence
\[
0 \lra \O(-4) \lra \O(-4) \oplus 2\O(-3) \oplus \O(-1) \lra \O(-2) \oplus \O \oplus \O(1) \lra \F \lra 0.
\]
The morphism $\O(-4) \to \O(-4)$ is non-zero because $\h^1(\F(1))=0$.
Canceling $\O(-4)$ we obtain a resolution as in the proposition.

Conversely, assume that $\F$ has a resolution as in the proposition.
Then $\F$ is an extension of $\C_x$ by $\F'$, where, in view of \ref{3.1.8},
$\F'$ gives a point in $\M(6,1)$.
It follows that any possibly destabilising subsheaf of $\F$ must be the structure sheaf
of a line or of a conic curve. Each of these situations can be easily ruled out using
diagrams similar to diagram (8) in subsection 3.1.
\end{proof}

\noi
In the remaining part of this subsection we shall prove that there are no
sheaves $\F$ giving points in $\M(6,2)$ beside the sheaves we have discussed in this subsection
and the sheaves at \ref{3.1.7}(ii).
In view of loc.cit., we may restrict our attention to the case when $\H^1(\F(1))=0$.
Assume that $\h^0(\F(-1)) \le 1$. According to \ref{2.2.2}(iv), (v), and \ref{4.1.1}
the pair $(\h^0(\F(-1)), \h^1(\F))$ may be one of the following: $(0, 0)$, $(0, 1)$, $(1,1)$, $(1, 2)$.
Each of these situations has already been examined.
The following concludes the classification of sheaves in $\M(6,2)$:

\begin{prop}
\label{4.1.5}
Let $\F$ be a sheaf giving a point in $\M(6,2)$ and satisfying the condition $\h^1(\F(1))=0$.
Then $\h^0(\F(-1))=0$ or $1$.
\end{prop}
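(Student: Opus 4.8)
The plan is to obtain the bound directly from the cohomology estimates of \ref{2.2.1}, the point being that the standing hypothesis $\h^1(\F(1))=0$ is exactly what makes part (i) of that proposition usable. First I would clear away the degenerate cases: if $\h^1(\F)=0$, then $\h^0(\F(-1))=0$ by \ref{4.1.1}; and if $\h^0(\F(-1))=0$ there is nothing to prove. So I may assume $\h^0(\F(-1))>0$ and $\h^1(\F)>0$, which is precisely the regime in which both parts of \ref{2.2.1} apply to $\F$. I would also record the vanishing $\h^0(\F(-2))=0$: a nonzero section of $\F(-2)$ would embed the structure sheaf $\O_C$ of some curve $C$ of degree $d\le 6$, and since $\pp(\O_C)=(3-d)/2\ge -3/2 > -5/3 = \pp(\F(-2))$ this would violate the semi-stability of $\F$.

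Next I would feed these two vanishings into \ref{2.2.1}. Part (i) with $\h^1(\F(1))=0$ and part (ii) with $\h^0(\F(-2))=0$ yield
\[
\h^1(\F(-1)) > 2\,\h^1(\F), \qquad \h^0(\F) > 2\,\h^0(\F(-1)).
\]
Since $\F$ is one-dimensional, $\h^2$ of every twist vanishes, so from $\PP_{\F}(m)=6m+2$ I get $\h^1(\F)=\h^0(\F)-2$ and $\h^1(\F(-1))=\h^0(\F(-1))+4$. Writing $a=\h^0(\F(-1))$ and $b=\h^0(\F)$, the two estimates above translate into
\[
a+8 > 2b \qquad \text{and} \qquad b > 2a.
\]

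Chaining these gives $4a < 2b < a+8$, hence $3a<8$ and $a\le 2$. The real content is then the exclusion of the boundary value $a=2$, and here mere chaining is not enough: one must compare the two bounds on $b$ directly, for $a=2$ forces both $b\ge 5$ (from $b>2a$) and $b\le 4$ (from $a+8>2b$), a contradiction. Hence $\h^0(\F(-1))=a\le 1$, as claimed. I expect the only delicate points to be the two vanishing inputs—$\h^0(\F(-2))=0$ and the use of the hypothesis $\h^1(\F(1))=0$—together with this sharp elimination of $a=2$. Should a more geometric argument be preferred, one could instead imitate \ref{3.1.9} by embedding $\O_C$ in $\F(-1)$ for a quintic or sextic $C$; but the quintic case is messier here, since the quotient $\F(-1)/\O_C$ has Hilbert polynomial $m+1$ and so is $\O_L$ rather than $\O_L(-1)$, whose nonvanishing $\h^0$ would then have to be controlled separately.
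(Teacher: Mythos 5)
Your proof is correct, and it takes a genuinely different route from the paper's. The paper argues geometrically: assuming $\h^0(\F(-1)) \ge 2$, it produces an injection $\O_C \to \F(-1)$ with $\deg(C) = 5$ or $6$, analyses the zero-dimensional torsion of the quotient, and in each surviving branch builds an explicit resolution (via the horseshoe lemma and the classification of $\M(6,1)$ from subsection 3.1) ending with $\h^1(\F(1))=1$, contradicting the hypothesis. Your argument is purely numerical: after disposing of the degenerate cases via \ref{4.1.1}, you feed the two vanishings $\h^1(\F(1))=0$ and $\h^0(\F(-2))=0$ (the latter correctly justified by semi-stability, since $\pp(\O_C) \ge -3/2 > -5/3 = \pp(\F(-2))$ for $d \le 6$) into both parts of \ref{2.2.1}, convert to the inequalities $a+8>2b$ and $b>2a$ with $a=\h^0(\F(-1))$, $b=\h^0(\F)$, and the sharp elimination of $a=2$ (which forces simultaneously $b\ge 5$ and $b\le 4$) closes the argument; all the arithmetic checks out, including the Euler characteristic identities $\h^1(\F)=b-2$ and $\h^1(\F(-1))=a+4$. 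What each approach buys: yours is shorter, more elementary, and follows the same pattern as the paper's own Corollary \ref{2.2.2}, avoiding the delicate torsion case analysis entirely. The paper's longer route is not wasted effort, however: its case analysis also identifies \emph{where} the excluded sheaves live, producing the resolution $0 \to \O(-4)\oplus\O \to 2\O(1) \to \F \to 0$ and hence showing that sheaves with $\h^0(\F(-1))\ge 2$ are exactly the $X_6$-sheaves of \ref{3.1.7}(ii) with $\h^1(\F(1))=1$ --- information the classification needs anyway, so the paper gets the exhaustiveness of its strata and this proposition from one argument.
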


\begin{proof}
Let $\F$ give a point in $\M(6,2)$ and satisfy the condition $\h^0(\F(-1))\ge 2$.
As at 2.1.3 \cite{drezet-maican}, there is an injective morphism $\O_C \to \F(-1)$
for a curve $C \subset \P^2$. This curve has degree $5$ or $6$, otherwise $\O_C$
would destabilise $\F(-1)$.
Assume that $\deg(C)=5$. The quotient sheaf $\CC = \F/\O_C(1)$ has Hilbert polynomial
$\PP(m)=m+2$ and zero-dimensional torsion $\TT$ of length at most $1$.
Indeed, the pull-back in $\F$ of $\TT$ would be a destabilising subsheaf if $\length(\TT) \ge 2$.
If $\TT=0$, then $\CC \isom \O_L(1)$, forcing $\h^0(\F(-1))=2$.
The morphism $\O(1) \to \O_L(1)$ lifts to a morphism $\O(1) \to \F$,
which leads us to the resolution
\[
0 \lra \O(-4) \oplus \O \lra 2\O(1) \lra \F \lra 0.
\]
Thus $\h^1(\F(1))=1$. Assume now that $\length(\TT)=1$.
Let $\F' \subset \F$ be the pull-back of $\TT$.
According to 3.1.5 \cite{mult_five}, we have $\h^0(\F'(-1)) =1$.
Since $\F/\F' \isom \O_L$, we get $\h^0(\F(-1))=1$, contradicting our choice of $\F$.

Assume now that $C$ is a sextic curve.
The quotient sheaf $\CC = \F/\O_C(1)$ is zero-dimensional of length $5$.
Let $\CC' \subset \CC$ be a subsheaf of length $4$ and let $\F'$ be its preimage in $\F$.
We claim that $\F'$ gives a point in $\M(6,1)$. If this were not the case, then $\F'$ would have
a destabilising subsheaf $\F''$, which may be assumed to be semi-stable.
We may assume, without loss of generality, that $\F$ is stable.
Thus we have the inequalities $1/6 < \pp(\F'') < 1/3$.
This leaves only two possibilities: that $\F''$ give a point in $\M(5,1)$ or in $\M(4,1)$.
In the first case $\F/\F''$ is isomorphic to the structure sheaf of a line,
hence $\h^0(\F(-1)) = \h^0(\F''(-1)) = 0$ or $1$, cf. \cite{mult_five}.
This contradicts our choice of $\F$.
In the second case $\F/\F''$ is easily seen to be semi-stable,
hence it is isomorphic to the structure sheaf of a conic curve.
We get $\h^0(\F(-1)) = \h^0(\F''(-1))=0$, cf. \cite{drezet-maican},
contradicting our choice of $\F$. This proves the claim, i.e. that $\F'$ is semi-stable.
We have $\h^0(\F'(-1)) \ge 1$ so, according to the results in subsection 3.1,
there are two possible resolutions for $\F'$:
\[
0 \lra 2\O(-3) \oplus \O(-2) \lra \O(-2) \oplus \O(-1) \oplus \O(1) \lra \F' \lra 0
\]
or
\[
0 \lra \O(-4) \oplus \O(-1) \lra \O \oplus \O(1) \lra \F' \lra 0.
\]
Combining the first resolution with the standard resolution of $\C_x = \CC/\CC'$ tensored
with $\O(1)$ we obtain the exact sequence
\[
0 \lra \O(-1) \lra 2\O(-3) \oplus \O(-2) \oplus 2\O \lra \O(-2) \oplus \O(-1) \oplus 2\O(1) \lra \F \lra 0.
\]
From this it easily follows that $\C_x$ is a direct summand of $\F$, which violates semi-stability.
Assume, finally, that $\F'$ has the second resolution.
We can apply the horseshoe lemma as above, leading to the resolution
\[
0 \lra \O(-1) \lra \O(-4) \oplus \O(-1) \oplus 2\O \lra \O \oplus 2\O(1) \lra \F \lra 0.
\]
We see from this that $\h^1(\F(1))=1$.
\end{proof}

\subsection{The strata as quotients} 

In the previous subsection we classified all sheaves giving points in $\M(6,2)$,
namely we showed that this moduli space can be decomposed into seven subsets $X_0, \ldots, X_6$, cf. Table 2.
For $1 \le i \le 6$, the sheaves giving points in $X_i$ are stable.
We will employ the notations $\W_i$, $W_i$, $G_i$, $0 \le i \le 6$, analogous to the notations
from subsection 3.2.
For $1 \le i \le 6$, the fibres of the canonical maps $\rho_i \colon W_i \to X_i$
are precisely the $G_i$-orbits. It follows, as at 3.2, that these are geometric quotient maps.
The semi-stable but not stable points of $\M(6,2)$ are of the form $[\F_1 \oplus \F_2]$,
where $\F_1$, $\F_2$ give points in $\M(3,1)$, and they are all contained in $X_0$.
Thus $X_0$ cannot be a geometric quotient. Instead, it is a good quotient:

\begin{prop}
\label{4.2.1}
There is a good quotient $W_0 /\!/ G_0$, which is isomorphic to $X_0$.
\end{prop}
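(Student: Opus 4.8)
The plan is to realise $X_0$ as a GIT good quotient by invoking the theory of quotients of spaces of morphisms by non-reductive groups from \cite{drezet-trautmann}, and then to identify that quotient with the categorical quotient furnished by the Beilinson construction. First I would record the structure of the problem: $\W_0 = \Hom(4\O(-2), 2\O(-1) \oplus 2\O)$, a morphism $\f$ being a $4 \times 4$ matrix whose first two rows have entries in $V^*$ and whose last two rows have entries in $\SS^2 V^*$, and $G_0 = (\GL(4,\C) \times \Aut(2\O(-1) \oplus 2\O))/\C^*$. The group $G_0$ is non-reductive: its unipotent radical is $\Hom(2\O(-1),2\O)$ and its reductive part is $(\GL(4,\C) \times \GL(2,\C) \times \GL(2,\C))/\C^*$. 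The reason a geometric quotient fails here, unlike in the strata of \ref{3.2.1}, is precisely the presence of the properly semi-stable points $[\F_1 \oplus \F_2]$ with $\F_1, \F_2$ giving points in $\M(3,1)$, over which the fibre of $\rho_0$ is a union of several $G_0$-orbits.

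Next I would fix a polarisation $\L$ for the $G_0$-action on $\W_0$ in the sense of \cite{drezet-trautmann}, normalised so that $\C^*$ acts trivially, and translate GIT-semi-stability into King's numerical criterion \cite{king}. The core computation is to show that, for an appropriate choice of $\L$, a morphism is semi-stable precisely when it is not equivalent to one of the three degenerate matrix forms in \ref{4.1.1}(i); each forbidden form should correspond to a destabilising one-parameter subgroup arising from a coordinate flag in the source together with a sub-bundle of the target. Combined with the remark that every such semi-stable $\f$ is injective with $\Coker(\f)$ giving a point of $\M(6,2)$, this identifies $W_0$ with the open set $\W_0^{\ss}(\L)$ of semi-stable morphisms. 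By the existence results of \cite{drezet-trautmann} (the analogues of the propositions cited at \ref{3.2.1}) there is then a good quotient $\W_0^{\ss}(\L) /\!/ G_0$, which is projective; since $\W_0^{\ss}(\L) = W_0$, this produces the good quotient $W_0 /\!/ G_0$.

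It then remains to identify this quotient with $X_0$. Exactly as in the discussion preceding \ref{3.2.1}, the Beilinson construction recovering $\f$ from $[\F]$ works in flat families, so $\rho_0 \colon W_0 \to X_0$, $\f \mapsto [\Coker(\f)]$, is a categorical quotient map for the $G_0$-action and $X_0$ is normal by \cite{mumford}. A good quotient is in particular a categorical quotient, and categorical quotients are unique up to canonical isomorphism; since both $X_0$ and $W_0 /\!/ G_0$ are categorical quotients of the same $W_0$ by the same $G_0$, they are canonically isomorphic, and transporting the good-quotient structure along this isomorphism gives the proposition. In the course of this I would also verify that the closed $G_0$-orbits in $W_0$, which correspond to the polystable morphisms, have as cokernels exactly the polystable sheaves $\F_1 \oplus \F_2$, so that orbit-closure equivalence on the morphism side matches $S$-equivalence on the sheaf side; this is what guarantees that the good quotient separates precisely the $S$-equivalence classes, as it must.

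The main obstacle I anticipate is the matching of GIT-semi-stability with the sheaf-theoretic conditions, namely proving $\W_0^{\ss}(\L) = W_0$ via King's criterion while simultaneously confirming that the constraints on $\L$ demanded by the existence theorems of \cite{drezet-trautmann} are satisfied. This requires a complete enumeration of the destabilising subspaces and a check that the numerical inequalities isolate exactly the three forbidden forms and nothing more, together with the non-reductive bookkeeping — elimination of the unipotent radical through the affine-fibration argument of loc.cit. — that makes the resulting quotient projective rather than merely quasi-projective.
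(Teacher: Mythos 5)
Your proposal follows the same overall architecture as the paper's proof: obtain a good quotient of the morphism space from the theory of polarised quotients (the paper fixes $\L=(\l_1,\m_1,\m_2)$ with $1/8<\m_2<3/16$ and quotes \cite{drezet-2000}, theorem 6.4, for the existence of the projective quotient $\W_0^\ss(\L)/\!/G_0$), show that $\rho_0\colon W_0\to X_0$ is a categorical quotient map by running the Beilinson construction in flat families together with the orbit-closure description of the fibres over the properly semi-stable points (4.2.1 and 4.2.2 of \cite{drezet-maican}), and conclude by uniqueness of categorical quotients. The one place where you deviate from the paper, however, contains a genuine error.

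You claim that every $\L$-semi-stable morphism is injective, hence that $\W_0^\ss(\L)=W_0$ and that $W_0/\!/G_0$ is projective. Both claims are false, and the second contradicts the proposition itself: $X_0$ is a dense open subset of the irreducible projective variety $\M(6,2)$ with non-empty complement (it contains, for instance, the stratum $X_6$, the universal sextic), and a proper open subvariety of an irreducible projective variety can never be projective, since the inclusion of a projective open subvariety is a proper morphism, hence has closed image, forcing the subvariety to be everything. Semi-stability, i.e. avoidance of the three zero-block forms of \ref{4.1.1}(i), does not force injectivity. For example,
\[
\f = \left[
\ba{cccc}
Y & -X & 0 & Z \\
Z & 0 & -X & Y \\
YZ & 0 & -XY & q \\
0 & XZ & -XY & q'
\ea
\right],
\]
with $q,q'\in \SS^2 V^*$ arbitrary, satisfies $XC_1+YC_2+ZC_3=0$ among its first three columns, so $\det(\f)\equiv 0$ and $\f$ is not injective; on the other hand its first two rows $R_1,R_2$, viewed as linear maps $\C^4\to V^*$, satisfy $\rank(\a R_1+\b R_2)\ge 2$ for every $(\a,\b)\neq(0,0)$ and $\ker R_1\cap \ker R_2=0$, and these two facts rule out the first forbidden form and, respectively, the last two. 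The correct statement, which the paper extracts from \cite{maican}, 4.3, is that $W_0$ is the \emph{proper} open invariant subset of injective morphisms inside $\W_0^\ss(\L)$; one must then also note that the good quotient restricts to this subset, so that $W_0/\!/G_0$ exists as a proper open, in particular non-projective, subset of $\W_0^\ss(\L)/\!/G_0$. With this correction, the remainder of your argument (categorical quotient via the Beilinson construction in families, then uniqueness of categorical quotients) goes through exactly as in the paper.
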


\begin{proof}
Let $\W_0^\ss(\L) \subset \W_0$ denote the set of morphisms that are
semi-stable with respect to a polarisation $\L= (\l_1, \m_1, \m_2)$
satisfying the relation $1/8 < \m_2 < 3/16$ (notations as at \cite{drezet-trautmann}).
According to \cite{drezet-2000}, theorem 6.4, $\W_0^\ss(\L)/\!/ G_0$ exists and is a projective variety.
According to \cite{maican}, 4.3, $W_0$ is the subset of injective morphisms inside $\W_0^\ss(\L)$.
Thus $W_0/\!/ G_0$ exists and is a proper open subset of $\W_0^\ss(\L)/\!/ G_0$.

Arguing as at 4.2.1 \cite{drezet-maican},
we can easily see that two points of $W_0$ are in the same fibre of $\rho_0$
if and only if the relative closures in $W_0$ of their $G_0$-orbits intersect non-trivially.
This allows us to apply the method of 4.2.2 op.cit. in order to show that
$\rho_0$ is a categorical quotient map. We need to recover resolution \ref{4.1.1}(i)
from the Beilinson spectral sequence.
Fix $\F$ in $X_0$. Tableau (2.1.4) for the dual sheaf $\F^\D(1)$ reads
\[
\xymatrix
{
2\O(-2) & 0 & 0 \\
0 & 2\O(-1) \ar[r]^-{\f_4} & 4\O
}.
\]
Combining the exact sequences (2.1.5) and (2.1.6) yields the dual to resolution \ref{4.1.1}(i).
Thus $W_0 \to X_0$ is a categorical quotient map and the isomorphism
$W_0/\!/ G_0 \isom X_0$ follows from the uniqueness of the categorical quotient.
\end{proof}

By analogy with 2.2.2 \cite{mult_five}, the quotient $W_1/G_1$
is isomorphic to an open subset of the projectivisation
of a vector bundle over $\N(3,4,3) \times \P^2$ of rank $21$.
By analogy with 3.2.3 op.cit., the quotient $W_3/G_3$ is isomorphic
to an open subset of the projectivisation of a vector bundle over $\Hilb(2) \times \N(3,2,3)$ of rank $23$,
and $W_5/G_5$ is isomorphic to an open subset of the projectivisation
of a vector bundle over $\P^2 \times \Hilb(2)$ of rank $25$.
Recall the smooth projective variety $U/G$ constructed at \ref{3.2.1}.
By analogy with 9.3 \cite{drezet-trautmann}, $W_4/G_4$ is isomorphic to an open subset
of the projectivisation of a vector bundle over $U/G$ of rank $23$.
The smallest stratum $X_6$ is isomorphic to $\Hilb(6,1)$,
i.e. to the universal sextic curve in $\P^2 \times \P(\SS^6 V^*)$.

\subsection{Generic sheaves}  

Let $C \subset \P^2$ denote an arbitrary smooth sextic curve and let $P_i$ denote distinct points on $C$.
According to \cite{modules-alternatives}, propositions 4.5 and 4.6, the cokernels of morphisms
$3\O(-4) \to 4\O(-3)$ whose maximal minors have no common factor are precisely the
ideal sheaves $\I_Z \subset \O_{\P^2}$ of zero-dimensional schemes $Z$ of length $6$ that are not
contained in a conic curve. It follows that the generic sheaves in $X_1$ have the form
$\O_C(1)(P_1 + \cdots + P_6 - P_7)$, where $P_1, \ldots, P_6$ are not contained in a conic curve.
Also from loc.cit. we deduce that the generic sheaves in $X_3$ have the form
$\O_C(2)(-P_1 - P_2 - P_3 + P_4 + P_5)$, where $P_1, P_2, P_3$ are non-colinear.
From \ref{3.3.2} we deduce, by duality, that the generic sheaves in $X_4$ are of the form
$\O_C(1)(P_1 + \cdots + P_5)$, where no three points among $P_1, \ldots, P_5$ are colinear.
It is easy to see that the generic sheaves in $X_5$ are of the form $\O_C(2)(P_1 - P_2 - P_3)$.
According to claim \ref{4.3.1} below, the generic sheaves in $X_2$ have the form
$\O_C(3)(-P_1 - \cdots - P_7)$, where $P_1, \ldots, P_7$ do not lie on a conic curve and no four points
among them are colinear.

\begin{claim}
\label{4.3.1}
Let $U \subset \Hom(\O(-2) \oplus \O(-1), 3\O)$ be the set of morphisms whose maximal minors
have no common factor.
The cokernels of the morphisms in $U$ are precisely the sheaves of the form $\I_Z(3)$,
where $\I_Z \subset \O_{\P^2}$ is the ideal sheaf of a zero-dimensional subscheme $Z \subset \P^2$
of length $7$ that is not contained in a conic curve
and no subscheme of length $4$ of which is contained in a line.
\end{claim}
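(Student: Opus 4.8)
The plan is to prove both inclusions, following the template of \ref{3.3.1} and \ref{3.3.2}. For the easy inclusion I would start from $\f \in U$ and write down the Hilbert--Burch complex
\[
0 \lra \O(-2) \oplus \O(-1) \stackrel{\f}{\lra} 3\O \stackrel{\z}{\lra} \O(3) \lra \CC \lra 0, \qquad \z = [\ \z_1 \ \ -\z_2 \ \ \z_3 \ ],
\]
where $\z_1, \z_2, \z_3$ are the (cubic) maximal minors. Since they have no common factor this is exact, $\CC = \O_Z$ with $\length(Z) = 7$ by a Hilbert polynomial count, and $\Coker(\f) \isom \I_Z(3)$. To see that $Z$ lies on no conic and has no length-$4$ subscheme on a line I would invoke B\'ezout: a conic through $Z$ (length $7 > 2 \cdot 3$) would divide every $\z_i$, and a line through a length-$4$ subscheme ($4 > 3$) would likewise divide every $\z_i$, both contradicting the defining condition of $U$.

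For the converse I would fix $Z$ as in the statement and run the Beilinson machine on $\I_Z(2)$. The hypothesis that $Z$ lie on no conic gives $\h^0(\I_Z(2)) = 0$, whence also $\h^0(\I_Z(1)) = 0$ and $\h^0(\I_Z(3) \tensor \Om^1) = 0$, and Serre duality disposes of the three corresponding $\h^2$'s. As at \ref{3.3.1} the middle row of the display diagram then produces a monad
\[
0 \lra 4\O(-2) \stackrel{\a}{\lra} 6\O(-1) \stackrel{\b}{\lra} \O \lra 0
\]
with middle cohomology $\I_Z(2)$; the outer ranks are $\h^1(\I_Z(1)) = 4$ and $\h^1(\I_Z(2)) = 1$, and the middle rank $6$ is forced by $\rank(\I_Z(2)) = 1$. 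Here the right-hand term is merely $\O$, a simplification over \ref{3.3.1}: because $\b$ is surjective its six linear entries span $V^*$, so the Euler sequence gives $\Ker(\b) \isom \Om^1 \oplus 3\O(-1)$ outright, with no need to analyse canonical forms of $\b^\T$.

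Next I would resolve $\Om^1$ by $0 \to \O(-3) \to 3\O(-2) \to \Om^1 \to 0$ and lift $\a$ to the resulting free cover, obtaining
\[
0 \lra \O(-3) \oplus 4\O(-2) \stackrel{\rho}{\lra} 3\O(-2) \oplus 3\O(-1) \lra \I_Z(2) \lra 0,
\]
and then cancel the $3\O(-2)$ summands. The whole proof turns on showing that the scalar block $\rho_{12} \colon 4\O(-2) \to 3\O(-2)$ has rank $3$. If its rank were $\le 2$, then after cancellation, projecting off the $3\O(-1)$, the sheaf $\I_Z(2)$ would surject onto the cokernel of some morphism $\O(-3) \to k\O(-2)$ with $k \ge 1$; surjections onto a sheaf of rank $\ge 2$ or onto $\O(-2)$ are ruled out by comparing ranks and Euler characteristics, leaving only a surjection $\I_Z(2) \twoheadrightarrow \O_L(-2)$ for a line $L$. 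Once $\rank(\rho_{12}) = 3$ is established, cancelling produces
\[
0 \lra \O(-3) \oplus \O(-2) \stackrel{\psi}{\lra} 3\O(-1) \lra \I_Z(2) \lra 0,
\]
and a twist by $\O(1)$ is the required resolution; torsion-freeness of $\I_Z(2)$ (Hilbert--Burch) then forces the minors of $\psi$ to have no common factor, placing the twisted morphism in $U$.

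The hard part will be ruling out that last surjection. I expect the decisive computation to be the restriction of $\I_Z(2)$ to $L$: the map $\I_Z(2) \to \O_L(-2)$ factors through the torsion-free quotient $\O_L(2 - w)$ of $\I_Z(2) \tensor \O_L$, where $w = \length(Z \cap L)$, and it can be surjective only when $w \ge 4$, that is, only when a length-$4$ subscheme of $Z$ lies on $L$. This is precisely what the second hypothesis forbids, and it is the only place in the converse where that hypothesis enters; getting the bookkeeping of $\I_Z(2) \tensor \O_L$ right, in particular distinguishing its torsion (supported on $Z \cap L$) from its torsion-free quotient, is the step most likely to require care.
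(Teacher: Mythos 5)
Your proposal is correct, and most of it coincides with the paper's own proof: the Hilbert--Burch sequence plus B\'ezout for the forward direction and, for the converse, the same cohomology vanishings, the same monad $0 \to 4\O(-2) \to 6\O(-1) \stackrel{\b}{\to} \O \to 0$, the identification $\Ker(\b) \isom \Om^1 \oplus 3\O(-1)$ (the paper likewise uses no canonical-form analysis here), and the reduction to showing that the scalar block $\rho_{12} \colon 4\O(-2) \to 3\O(-2)$ of $0 \to \O(-3) \oplus 4\O(-2) \stackrel{\rho}{\to} 3\O(-2) \oplus 3\O(-1) \to \I_Z(2) \to 0$ has rank $3$. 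Where you genuinely diverge is in how ranks $\le 2$ are excluded, i.e.\ in where the collinearity hypothesis enters. The paper kills rank $1$ by a subsheaf argument: after cancellation one gets $\eta$ with $\eta_{12}=0$, and $\Coker(\eta_{22})$, $\eta_{22} \colon 3\O(-2) \to 3\O(-1)$, is a nonzero torsion sheaf mapping injectively into the torsion-free sheaf $\I_Z(2)$. It kills the decisive rank-$2$ case by Hilbert--Burch applied to the sub-block $\eta_{22} \colon 2\O(-2) \to 3\O(-1)$: its maximal minors have no common factor, hence cut out a scheme of length $3$, so at least $7-3=4$ points of $Z$ lie on the line $\eta_{11}=0$, contradiction. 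You instead argue uniformly on the quotient side, reducing every low-rank case to a surjection of $\I_Z(2)$ onto $\Coker(\eta_{11})$, ultimately onto $\O_L(-2)$, which you exclude by restriction to $L$: the map factors through the torsion-free quotient $\O_L(2-w)$ of $\I_Z(2) \tensor \O_L$, $w = \length(Z \cap L)$, and surjectivity forces $w \ge 4$. Both arguments are sound; yours isolates more transparently the geometric role of the hypothesis, while the paper's stays at the level of matrices, minors and a length count. One small repair to your reduction: the cokernel of $\O(-3) \to 2\O(-2)$ with linearly independent entries is $\I_x(-1)$, a rank-$1$ sheaf covered by neither of your stated exclusions (rank $\ge 2$, or the sheaf $\O(-2)$), so the claim that only $\O_L(-2)$ remains is not literally justified; this case is dispatched by exactly the comparison you invoke --- a rank-$1$ quotient of the rank-$1$ torsion-free sheaf $\I_Z(2)$ has torsion, hence zero, kernel, so would be an isomorphism, and the Hilbert polynomials of $\I_Z(2)$ and $\I_x(-1)$ differ --- but it should be listed.
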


\begin{proof}
Consider $\psi \in U$.
As the maximal minors of $\psi$, denoted $\z_1$, $\z_2$, $\z_3$,
have no common factor, there is an exact sequence of the form
\[
0 \lra \O(-2) \oplus \O(-1) \stackrel{\psi}{\lra} 3\O \stackrel{\z}{\lra} \O(3) \lra \CC \lra 0,
\]
\[
\z = \left[
\ba{ccc}
\z_1 & -\z_2 & \phantom{-} \z_3
\ea
\right].
\]
The Hilbert polynomial of $\CC$ is $7$, hence $\CC$ is the structure sheaf of a zero-dimensional
scheme $Z$ of length $7$ and $\Coker(\psi) \isom \I_Z(3)$.
If $Z$ were contained in an irreducible conic curve $C$ with equation $q=0$,
then $C$ would meet each of the cubic curves with equation $\z_i =0$ in at least seven points,
hence, by B\'ezout's theorem, $q$ would divide $\z_1$, $\z_2$, $\z_3$, contradicting our hypothesis.
Similarly, if four points of $Z$ lay on the line with equation $\ell=0$, then $\ell$ would divide
the maximal minors of $\psi$.

For the converse we use the method of 4.5 \cite{modules-alternatives}.
Let $Z \subset \P^2$ be a subscheme as in the proposition.
The Beilinson spectral sequence with $\EE^1$-term
\[
\EE^1_{ij} = \H^j (\I_Z(2) \tensor \Om^{-i}(-i)) \tensor \O(i)
\]
converges to $\I_Z(2)$. The bottom and the top row of the display diagram for $\EE^1$
vanish, cf. the arguments at \ref{3.3.1}. The middle row yields a monad
\[
0 \lra 4\O(-2) \lra 6\O(-1) \lra \O \lra 0
\]
with cohomology $\I_Z(2)$. From this we get the resolution
\[
0 \lra 4\O(-2) \lra \Om^1 \oplus 3\O(-1) \lra \I_Z(2) \lra 0.
\]
Resolving $\Om^1$ we obtain the exact sequence
\[
0 \lra \O(-3) \oplus 4\O(-2) \stackrel{\rho}{\lra} 3\O(-2) \oplus 3\O(-1) \lra \I_Z(2) \lra 0.
\]
If $\rho_{12} = 0$, then $\I_Z(2)$ maps surjectively onto $\Coker(\rho_{11})$,
which is absurd because $\rank(\Coker(\rho_{11}))=2$ whereas $\rank(\I_Z(2))=1$.
Assume that $\rank(\rho_{12})=1$. We get a resolution
\[
0 \lra \O(-3) \oplus 3\O(-2) \stackrel{\eta}{\lra} 2\O(-2) \oplus 3\O(-1) \lra \I_Z(2) \lra 0
\]
with $\eta_{12}=0$.
Clearly $\eta_{22}$ is injective and $\Coker(\eta_{22})$ maps injectively to $\I_Z(2)$.
This is absurd because $\Coker(\eta_{22})$ is a torsion sheaf whereas $\I_Z(2)$ is torsion-free.
Assume that $\rank(\rho_{12})=2$. We arrive at a resolution
\[
0 \lra \O(-3) \oplus 2\O(-2) \stackrel{\eta}{\lra} \O(-2) \oplus 3\O(-1) \lra \I_Z(2) \lra 0
\]
with $\eta_{12}=0$.
From the exactness of the above sequence we see that the maximal minors of $\eta$
have no common factor. It follows that the maximal minors of $\eta_{22}$,
denoted $\xi_1$, $\xi_2$, $\xi_3$, have no common factor, too.
Thus $\xi_1$, $\xi_2$, $\xi_3$ generate the ideal sheaf of a zero-dimensional scheme of length $3$.
At least four points of $Z$ are not in the support of this scheme, hence they lie on the line
with equation $\eta_{11}=0$. This contradicts our hypothesis.
We conclude that $\rank(\rho_{12})=3$. Canceling $3\O(-2)$ we obtain the resolution
\[
0 \lra \O(-3) \oplus \O(-2) \stackrel{\psi}{\lra} 3\O(-1) \lra \I_Z(2) \lra 0.
\]
Clearly $\psi$ satisfies the requirements of the proposition.
\end{proof}


\section{The moduli space $\M(6,3)$}

\subsection{Classification of sheaves}   

\begin{prop}
\label{5.1.1}
The sheaves $\F$ giving points in $\M(6,3)$ and satisfying the conditions
$\h^0(\F(-1))=0$, $\h^1(\F)=0$, $\h^1(\F(1))=0$
are precisely the sheaves having one of the following resolutions:
\[
\tag{i}
0 \lra 3\O(-2) \lra 3\O \lra \F \lra 0;
\]
\[
\tag{ii}
0 \lra 3\O(-2) \oplus \O(-1) \stackrel{\f}{\lra} \O(-1) \oplus 3\O \lra \F \lra 0,
\]
where $\f_{12}=0$, the entries of $\f_{11}$ span a subspace of $V^*$ of dimension
at least $2$, the same for the entries of $\f_{22}$ and, moreover, $\f$ is not equivalent
to a morphism represented by a matrix of the form
\[
\left[
\ba{cccc}
\star & \star & 0 & 0 \\
\star & \star & 0 & 0 \\
\star & \star & \star & \star \\
\star & \star & \star & \star
\ea
\right];
\]
\[
\tag{iii}
0 \lra 3\O(-2) \oplus 2\O(-1) \stackrel{\f}{\lra} 2\O(-1) \oplus 3\O \lra \F \lra 0,
\]
where $\f_{12}=0$, $\f_{11}$ has linearly independent maximal minors and the same for $\f_{22}$.
\end{prop}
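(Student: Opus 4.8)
The plan is to run the Beilinson machine on $\F(1)$, exactly as in the proofs of \ref{3.1.3} and \ref{4.1.2}, and to separate the three cases by the rank of a single constant block. First I would collect the numerics: from $\PP_{\F}(m)=6m+3$ together with $\h^0(\F(-1))=0$, $\h^1(\F)=0$, $\h^1(\F(1))=0$ one gets $\h^0(\F)=3$, $\h^0(\F(1))=9$ and $\h^1(\F(-1))=3$. In display diagram (2.1.1) for the spectral sequence I converging to $\F(1)$ the entire top row and the bottom-left corner vanish, since $\h^1(\F)=\h^1(\F(1))=\h^0(\F(-1))=0$; only $3\O(-1)$ in the upper-left corner and the bottom row $3\Om^1(1)\lra 9\O$ survive. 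This is the simplification that makes $\h^1(\F)=0$ cheaper to handle than the analogous step in \ref{4.1.2}, where a nontrivial $\Om^1(1)$ term forced an extra kernel computation.

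With this degeneration the sequences (2.1.2) and (2.1.3) splice at once to $0 \lra 3\O(-1)\oplus 3\Om^1(1) \lra 9\O \lra \F(1) \lra 0$, and resolving the three copies of $\Om^1(1)$ by the Euler sequence $0 \lra \Om^1(1) \lra V^*\tensor\O \lra \O(1) \lra 0$ yields $0 \lra 3\O(-1)\oplus 9\O \stackrel{\rho}{\lra} 9\O\oplus 3\O(1) \lra \F(1) \lra 0$, in which the block $\rho_{12}\colon 9\O \to 9\O$ is a constant matrix. The key point of the forward direction is the inequality $\rank(\rho_{12})\ge 7$. I would argue by contradiction: after cancelling the invertible part of $\rho_{12}$ and projecting the target onto the surviving copies of $\O$, the sheaf $\F(1)$ surjects onto $Q=\Coker(3\O(-1) \to (9-\rank(\rho_{12}))\O)$. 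If $\rank(\rho_{12})\le 5$ then $Q$ has positive rank, impossible since $\F(1)$ is a torsion sheaf; if $\rank(\rho_{12})=6$ then $Q$ is a torsion sheaf of slope $1<3/2=\pp(\F(1))$, contradicting semi-stability. Hence $\rank(\rho_{12})\in\{7,8,9\}$; cancelling that many trivial factors and untwisting produces resolutions (iii), (ii), (i) respectively, the vanishing block $\f_{12}=0$ being exactly the residual, rank-zero part of $\rho_{12}$. Since the number $9-\rank(\rho_{12})$ of surviving $\O(-1)$ summands equals $\h^0(\F\tensor\Om^1(1))$, the three cases match the strata $X_0$, $X_1$, $X_2$.

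It remains to impose the non-degeneracy conditions and to prove the converse. In the forward direction the linear independence of the maximal minors of $\f_{11}$ and of $\f_{22}$ in (iii), and the span conditions together with the exclusion of the displayed special form in (ii), are forced by minimality of the resolution and by injectivity of $\f$: a common factor, a vanishing minor, or the forbidden block shape would either permit a further cancellation or split off a destabilising subsheaf of the form $\O_L(-1)$ or a sheaf supported on a conic. For the converse I would verify semi-stability resolution by resolution.

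I expect the main obstacle to be this converse. For (i), $\F$ is the cokernel of a $3\times 3$ matrix of quadratic forms, and its semi-stability is governed by the semi-stability of that matrix as a Kronecker module, which is what underlies the identification of $X_0$ with an open subset of $\N(6,3,3)$; the properly semi-stable locus consists of the split sheaves $[\O_{C_1}\oplus\O_{C_2}]$ for cubic curves $C_1,C_2$. For (ii) and (iii) I would exploit the block-triangular shape $\f_{12}=0$: the snake lemma presents $\F$ as an extension assembled from the cokernels of $\f_{11}$ and $\f_{22}$, sheaves already controlled by \ref{3.1.4} and by the determinantal analysis of \ref{3.3.1} and \ref{3.3.2}. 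One then lists the numerically possible destabilising subsheaves --- structure sheaves of lines and conics, and the semi-stable sheaves of slope $1/2$ and small multiplicity --- and rules each out by fitting its minimal resolution into a commutative diagram of the type of diagram (8) in \ref{3.1.3}. The delicate points will be the borderline slope-$1/2$ quotients, where semi-stability holds with equality and one must check that precisely the excluded matrix forms are what produce them; verifying that the stated conditions are both necessary and sufficient here is where the bulk of the case analysis lies.
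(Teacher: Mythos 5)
Your forward direction is essentially identical to the paper's proof: the same Beilinson spectral sequence I for $\F(1)$, the same spliced resolution $0 \to 3\O(-1)\oplus 9\O \stackrel{\rho}{\to} 9\O\oplus 3\O(1) \to \F(1) \to 0$, the same semi-stability bound $\rank(\rho_{12})\ge 7$, and the same case split by $\rank(\rho_{12})=9,8,7$ yielding (i), (ii), (iii). In fact the paper's printed proof stops exactly there --- it neither derives the non-degeneracy conditions in (ii)/(iii) nor proves the converse --- so the extra material you anticipate (those conditions being forced by semi-stability, and a converse via Kronecker semi-stability for (i) and diagram-(8)-type exclusions for (ii)/(iii)) goes beyond what is printed, but it is correct in spirit and matches the techniques the paper uses for the analogous propositions \ref{3.1.3} and \ref{4.1.2}.
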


\begin{proof}
Let $\F$ give a point in $\M(6,3)$ and satisfy the above cohomological conditions.
Diagram (2.1.1) for the Beilinson spectral sequence I converging to $\F(1)$ reads
\[
\xymatrix
{
3\O(-1) & 0 & 0 \\
0 & 3\Om^1(1) \ar[r]^-{\f_4} & 9\O
}.
\]
Combining the exact sequences (2.1.2) and (2.1.3) yields the resolution
\[
0 \lra 3\O(-1) \oplus 3\Om^1(1) \lra 9\O \lra \F(1) \lra 0,
\]
hence the resolution
\[
0 \lra 3\O(-1) \oplus 9\O \stackrel{\rho}{\lra} 9\O \oplus 3\O(1) \lra \F(1) \lra 0.
\]
Note that $\rank(\rho_{12}) \ge 7$, otherwise $\F(1)$ would map surjectively
onto the cokernel of a morphism $3\O(-1) \to 3\O$, in violation of semi-stability.
We now get resolutions (i), (ii), (iii), depending on whether $\rank(\rho_{12}) = 9$, $8$, $7$.
\end{proof}

\begin{prop}
\label{5.1.2}
\emph{(i)} 
The sheaves $\F$ giving points in $\M(6,3)$ and satisfying the conditions $\h^0(\F(-1))=0$, $\h^1(\F)=1$
are precisely the sheaves having a resolution of the form
\[
0 \lra \O(-3) \oplus 3\O(-1) \stackrel{\f}{\lra} 4\O \lra \F \lra 0,
\]
where $\f_{12}$ is semi-stable as a Kronecker module.

\medskip

\noi
\emph{(ii)}
The sheaves $\F$ giving points in $\M(6,3)$ and satisfying the dual conditions $\h^0(\F(-1))=1$, $\h^1(\F)=0$
are precisely the sheaves having a resolution of the form
\[
0 \lra 4\O(-2) \stackrel{\f}{\lra} 3\O(-1) \oplus \O(1) \lra \F \lra 0,
\]
where $\f_{11}$ is semi-stable as a Kronecker module.
\end{prop}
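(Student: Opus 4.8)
The plan is to establish (i) in detail and to derive (ii) from it by duality. Dualizing ($\F \mapsto \F^\D$) the resolution in (i) and twisting by $\O(1)$ transforms
\[
0 \lra \O(-3) \oplus 3\O(-1) \stackrel{\f}{\lra} 4\O \lra \F \lra 0
\]
into $0 \to 4\O(-2) \stackrel{\f^\T}{\to} \O(1) \oplus 3\O(-1) \to \F^\D(1) \to 0$, which is the resolution in (ii) with $\f_{11} = (\f_{12})^\T$; since a Kronecker module and its transpose are simultaneously semi-stable and the duality isomorphism $\M(6,3) \isom \M(6,3)$, $[\F] \mapsto [\F^\D(1)]$, sends the pair $(\h^0(\F(-1)), \h^1(\F)) = (0,1)$ to $(\h^0(\F^\D(1)(-1)), \h^1(\F^\D(1))) = (1,0)$, part (ii) will follow from part (i). I would first note that the hypothesis $\h^0(\F(-1)) = 0$ already forces $\h^1(\F(1)) = 0$: were $\h^1(\F(1)) > 0$, a slope estimate as in \ref{2.3.1} applied to $\F^\D$ would force $\F \isom \O_C(2)$ for a sextic $C$, whence $\h^0(\F(-1)) = \h^0(\O_C(1)) = 3$, a contradiction.

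For the direct implication of (i), I would feed $\F(1)$ into the Beilinson spectral sequence I. From $\chi(\F) = 3$, $\h^0(\F(-2)) = 0$, and the three cohomological conditions one computes $\h^0(\F) = 4$, $\h^1(\F(-1)) = 3$, $\h^0(\F(1)) = 9$, so display diagram (2.1.1) for $\F(1)$ reads
\[
\xymatrix
{
3\O(-1) \ar[r]^-{\f_1} & \Om^1(1) & 0 \\
0 & 4\Om^1(1) \ar[r]^-{\f_4} & 9\O
}.
\]
Here $\f_2 = 0$, so $\Coker(\f_1)$ is a quotient of $\F(1)$ and must have slope $\ge \pp(\F(1)) = 3/2$. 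Resolving $\Om^1(1)$ as $\Coker(\O(-2) \to 3\O(-1))$ and lifting $\f_1$ to $\sigma \colon \O(-2) \oplus 3\O(-1) \lra 3\O(-1)$, the case $\rank(\sigma_{12}) \le 2$ is excluded because it would give $\Coker(\f_1) \isom \O_L(-1)$ or a sheaf of positive rank, both of slope $< 3/2$. Hence $\sigma_{12}$ is invertible, $\Coker(\f_1) = 0$ and $\Ker(\f_1) \isom \O(-2)$. Combining (2.1.2) and (2.1.3) yields
\[
0 \lra \O(-2) \oplus 4\Om^1(1) \lra 9\O \lra \F(1) \lra 0,
\]
and resolving $4\Om^1(1)$ through the Euler sequence $0 \to \Om^1(1) \to 3\O \to \O(1) \to 0$ gives
\[
0 \lra \O(-2) \oplus 12\O \stackrel{\rho}{\lra} 9\O \oplus 4\O(1) \lra \F(1) \lra 0.
\]
Semi-stability forces $\rank(\rho_{12}) = 9$, for otherwise $\F(1)$ would map onto a quotient of slope below $3/2$; cancelling $9\O$ and twisting by $\O(-1)$ produces exactly the resolution in (i), with $\f_{11} \colon \O(-3) \to 4\O$ the column of cubics and $\f_{12} \colon 3\O(-1) \to 4\O$ the $4 \times 3$ Kronecker block.

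It then remains to identify semi-stability of $\F$ with semi-stability of $\f_{12}$ as a Kronecker module, which simultaneously gives the converse. I would prove the equivalence directly: any proper subsheaf $\E \subset \F$ is a quotient of $\H^0(\F) \tensor \O$, hence satisfies $\h^0(\E) \le 3$ and $\h^0(\E(-1)) = 0$, so a destabilizing $\E$ gives a point in $\M(r, \chi')$ with $r \le 5$ whose minimal resolution ranges over a short explicit list. Fitting each such resolution, as in diagram (8) of \ref{3.1.3}, into a commutative square over the resolution of $\F$ and reading off the induced maps shows that $\E$ can destabilize precisely when $\f_{12}$ is equivalent to a form $\left[\ba{cc} \star & \psi \\ \star & 0 \ea\right]$ with $\psi \colon r\O(-1) \to r\O$; conversely each such zero block yields an explicit destabilizing sub- or quotient sheaf. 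The main obstacle is this last step: one must enumerate the admissible resolution types of a semi-stable sheaf of multiplicity at most $5$ with $\h^0(\E(-1)) = 0$ and verify, case by case, that compatibility with $\f$ is equivalent to the failure of King's inequality for $\f_{12}$. Once this is settled, the numerics $\pp(\F) = 1/2$ together with the bound $\pp(\E) \le \pp(\F)$ close the argument, and part (ii) follows by the duality discussed above.
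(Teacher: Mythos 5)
Your route is genuinely different from the paper's: for part (i) the paper gives no spectral-sequence argument at all, it simply invokes the general result 5.3 of \cite{maican}, and then obtains (ii) by duality. Your proposal replaces that citation with a self-contained argument, and most of it is sound. The reduction of (ii) to (i) by transposing the dual resolution, and the matching of the cohomological conditions under $[\F] \mapsto [\F^\D(1)]$, is exactly the paper's duality step. The preliminary vanishing $\h^1(\F(1))=0$ is true, though it is really the content of \ref{5.1.5} (whose proof is independent of \ref{5.1.2}) rather than of \ref{2.3.1}. The Beilinson computation itself --- showing $\Coker(\f_1)=0$ and $\Ker(\f_1) \isom \O(-2)$, combining (2.1.2) and (2.1.3), resolving $\Om^1(1)$, and using semi-stability to force $\rank(\rho_{12})=9$ before cancelling $9\O$ --- is correct and follows the same template the paper uses at \ref{3.1.3}, \ref{4.1.2} and \ref{5.1.1}.

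The genuine gap is the step you yourself flag as ``the main obstacle'': the equivalence between semi-stability of $\F=\Coker(\f)$ and semi-stability of $\f_{12}$ as a Kronecker module. The easy half is fine as sketched: if $\f_{12} \sim \left[ \ba{cc} \star & \psi \\ \star & 0 \ea \right]$ with $\psi$ of size $r \times r$, then $\F$ surjects onto the cokernel of a morphism $\O(-3) \oplus (3-r)\O(-1) \to (4-r)\O$, whose slope is $2/5$, $1/4$, $0$ for $r=1,2,3$, all below $\pp(\F)=1/2$; this settles Kronecker semi-stability of $\f_{12}$ in the forward direction of (i). But the converse --- that every injective $\f$ of this shape with Kronecker-semi-stable $\f_{12}$ has semi-stable cokernel, equivalently that a destabilising subsheaf forces a zero block --- is the actual mathematical content of the proposition, and it is precisely what the cited result 5.3 of \cite{maican} supplies. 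In your proposal it is only announced. Executing it requires: passing to a semi-stable destabilising subsheaf $\E$; using $\h^0(\E(-1))=0$, $\h^0(\E)\le 3$ and $\pp(\E)>1/2$ to restrict $(r,\chi)$ to the pairs $(1,1)$, $(2,2)$, $(3,2)$, $(3,3)$, $(4,3)$, $(5,3)$ (pairs such as $(1,2)$, $(2,3)$ being excluded by $\h^0(\E(-1))=0$); importing from \cite{drezet-maican} and \cite{mult_five} the complete lists of resolutions of the sheaves in each of these moduli spaces; and running, for every resolution on those lists, the diagram (8) argument of \ref{3.1.3} to exhibit the zero block. This is several pages of case analysis of exactly the kind the paper carries out in the converse halves of \ref{3.1.3} and \ref{4.1.2}, and none of it is present; as written, your argument proves only that $\F$ admits a resolution of the stated shape, not the ``precisely'' of the statement. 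One small inaccuracy en route: a proper subsheaf of a globally generated sheaf need not be a quotient of $\H^0(\F)\tensor\O$; the bound $\h^0(\E)\le 3$ follows instead because $\H^0(\E)=\H^0(\F)$ would force $\E=\F$, as $\H^0(\F)$ generates $\F$.
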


\begin{proof}
Part (i) is a particular case of 5.3 \cite{maican}.
Part (ii) is equivalent to (i) by duality.
\end{proof}

\begin{prop}
\label{5.1.3}
The sheaves $\F$ giving points in $\M(6,3)$ and satisfying the conditions
$\h^0(\F(-1))=1$, $\h^1(\F)=1$, $\h^1(\F(1))=0$
are precisely the sheaves having a resolution of the form
\[
\tag{i}
0 \lra \O(-3) \oplus \O(-2) \stackrel{\f}{\lra} \O \oplus \O(1) \lra \F \lra 0,
\]
where $\f_{12} \neq 0$,
or the sheaves having a resolution of the form
\[
\tag{ii}
0 \lra \O(-3) \oplus \O(-2) \oplus \O(-1) \stackrel{\f}{\lra} \O(-1) \oplus \O \oplus \O(1) \lra \F \lra 0,
\]
where $\f_{12}, \f_{23} \neq 0$, $\f_{12}$ does not divide $\f_{11}$, $\f_{23}$ does not divide $\f_{33}$.
\end{prop}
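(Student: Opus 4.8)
The plan is to follow the template of the proofs of \ref{3.1.3} and \ref{4.1.4}: derive the resolution from the Beilinson spectral sequence I applied to $\F(1)$, and then establish the converse with the stability criteria \ref{2.3.1}--\ref{2.3.2} supplemented by a direct analysis of destabilising subsheaves. First I would record the numerical data. From $\PP_\F(m)=6m+3$ and the hypotheses $\h^0(\F(-1))=1$, $\h^1(\F)=1$, $\h^1(\F(1))=0$ one gets $\h^1(\F(-1))=4$, $\h^0(\F)=4$, $\h^0(\F(1))=9$, so that diagram (2.1.1) for $\F(1)$ has nonzero terms $4\O(-1)\stackrel{\f_1}{\lra}\Om^1(1)$ in the top row and $\O(-1)\stackrel{\f_3}{\lra}4\Om^1(1)\stackrel{\f_4}{\lra}9\O$ in the bottom row. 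Exactly as at \ref{3.1.3}, resolving the top-row copy of $\Om^1(1)$ gives $0\lra\Ker(\f_1)\lra\O(-2)\oplus4\O(-1)\stackrel{\sigma}{\lra}3\O(-1)\lra\Coker(\f_1)\lra0$; since $\F(1)$ maps onto $\Coker(\f_1)$ and is semi-stable, $\rank(\sigma_{12})=3$, whence $\Coker(\f_1)=0$ and $\Ker(\f_1)\isom\O(-2)\oplus\O(-1)$ (a Chern-class count confirms this rank-$2$ kernel).

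Next I would splice (2.1.2) and (2.1.3), resolve the remaining copies of $\Om^1(1)$, and cancel the trivial direct summands forced by semi-stability, arriving at a two-step resolution of $\F(1)$ by sums of line bundles, presented by a matrix $\rho$. The value of $\rank(\rho)$ --- equivalently the value of $\h^0(\F\tensor\Om^1(1))$, the only invariant not pinned by the hypotheses, which the argument shows is $3$ or $4$ --- produces two cases; after twisting by $\O(-1)$ these are resolutions (i) and (ii). The side conditions are read off from injectivity and semi-stability: in (i), if $\f_{12}=0$ then $\F$ has a slope-$1$ quotient $\O_{C'}(1)$ for a cubic curve $C'$, contradicting semi-stability, so $\f_{12}\neq0$; in (ii) the analogous degenerations $\f_{12}=0$, $\f_{23}=0$, $\f_{12}\mid\f_{11}$ or $\f_{23}\mid\f_{33}$, as well as $\f_{13}\neq0$ (which would let one cancel back to (i)), are excluded for the same reason.

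For the converse with resolution (i) I would invoke \ref{2.3.2} with $(d_1,d_2,e_1,e_2)=(-3,-2,0,1)$: here $e_1-d_1=e_2-d_2=3$, so once $\f_{12}$ and $\f_{22}$ have no common factor the sheaf is stable, the exceptional case of the corollary being properly semi-stable and still in $\M(6,3)$. The remaining possibility, $\gcd(\f_{12},\f_{22})=\ell$ with $\deg\ell\geq1$, I would treat by dividing the second column by $\ell$: this exhibits an extension $0\lra\O_L(-1)\lra\F\lra\bar\F\lra0$ in which $\bar\F$ is a stable sheaf of multiplicity $5$ and slope $4/5$ (again by \ref{2.3.2}), and I would rule out a destabilising subsheaf by the diagram technique in the proof of \ref{3.1.3}. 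For resolution (ii) I would, as in the converse part of \ref{3.1.3}, pass to the dual $\F^\D(1)\in\M(6,3)$ and apply the snake lemma to write it as an extension $0\lra\G'\lra\F^\D(1)\lra\C_x\lra0$ with $\G'$ a stable sheaf already described by \ref{3.1.2} and \ref{4.1.3}, then eliminate the finitely many numerically possible destabilisers $\E$ (those with $\pp(\E)>1/2$ and small multiplicity) by commutative diagrams analogous to diagram (8) there. Finally I would confirm that the two resolutions do yield the asserted cohomology by tensoring them with $\Om^1(1)$ and computing, which gives $\h^0(\F\tensor\Om^1(1))=3$ for (i) and $4$ for (ii), along with $\h^0(\F(-1))=1$, $\h^1(\F)=1$, $\h^1(\F(1))=0$ in both cases.

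The hard part will be the converse in the degenerate strata: verifying that no subsheaf of slope exceeding $1/2$ slips through when $\f_{12}$ and $\f_{22}$ share a factor in (i), or when the divisibility hypotheses in (ii) are only barely met. In those situations semi-stability is not formal and must be extracted from the explicit extension structure, by pinning down exactly which subsheaves $\E$ can occur and killing each with a commutative diagram; matching the two values of $\rank(\rho)$ cleanly to resolutions (i) and (ii) is the only delicate point in the forward direction.
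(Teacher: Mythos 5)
Your forward half coincides with the paper's argument and is fine, but your converse for resolution (ii) rests on a false structural claim. You assert that the snake lemma writes $\F^\D(1)$ as an extension $0\lra\G'\lra\F^\D(1)\lra\C_x\lra 0$ with $\G'$ one of the $\M(6,2)$--sheaves of \ref{3.1.2}/\ref{4.1.3}. That is the pattern of \ref{3.1.3}(ii), where the Koszul pair consists of two linearly independent \emph{linear} forms, whose Koszul cokernel is indeed $\C_x$. Here the coprime pair sitting in the relevant row of $\f$ is $(\f_{11},\f_{12})$, a \emph{quadratic} and a linear form; its Koszul complex is
\[
0\lra\O(-4)\lra\O(-3)\oplus\O(-2)\lra\O(-1)\lra\O_Z\lra 0
\]
with $\length(\O_Z)=2$. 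So what the snake lemma actually yields (and what the paper uses) is $0\lra\E\lra\F\lra\O_Z\lra 0$, where $\E$ is the cokernel of a morphism $\O(-4)\oplus\O(-1)\to\O\oplus\O(1)$ satisfying the hypotheses of \ref{3.1.8}, hence $\E\in\M(6,1)$ --- not a sheaf in $\M(6,2)$ with quotient $\C_x$. Your claimed extension generally does not even exist: for generic $\F\isom\O_C(2)(P_1+P_2-P_3-P_4)$ in this stratum, the kernel $\G'$ of a surjection $\F^\D(1)\to\C_x$ is $\O_C(2)(P_3+P_4-P_1-P_2-x)$, and since $\h^0(\F^\D(1)(-1))=1$ (the stratum is self-dual), one has $\h^0(\G'(-1))=0$ for every $x$ outside the degree-six vanishing divisor of the unique section, so $\G'$ fails the cohomological conditions of \ref{4.1.3} for all but finitely many $x$. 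Everything downstream (your list of ``numerically possible destabilisers'') inherits this error. The correct continuation is: the extension by $\O_Z$ together with integrality of the Euler characteristic bounds any destabilising subsheaf to multiplicity at most $3$; since $\F^\D(1)$ has a resolution of the same form (ii) (the conditions are invariant under transposition), duality forces the multiplicity to be exactly $3$; one then reduces to a subsheaf giving a point of $\M(3,2)$ and kills it with a commutative diagram.

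There are also slips in case (i). When $\gcd(\f_{12},\f_{22})$ is linear, the quotient $\bar{\F}$ in your extension lies in $\M(5,3)$, so its slope is $3/5$, not $4/5$; and when $\f_{12}$ divides $\f_{22}$ (gcd quadratic) the sheaf is not an extension of a stable multiplicity-$5$ sheaf by $\O_L(-1)$ at all: it is properly semi-stable, stable-equivalent to $\O_C\oplus\O_Q(1)$ for a conic $C$ and a quartic $Q$, and this case must be retained since such sheaves do give points of $\M(6,3)$. Finally, in the forward direction your justification of $\f_{12}\neq0$ is inverted: if $\f_{12}=0$ the destabilising \emph{quotient} is $\O_{C'}$ of slope $0$ (the slope-$1$ sheaf $\O_{C''}(1)$ occurs as a destabilising \emph{subsheaf}); a quotient of slope $1>1/2$ would not contradict semi-stability.
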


\begin{proof}
Let $\F$ be a sheaf giving a point in $\M(6,3)$ and satisfying the above cohomological conditions.
Diagram (2.1.1) for the Beilinson spectral sequence I converging to $\F(1)$ takes the form
\[
\xymatrix
{
4\O(-1) \ar[r]^-{\f_1} & \Om^1(1) &  0 \\
\O(-1) \ar[r]^-{\f_3} & 4\Om^1(1) \ar[r]^-{\f_4} & 9\O
}.
\]
Arguing as at \ref{3.1.3} we see that $\Coker(\f_1)=0$ and $\Ker(\f_1) \isom \O(-2) \oplus \O(-1)$.
Performing the same steps as at loc.cit. we arrive at the resolution
\[
0 \lra \O(-1) \lra \O(-2) \oplus \O(-1) \oplus 12\O \stackrel{\rho}{\lra} 9\O \oplus 4\O(1) \lra \F(1) \lra 0.
\]
Notice that $\rank(\rho_{13}) \ge 8$, otherwise $\F(1)$ would map surjectively to the cokernel of a morphism
$\O(-2) \oplus \O(-1) \to 2\O$, in violation of semi-stability.
We arrive at a resolution
\[
0 \lra \O(-2) \stackrel{\psi}{\lra} \O(-3) \oplus \O(-2) \oplus 4\O(-1) \lra \O(-1) \oplus 4\O \lra \F \lra 0
\]
in which $\psi_{11}=0$, $\psi_{21}=0$.
Arguing as in the proof of 2.1.4 \cite{mult_five},
we can show that $\Coker(\psi_{31}) \isom \O(-1) \oplus \Om^1(1)$
and that the cokernel of the induced map $\Om^1(1) \to 4\O$ is isomorphic to $\O \oplus \O(1)$.
We get the resolution
\[
0 \lra \O(-3) \oplus \O(-2) \oplus \O(-1) \stackrel{\f}{\lra} \O(-1) \oplus \O \oplus \O(1) \lra \F \lra 0.
\]
Finally, we obtain resolutions (i) or (ii) depending on whether $\f_{13} \neq 0$ or $\f_{13}=0$.

Conversely, assume that $\F$ has resolution (i). According to \ref{2.3.2}, if $\f_{12}$ and $\f_{22}$ have
no common factor, then $\F$ is semi-stable. If $\f_{12}$ divides $\f_{22}$, then $\F$ is stable-equivalent
to $\O_C \oplus \O_Q(1)$, for a quartic curve $Q$ and a conic curve $C$ in $\P^2$.
It remains to examine the case when $\gcd(\f_{12}, \f_{22})$ is a linear form $\ell$.
In this case we have a non-split extension
\[
0 \lra \O_L(-1) \lra \F \lra \E \lra 0,
\]
where $\E$ has a resolution as at 2.1.4 \cite{mult_five}, so it gives a point in $\M(5,3)$.
It is easy to estimate the slope of any subsheaf of $\F$, showing that this sheaf is semi-stable.

Assume now that $\F$ has resolution (ii). From the snake lemma we get an extension
\[
0 \lra \E \lra \F \lra \O_Z \lra 0,
\]
where $Z$ is the common zero-set of $\f_{11}$ and $\f_{12}$, and $\E$ has a resolution as at \ref{3.1.8},
so it gives a point in $\M(6,1)$.
Assume that $\F' \subset \F$ is a destabilising subsheaf. Since $\pp(\F' \cap \E) \le 0$, we see that $\F'$
has multiplicity at most $3$. By duality, any destabilising subsheaf of $\F^\D(1)$ has multiplicity at most $3$,
hence $\F'$ has multiplicity $3$. Without loss of generality we may assume that $\F'$ gives a point in
$\M(3,2)$. We have a diagram
\[
\xymatrix
{
0 \ar[r] & \O(-2) \oplus \O(-1) \ar[r] \ar[d]^-{\b} & 2\O \ar[r] \ar[d]^-{\a} & \F' \ar[r] \ar[d] & 0 \\
0 \ar[r] & \O(-3) \oplus \O(-2) \oplus \O(-1) \ar[r] & \O(-1) \oplus \O \oplus \O(1) \ar[r] & \F \ar[r] & 0
}
\]
in which either $\a$ and $\b$ are both injective or
\[
\a \sim \left[
\ba{cc}
0 & 0 \\
0 & 0 \\
u_1 & u_2
\ea
\right] \qquad \text{and} \qquad \b \sim \left[
\ba{cc}
0 & 0 \\
1 & 0 \\
0 & 0
\ea
\right] \quad \text{or} \quad \b \sim \left[
\ba{cc}
0 & 0 \\
0 & 0 \\
u & 0
\ea
\right].
\]
Thus $\f_{12}= 0$ or $\f_{23}= 0$, which yields a contradiction.
\end{proof}

\begin{prop}
\label{5.1.4}
The sheaves $\F$ giving points in $\M(6,3)$ and satisfying the conditions
$\h^0(\F(-1))=2$, $\h^1(\F)=2$, $\h^1(\F(1))=0$
are precisely the sheaves having a resolution of the form
\[
0 \lra 2\O(-3) \oplus \O \stackrel{\f}{\lra} \O(-2) \oplus 2\O(1) \lra \F \lra 0,
\]
where $\f_{11}$ has linearly independent entries and the same for $\f_{22}$.
\end{prop}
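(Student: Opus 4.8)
The plan is to mirror the proofs of \ref{3.1.5} and \ref{4.1.4}: extract a resolution from a Beilinson spectral sequence, then prove semi-stability by means of an auxiliary extension by a point. First I would write down display diagram (2.1.1) for the Beilinson spectral sequence I converging to $\F(1)$. From $\PP_\F(m) = 6m+3$ and the hypotheses one gets $\h^1(\F(-1)) = 5$, $\h^0(\F) = 5$ and $\h^0(\F(1)) = 9$, so that the diagram reads
\[
\xymatrix
{
5\O(-1) \ar[r]^-{\f_1} & 2\Om^1(1) & 0 \\
2\O(-1) \ar[r]^-{\f_3} & 5\Om^1(1) \ar[r]^-{\f_4} & 9\O
}.
\]
The top row is literally the one occurring in the proof of \ref{3.1.5}; resolving $2\Om^1(1)$ and counting the rank of the induced map $5\O(-1) \lra 6\O(-1)$ gives $\Ker(\f_1) \isom \O(-3)$ and $\Coker(\f_1) \isom \C_x$ for some point $x \in \P^2$. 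Exact sequence (2.1.3) then yields an extension $0 \lra \F' \lra \F \lra \C_x \lra 0$, where $\F'$ gives a point in $\M(6,2)$.

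The decisive step is to identify $\F'$. I would show that $\h^1(\F'(1)) = 1$: the quotient $\C_x = \Coker(\f_1)$ lives in the top ($\H^1$) row of the spectral sequence and so contributes nothing to $\H^0(\F(1))$; hence $\H^0(\F'(1)) \isom \H^0(\F(1))$ and the cohomology sequence of the extension forces $\h^1(\F'(1)) = 1 > 0$. By \ref{3.1.7}(ii) we then have $\F' \isom \J_y(2)$ with resolution $0 \lra \O(-4) \oplus \O \lra 2\O(1) \lra \F' \lra 0$. Applying the horseshoe lemma to the extension, to this resolution, and to the Koszul resolution of $\C_x$ twisted by $\O(-2)$, namely $0 \lra \O(-4) \lra 2\O(-3) \lra \O(-2) \lra \C_x \lra 0$, I obtain
\[
0 \lra \O(-4) \lra \O(-4) \oplus 2\O(-3) \oplus \O \lra \O(-2) \oplus 2\O(1) \lra \F \lra 0.
\]
The morphism $\O(-4) \lra \O(-4)$ is non-zero because $\h^1(\F(1)) = 0$; cancelling it gives the asserted resolution. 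Linear independence of the entries of $\f_{11}$ and of $\f_{22}$ then follows from the minimality of the resolution (equivalently, from the independence required in \ref{3.1.7}(ii)), since a dependency would either produce zero-dimensional torsion or a destabilising quotient.

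For the converse, assume $\F$ has the stated resolution and note that $\f_{12} \colon \O \to \O(-2)$ vanishes automatically. The kernel $\O(-4)$ of $\f_{11}$ together with the summand $\O$ forms a sub-resolution $\psi \colon \O(-4) \oplus \O \lra 2\O(1)$, and a snake-lemma argument exhibits $\Coker(\psi)$ as a subsheaf $\F' \subseteq \F$ with $\F/\F' \isom \C_x$. Since $\psi$ satisfies the hypothesis of \ref{3.1.7}(ii), the sheaf $\F' \isom \J_y(2)$ is semi-stable of slope $1/3$. Now let $\E \subseteq \F$ be a subsheaf: if $\E \subseteq \F'$ then $\pp(\E) \le 1/3 < 1/2 = \pp(\F)$, while otherwise $\E$ surjects onto $\C_x$ and $\pp(\E) \le 1/3 + 1/\mult(\E)$, which exceeds $1/2$ only for $\mult(\E) \le 5$. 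The finitely many resulting candidates, namely structure sheaves of curves of degree at most $5$ suitably twisted, are excluded by commutative diagrams analogous to diagram (8) of \ref{3.1.3}, each of which would force $\f$ into a shape incompatible with the independence of the entries of $\f_{11}$ or $\f_{22}$.

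I expect the main obstacle to be the clean identification of $\F'$ in the forward direction, that is, the verification that $\h^1(\F'(1)) = 1$, which is exactly what licenses the appeal to \ref{3.1.7}(ii). Should this vanishing fail in some degenerate configuration, the fallback is to resolve $\F' = \Coker(\f_5)$ directly from (2.1.2) and perform the rank reductions by hand, as in \ref{3.1.5} and \ref{4.1.4}; I expect the bookkeeping there, rather than any conceptual difficulty, to be the most laborious part.
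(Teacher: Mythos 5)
Your outline follows the paper's proof in its overall shape --- the same Beilinson diagram for $\F(1)$, the same identification $\Ker(\f_1) \isom \O(-3)$ and $\Coker(\f_1) \isom \C_x$, the extension $0 \to \F' \to \F \to \C_x \to 0$, and the same horseshoe-plus-cancellation ending and converse --- but your ``decisive step'' contains a genuine gap. Proposition \ref{3.1.7}(ii) applies to sheaves \emph{giving points in} $\M(6,2)$, so to invoke it you must know that $\F'$ is semi-stable, and you assert this without proof. It is not automatic: $\pp(\F') = 1/3 < 1/2 = \pp(\F)$, so a subsheaf $\E \subset \F'$ with $1/3 < \pp(\E) \le 1/2$ would destabilise $\F'$ while being perfectly consistent with the semi-stability of $\F$. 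Concretely, you would have to exclude semi-stable subsheaves of types $(2,1)$, $(4,2)$ and $(5,2)$; for instance a subsheaf $\O_C \subset \F'$ with $C$ a conic forces the quotient $\F/\O_C$ to be a semi-stable sheaf in $\M(4,2)$ whose twist by $\O(-1)$ has at least two sections, which must then be ruled out via the classification of $\M(4,2)$. None of this case analysis appears in your proposal, and it is work of the same order as the bookkeeping you hoped to avoid. By contrast, your claim $\h^1(\F'(1)) = 1$ is sound and can be made rigorous without appealing to folklore about the edge map: from (2.1.2) one has $0 \to 2\O(-1) \to 5\Om^1(1) \to 9\O \to \Coker(\f_4) \to 0$, whence $\H^0(9\O)$ injects into $\H^0(\Coker(\f_4))$, and since $\F'(1)$ is the quotient of $\Coker(\f_4)$ by $\Ker(\f_1) \isom \O(-3)$ we get $\h^0(\F'(1)) = 9$ and $\h^1(\F'(1)) = 1$. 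So the vanishing you worried about is not the problem; the unproved semi-stability is.

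The paper sidesteps this issue entirely: it never claims $\F'$ is semi-stable in the forward direction. It resolves $\F' = \Coker(\f_5)(-1)$ directly from (2.1.2), and the two rank estimates along the way ($\rank(\rho_{12}) = 9$ and $\rank(\sigma_{12}) = 5$) are justified because their failure would produce inside $\F'$ --- hence inside $\F$ --- a subsheaf of slope $5/3$, respectively $2$, destabilising $\F$ itself; \ref{3.1.7}(ii) is then invoked only in the converse direction, where the resolution of $\F'$ is already in hand. In other words, the ``fallback'' you mention at the end is not a fallback but is precisely the paper's proof, and you attached it to the wrong worry. A smaller remark on your converse: since \ref{3.1.7}(ii) gives that $\F' \isom \J_y(2)$ is \emph{stable}, not merely semi-stable, the strict inequality $\pp(\E \cap \F') < 1/3$ cuts the list of candidate destabilising subsheaves down to $\O_L$ alone, as in the paper, which is considerably less to check than all structure sheaves of curves of degree at most $5$, suitably twisted.
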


\begin{proof}
Let $\F$ give a point in $\M(6,3)$ and satisfy the above cohomological conditions.
Display diagram (2.1.1) for the Beilinson spectral sequence I converging to $\F(1)$ reads
\[
\xymatrix
{
5\O(-1) \ar[r]^-{\f_1} & 2\Om^1(1) & 0 \\
2\O(-1) \ar[r]^-{\f_3} & 5\Om^1(1) \ar[r]^-{\f_4} & 9\O
}.
\]
Arguing as in the proof of \ref{3.1.5}, we see that $\Ker(\f_1) \isom \O(-3)$ and $\Coker(\f_1) \isom \C_x$.
From (2.1.3) we have an extension
\[
0 \lra \F' \lra \F \lra \C_x \lra 0,
\]
where $\F' = \Coker(\f_5)(-1)$. From (2.1.2) we get the exact sequence
\[
0 \lra 2\O(-2) \lra \O(-4) \oplus 5\Om^1 \lra 9\O(-1) \lra \F' \lra 0,
\]
hence the resolution
\[
0 \lra 2\O(-2) \lra \O(-4) \oplus 15\O(-1) \stackrel{\rho}{\lra} 9\O(-1) \oplus 5\O \lra \F' \lra 0.
\]
If $\rank(\rho_{12}) \le 8$, then $\F'$ would have a subsheaf of slope $5/3$
that would destabilise $\F$.
Thus $\rank(\rho_{12}) = 9$ and we have the resolution
\[
0 \lra 2\O(-2) \stackrel{\psi}{\lra} \O(-4) \oplus 6\O(-1) \lra 5\O \lra \F' \lra 0.
\]
Arguing as at 3.2.5 \cite{mult_five}, we can show that $\Coker(\psi_{21}) \isom 2\Om^1(1)$.
The exact sequence
\[
0 \lra \O(-4) \oplus 2\Om^1(1) \lra 5\O \lra \F' \lra 0
\]
yields the resolution
\[
0 \lra \O(-4) \oplus 6\O \stackrel{\sigma}{\lra} 5\O \oplus 2\O(1) \lra \F' \lra 0.
\]
If $\rank(\sigma_{12}) \le 4$, then $\F'$ would have a subsheaf of slope $2$ that would destabilise $\F$.
Thus $\rank(\sigma_{12}) = 5$ and we have a resolution
\[
0 \lra \O(-4) \oplus \O \lra 2\O(1) \lra \F' \lra 0.
\]
Combining this with the standard resolution of $\C_x$ tensored with $\O(-2)$ we obtain the exact sequence
\[
0 \lra \O(-4) \lra \O(-4) \oplus 2\O(-3) \oplus \O \lra \O(-2) \oplus 2\O(1) \lra \F \lra 0.
\]
The morphism $\O(-4) \to \O(-4)$ is non-zero because $\h^1(\F(1))=0$.
Canceling $\O(-4)$ we obtain a resolution as in the proposition.

Conversely, assume that $\F$ has a resolution as in the proposition.
Then $\F$ is an extension of $\C_x$ by $\F'$, where, in view of \ref{3.1.7}(ii),
$\F'$ gives a stable point in $\M(6,2)$.
It follows that any possibly destabilising subsheaf of $\F$ must be the structure sheaf of a line.
This situation, however, can be easily ruled out using
a diagram analogous to diagram (8) in subsection 3.1.
\end{proof}

\begin{prop}
\label{5.1.5}
The sheaves $\F$ giving points in $\M(6,3)$ and satisfying the condition $\h^1(\F(1)) > 0$
are precisely the sheaves of the form $\O_C(2)$, where $C \subset \P^2$ is a sextic curve.
\end{prop}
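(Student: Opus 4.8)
The plan is to dualise and reduce the statement to the existence of a section of a twist of the sheaf, the point being that for multiplicity $6$ and $\chi = 3$ the resulting quotient has length zero, so the sheaf is forced to be a structure sheaf.

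First I would pass to the dual sheaf. Set $\G = \F^\D(1)$; by \cite{maican-duality} it gives a point in $\M(6,3)$, and Serre duality for pure one-dimensional sheaves on $\P^2$ gives $\h^1(\F(1)) = \h^0(\F^\D(-1)) = \h^0(\G(-2))$. Thus the hypothesis $\h^1(\F(1)) > 0$ is equivalent to $\h^0(\G(-2)) > 0$, and it is enough to classify the sheaves $\G$ in $\M(6,3)$ with $\h^0(\G(-2)) > 0$.

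Next, a non-zero section of $\G(-2)$ gives a non-zero morphism $\O \to \G(-2)$. As $\G$ is semi-stable it is pure, hence so is $\G(-2)$, and the image of this morphism, being a one-dimensional quotient of $\O$ lying inside a pure sheaf, is the structure sheaf $\O_C$ of an effective divisor $C \subset \P^2$. Now $\G(-2)$ gives a point in $\M(6,-9)$, so $\pp(\G(-2)) = -3/2$; semi-stability forces $\pp(\O_C) = (3 - \deg C)/2 \le -3/2$, i.e. $\deg C \ge 6$, while $\mult(\O_C) = \deg C \le \mult(\G(-2)) = 6$. Hence $\deg C = 6$ and $\pp(\O_C) = \pp(\G(-2))$. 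Comparing Euler characteristics, $\chi(\O_C) = 6 \cdot (3-6)/2 = -9 = \chi(\G(-2))$, so the quotient $\G(-2)/\O_C$ is zero-dimensional of length $0$, hence zero. Therefore $\G(-2) \isom \O_C$, that is $\G \isom \O_C(2)$. Dualising back, and using $(\O_C(2))^\D \isom \omega_C(-2) \isom \O_C(1)$ for a sextic curve, so that $\O_C(2)$ is fixed by the involution $\HH \mapsto \HH^\D(1)$, I obtain $\F \isom \G^\D(1) \isom \O_C(2)$.

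For the converse I would verify that $\O_C(2)$ really lies in $\M(6,3)$ and meets the cohomological condition. Its Hilbert polynomial is $6m + 3$, and from the structure sequence $0 \lra \O(-3) \lra \O(3) \lra \O_C(3) \lra 0$ one reads $\h^1(\F(1)) = \h^1(\O_C(3)) = \h^2(\O(-3)) = 1 > 0$. The main obstacle is semi-stability: one must know that $\O_C$ is semi-stable for an arbitrary sextic, including the reducible and non-reduced cases. This is the standard semi-stability of structure sheaves of plane curves and can be established precisely by the slope estimate of \ref{2.3.1}, applying \cite{maican}, lemma 6.7, to a hypothetical destabilising subsheaf $\S \subset \O_C$ and reducing to a B\'ezout argument. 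A secondary technical point, already used in the forward direction, is the purity of the image of the section, which guarantees that $C$ is a genuine curve with no embedded points; this follows at once from the purity of $\G$.
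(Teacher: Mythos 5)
Your proof is correct and follows essentially the same route as the paper: the paper's proof is just a citation of the analogous argument at 4.1.1 of \cite{mult_five}, which --- exactly like the paper's own proof of \ref{3.1.8} --- passes to $\G = \F^\D(1)$, produces an injective morphism $\O_C \to \G(-2)$ from a non-zero section, forces $\deg C = 6$ by semi-stability, and concludes because the quotient has Hilbert polynomial zero. You have simply written out in full the details that the paper delegates to the cited reference, including the purity and stability checks.
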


\begin{proof}
The argument is entirely analogous to the argument at 4.1.1 \cite{mult_five}.
\end{proof}

\begin{prop}
\label{5.1.6}
Let $\F$ give a point in $\M(6,3)$ and satisfy the condition $\h^0(\F(-1)) \ge 3$ or the condition
$\h^1(\F) \ge 3$. Then $\F \isom \O_C(2)$ for some sextic curve $C \subset \P^2$.
\end{prop}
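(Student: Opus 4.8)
The plan is to separate off the one genuinely interesting case and reduce everything else to an arithmetic incompatibility. By Proposition \ref{5.1.5} the sheaves with $\h^1(\F(1)) > 0$ are exactly the sheaves $\O_C(2)$, so if $\h^1(\F(1)) > 0$ we are immediately done. I would therefore assume $\h^1(\F(1)) = 0$ and show that this forces $\h^0(\F(-1)) \le 2$ and $\h^1(\F) \le 2$, which contradicts the hypothesis. Throughout I set $a = \h^0(\F(-1))$ and $b = \h^1(\F)$, so the target under $\h^1(\F(1))=0$ is $a \le 2$ and $b \le 2$.

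The step I expect to be the main obstacle is the vanishing $\h^0(\F(-2)) = 0$. Unlike in $\M(6,1)$ and $\M(6,2)$ (compare \ref{3.1.9} and \ref{4.1.5}), this is not automatic here: $\F$ has slope $1/2$, so $\F(-2)$ has slope $-3/2$, which is exactly $\pp(\O_C)$ for a sextic $C$, and indeed $\O_C(2)$ has $\h^0(\O_C) = 1$. I would argue as follows. If $\h^0(\F(-2)) \neq 0$, a nonzero section yields an injection $\O_C \hookrightarrow \F(-2)$ with $C$ a curve of some degree $d \le 6$, as in the proof of 2.1.3 \cite{drezet-maican}. Semi-stability of $\F(-2)$ gives $\pp(\O_C) \le -3/2$; since $\pp(\O_C) = (1 - (d-1)(d-2)/2)/d$, a direct check over $d \le 6$ shows this holds only for $d = 6$, with equality. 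Then $\O_C$ and $\F(-2)$ have the same multiplicity $6$, and the quotient $\F(-2)/\O_C$ is supported in dimension $0$ of length $\PP_{\F}(-2) - \PP_{\O_C}(0) = -9 - (-9) = 0$, hence vanishes; thus $\F \isom \O_C(2)$, which by \ref{5.1.5} contradicts $\h^1(\F(1)) = 0$. Therefore $\h^0(\F(-2)) = 0$.

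The remainder is bookkeeping with \ref{2.2.1}. From $\PP_{\F}(m) = 6m + 3$ and the vanishing of $\H^2$ on a one-dimensional sheaf I read off $\h^1(\F(-1)) = a + 3$ and $\h^0(\F) = b + 3$. When $b > 0$, part (i) of \ref{2.2.1} gives $0 = \h^1(\F(1)) > 2b - \h^1(\F(-1))$, i.e. $a > 2b - 3$; when $a > 0$, part (ii) together with $\h^0(\F(-2)) = 0$ gives $b > 2a - 3$. Now if $a \ge 3$, the second inequality forces $b > 2a - 3 \ge 3$, so $b \ge 4$; both inequalities are then in force, and adding them yields $a + b > 2(a+b) - 6$, i.e. $a + b < 6$, which is absurd since $a + b \ge 7$. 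The case $b \ge 3$ is symmetric, applying (i) first to obtain $a \ge 4$. Hence under $\h^1(\F(1)) = 0$ one has $a \le 2$ and $b \le 2$, so the hypothesis $\h^0(\F(-1)) \ge 3$ or $\h^1(\F) \ge 3$ can be met only when $\h^1(\F(1)) > 0$, in which case \ref{5.1.5} gives $\F \isom \O_C(2)$.
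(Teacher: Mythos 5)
Your proof is correct, and it takes a genuinely different route from the paper's. The paper reduces by Serre duality ($\h^1(\F) = \h^0(\F^\D)$) to the case $\h^0(\F(-1)) \ge 3$, produces an injection $\O_C \to \F(-1)$ with $C$ of degree $5$ or $6$, and then runs a case analysis in which subsheaves of $\F$ are built up step by step and explicit resolutions are assembled with the horseshoe lemma (the sextic case also needs the classification of sheaves in $\M(6,2)$ with $\h^0(\F(-1)) \ge 2$ from subsection 4.1); the outcome is $\h^1(\F(1)) = 1$, and \ref{5.1.5} finishes. You instead argue contrapositively under $\h^1(\F(1)) = 0$ with two much lighter ingredients. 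First, the vanishing $\h^0(\F(-2)) = 0$, which you correctly single out as the one non-automatic point: since $\pp(\F(-2)) = -3/2$ equals $\pp(\O_C)$ for a sextic, the slope comparison over $\deg(C) \le 6$ forces degree $6$ with equality, the zero-dimensional quotient $\F(-2)/\O_C$ then has length $0$, hence $\F \isom \O_C(2)$, which \ref{5.1.5} rules out when $\h^1(\F(1)) = 0$. Second, both halves of \ref{2.2.1} together with the Euler-characteristic identities $\h^1(\F(-1)) = a+3$, $\h^0(\F) = b+3$ yield $a > 2b-3$ (when $b>0$) and $b > 2a-3$ (when $a>0$), and these are jointly incompatible with $a \ge 3$ or $b \ge 3$. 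Your approach buys brevity and economy of means — no horseshoe lemma, no resolutions, no input from section 4 — and it deploys Proposition \ref{2.2.1} in the same spirit as Corollary \ref{2.2.2}, with the twist that here the required vanishing $\h^0(\F(-2)) = 0$ must be earned rather than quoted (as it genuinely fails for $\O_C(2)$ itself). The paper's longer route stays inside the resolution-based machinery used throughout section 5 and exhibits explicit resolutions of the intermediate sheaves as a by-product, but none of that extra structure is needed for the statement.
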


\begin{proof}
By Serre duality $\h^1(\F) = \h^0(\F^\D)$,
so it is enough to examine only the case when $\h^0(\F(-1)) \ge 3$.
It is easy to see that $\F$ is stable
(cf. the description in subsection 5.2 of properly semi-stable sheaves).
Arguing as at 2.1.3 \cite{drezet-maican}, we see that there is an injective morphism
$\O_C \to \F(-1)$ for some curve $C \subset \P^2$ of degree at most $6$.
Since $\pp(\O_C) < -1/2$, $C$ has degree $5$ or $6$.
Assume first that $\deg(C)=6$.
The quotient sheaf $\CC = \F/\O_C(1)$ has length $6$ and dimension zero.
Let $\CC' \subset \CC$ be a subsheaf of length $5$ and let $\F'$ be its preimage in $\CC$.
We have an exact sequence
\[
0 \lra \F' \lra \F \lra \C_x \lra 0.
\]
We claim that $\F'$ is semi-stable. If this were not the case, then $\F'$ would have a destabilising
subsheaf $\F''$, which may be assumed to be stable.
In fact, $\F''$ must give a point in $\M(5,2)$ because $1/3 < \pp(\F'') < 1/2$.
According to \cite{mult_five}, section 2, we have the inequality $\h^0(\F''(-1)) \le 1$.
The quotient sheaf $\F/\F''$ has Hilbert polynomial $\PP(m)=m+1$ and no zero-dimensional torsion,
so $\F/\F'' \isom \O_L$.
Thus
\[
\h^0(\F(-1)) \le \h^0(\F''(-1)) + \h^0(\O_L(-1)) \le 1,
\]
contradicting our hypothesis.
This proves that $\F'$ gives a point in $\M(6,2)$.
We have the relation $\h^0(\F'(-1)) \ge 2$ hence,
according to the results in subsection 4.1, there is a resolution
\[
0 \lra \O(-4) \oplus \O \lra 2\O(1) \lra \F' \lra 0.
\]
Combining this with the standard resolution of $\C_x$
tensored with $\O(1)$ we get the exact sequence
\[
\tag{*}
0 \lra \O(-1) \lra \O(-4) \oplus 3\O \lra 3\O(1) \lra \F \lra 0.
\]
From this we obtain the relation $\h^1(\F(1))=1$, hence,
by \ref{5.1.5}, $\F \isom \O_C(2)$.

Assume now that $C$ has degree $5$.
The quotient sheaf $\F/\O_C(1)$ has Hilbert polynomial $\PP(m)=m+3$.
Let $\TT$ denote its zero-dimensional torsion and let $\F'$ be the preimage of $\TT$ in $\F$.
We have $\length(\TT) \le 2$, otherwise $\F'$ would destabilise $\F$.
If $\TT=0$, then $\F/\O_C(1) \isom \O_L(2)$.
We apply the horseshoe lemma to the extension
\[
0 \lra \O_C(1) \lra \F \lra \O_L(2) \lra 0,
\]
to the standard resolution of $\O_C(1)$ and to the resolution
\[
0 \lra \O(-1) \lra 3\O \lra 2\O(1) \lra \O_L(2) \lra 0.
\]
We obtain again resolution (*), hence, as we saw above, $\F \isom \O_C(2)$.
Assume that $\length(\TT) = 1$. According to 3.1.5 \cite{mult_five}, we have $\h^0(\F'(-1))=1$.
Since $\F/\F' \isom \O_L(1)$, we see that $\h^0(\F(-1)) \le 2$, contrary to our hypothesis.
Assume that $\length(\TT) =2$. Since $\F$ is stable, it is easy to see that $\F'$ gives a point in $\M(5,2)$,
so $\h^0(\F'(-1)) \le 1$, forcing $\h^0(\F(-1)) \le 1$, which contradicts our hypothesis.
\end{proof}

\noi
There are no other sheaves giving points in $\M(6,3)$
beside the sheaves we have discussed in this subsection.
To see this we may, by virtue of \ref{5.1.5}, restrict our attention to the case when $\H^1(\F(1))=0$.
According to \ref{5.1.6} and \ref{2.2.2}(vi),
the pair $(\h^0(\F(-1)), \h^1(\F))$ may be one of the following: $(0, 0)$, $(0, 1)$, $(1, 0)$, $(1, 1)$, $(2, 2)$.
Each of these situations has been examined.

\subsection{The strata as quotients}   

In the previous subsection we classified all sheaves giving points in $\M(6,3)$, namely we showed
that this moduli space is the union of nine locally closed subsets, as in Table 3,
which we will call, by an abuse of terminology, strata.
As the notation suggests, the stratum $X_3^\D$ is the image of $X_3$
under the duality automorphism $[\F] \to [\F^\D(1)]$.
The strata $X_i$, $0 \le i \le 7$, $i \neq 3$, are invariant under this automorphism.
We employ the notations $\W_i$, $W_i$, $G_i$, $\rho_i$, $0 \le i \le 7$, analogous to the notations
from subsection 3.2. We denote $W_i^\st = \rho_i^{-1}(X_i^\st)$.
Adopting the notations of \cite{drezet-maican},
let $\E_i$ denote an arbitrary sheaf giving a point in the codimension $i$ stratum of $\M(4,2)$, $i = 0, 1$.
Let $C \subset \P^2$ denote an arbitrary conic curve;
let $Q \subset \P^2$ denote an arbitrary quartic curve.
It is easy to see that all points of the form $[\O_C \oplus \E_i]$
belong to $X_i$ and to no other stratum.
According to \ref{2.3.2}, the set $W_4 \setminus W_4^\st$ consists of those morphisms $\f$
such that $\f_{12}$ divides $\f_{11}$ or $\f_{22}$.
The sheaves $\F = \Coker(\f)$, $\f \in W_4 \setminus W_4^\st$, are precisely the extensions
of $\O_C$ by $\O_Q(1)$ or of $\O_Q(1)$ by $\O_C$ satisfying the conditions
$\h^0(\F(-1))=1$, $\h^1(\F)=1$.
Using the argument found at 3.3.2 \cite{mult_five}, we can show that the extension sheaves
\[
0 \lra \O_Q(1) \lra \F \lra \O_C \lra 0
\]
satisfying the condition $\h^1(\F)=0$ are precisely the sheaves of the form $\Coker(\f)$, $\f \in W_3^\D$
such that the maximal minors of $\f_{11}$ have a common quadratic factor.
By duality, it follows that the extension sheaves
\[
0 \lra \O_C \lra \F \lra \O_Q(1) \lra 0
\]
satisfying the condition $\h^0(\F(-1))=0$ are precisely the sheaves of the form $\Coker(\f)$, $\f \in W_3$
such that the maximal minors of $\f_{12}$ have a common quadratic factor.
This shows that the strata $X_2$, $X_5$, $X_6$, $X_7$ have only stable points
and that the sets $X_3 \setminus X_3^\st$, $X_3^\D \setminus (X_3^\D)^\st$, $X_4 \setminus X_4^\st$
coincide and consist of all points of the form $[\O_C \oplus \O_Q(1)]$.

The fibres of the canonical maps $\rho_i \colon W_i^\st \to X_i^\st$, $0 \le i \le 7$,
are precisely the $G_i$-orbits, hence, by the argument found in subsection 3.2,
these are geometric quotient maps. Thus $X_i \isom W_i/G_i$ for $i \in \{ 2, 5, 6,7 \}$.

Assume that $i \in \{ 0, 1 \}$. Arguing as at 4.2.1 \cite{drezet-maican},
we can easily see that two points of $W_i$ are in the same fibre of $\rho_i$
if and only if the relative closures in $W_i$ of their $G_i$-orbits intersect non-trivially.
This allows us to apply the method of 4.2.2 op.cit. in order to show that
$\rho_i$ is a categorical quotient map.
Note that $W_0$ is a proper invariant open subset of the set of semi-stable Kronecker modules
$3\O(-2) \to 3\O$, so there exists a good quotient $W_0/\!/G_0$ as an open subset of $\N(6,3,3)$.
By the uniqueness of the categorical quotient we have an isomorphism $X_0 \isom W_0/\!/G_0$.
This shows that $\M(6,3)$ and $\N(6,3,3)$ are birational.
Let $W_{10} \subset W_1$ be the open invariant subset given by the condition that
the entries of $\f_{11}$ span $V^*$ and the same for the entries of $\f_{22}$.
Its image $X_{10}$ is open in $X_1$. Since $W_{10} \subset W_1^\st$,
the map $W_{10} \to X_{10}$ is a geometric quotient map.
By analogy with 2.2.2 \cite{mult_five}, the quotient $W_{10}/G_1$ is isomorphic
to an open subset of the projectivisation of a vector bundle of rank $37$
over $\N(3,3,1) \times \N(3,1,3)$. The base is isomorphic to a point,
so $X_{10}$ is an open subset of $\P^{36}$.

By analogy with loc.cit., the quotient $W_2/G_2$ is isomorphic to an open subset
of the projectivisation of a vector bundle of rank $22$ over $\N(3,3,2) \times \N(3,2,3)$.
Likewise, $W_6/G_6$ is isomorphic to an open subset of the projectivisation
of a vector bundle of rank $26$ over $\P^2 \times \P^2$.
By analogy with 3.2.3 op.cit., $W_5/G_5$ is isomorphic to an open subset
of the projectivisation of a vector bundle of rank $24$ over $\Hilb(2) \times \Hilb(2)$.
The stratum $X_7$ is isomorphic to $\P(\SS^6 V^*)$.

By analogy with 9.3 \cite{drezet-trautmann}, there exists a geometric quotient $W_3/G_3$,
which is an open subset of the projectivisation of a vector bundle of rank $22$ over $\N(3,3,4)$.
The induced map $W_3/G_3 \to X_3$ is an isomorphism over the set of stable points
in $X_3$, as we saw above.
We will show that the fibre of this map over any properly semi-stable point $[\O_C \oplus \O_Q(1)]$
is isomorphic to $\SS^3 V^*$. Choose an equation for $C$ of the form
$X \ell_1 + Y \ell_2 + Z \ell_3 =0$ and an equation for $Q$ of the form $X f_3 - Y f_2 + Z f_1 =0$.
For any $f \in \SS^3 V^*$ consider the morphism
\[
\f_f = \left[
\ba{cccc}
f_1 & -Y & \phantom{-} X & 0 \\
f_2 & -Z & \phantom{-} 0 & X \\
f_3 & \phantom{-} 0 & -Z & Y \\
f & \phantom{-} \ell_1 & \phantom{-} \ell_2 & \ell_3
\ea
\right].
\]
Any $\f \in W_3$ such that $\rho_3(\f) = [\O_C \oplus \O_Q(1)]$ is in the orbit of some $\f_f$
and $\f_f \sim \f_g$ if and only if $f = g$.

The linear algebraic group $G= \Aut(\O \oplus \O(1))$ acts on the vector space
$\U = \Hom(\O(-2), \O \oplus \O(1))$ by left-multiplication.
Consider the open $G$-invariant subset $U \subset \U$ of morphisms $\psi$
for which $\psi_{11}$ is non-zero and does not divide $\psi_{21}$.
Consider the fibre bundle with base $\P(\SS^2 V^*)$ and fibre $\P(\SS^3 V^*/V^* q)$
at any point of the base represented by $q \in \SS^2 V^*$.
Clearly this fibre bundle is the geometric quotient of $U$ modulo $G$.
Consider the open $G_4$-invariant subset $W_4' \subset \W_4$ of morphisms $\f$
whose restriction to $\O(-2)$ lies in $U$.
Clearly $W_4'$ is the trivial vector bundle over $U$ with fibre $\Hom(\O(-3), \O \oplus \O(1))$.
Consider the sub-bundle $\Sigma \subset W_4'$ given by the condition
$(\f_{11}, \f_{21}) = (\f_{12} u, \f_{22} u)$, for some $u \in \Hom(\O(-3), \O(-2))$.
As at 2.2.5 \cite{mult_five}, the quotient bundle $W_4'/\Sigma$ is $G$-linearised, 
hence it descends to a vector bundle $E$ over $U/G$ of rank $22$.
Its projectivisation $\P(E)$ is the geometric quotient of $W_4' \setminus \Sigma$
modulo $G_4$.
Notice that $W_4^\st$ is a proper open $G_4$-invariant subset of $W_4' \setminus \Sigma$.
Thus $X_4^\st = W_4^\st/G_4$ is isomorphic to a proper open subset of $\P(E)$.

\subsection{Generic sheaves} 

Let $C$ denote an arbitrary smooth sextic curve in $\P^2$ and let $P_i$ denote distinct points on $C$.
By analogy with the case of the stratum $X_3 \subset \M(6,1)$,
we see that the generic sheaves in $X_2$ have the form $\O_C(2)(-P_1 - P_2 - P_3 + P_4 + P_5 + P_6)$,
where $P_1, P_2, P_3$ are non-colinear and the same for $P_4, P_5, P_6$.
By analogy with the stratum $X_1 \subset \M(6,2)$, we see that the generic sheaves in $X_3$
have the form $\O_C(3)(-P_1 - \cdots - P_6)$,
where $P_1, \ldots, P_6$ are not contained in a conic curve.
The generic sheaves in $X_4$ have the form $\O_C(3)(-P_1 - \cdots - P_6)$,
where $P_1, \ldots, P_6$ lie on a conic curve and no four points among them are colinear.
The generic sheaves in $X_5$ have the form $\O_C(2)(P_1 + P_2 - P_3 - P_4)$.
The generic sheaves in $X_6$ have the form $\O_C(2)(P_1 - P_2)$.


\section{The moduli space $\M(6,0)$}

\subsection{Classification of sheaves}   

\begin{prop}
\label{6.1.1}
Let $r$ be a positive integer.
The sheaves $\F$ giving points in $\M(r,0)$ and satisfying the condition $\h^1(\F)=0$
are precisely the sheaves having a resolution of the form
\[
0 \lra r\O(-2) \stackrel{\f}{\lra} r\O(-1) \lra \F \lra 0.
\]
Moreover, $\F$ is properly semi-stable if and only if $\f$ is properly semi-stable,
viewed as a Kronecker module.
\end{prop}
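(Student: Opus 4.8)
The plan is to read off the resolution from the Beilinson free monad, to deduce semi-stability almost for free from the vanishing of $\h^0(\F)$, and to match \emph{proper} semi-stability on the two sides by relating slope-zero subsheaves of $\F$ to zero-blocks of $\f$. First I would run the free monad (2.1.7) for $\F$. Since $\pp(\F)=0$ we have $\h^0(\F)=\h^1(\F)=0$, and the Euler sequence $0 \to \F \tensor \Om^1(1) \to V^* \tensor \F \to \F(1) \to 0$ then gives $\h^0(\F \tensor \Om^1(1))=0$, $\h^1(\F(1))=0$ and $\h^1(\F \tensor \Om^1(1))=\h^0(\F(1))=\chi(\F(1))=r$. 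Writing $q=\h^0(\F(-1))$, the terms $\CC^1$ and $\CC^2$ of (2.1.7) vanish and the surviving terms are $\CC^{-2}=q\O(-2)$, $\CC^{-1}=(q+r)\O(-2)$ and $\CC^0=r\O(-1)$, so the monad reads
\[
0 \lra q\O(-2) \lra (q+r)\O(-2) \lra r\O(-1) \lra 0
\]
with $\F$ as the cohomology at $\CC^0$. The left-hand map is given by a matrix of scalars and is injective, hence split; cancelling the acyclic $q\O(-2)$ leaves the asserted resolution $0 \to r\O(-2) \stackrel{\f}{\lra} r\O(-1) \to \F \to 0$. (In particular $q=0$ a posteriori, since this resolution forces $\h^0(\F(-1))=0$.)

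For the converse I would argue that \emph{any} $\F$ with such a resolution automatically lies in $\M(r,0)$. The long exact sequence in cohomology gives $\h^0(\F)=\h^1(\F)=0$ and $\PP_\F(m)=rm$; moreover $\F$ has no zero-dimensional torsion, because a nonzero $0$-dimensional subsheaf would contribute to $\h^0(\F)$. The key observation is that $\h^0(\F)=0$ already forces semi-stability: if $\F$ had a subsheaf $\G$ with $\pp(\G)>0$, then $\chi(\G)>0$, whence $\h^0(\G)\ge \chi(\G)>0$ and the composite $\O \to \G \hookrightarrow \F$ would be a nonzero section of $\F$, a contradiction. Thus $\F$ is semi-stable of slope $0$, i.e. gives a point of $\M(r,0)$, and the two sets in the statement coincide.

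For the ``moreover'' clause I would translate proper semi-stability into a block condition. Since $\F$ is pure, $\f$ is injective ($\det \f \neq 0$), and then so is every equivalent matrix; hence no representative can exhibit a zero-block of size $q'\times p'$ with $p'+q'>r$, for the submatrix of $\f$ on the first $p'$ columns, a morphism $p'\O(-2)\to(r-q')\O(-1)$ with $p'>r-q'$, would have a kernel. So every injective $\f$ is semi-stable as a Kronecker module, and it remains to compare \emph{proper} semi-stability. If $\f$ admits a representative with a zero-block of size $q'\times p'$, $p'+q'=r$, then $\f$ is block upper-triangular with square diagonal blocks, the lower one $C\colon q'\O(-2)\to q'\O(-1)$ has $\det C \neq 0$, and $\Coker(C)$ is a slope-zero quotient of $\F$; hence $\F$ is not stable. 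Conversely, if $\F$ is properly semi-stable it has a proper subsheaf $\S$ with $\pp(\S)=0$ and multiplicity $s$; from the sequence $0\to\S\to\F\to\F/\S\to0$ one checks $\h^0(\S)=\h^1(\S)=0$ and $\h^0(\F/\S)=\h^1(\F/\S)=0$, so $\S$ and $\F/\S$ give points of $\M(s,0)$ and $\M(r-s,0)$ with $\h^1=0$. By the first part (applied with $r$ replaced by $s$ and by $r-s$) each has a linear resolution $s\O(-2)\to s\O(-1)$, respectively $(r-s)\O(-2)\to(r-s)\O(-1)$, and the horseshoe lemma produces a block upper-triangular resolution of $\F$ of the required shape; thus $\f$ is not stable.

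The main obstacle is precisely this last equivalence: passing from the sheaf-theoretic statement that $\F$ carries a slope-zero subsheaf to the matrix statement that $\f$ has a destabilising zero-block. This rests on the uniqueness, up to the $\GL(r)\times\GL(r)$-action, of the minimal free resolution $r\O(-2)\to r\O(-1)$ (the entries of $\f$ being linear forms, the resolution is minimal), which lets one identify the block-triangular resolution produced by the horseshoe lemma with $\f$ itself and so conclude that $\f$ is equivalent to a morphism with a zero-block of size $(r-s)\times s$. Everything else — the monad computation and the automatic semi-stability coming from $\h^0(\F)=0$ — is routine.
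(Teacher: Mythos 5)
Your proof is correct, and for the existence of the resolution you and the paper use the same Beilinson machinery in slightly different packaging: the paper reads off the display (2.1.4) of spectral sequence II for $\F$ and concludes $\F \isom \Coker(\f_1)$ via the exact sequences (2.1.5)--(2.1.6), while you assemble the free monad (2.1.7) and cancel. Your version has the small advantage of not needing $\h^0(\F(-1))=0$ as an input --- you carry $q=\h^0(\F(-1))$ along and split it off; in fact you could shorten this further, since by the remark following (2.1.7) the constant map $q\O(-2)\to (q+r)\O(-2)$ is zero, so exactness of the monad at $\CC^{-2}$ forces $q=0$ outright. The genuine divergence is in the converse and the \emph{moreover} clause, which the paper dispatches with the citation ``exactly as at 4.1.2 \cite{mult_five}'', whereas you prove them from scratch, and your arguments are sound: semi-stability of $\Coker(\f)$ comes for free from $\h^0(\F)=0$ (a destabilising subsheaf would have positive Euler characteristic, hence a section), Kronecker semi-stability comes for free from injectivity of $\f$ (a zero block of size $q'\times p'$ with $p'+q'>r$ would produce a kernel by a rank count), and the two notions of proper semi-stability are matched by the block-triangular quotient $\Coker(C)$ in one direction and by the horseshoe lemma plus uniqueness of the linear resolution in the other. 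That uniqueness step is the crux, and it holds for essentially the reason you give: both resolutions involve only $\O(-2)$ and $\O(-1)$, and since $\Hom(\O(-1),\O(-2))=0=\Ext^1(\O(-1),\O(-2))$, the identity of $\F$ lifts to an isomorphism of the two resolutions, so $\f$ is equivalent, under the $\GL(r,\C)\times\GL(r,\C)$-action, to the block-triangular matrix produced by the horseshoe lemma. One point worth making explicit when you apply the first part to $\S$ and $\F/\S$: either observe that your derivation of the resolution never used semi-stability (only one-dimensionality, $\chi=0$ and $\h^1=0$, all of which you verify for $\S$ and $\F/\S$), or note that a slope-zero subsheaf of a semi-stable slope-zero sheaf, and its quotient, are automatically semi-stable; either remark closes the loop.
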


\begin{proof}
This is a generalisation of 4.1.2 \cite{mult_five}.
Assume that $\F$ gives a point in $\M(r,0)$ and $\h^1(\F)=0$.
Diagram (2.1.4) for the Beilinson spectral sequence II converging to $\F$ reads
\[
\xymatrix
{
r \O(-2) \ar[r]^-{\f_1} & r\O(-1) & 0 \\
0 & 0 & 0
}.
\]
The exact sequences (2.1.5) and (2.1.6) show that $\F \isom \Coker(\f_1)$.
The rest of the proof is exactly as at 4.1.2 \cite{mult_five}.
\end{proof}

\begin{prop}
\label{6.1.2}
Consider an integer $r \ge 3$.
The sheaves $\F$ giving points in $\M(r,0)$ and satisfying the conditions
$\h^0(\F(-1))=0$, $\h^1(\F)=1$, $\h^1(\F(1))=0$
are precisely the sheaves having a resolution of the form
\[
0 \lra \O(-3) \oplus (r-3)\O(-2) \stackrel{\f}{\lra} (r-3)\O(-1) \oplus \O \lra \F \lra 0,
\]
where $\f_{12}$ is semi-stable as a Kronecker module.
\end{prop}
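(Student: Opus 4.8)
The plan is to establish the resolution from the Beilinson spectral sequence and then to reduce the semistability of $\F$ to the Kronecker condition on the square block $\f_{12}$.

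First I would read off the cohomology of $\F$. Since $\chi(\F)=0$ we get $\h^0(\F)=\h^1(\F)=1$; from $\chi(\F(-1))=-r$ and $\h^0(\F(-1))=0$ we get $\h^1(\F(-1))=r$; from $\chi(\F(1))=r$ and $\h^1(\F(1))=0$ we get $\h^0(\F(1))=r$. Feeding these into display diagram (2.1.1) for the Beilinson spectral sequence I converging to $\F(1)$ gives
\[
\xymatrix
{
r\O(-1) \ar[r]^-{\f_1} & \Om^1(1) & 0 \\
0 & \Om^1(1) \ar[r]^-{\f_4} & r\O
}.
\]
Resolving $\Om^1(1)$ by $0\lra\O(-2)\lra3\O(-1)\lra\Om^1(1)\lra0$ turns $\f_1$ into a morphism $\sigma\colon\O(-2)\oplus r\O(-1)\lra3\O(-1)$ with the same kernel and cokernel. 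Because $\F(1)$ maps surjectively onto $\Coker(\f_1)\isom\Coker(\sigma)$, the constant block $\sigma_{12}$ must have rank $3$: otherwise $\Coker(\f_1)$ would be $\O_L(-1)$ or a sheaf of positive rank, contradicting the semistability of $\F(1)$. Hence $\Coker(\f_1)=0$ and $\Ker(\f_1)\isom\O(-2)\oplus(r-3)\O(-1)$.

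Next I would assemble the resolution. Since $\h^0(\F(-1))=0$, the exact sequence (2.1.2) shows $\f_4\colon\Om^1(1)\lra r\O$ is injective, and (2.1.3) collapses (as $\Coker(\f_1)=0$) to the presentation $0\lra\O(-2)\oplus(r-3)\O(-1)\oplus\Om^1(1)\lra r\O\lra\F(1)\lra0$. Replacing the single copy of $\Om^1(1)$ by means of $0\lra\Om^1(1)\lra3\O\lra\O(1)\lra0$ yields
\[
0\lra\O(-2)\oplus(r-3)\O(-1)\oplus3\O\stackrel{\rho}{\lra}r\O\oplus\O(1)\lra\F(1)\lra0.
\]
A rank count as in \ref{3.1.3}, again using semistability, forces $\rho_{13}\colon3\O\lra r\O$ to have maximal rank $3$; canceling $3\O$ and tensoring with $\O(-1)$ produces the asserted resolution, in which $\f_{12}$ is the square $(r-3)\times(r-3)$ block of one-forms. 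Finally, semistability of $\F$ forces $\f_{12}$ to be semistable as a Kronecker module: a zero-submatrix of $\f_{12}$ of size $q'\times p'$ with $p'+q'>r-3$ would split off a sub- (or quotient-) resolution yielding a subsheaf of $\F$ of positive slope.

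For the converse I would first note that every $\F$ with such a resolution automatically lies in $\M(r,0)$ and satisfies the three cohomological conditions: the Hilbert-polynomial computation gives multiplicity $r$ and $\chi=0$, while the long exact cohomology sequences return $\h^0(\F(-1))=0$, $\h^1(\F)=1$, $\h^1(\F(1))=0$ with no hypothesis on $\f_{12}$. The \emph{substance, and the main obstacle, is the implication ``$\f_{12}$ semistable $\Rightarrow$ $\F$ semistable''}: here the criterion \ref{2.3.1} is unavailable, since its condition (i) fails as soon as $r\ge5$, which is exactly where the square block $\f_{12}$ takes over. I would dispatch this by the GIT correspondence of \cite{drezet-trautmann}: on $\Hom(\O(-3)\oplus(r-3)\O(-2),(r-3)\O(-1)\oplus\O)$ with the action of $G=(\Aut\times\Aut)/\C^*$, a polarisation concentrated on the $\f_{12}$-block identifies, via King's numerical criterion \cite{king}, the GIT-semistable injective morphisms with those for which $\f_{12}$ is a semistable Kronecker module, and this GIT-semistability matches sheaf-semistability (generalising the argument behind \ref{6.1.1}). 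Alternatively one argues by hand: a hypothetical destabilising subsheaf $\S$ has $\h^0(\S(-1))\le\h^0(\F(-1))=0$ and multiplicity $1\le s\le r-1$, so it fits into a commutative diagram with a subresolution of $\f$; the inequality $\pp(\S)>0$ then translates into a forbidden zero block of $\f_{12}$. I expect this translation between the numerics of $\S$ and the block sizes of $\f_{12}$ to be the delicate point.
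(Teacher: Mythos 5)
Your treatment of the forward direction is correct and is essentially the paper's own argument: the same Beilinson diagram for $\F(1)$ and the same identification of $\Ker(\f_1)$ and $\Coker(\f_1)$; the only difference is that the paper gets $\Coker(\f_4) \isom (r-3)\O \oplus \O(1)$ in one stroke by duality, where you resolve $\Om^1(1)$ by the Euler sequence and run the rank count on $\rho_{13}$ (both moves are used throughout the paper, and your version has the small merit of spelling out why semi-stability of $\F$ forces $\f_{12}$ to be semi-stable as a Kronecker module, which the paper's text leaves implicit).

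The gap is in the converse, which you yourself identify as the substance: $\f$ injective with $\f_{12}$ Kronecker-semi-stable implies $\F$ semi-stable. The paper disposes of this by citing the identical argument at 4.1.3 of \cite{mult_five}; you offer two sketches, and neither is a proof as written. The GIT route does not work as described: the group $(\Aut(\O(-3)\oplus(r-3)\O(-2)) \times \Aut((r-3)\O(-1)\oplus\O))/\C^*$ is non-reductive (it contains the unipotent pieces coming from $\Hom(\O(-3),\O(-2))$ and $\Hom(\O(-1),\O)$), so King's criterion does not apply directly and one is forced into the polarised semi-stability theory of \cite{drezet-trautmann}; more seriously, the asserted coincidence of GIT-semi-stability with sheaf-semi-stability for a mixed resolution of this type is precisely the non-trivial theorem --- it is the kind of statement proved case by case in \cite{maican}, and it does not follow formally from the classical Kronecker case behind \ref{6.1.1}. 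Your ``by hand'' alternative is the right approach (it is what the cited proof amounts to), but you stop exactly at the step that needs proving.

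For completeness, here is what that step requires. Let $\S \subset \F$ be the maximal destabilising subsheaf, of multiplicity $s$; it is semi-stable with $\pp(\S)>0$, and from $\h^0(\F)=1$, $\h^0(\F(-1))=0$ one gets $\chi(\S)=1$, $\h^1(\S)=0$, $\h^0(\S(-1))=0$, whence by the multiplicity-$s$ analogue of \ref{3.1.1} (that is, 4.2 of \cite{maican} via duality) a resolution $0 \to (s-1)\O(-2) \to (s-2)\O(-1)\oplus\O \to \S \to 0$ when $s \ge 2$, and $\S \isom \O_L$ when $s=1$. Lifting $\S \hookrightarrow \F$ to a morphism of resolutions $(\b,\a)$ (possible since the relevant $\Ext^1$ groups vanish), the snake lemma gives $\Ker(\b) \isom \Ker(\a)$; after normalising $\a$ using $\h^0$, both kernels are direct sums of line bundles of different twists, hence both vanish, so the constant blocks $\b_2 \colon (s-1)\O(-2) \to (r-3)\O(-2)$ and $\a_{11} \colon (s-2)\O(-1) \to (r-3)\O(-1)$ have full column rank (this already kills $s=1$ and $s=r-1$). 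In adapted bases the relation $\f \circ \b = \a \circ \psi$ then exhibits a zero submatrix of $\f_{12}$ of size $(r-s-1)\times(s-1)$, and $(s-1)+(r-s-1)=r-2>r-3$ contradicts the Kronecker semi-stability of $\f_{12}$. Without this diagram-chase (or an equivalent argument), the ``precisely'' in the proposition is not established.
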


\begin{proof}
This is a generalisation of 4.1.3 \cite{mult_five}.
Assume that $\F$ gives a point in $\M(r,0)$ and satisfies the above cohomological conditions.
Diagram (2.1.1) for the Beilinson spectral sequence I converging to $\F(1)$ reads
\[
\xymatrix
{
r\O(-1) \ar[r]^-{\f_1} & \Om^1(1) & 0 \\
0 & \Om^1(1) \ar[r]^-{\f_4} & r\O
}.
\]
Arguing as at \ref{3.1.3}, we can show that $\Ker(\f_1) \isom \O(-2) \oplus (r-3)\O(-1)$
and $\Coker(\f_1) =0$. By duality, $\Coker(\f_4) \isom (r-3)\O \oplus \O(1)$.
The exact sequence (2.1.3) yields the resolution
\[
0 \lra \O(-2) \oplus (r-3)\O(-1) \lra (r-3)\O \oplus \O(1) \lra \F(1) \lra 0.
\]
The converse is exactly as at 4.1.3 \cite{mult_five}.
\end{proof}

\begin{prop}
\label{6.1.3}
The sheaves $\F$ giving points in $\M(6,0)$ and satisfying the conditions $\h^0(\F(-1))=0$,
$\h^1(\F)=2$ are precisely the sheaves having one of the following resolutions:
\[
\tag{i}
0 \lra 2\O(-3) \lra 2\O \lra \F \lra 0,
\]
\[
\f = \left[
\ba{cc}
f_{11} & f_{12} \\
f_{21} & f_{22}
\ea
\right],
\]
\[
\tag{ii}
0 \lra 2\O(-3) \oplus \O(-2) \stackrel{\f}{\lra} \O(-2) \oplus 2\O \lra \F \lra 0,
\]
\[
\f = \left[
\ba{ccc}
\ell_1 & \ell_2 & 0 \\
f_{11} & f_{12} & q_1 \\
f_{21} & f_{22} & q_2
\ea
\right],
\]
\[
\tag{iii}
0 \lra 2\O(-3) \oplus \O(-1) \stackrel{\f}{\lra} \O(-1) \oplus 2\O \lra \F \lra 0,
\]
\[
\f = \left[
\ba{ccc}
q_1 & q_2 & 0 \\
f_{11} & f_{12} & \ell_1 \\
f_{21} & f_{22} & \ell_2
\ea
\right],
\]
\[
\tag{iv}
0 \lra 2\O(-3) \oplus \O(-2) \oplus \O(-1) \stackrel{\f}{\lra} \O(-2) \oplus \O(-1) \oplus 2\O \lra \F \lra 0,
\]
\[
\f = \left[
\ba{cccc}
\ell_1 & \ell_2 & 0 & 0 \\
p_1 & p_2 & \ell & 0 \\
f_{11} & f_{12} & p_1' & \ell_1' \\
f_{21} & f_{22} & p_2' & \ell_2'
\ea
\right].
\]
Here $q_1$, $q_2$ are linearly independent, $\ell_1$, $\ell_2$ are linearly independent,
$\ell_1'$, $\ell_2'$ are linearly independent and $\ell \neq 0$.
\end{prop}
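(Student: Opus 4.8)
The plan is to prove the two implications separately: the forward implication by the Beilinson machinery of \S2.1, the converse by the stability criterion \ref{2.3.2} reinforced with snake-lemma and slope estimates. Because $\chi=0$, I would run the Beilinson spectral sequence II converging to $\F$ itself (equivalently the free monad (2.1.7)), as in the proof of \ref{6.1.1}; the point is that its $\EE^1$-term involves only $\H^0(\F(-1))$, $\H^1(\F(-1))$, $\H^\ast(\F\tensor\Om^1(1))$ and $\H^\ast(\F)$, so that no knowledge of $\h^1(\F(1))$ is needed at the outset. Writing $m=\h^0(\F\tensor\Om^1(1))$ and using $\h^0(\F(-1))=0$, $\h^1(\F)=2$ with Riemann--Roch one gets $\h^1(\F(-1))=6$, $\h^0(\F)=2$ and $\h^1(\F\tensor\Om^1(1))=m+6$, so that the display diagram (2.1.4) has bottom row $0\to m\O(-1)\stackrel{\f_4}{\to}2\O$ and top row $6\O(-2)\stackrel{\f_1}{\to}(m+6)\O(-1)\stackrel{\f_2}{\to}2\O$. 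Exactly as in the proof of \ref{2.2.1}, semi-stability of $\F$ forces $m\le 1$, and, as in the proof of \ref{3.1.3}, it forces the correction term $\Ker(\f_2)/\Im(\f_1)$ to vanish. Combining (2.1.5) and (2.1.6) then gives $0\to\Ker(\f_1)\to\Coker(\f_4)\to\F\to0$, where $\Coker(\f_4)=\Coker(m\O(-1)\to 2\O)$.

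It remains to resolve $\Ker(\f_1)$ and $\Coker(\f_4)$, splice the resolutions by the horseshoe lemma, and cancel trivial summands. I expect the four cases to be governed by two independent discrete invariants. The first is $m$: computing $\h^0(\F\tensor\Om^1(1))$ directly from each listed resolution (via $0\to\F\tensor\Om^1(1)\to 3\F\to\F(1)\to0$) gives $m=0$ in cases (i), (ii) and $m=1$ in cases (iii), (iv), so that $m$ records whether the summand $\O(-1)$ survives in both source and target. The second invariant is the rank of an induced scalar block, equivalently whether the $\O(-2)\to\O(-2)$ entry of the reduced morphism is zero or a unit, which records whether the summand $\O(-2)$ survives. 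Concretely I would first produce the largest resolution, namely the stratum form of Table~4, observe that its $\O(-2)\to\O(-2)$ and $\O(-1)\to\O(-1)$ entries are scalars, and then cancel each unit entry and bring the remaining blocks into the stated shapes by elementary row and column operations; whether each of the two diagonal scalars vanishes yields the four pairs $(0,0),(1,0),(0,1),(1,1)$, i.e. (i), (ii), (iii), (iv). A rank count controlled by semi-stability excludes every other configuration, which is what makes the list exhaustive.

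For the converse I would proceed in order of increasing size. Resolution (i) falls under \ref{2.3.2} with $d_1=d_2=-3$, $e_1=e_2=0$: since $e_1-d_1=e_2-d_2$, the sheaf $\F$ is stable when the two cubics $f_{12},f_{22}$ have no common factor and is a properly semi-stable direct sum supported on two cubics otherwise, so in all cases it is semi-stable and lies in $\M(6,0)$. For (ii), (iii), (iv) hypothesis (i) of \ref{2.3.1} fails, so instead I would use the snake lemma to present $\F$ as an extension of a zero-dimensional sheaf by a multiplicity-$6$ sheaf $\F'$ whose semi-stability is already known (directly, or through the duality isomorphism of \cite{maican-duality} relating $\F'$ to a sheaf classified in \S3), and then bound the slope of any potential destabilising subsheaf—which can only be the structure sheaf of a line or of a conic—by fitting its standard resolution into a commutative diagram of the type of diagram (8) in the proof of \ref{3.1.3}, in which the comparison maps are forced to be injective on global sections and hence contradict the normal-form hypotheses on $\f$. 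These are the mechanisms already used in the proofs of \ref{3.1.5} and \ref{4.1.4}.

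The main obstacle is the bookkeeping in the forward direction: showing that semi-stability pins $m$ and the auxiliary scalar ranks down to precisely the four admissible combinations, and that no further degeneration occurs, so that (i)--(iv) are genuinely exhaustive and each is reached in the stated normal form after cancellation. A secondary difficulty is keeping the converse uniform across the degenerate cases while correctly accommodating the properly semi-stable locus of $\M(6,0)$, where $\F$ need not be stable and the sheaf $\F'$ obtained after peeling off a point may itself be strictly semi-stable.
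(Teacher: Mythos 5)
Your forward direction breaks at its first structural step: the claim that, ``as in the proof of \ref{3.1.3}'', semi-stability forces the correction term $\Ker(\f_2)/\Im(\f_1)$ of the Beilinson spectral sequence II for $\F$ to vanish. This is false, and it fails exactly for the sheaves you need in cases (ii)--(iv). In spectral sequence II applied to $\F$ itself, $\Coker(\f_5)$ is a quotient of $\Coker(\f_4)$, which is in turn a quotient of $\H^0(\F)\tensor\O$; so if the correction term vanished, $\F=\Coker(\f_5)$ would be a quotient of $\H^0(\F)\tensor\O$, i.e.\ globally generated. But a sheaf $\F$ with resolution (ii) satisfies the hypotheses $\h^0(\F(-1))=0$, $\h^1(\F)=2$ (compute from the resolution) and is \emph{not} globally generated: there $\h^0(\F)=2$, every global section lifts to a constant section of the summand $2\O$ of the target, and the cokernel of the evaluation map $2\O\to\F$ is the structure sheaf $\C_x$ of the point $x=\{\ell_1=\ell_2=0\}$. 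So for such $\F$ the correction term is $\C_x\neq 0$ (and similarly it is a non-zero zero-dimensional sheaf for types (iii) and (iv)). The analogy with \ref{3.1.3} is misleading: in that proof the only possible non-zero values of the correction term were one-dimensional quotients of negative slope such as $\O_L(-1)$, which semi-stability excludes; here the correction term is the cokernel of evaluation, a zero-dimensional sheaf, and zero-dimensional quotients never contradict semi-stability. Note also that your plan is internally inconsistent: if the correction term always vanished, every sheaf in the stratum would be globally generated and only resolution (i) could occur, whereas your own computation of $m$ shows that types (ii)--(iv), whose resolutions have the negative twists $\O(-2)$, $\O(-1)$ in the target precisely because they are not globally generated, must also arise. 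This is exactly why the paper does not run Beilinson II on $\F$: it applies spectral sequence I to the dual sheaf $\F^\D(2)$, whose display diagram has top row $2\O(-1)\to 0\to 0$ because $\h^1(\F^\D(2)(-1))=\h^0(\F(-1))=0$ and $\h^1(\F^\D(2))=\h^0(\F(-2))=0$; the correction term then vanishes for trivial reasons, one gets the four-term exact sequence $0\to 2\O(-1)\to 2\O(-1)\oplus 6\Om^1(1)\to 12\O\to \F^\D(2)\to 0$, and the master resolution follows by resolving $\Om^1(1)$, imposing rank conditions via semi-stability, cancelling, and dualising back. A repair within your framework would have to keep the non-zero correction term, write $\F$ as an extension of a zero-dimensional sheaf by $\Coker(\f_5)$, and splice resolutions by the horseshoe lemma, as the paper does in \ref{3.1.5}, \ref{4.1.4}, \ref{5.1.4}; your written plan does the opposite.

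There is also a flaw in your converse for (i): when the cubics $f_{12},f_{22}$ have a common factor, \ref{2.3.2} does not apply, and $\F$ is not ``a properly semi-stable direct sum supported on two cubics.'' If $\gcd(f_{12},f_{22})$ is a quadric (resp.\ a linear form $\ell$), the paper exhibits $\F$ as a non-split extension of a sheaf of $\M(4,1)$ by $\O_C(-1)$ for a conic $C$ (resp.\ of a sheaf of $\M(5,1)$ by $\O_L(-2)$), and semi-stability then requires separate slope estimates; only when $f_{12},f_{22}$ are proportional is $\F$ an extension of one cubic structure sheaf by another. Your converse plan for (ii)--(iv) (snake lemma, auxiliary semi-stable sheaf $\F'$, diagram-(8) type arguments) is broadly the paper's, but note that in case (iv) the auxiliary sheaf $\F'$ is semi-stable only when the reduced matrix avoids one degenerate form, a subcase the paper must and does treat by a second extension.
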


\begin{proof}
Diagram (2.1.1) for the Beilinson spectral sequence I converging to $\F^\D(2)$ has the form
\[
\xymatrix
{
2\O(-1) & 0 & 0 \\
2\O(-1) \ar[r]^-{\f_3} & 6\Om^1(1) \ar[r]^-{\f_4} & 12 \O
}.
\]
Combining the exact sequences (2.1.2) and (2.1.3), we obtain the resolution
\[
0 \lra 2\O(-1) \lra 2\O(-1) \oplus 6\Om^1(1) \lra 12\O \lra \F^\D(2) \lra 0,
\]
hence the resolution
\[
0 \lra 2\O(-1) \lra 2\O(-1) \oplus 18 \O \stackrel{\rho}{\lra} 12\O \oplus 6\O(1) \lra \F^\D(2) \lra 0.
\]
If $\rank(\rho_{12}) \le 10$, then $\F^\D(2)$ would map surjectively to the cokernel
of a morphism $2\O(-1) \to 2\O$, in violation of semi-stability.
Thus $\rank(\rho_{12}) \ge 11$, which leads us to the resolution
\[
0 \lra 2\O(-1) \stackrel{\psi}{\lra} 2\O(-1) \oplus 7\O \lra \O \oplus 6\O(1) \lra \F^\D(2) \lra 0,
\]
where $\psi_{11}=0$.
Arguing as at 3.1.3 \cite{mult_five}, we see that $\Coker(\psi_{21}) \isom \O \oplus 2\Om^1(2)$.
The resolution
\[
0 \lra 2\O(-3) \oplus \O(-2) \oplus 2\Om^1 \lra \O(-2) \oplus 6\O(-1) \lra \F^\D \lra 0
\]
leads to the exact sequence
\[
0 \lra 2\O(-3) \oplus \O(-2) \oplus 6\O(-1) \stackrel{\eta}{\lra} \O(-2) \oplus 6\O(-1) \oplus 2\O \lra \F^\D \lra 0.
\]
If $\rank(\eta_{23}) \le 4$, then $\F^\D$ would map surjectively to the cokernel of a morphism
$2\O(-3) \oplus \O(-2) \to \O(-2) \oplus 2\O(-1)$, in violation of semi-stability.
Canceling $5\O(-1)$ and dualising yields the resolution
\[
0 \lra 2\O(-3) \oplus \O(-2) \oplus \O(-1) \stackrel{\f}{\lra} \O(-2) \oplus \O(-1) \oplus 2\O \lra \F \lra 0.
\]
From this we get resolutions (i)--(iv) depending on whether $\f_{12} =0$ and $\f_{23}=0$.

\medskip

\noi
Conversely, assume that $\F$ has resolution (i).
If $f_{12}$ and $f_{22}$ have no common factor, then, by virtue of \ref{2.3.2},
$\F$ is semi-stable.
Assume that $\gcd(f_{12}, f_{22})$ is a quadratic polynomial $q$.
We get a non-split extension
\[
0 \lra \O_C(-1) \lra \F \lra \F' \lra 0,
\]
where $C$ is the conic curve given by the equation $q=0$ and $\F'$ has a resolution
\[
0 \lra \O(-3) \oplus \O(-1) \stackrel{\f'}{\lra} 2\O \lra \F' \lra 0
\]
in which the entries of $\f_{12}'$ are linearly independent.
According to \ref{2.3.2}, $\F'$ gives a point in $\M(4,1)$.
It is now easy to see that for any proper subsheaf $\E \subset \F$ we have $\pp(\E) \le 0$.
Assume that $\gcd(f_{12}, f_{22})$ is a linear form $\ell$.
We have an extension
\[
0 \lra \O_L(-2) \lra \F \lra \F' \lra 0,
\]
where $L$ is the line given by the equation $\ell=0$ and $\F'$ has a resolution
\[
0 \lra \O(-3) \oplus \O(-2) \stackrel{\f'}{\lra} 2\O \lra \F' \lra 0
\]
in which the entries of $\f_{12}'$ have no common factor.
According to \ref{2.3.2}, $\F'$ gives a point in $\M(5,1)$.
It is now easy to see that for any proper subsheaf $\E \subset \F$ we have $\pp(\E) \le 0$.

Assume now that $\F$ has resolution (ii) in which $q_1$, $q_2$ have no common factor.
From the snake lemma we get an extension
\[
0 \lra \F' \lra \F \lra \C_x \lra 0,
\]
where $x$ is given by the equations $\ell_1=0$, $\ell_2=0$, and $\F'$ has a resolution
\[
0 \lra \O(-4) \oplus \O(-2) \stackrel{\f'}{\lra} 2\O \lra \F' \lra 0
\]
in which the entries of $\f_{12}'$ have no common factor.
According to \ref{2.3.2}, $\F'$ gives a point in $\M(6,-1)$.
It is now easy to see that for any proper subsheaf $\E \subset \F$ we have $\pp(\E) \le 0$.
If $q_1$ and $q_2$ have a common linear factor, then we have an extension
\[
\tag{*}
0 \lra \O_L(-1) \lra \F \lra \F' \lra 0,
\]
where $\F'$ has a resolution as at 4.1.4 \cite{mult_five}, so is semi-stable.
Thus $\F$ is semi-stable.

Finally, we assume that $\F$ has resolution (iv).
We have an extension
\[
0 \lra \F' \lra \F \lra \C_x \lra 0,
\]
where $x$ is given by the equations $\ell_1=0$, $\ell_2=0$, and $\F'$ has resolution
\[
0 \lra \O(-4) \oplus \O(-2) \oplus \O(-1) \stackrel{\f'}{\lra} \O(-1) \oplus 2\O \lra \F' \lra 0,
\]
\[
\f' = \left[
\ba{ccc}
\star & \ell & 0 \\
\star & p_1' & \ell_1' \\
\star & p_2' & \ell_2'
\ea
\right]. \quad \text{Assume first that} \quad \f' \nsim \left[
\ba{ccc}
\star & \star & 0 \\
\star & 0 & \star \\
\star & 0 & \star
\ea
\right].
\]
Then, according to \ref{3.1.6}(ii), $\F'$ is semi-stable, showing that $\F$ is semi-stable.
If $\f'$ has the special form given above, then we have extension (*), showing that $\F$ is semi-stable.
\end{proof}

\begin{prop}
\label{6.1.4}
\emph{(i)}
The sheaves $\F$ giving points in $\M(6,0)$ and satisfying the conditions
$\h^0(\F(-1)) > 0$, $\h^1(\F(1)) = 0$
are precisely the sheaves having a resolution of the form
\[
0 \lra 3\O(-3) \stackrel{\f}{\lra} 2\O(-2) \oplus \O(1) \lra \F \lra 0,
\]
where $\f_{11}$ has linearly independent maximal minors.

\medskip

\noi
\emph{(ii)}
The sheaves $\F$ giving points in $\M(6,0)$ and satisfying the dual conditions
$\h^0(\F(-1)) = 0$, $\h^1(\F(1)) > 0$
are precisely the sheaves having a resolution of the form
\[
0 \lra \O(-4) \oplus 2\O(-1) \stackrel{\f}{\lra} 3\O \lra \F \lra 0,
\]
where $\f_{12}$ has linearly independent maximal minors.
\end{prop}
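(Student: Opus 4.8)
The two parts are dual to one another, so the plan is to establish one of them, say (ii), directly and to obtain (i) by transposition. Concretely, for $\F \in \M(6,0)$ the relations $\h^0(\F^\D(n)) = \h^1(\F(-n))$ of \cite{maican-duality} show that $\G = \F^\D$ satisfies $\h^0(\G(-1)) = \h^1(\F(1))$ and $\h^1(\G(1)) = \h^0(\F(-1))$; hence $\F$ meets the hypotheses of (i) exactly when $\G$ meets those of (ii). Applying $\mathcal{Hom}(-, \O(-3))$ to the resolution in (ii) produces the resolution in (i), the transposed block $\f_{12}^\T$ carrying the same maximal minors. Thus it suffices to prove (ii).

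The converse in (ii) is already available. If $\F$ has the resolution $0 \to \O(-4) \oplus 2\O(-1) \stackrel{\f}{\to} 3\O \to \F \to 0$ with $\f_{12}$ having linearly independent maximal minors, then Claim \ref{3.1.4} asserts $\F \in \M(6,0)$ (stable when the minors have no common factor, otherwise of the form $[\O_L(-1) \oplus \O_Q(1)]$). Reading Hilbert polynomials off the twisted resolutions then gives $\h^0(\F(-1)) = 0$ and $\h^1(\F(1)) = 1 > 0$, so the cohomological conditions hold automatically; nothing further is needed for this direction.

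For the forward implication I would run the Beilinson machinery as in the proofs of \ref{6.1.2} and \ref{6.1.3}. Since $\h^0(\F(-1)) = 0$ forces $\h^0(\F(-2)) = 0$ and $\h^1(\F(-1)) = 6$, the display diagram (2.1.1) of spectral sequence I converging to $\F(1)$ has left column $6\O(-1)$, middle column two copies of $a\,\Om^1(1)$ with $a = \h^0(\F) = \h^1(\F)$, and right column $t\,\O$ over $(6+t)\O$ with $t = \h^1(\F(1))$. As in \ref{3.1.3}, semi-stability forces $\Coker(\f_1) = 0$ and pins down $\Ker(\f_1)$; I would then resolve the copies of $\Om^1(1)$ by the Euler sequence, splice (2.1.2) and (2.1.3) into a single presentation of $\F(1)$, and cancel the redundant summands. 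Each cancellation is controlled by a rank inequality: were a block too small, $\F$ would surject onto the cokernel of a morphism of bundles of negative slope, violating semi-stability. Untwisting should collapse the presentation to $0 \to \O(-4) \oplus 2\O(-1) \to 3\O \to \F \to 0$, and linear independence of the maximal minors of $\f_{12}$ is then forced by the same rank argument, a dependence leaving a destabilising quotient supported on a conic.

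The main obstacle is the numerical input to this cancellation, namely determining that $a = \h^1(\F) = 3$ and $t = \h^1(\F(1)) = 1$, since the hypotheses give only $t > 0$. I expect to fix these by combining \ref{2.2.1}(i) (which yields $t > 2a - 6$ once $\h^1(\F) > 0$) with \ref{2.2.2}(vii) and the dual reading $t = \h^0(\F^\D(-1))$, bounding the sections of the slope $-1$ semi-stable sheaf $\F^\D(-1)$ by a geometric argument in the spirit of \ref{5.1.6}. Once the tableau has a definite numerical shape the remaining cancellations proceed exactly as in \ref{6.1.3}, and the identification of the properly semi-stable locus follows from \ref{2.3.2} and Claim \ref{3.1.4} as recorded above.
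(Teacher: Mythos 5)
Your duality reduction and your converse are correct, and they match the paper up to reflection: the paper proves (i) directly and obtains (ii) by dualising the resolution, and its converse is the same appeal to Claim \ref{3.1.4}. The genuine gap is in the forward direction of (ii), which is the substance of the proposition and which your proposal does not actually prove. The numerical input you defer --- $\h^1(\F)=3$, $\h^1(\F(1))=1$ --- cannot be obtained from the tools you cite: Corollary \ref{2.2.2}(vii) says nothing here, since it concerns sheaves with $\h^1(\F(1))=0$, and Proposition \ref{2.2.1}(i), even combined with the bound $\h^1(\F(1)) \le \h^1(\F)$ coming from multiplication by a generic linear form, still allows $(\h^1(\F),\h^1(\F(1)))$ to be any of $(1,1)$, $(2,1)$, $(2,2)$, $(3,1)$, $(3,2)$, $(3,3)$, $(4,3)$, $(4,4)$, $(5,5)$. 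Eliminating the spurious pairs is exactly the hard part, and the ``geometric argument in the spirit of \ref{5.1.6}'' that you postpone is, once written out for $\F^\D$, precisely the paper's whole proof of (i): an injective morphism $\O_C \to \F(-1)$ with $\deg C = 5$ or $6$; identification of the quotient $\F/\O_C(1)$ (either $\O_L(-1)$ or zero-dimensional of length $3$); and repeated applications of the horseshoe lemma, with the hypothesis $\h^1(\F(1))=0$ used at the last step to cancel a component $\O(-4) \to \O(-4)$. That route needs neither the value of $\h^1(\F)$ nor any Beilinson computation; conversely, without it your Beilinson computation has no starting data.

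Moreover, the structural step you import from \ref{3.1.3} is false in your setting. In diagram (2.1.1) for $\F(1)$ the top row is $6\O(-1) \stackrel{\f_1}{\lra} a\,\Om^1(1) \stackrel{\f_2}{\lra} t\,\O$ with $t=\h^1(\F(1)) \ge 1$. Since $\f_2$ is surjective and $\Ker(\f_2)/\Im(\f_1)$ is a quotient of the one-dimensional sheaf $\F(1)$, comparing ranks gives $\rank \Im(\f_1) = 2a-t$, hence $\rank \Coker(\f_1) = t \ge 1$; so $\Coker(\f_1) \neq 0$ no matter what $a$ is. The argument of \ref{3.1.3} forces $\Coker(\f_1)=0$ only because there the top-right term vanishes, so that $\Ker(\f_2)/\Im(\f_1)$ coincides with $\Coker(\f_1)$; when $t>0$ these differ, the exact sequence (2.1.3) acquires a nonzero cokernel term, and the cancellation scheme you describe does not go through as stated. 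So both halves of your forward argument --- the numerical normalisation and the spectral-sequence collapse --- need to be replaced; the paper's section-plus-horseshoe argument for (i), transported by your own duality reduction, is the way to do it.
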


\begin{proof}
Part (ii) is equivalent to (i) by duality, so we concentrate on (i).
Assume that $\F$ gives a point in $\M(6,0)$ and satisfies the cohomological conditions from (i).
There is an injective morphism $\O_C \to \F(-1)$ for some curve $C \subset \P^2$.
Note that $\deg(C)= 5$ or $6$, otherwise the semi-stability of $\F(-1)$ would be contradicted.
Assume first that $\deg(C) = 5$.
Let $\TT$ denote the zero-dimensional torsion of $\F/\O_C(1)$.
If $\TT \neq 0$, then the pull-back of $\TT$ in $\F$ would be a destabilising subsheaf.
Thus $\TT=0$, hence $\F/\O_C(1) \isom \O_L(-1)$.
We apply the horseshoe lemma to the extension
\[
0 \lra \O_C(1) \lra \F \lra \O_L(-1) \lra 0,
\]
to the standard resolution of $\O_C(1)$ and to the resolution
\[
0 \lra \O(-4) \lra 3\O(-3) \lra 2\O(-2) \lra \O_L(-1) \lra 0.
\]
We obtain a resolution
\[
0 \lra \O(-4) \lra \O(-4) \oplus 3\O(-3) \lra 2\O(-2) \oplus \O(1) \lra \F \lra 0.
\]
Since $\h^1(\F(1))=0$, the map $\O(-4) \to \O(-4)$ is non-zero.
Canceling $\O(-4)$ we arrive at a morphism as in the proposition.

Assume now that $\deg(C)=6$.
The quotient sheaf $\CC = \F/\O_C(1)$ has dimension zero and length $3$.
Let $\C_x \subset \CC$ be a subsheaf of length $1$ and let $\F' \subset \F$ be its preimage.
We apply the horseshoe lemma to the extension
\[
0 \lra \O_C(1) \lra \F' \lra \C_x \lra 0,
\]
to the standard resolution of $\O_C(1)$ and to the standard resolution of $\C_x$ tensored with $\O(-3)$.
We obtain the resolution
\[
0 \lra \O(-5) \lra \O(-5) \oplus 2\O(-4) \lra \O(-3) \oplus \O(1) \lra \F' \lra 0.
\]
The morphism $\O(-5) \to \O(-5)$ is non-zero otherwise, arguing as at 2.3.2 \cite{mult_five},
we would deduce that $\C_x$ is a direct summand of $\F'$, which is absurd.
We have an extension
\[
0 \lra \F' \lra \F \lra \CC' \lra 0,
\]
where $\CC'$ has length $2$. Let $\C_y \subset \CC'$ be a subsheaf of length $1$
and let $\F'' \subset \F$ be its preimage.
We apply the horseshoe lemma to the extension
\[
0 \lra \F' \lra \F'' \lra \C_y \lra 0,
\]
to the standard resolution of $\C_y$ tensored with $\O(-2)$ and to the resolution
\[
0 \lra 2\O(-4) \lra \O(-3) \oplus \O(1) \lra \F' \lra 0.
\]
We obtain a resolution
\[
0 \lra \O(-4) \lra 2\O(-4) \oplus 2\O(-3) \lra \O(-3) \oplus \O(-2) \oplus \O(1) \lra \F'' \lra 0
\]
in which, by the same argument as above, the morphism $\O(-4) \to 2\O(-4)$ is non-zero.
Canceling $\O(-4)$ we obtain the resolution
\[
0 \lra \O(-4) \oplus 2\O(-3) \lra \O(-3) \oplus \O(-2) \oplus \O(1) \lra \F'' \lra 0.
\]
The morphism $2\O(-3) \to \O(-3)$ is non-zero, otherwise $\F$ would have a destabilising
subsheaf that is the cokernel of a morphism $2\O(-3) \to \O(-2) \oplus \O(1)$.
Denote $\C_z = \CC'/\C_y$.
We apply the horseshoe lemma to the extension
\[
0 \lra \F'' \lra \F \lra \C_z \lra 0,
\]
to the standard resolution of $\C_z$ tensored with $\O(-2)$ and to the resolution
\[
0 \lra \O(-4) \oplus \O(-3) \lra \O(-2) \oplus \O(1) \lra \F'' \lra 0.
\]
We arrive at the resolution
\[
0 \lra \O(-4) \lra \O(-4) \oplus 3\O(-3) \lra 2\O(-2) \oplus \O(1) \lra \F \lra 0.
\]
The morphism $\O(-4) \to \O(-4)$ is non-zero because $\h^1(\F(1))=0$.
Canceling $\O(-4)$ we obtain a resolution as in the proposition.

Conversely, if $\F$ has resolution (i) or (ii), then, by virtue of claim \ref{3.1.4},
$\F$ is semi-stable.
\end{proof}

\begin{prop}
\label{6.1.5}
The sheaves $\F$ giving points in $\M(6,0)$ and satisfying the conditions
$\h^0(\F(-1))>0$, $\h^1(\F(1))>0$ are precisely the sheaves having a resolution
of the form
\[
0 \lra \O(-4) \oplus \O(-2) \stackrel{\f}{\lra} \O(-1) \oplus \O(1) \lra \F \lra 0, \qquad
\text{where $\f_{12} \neq 0$}.
\]
\end{prop}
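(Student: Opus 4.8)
The plan is to prove the two implications separately: derive the resolution from the Beilinson machine, and establish the converse from the stability criterion \ref{2.3.2}. For the forward implication I would first pin down the cohomology of $\F$. Since $\h^0(\F(-1))>0$, Proposition \ref{6.1.1} forces $\h^1(\F)>0$; combining this with Serre duality, the semi-stability bounds of \ref{2.2.1}, and the vanishing $\h^1(\F(2))=\h^0(\F^\D(-2))=0$ (a nonzero section of $\F^\D(-2)$ would violate semi-stability), one finds $\h^1(\F)=3$, $\h^0(\F(-1))=\h^1(\F(1))=1$ and $\h^0(\F(2))=12$, $\h^1(\F(2))=0$. Then I would feed $\F(2)$ into the Beilinson spectral sequence I; display (2.1.1) reads
\[
\xymatrix{
3\O(-1) \ar[r]^-{\f_1} & \Om^1(1) & 0 \\
3\O(-1) \ar[r]^-{\f_3} & 7\Om^1(1) \ar[r]^-{\f_4} & 12\O
}
\]
with vanishing top-right corner.

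The technical heart is the computation of $\f_1\colon 3\O(-1)\to\Om^1(1)$. Since $\F(2)$ maps onto $\Coker(\f_1)$ (via (2.1.3)) and $\F(2)$ is torsion, $\Coker(\f_1)$ is a torsion quotient, so $\f_1$ is generically surjective and $\Ker(\f_1)$ is a reflexive rank-$1$, hence invertible, sheaf. A Chern class count gives $c_1(\Ker(\f_1))=-2H$, and as $\chi(\Om^1(1))=\chi(3\O(-1))=0$ one gets $\chi(\Coker(\f_1))=\chi(\Ker(\f_1))=0$, forcing $\Ker(\f_1)\isom\O(-2)$ and $\Coker(\f_1)=0$. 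Substituting into (2.1.2)–(2.1.3), resolving the summands $\Om^1(1)$ by $0\to\Om^1(1)\to 3\O\to\O(1)\to 0$, and canceling trivial line-bundle factors (each cancellation justified by semi-stability, exactly as in the proofs of \ref{3.1.6} and \ref{6.1.2}), I expect to reach $0\to\O(-2)\oplus\O\to\O(1)\oplus\O(3)\to\F(2)\to 0$; tensoring by $\O(-2)$ yields the stated resolution. The summand $\O(-2)$, the contribution of $\Ker(\f_1)$, is precisely what distinguishes this stratum from those treated in \ref{6.1.4}.

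For the converse, suppose $\F$ has the stated resolution with $\f_{12}=\ell\neq 0$. Under the group $(\Aut(\O(-4)\oplus\O(-2))\times\Aut(\O(-1)\oplus\O(1)))/\C^*$ the entry $\f_{12}$ is only rescaled, so $\f_{12}\neq 0$ is an invariant condition; moreover if $\f_{12}=0$ then $\f_{22}\neq 0$ by injectivity of $\f$, and the snake lemma exhibits a subsheaf $\O_{Q'}(1)\hookrightarrow\F$ with $Q'=\{\f_{22}=0\}$ a cubic and $\pp(\O_{Q'}(1))=1>0$, so $\f_{12}\neq 0$ is also necessary. If $\ell\nmid\f_{22}$, then $\f_{12}$ and $\f_{22}$ have no common factor and we are in the boundary case $e_1-d_1=e_2-d_2=3$ of \ref{2.3.2}, whence $\F$ is semi-stable, stable unless $\f\sim\left[\ba{cc}0 & \star \\ \star & \star\ea\right]$. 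If $\ell\mid\f_{22}$, a row operation normalizes $\f_{22}=0$ and the snake lemma produces an extension $0\to\O_L(-1)\to\F\to\O_Q(1)\to 0$ with $L=\{\ell=0\}$ and $Q$ a quintic; as $\O_L(-1)$ and $\O_Q(1)$ are semi-stable of slope $0=\pp(\F)$, their extension $\F$ is semi-stable. The conditions $\h^0(\F(-1))>0$ and $\h^1(\F(1))>0$ are then read off the resolution.

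The step I expect to be the main obstacle is the common-factor case of the converse: there \ref{2.3.2} does not apply, so semi-stability must be extracted from the extension, which forces one to know that $\O_Q(1)$ is semi-stable for an arbitrary, possibly non-reduced, plane quintic $Q$, and to locate correctly the boundary between the stable locus and the properly semi-stable points $[\O_L(-1)\oplus\O_Q(1)]$. On the forward side the corresponding delicate point is the identification $\Ker(\f_1)\isom\O(-2)$, $\Coker(\f_1)=0$, since it is exactly here that semi-stability is invoked to prevent spurious $\Om^1$-summands from surviving the reduction and to force the final resolution to have the short, line-bundle form asserted.
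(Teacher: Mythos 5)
Your converse direction is sound: the dichotomy on whether $\f_{12}$ divides $\f_{22}$, the application of \ref{2.3.2} in the coprime case, and the extension $0 \to \O_L(-1) \to \F \to \O_Q(1) \to 0$ in the divisible case are all correct (and in fact more explicit than the paper, which leaves this half to \ref{2.3.2} and the discussion of properly semi-stable points in subsection 6.2). The forward direction, however, has a genuine gap at its very first step, and everything after depends on it. You claim that \ref{2.2.1}, Serre duality and the vanishing $\h^1(\F(2))=0$ force $\h^1(\F)=3$ and $\h^0(\F(-1))=\h^1(\F(1))=1$. Write $a=\h^0(\F(-1))$, $b=\h^0(\F)=\h^1(\F)$, $c=\h^0(\F(1))$. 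Proposition \ref{2.2.1} applies here only to $\F$ itself (or, what gives the same thing by duality, to $\F^\D$), since all other twists leave the window $0 \le \chi < r$, resp.\ $0 < \chi \le r$; it yields the single inequality $a+c>2b$. Together with the monotonicity relations $a \le b \le c$, $c-6 \le b \le a+6$ and your vanishings, this does \emph{not} pin down $(a,b,c)=(1,3,7)$: the triples $(1,2,7)$ and $(2,3,7)$, for instance, satisfy every constraint you invoke. That no semi-stable sheaf realizes such values is true, but it is a \emph{consequence} of the classification — it is exactly what the paper's structural argument establishes by taking a nonzero section, getting $\O_C \hookrightarrow \F(-1)$ with $\deg C \in \{5,6\}$, and rerunning the proof of \ref{6.1.4} with $\h^1(\F(1))>0$ now forcing the connecting map $\O(-4)\to\O(-4)$ to vanish rather than be cancelled. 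Since your Beilinson display ($3\O(-1)\to\Om^1(1)$ on top, $3\O(-1)\to 7\Om^1(1)\to 12\O$ below) requires these exact values, the forward argument as written assumes the answer at the decisive point.

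Two further weaknesses in the same direction. First, the Chern-class identification of $\Ker(\f_1)$ is circular: the count $c_1(\Ker(\f_1))=-2H$ presupposes that $\Coker(\f_1)$ is finite, whereas a priori $\Coker(\f_1)$ could be one-dimensional; if its multiplicity is $e=1$ then $\Ker(\f_1)\isom\O(-1)$ and $\Coker(\f_1)$ has Hilbert polynomial $m$, a case your Euler-characteristic check does not see (as $\chi(\O(-1))=0$). It is excluded only because a quotient of $\F(2)$ of slope $0$ contradicts semi-stability (slope $2$) — the paper's standard move, cf.\ the proof of \ref{3.1.3}, which you allude to at the end but never actually carry out. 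Second, the reduction "resolving the summands $\Om^1(1)$ and canceling trivial line-bundle factors, I expect to reach $0\to\O(-2)\oplus\O\to\O(1)\oplus\O(3)\to\F(2)\to 0$" is not a matter of canceling trivial factors: because $\f_3\colon 3\O(-1)\to 7\Om^1(1)$ is nonzero, one must identify $\Coker(\f_3)$ (or perform the matrix-of-linear-forms analysis of the kind the paper imports from \cite{mult_five} at \ref{3.1.6}, \ref{4.1.3}, \ref{5.1.3}) before any cancellation is available; as written this step is asserted, not proved. So the proposal is an honest, genuinely different route — cohomology first, then Beilinson for $\F(2)$ — but in its present form the forward implication is incomplete precisely where the paper's extension-plus-horseshoe argument does its work.
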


\begin{proof}
Arguing as in the proof of \ref{6.1.4}, we see that there is an extension
\[
0 \lra \O_C(1) \lra \F \lra \O_L(-1) \lra 0
\]
or there is a resolution
\[
0 \lra \O(-4) \stackrel{\psi}{\lra} \O(-4) \oplus 3\O(-3) \stackrel{\rho}{\lra} 2\O(-2) \oplus \O(1) \lra \F \lra 0.
\]
In the first case we can combine the standard resolutions of $\O_C(1)$ and $\O_L(-1)$
to get a resolution as in the proposition.
Indeed, by hypothesis $\h^0(\F(1)) \ge 7$,
hence $\F(1)$ has a section mapping to a non-zero section of $\O_L$.

In the second case $\psi_{11} = 0$ because $\h^1(\F(1)) > 0$.
We claim that $\Coker(\psi_{21}) \isom \Om^1(-1)$,
i.e. that the entries of $\psi_{21}$ are linearly independent.
Clearly they span a vector space of dimension at least $2$.
If
\[
\psi_{21} \sim \left[
\ba{c}
0 \\ \star \\ \star
\ea
\right], \qquad \text{then} \qquad \rho \sim
\left[
\ba{cccc}
\star & \star & \star & \star \\
\star & \star & 0 & 0 \\
\star & \star & 0 & 0
\ea
\right].
\]
It would follow that $\F$ maps surjectively to the cokernel of an injective morphism
$\O(-4) \oplus \O(-3) \to \O(-2) \oplus \O(1)$.
This would contradict the semi-stability of $\F$. From the resolution
\[
0 \lra \O(-4) \oplus \Om^1(-1) \lra 2\O(-2) \oplus \O(1) \lra \F \lra 0
\]
we obtain the resolution
\[
0 \lra \O(-4) \oplus 3\O(-2) \stackrel{\f}{\lra} 2\O(-2) \oplus \O(-1) \oplus \O(1) \lra \F \lra 0.
\]
If $\rank(\f_{12}) \le 1$, then $\F$ would map surjectively to $\O_C(-2)$ for a conic curve $C \subset \P^2$,
in violation of semi-stability.
Thus $\rank(\f_{12}) =2$ and canceling $2\O(-2)$ we obtain a resolution as in the proposition.
\end{proof}

\noi
In view of \ref{2.2.2}(vii) there are no other sheaves giving points in $\M(6,0)$ beside the sheaves
we have discussed in this subsection.

\subsection{The strata as quotients}  

In the previous subsection we classified all sheaves giving points in $\M(6,0)$,
namely we showed that this moduli space is the union of six locally closed subsets as in Table 4.
As the notation suggests,
$X_3^\D$ is the image of $X_3$ under the duality automorphism $[\F] \to [\F^\D]$.
The strata $X_i$, $i= 0, 1, 2, 4$, are invariant under this automorphism.
We employ the notations $\W_i$, $W_i$, $G_i$, $\rho_i$, $0 \le i \le 4$, analogous to the notations
from subsection 3.2. We denote $W_i^\st = \rho_i^{-1}(X_i^\st)$.

From proposition \ref{6.1.1} we easily deduce that the points in $X_0$ given by properly semi-stable
sheaves are of the form $[\F_1 \oplus \cdots \oplus \F_{\k}]$, where $\F_i$ is stable and has resolution
\[
0 \lra r_i \O(-2) \lra r_i \O(-1) \lra \F_i \lra 0,
\]
$r_1 + \cdots + r_{\k} =6$. In particular, we see that $X_0$ is disjoint from the other strata
and that two points in $W_0$ are in the same fibre of $\rho_0$ if and only if the relative closures in $W_0$
of their orbits meet non-trivially.
This allows us to apply the method of 4.2.2 \cite{drezet-trautmann} in order to show that
$\rho_0$ is a categorical quotient map.
Note that $W_0$ is a proper invariant open subset of the set of semi-stable Kronecker modules
$6\O(-2) \to 6\O(-1)$, so there exists a good quotient $W_0/\!/G_0$ as an open subset of $\N(3,6,6)$.
By the uniqueness of the categorical quotient we have an isomorphism $X_0 \isom W_0/\!/G_0$.
This shows that $\M(6,0)$ and $\N(3,6,6)$ are birational.

According to \ref{3.1.4} and \ref{2.3.2},
the sets $X_3 \setminus X_3^\st$, $X_3^\D \setminus (X_3^\D)^\st$, $X_4 \setminus X_4^\st$
coincide and consist of all points of the form $[\O_L(-1) \oplus \O_Q(1)]$,
where $L \subset \P^2$ is a line and $Q \subset \P^2$ is a quintic curve.
Moreover, from the proofs of \ref{6.1.4} and \ref{6.1.5} it transpires that any sheaf stable-equivalent to
$\O_L(-1) \oplus \O_Q(1)$ is the cokernel of some morphism in
$W_3 \setminus W_3^\st$, $W_3 \setminus (W_3^\D)^\st$ or $W_4 \setminus W_4^\st$.
Thus $X_3$, $X_3^\D$, $X_4$ are disjoint from the strata $X_0$, $X_1$ and $X_2$.
For $i \in \{ 3, 4\}$ the fibres of the map $W_i^\st \to X_i^\st$ are precisely the $G_i$-orbits hence,
as at 3.2, this is a geometric quotient map.
According to 9.3 \cite{drezet-trautmann}, $W_3^\st/G_3$ is an open subset of a fibre bundle
over $\N(3,2,3)$ with fibre $\P^{24}$.
We can be more precise.
According to \ref{3.1.4}, $W_3^\st$ is the subset of $W_3$ of morphisms $\f$
such that the maximal minors of $\f_{12}$ have no common factor, hence,
applying \cite{modules-alternatives}, propositions 4.5 and 4.6,
we can show that the sheaves $\Coker(\f)$, $\f \in W_3^\st$,
are precisely the sheaves of the form $\J_Z(2)$, where $Z \subset \P^2$ is a zero-dimensional
scheme of length $3$ that is not contained in a line, $Z$ is contained in a sextic curve $C$
and $\J_Z \subset \O_C$ is the ideal of $Z$ in $C$.
Thus $W_3^\st/G_3$ is isomorphic to the open subset
of $\Hilb(6,3)$ of pairs $(C,Z)$ such that $Z$ is not contained in a line.
Similarly, the sheaves giving points in $X_4^\st$ are precisely the sheaves of the form
$\J_Z(2)$, where $Z$ is contained in a line $L$ that is not a component of $C$.
Thus $X_4$ is isomorphic to the locally closed subset $\{ (C,Z), \ Z \subset L, \ L \nsubseteq C \}$ of $\Hilb(6,3)$.

By the discussion above, the strata $X_1$ and $X_2$ are disjoint from $X_0$, $X_3$, $X_3^\D$, $X_4$.
We claim that
\[
X_1 \cap X_2 = \{ [\O_{C_1} \oplus \O_{C_2}], \quad \text{$C_1, C_2 \subset \P^2$ cubic curves} \}.
\]
The r.h.s. is clearly contained in $X_1 \cap X_2$.
To prove the reverse inclusion we will make a list of properly semi-stable sheaves $\F$ giving points in $\M(6,0)$.
If $\F$ is stable-equivalent to a direct sum of stable sheaves,
we call the \emph{type} of $\F$ the tuple of multiplicities of these sheaves.
We may assume that the type of $\F$ is a tuple of non-decreasing integers.
A properly semi-stable sheaf in $\M(6,0)$ may have one of the following types:
$(1,5)$, $(2,4)$, $(3,3)$, $(1,1,4)$, $(1,2,3)$, $(2,2,2)$, $(1,1,1,3)$, $(1,1,2,2)$, $(1,1,1,1,2)$, $(1,1,1,1,1,1)$.
The sheaves in $\M(1,0)$ and $\M(2,0)$ are cokernels of morphisms $r\O(-2) \to r\O(-1)$, $r=1,2$.
In view of \ref{6.1.1}, this shows that the last three types belong to $X_0$, and so does type $(2,2,2)$.
Let $L \subset \P^2$ denote an arbitrary line;
let $\A$ denote an arbitrary stable sheaf in $\M(2,0)$;
let $\B$ denote an arbitrary stable sheaf in $\M(3,0)$ that is the cokernel of a morphism $3\O(-2) \to 3\O(-1)$;
let $C \subset \P^2$ denote an arbitrary cubic curve;
let $\CC$ denote an arbitrary sheaf giving a point in the stratum $X_1 \subset \M(4,0)$, cf. 5.2 \cite{drezet-maican};
let $\E_i$ denote an arbitrary stable sheaf giving a point in the stratum $X_i \subset \M(5,0)$, $i = 1,2$,
cf. 4.1 \cite{mult_five};
let $\HH$ denote an arbitrary sheaf giving a point in $\M(2,0)$;
let $\G$ denote an arbitrary sheaf giving a point in $\M(3,0)$ that is the cokernel of a morphism
$3\O(-2) \to 3\O(-1)$.
Eliminating all sheaves giving points in the strata $X_0$, $X_3$, $X_3^\D$, $X_4$ of $\M(6,0)$
leaves us with the following possibilities:
\[
\O_L(-1) \oplus \E_1, \qquad
\O_L(-1) \oplus \E_2, \qquad
\A \oplus \CC, \qquad
\B \oplus \O_C, \qquad
\O_{C_1} \oplus \O_{C_2},
\]
\[
\O_{L_1}(-1) \oplus \O_{L_2}(-1) \oplus \CC, \qquad \qquad
\O_L(-1) \oplus \A \oplus \O_C,
\]
\[
\O_{L_1}(-1) \oplus \O_{L_2}(-1) \oplus \O_{L_3}(-1) \oplus \O_C.
\]
Thus $\F$ is one of the following:
\[
\O_L(-1) \oplus \E_1, \qquad
\O_L(-1) \oplus \E_2, \qquad
\HH \oplus \CC, \qquad
\G \oplus \O_C, \qquad
\O_{C_1} \oplus \O_{C_2}.
\]
Combining the standard resolution for $\O_L(-1)$ with the resolution
\[
0 \lra \O(-3) \oplus 2\O(-2) \lra 2\O(-1) \oplus \O \lra \E_1 \lra 0
\]
we obtain resolution \ref{6.1.2}. Combining the standard resolution of $\O_L(-1)$ with the resolution
\[
0 \lra 2\O(-3) \oplus \O(-1) \lra \O(-2) \oplus 2\O \lra \E_2 \lra 0
\]
we obtain resolution \ref{6.1.3}(iv).
Combining the resolutions
\[
0 \lra 2\O(-2) \lra 2\O(-1) \lra \HH \lra 0,
\]
\[
0 \lra \O(-3) \oplus \O(-2) \lra \O(-1) \oplus \O \lra \CC \lra 0
\]
we obtain resolution \ref{6.1.2}. Combining the standard resolution of $\O_C$ with the resolution
\[
0 \lra 3\O(-2) \lra 3\O(-1) \lra \G \lra 0
\]
we obtain resolution \ref{6.1.2}.
Thus $[\O_L(-1) \oplus \E_1]$, $[\HH \oplus \CC]$, $[\G \oplus \O_C]$
belong to $X_1 \setminus X_2$, $[\O_L(-1) \oplus \E_2]$ belongs to $X_2 \setminus X_1$,
which proves the claim. Denote $X_{10}= X_1 \setminus X_2$, $X_{20}= X_2 \setminus X_1$,
$W_{10}= \rho_1^{-1}(X_{10})$, $W_{20}= \rho_2^{-1}(X_{20})$.
Assume that $i \in \{ 1, 2 \}$. Arguing as at 4.2.1 \cite{drezet-maican},
we can easily see that two points of $W_{i0}$ are in the same fibre of $\rho_i$
if and only if the relative closures in $W_{i0}$ of their $G_i$-orbits intersect non-trivially.
This allows us to apply the method of 4.2.2 op.cit. in order to show that the maps
$W_{i0} \to X_{i0}$ are categorical quotient maps.

\subsection{Generic sheaves}  

Let $C \subset \P^2$ denote an arbitrary smooth sextic curve and let $P_i$ denote distinct points on $C$.
According to \cite{modules-alternatives}, propositions 4.5 and 4.6, the cokernels of morphisms
$5\O(-6) \to 6\O(-5)$ whose maximal minors have no common factor are precisely the ideal
sheaves $\I_Z \subset \P^2$ of zero-dimensional schemes $Z$ of length $15$ that are not contained
in a quartic curve. It follows that the generic sheaves in $X_0$ have the form
$\O_C(4)(-P_1 - \cdots - P_{15})$, where $P_1, \ldots, P_{15}$ are not contained in a quartic curve.
From claim \ref{6.3.1} below, it follows that the generic sheaves in $X_1$ have the form
$\O_C(3)(-P_1 - \cdots - P_9)$, where $P_1, \ldots, P_9$ are contained in a unique cubic curve.
It is obvious that the generic sheaves in $X_2$ have the form
$\O_C(3)(-P_1 - \cdots - P_9)$, where $P_1, \ldots, P_9$
are contained in two cubic curves that have no common component.
We saw in the previous subsection that the generic sheaves in $X_3$ have the form $\O_C(2)(-P_1-P_2-P_3)$,
and the generic sheaves in $X_3^\D$ have the form $\O_C(1)(P_1+P_2+P_3)$, where $P_1, P_2, P_3$
are non-colinear. The generic sheaves in $X_4$ have the form $\O_C(2)(-P_1-P_2-P_3)$,
where $P_1, P_2, P_3$ are colinear.

\begin{claim}
\label{6.3.1}
Let $U \subset \Hom(3\O(-2), 3\O(-1) \oplus \O)$ be the set of morphisms whose maximal minors
have no common factor. The cokernels of morphisms in $U$ are precisely the sheaves of the form
$\I_Z(3)$, where $\I_Z \subset \O_{\P^2}$ is the ideal sheaf of a zero-dimensional scheme $Z$
of length $9$, that is contained in a unique cubic curve.
\end{claim}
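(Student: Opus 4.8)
The plan is to prove the two inclusions separately, following closely the pattern of claims \ref{3.3.1}, \ref{3.3.2} and \ref{4.3.1}. First I would take $\psi \in U$ with maximal minors $\z_1, \z_2, \z_3, \z_4$; the three minors that retain the $\O$-row are quartics, while the one obtained by deleting that row is a cubic. Since the $\z_i$ have no common factor, $\psi$ is injective and the Eagon--Northcott complex is exact, giving
\[
0 \lra 3\O(-2) \stackrel{\psi}{\lra} 3\O(-1) \oplus \O \stackrel{\z}{\lra} \O(3) \lra \CC \lra 0,
\]
where $\z$ is the row of signed minors. A Hilbert polynomial computation gives $\PP_\CC = 9$, so $\CC \isom \O_Z$ with $Z$ zero-dimensional of length $9$ and $\Coker(\psi) \isom \I_Z(3)$. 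Taking global sections in the resolution $0 \to 3\O(-2) \to 3\O(-1) \oplus \O \to \I_Z(3) \to 0$ and using $\H^0(3\O(-2)) = \H^1(3\O(-2)) = 0$ together with $\h^0(3\O(-1)\oplus \O) = 1$, I obtain $\h^0(\I_Z(3)) = 1$; that is, $Z$ lies on a unique cubic.

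Conversely, let $Z$ have length $9$ and lie on a unique cubic. I would first observe that $Z$ lies on no conic and no line: otherwise, multiplying the equation of that conic (resp. line) by all lines (resp. all conics) would produce at least a $3$-dimensional (resp. $6$-dimensional) space of cubics through $Z$, contradicting uniqueness. Hence $\h^0(\I_Z(1)) = \h^0(\I_Z(2)) = 0$, and the equality $\h^0(\I_Z(3)) = 1$ forces $\h^1(\I_Z(3)) = 0$. I then feed $\I_Z(2)$ into the Beilinson spectral sequence I with $\EE^1_{ij} = \H^j(\I_Z(2) \tensor \Om^{-i}(-i)) \tensor \O(i)$. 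Using the Euler sequence $0 \to \Om^1(1) \to 3\O \to \O(1) \to 0$ and Serre duality exactly as in \ref{3.3.1}, the top and bottom rows of the display diagram vanish (the decisive inputs being $\h^0(\I_Z(2)) = 0$ and $\h^1(\I_Z(3)) = 0$), so the middle row is a monad
\[
0 \lra 6\O(-2) \stackrel{\a}{\lra} 10\O(-1) \stackrel{\b}{\lra} 3\O \lra 0
\]
with cohomology $\I_Z(2)$; the ranks $6, 10, 3$ are $\h^1(\I_Z(1))$, $\h^1(\I_Z(2) \tensor \Om^1(1))$ and $\h^1(\I_Z(2))$.

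As at \ref{3.3.1}, I would examine the transpose $\b^\T \colon 3\O(-1) \to 10\O$ and the cokernel $\B = {\mathcal Hom}(\Ker(\b), \O(-1))$; torsion-freeness of $\B$ forces each column of $\b^\T$ to contain three linearly independent linear forms and excludes the degenerate canonical forms, yielding $\Ker(\b) \isom 3\Om^1 \oplus \O(-1)$. Resolving each $\Om^1$ by $0 \to \O(-3) \to 3\O(-2) \to \Om^1 \to 0$ converts $0 \to 6\O(-2) \to \Ker(\b) \to \I_Z(2) \to 0$ into
\[
0 \lra 3\O(-3) \oplus 6\O(-2) \stackrel{\rho}{\lra} 9\O(-2) \oplus \O(-1) \lra \I_Z(2) \lra 0.
\]
I would then prove $\rank(\rho_{12}) = 6$, ruling out smaller values exactly as in \ref{4.3.1}: after maximal cancellation one is left with $\eta_{12} = 0$, whereupon a torsion cokernel would inject into the torsion-free sheaf $\I_Z(2)$, or else too many points of $Z$ would be forced onto a line, contradicting the hypotheses. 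Canceling $6\O(-2)$ gives $0 \to 3\O(-3) \to 3\O(-2) \oplus \O(-1) \to \I_Z(2) \to 0$, and twisting by $\O(1)$ produces the resolution of the claim. Finally, since $\Coker(\psi) \isom \I_Z(3)$ is torsion-free, Hilbert--Burch identifies the ideal of maximal minors of $\psi$ with a twist of $\I_Z$, which has codimension $2$; hence those minors have no common factor and $\psi \in U$.

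The main obstacle is the isomorphism $\Ker(\b) \isom 3\Om^1 \oplus \O(-1)$ together with the rank-$6$ cancellation: controlling the canonical form of $\b^\T$ and excluding the degenerate reductions is where the torsion-freeness of $\I_Z(2)$ and the geometric hypotheses on $Z$ are genuinely used, whereas the remaining steps are the formal bookkeeping common to \ref{3.3.1}--\ref{4.3.1}.
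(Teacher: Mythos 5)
Your forward direction agrees with the paper's, but the converse has a genuine gap at its central step, the identification $\Ker(\b) \isom 3\Om^1 \oplus \O(-1)$. Every tool you invoke there --- torsion-freeness of $\B$, the non-factorization conditions, the canonical-form analysis borrowed from \ref{3.3.1} --- is insensitive to the hypothesis that $Z$ lies on a \emph{unique} cubic, and yet that hypothesis is exactly what this step must use. Concretely, take $Z$ to be the complete intersection of two general cubics: it has length $9$, satisfies $\h^0(\I_Z(1)) = \h^0(\I_Z(2)) = 0$ (B\'ezout) and the Serre duality vanishings, so $\I_Z(2)$ has a Beilinson monad of exactly the same shape
$0 \lra 6\O(-2) \lra 10\O(-1) \stackrel{\b}{\lra} 3\O \lra 0$,
with $\b$ surjective, $\B = \Coker(\b^\T)$ locally free (in particular torsion-free), and all of your exclusion conditions holding verbatim. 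Nevertheless $\Ker(\b) \not\isom 3\Om^1 \oplus \O(-1)$ for this $Z$: twisting $0 \to 6\O(-2) \to \Ker(\b) \to \I_Z(2) \to 0$ by $\O(1)$ gives $\h^0(\Ker(\b)(1)) = \h^0(\I_Z(3)) = 2$, whereas $\h^0(3\Om^1(1) \oplus \O) = 1$; correspondingly the Koszul resolution gives $\I_Z(3) \isom \Coker(\O(-3) \to 2\O)$, which is not of the form in the claim. So an argument using only your listed inputs would prove a false statement; any repair must feed the unique-cubic hypothesis into the matrix analysis, for instance as the condition that the constant vectors annihilated by the $3 \times 10$ matrix representing $\b$ form exactly a line.

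A second, lesser flaw: your exclusions for $\rank(\rho_{12})$ appeal to ``too many points of $Z$ forced onto a line, contradicting the hypotheses'', copied from \ref{4.3.1}. But claim \ref{6.3.1} imposes no collinearity condition --- $Z$ may contain four collinear points and still lie on a unique cubic, namely that line together with the unique conic through the remaining five points --- so there is nothing to contradict. Those cases must instead be handled by injectivity (a morphism $k\O(-2) \to \O(-1)$ with $k \ge 2$ has a kernel, which forces $\rank(\rho_{12}) \ge 5$) together with a torsion/rank argument in the remaining case $\rank(\rho_{12}) = 5$. Note finally that the paper avoids this entire machinery by applying the Beilinson spectral sequence to $\I_Z(3)$ rather than to $\I_Z(2)$: since $\h^0(\I_Z(3)) = 1$ and $\h^1(\I_Z(3)) = 0$, the display diagram collapses to $6\O(-1) \to 3\Om^1(1)$ plus a single copy of $\O$, the uniqueness hypothesis is encoded in the very shape of the diagram, and the resolution follows in a few lines (one shows $\Ker(\f_1) = 0$ because $\O$ cannot embed in $6\O(-1)$, then splices two short exact sequences) with no canonical-form analysis at all. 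That is the route you should take.
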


\begin{proof}
Take $\psi \in U$ and let $\z_1$, $\z_2$, $\z_3$, $\z_4$ be its maximal minors.
We have an exact sequence
\[
0 \lra 3\O(-2) \stackrel{\psi}{\lra} 3\O(-1) \oplus \O \stackrel{\z}{\lra} \O(3) \lra \O_Z(3) \lra 0,
\]
\[
\z = \left[
\ba{cccc}
\z_1 & -\z_2 & \phantom{-}\z_4 & -\z_4
\ea
\right],
\]
where $Z \subset \P^2$ is the scheme defined by the ideal $(\z_1, \z_2, \z_3, \z_4)$.
Since $\PP_{\O_Z(3)}=9$, we deduce that $Z$ is a zero-dimensional scheme of length $9$
and that $\Coker(\psi) \isom \I_Z(3)$.
We have $\h^0(\I_Z(3))=1$, i.e. $Z$ is contained in a unique cubic curve.

Conversely, assume we are given $Z$ as in the proposition.
We have $\H^0(\I_Z(2))=0$. Indeed, if $Z$ were contained in a conic curve, then we could
produce at least two distinct cubic curves containing $Z$.
Similarly, $\H^0(\I_Z(1))=0$.
By Serre duality $\H^2(\I_Z(i))=0$ for $i = 1,2,3$.
The $\EE^1$-term for the Beilinson spectral sequence I converging to $\I_Z(3)$
has display diagram
\[
\xymatrix
{
0 & 0 & 0 \\
6\O(-1) \ar[r]^-{\f_1} & 3\Om^1(1) & 0 \\
0 & 0 & \O
}.
\]
We have exact sequences
\[
0 \lra \Ker(\f_1) \stackrel{\f_5}{\lra} \O \lra \I_Z(3) \lra \Coker(\f_1) \lra 0,
\]
\[
0 \lra \Ker(\f_1) \lra 3\O(-2) \oplus 6\O(-1) \stackrel{\rho}{\lra} 9\O(-1) \lra \Coker(\f_1) \lra 0.
\]
Assume that $\rank(\Ker(\f_1))=1$. Thus $\Coker(\f_5)$ is a torsion sheaf.
Since $\Coker(\f_5)$ is a subsheaf of the torsion-free sheaf $\I_Z(3)$, we deduce that
$\Coker(\f_5)=0$, i.e. that $\f_5$ is an isomorphism.
This is absurd, $\O$ cannot be a subsheaf of $6\O(-1)$.
This proves that $\Ker(\f_1)=0$, hence $\rank(\Coker(\f_1))=0$,
from which we deduce that $\rank(\rho_{12})=6$.
Combining the exact sequences
\[
0 \lra \O \lra \I_Z(3) \lra \Coker(\f_1) \lra 0
\]
and
\[
0 \lra 3\O(-2) \lra 3\O(-1) \lra \Coker(\f_1) \lra 0
\]
we obtain the resolution
\[
0 \lra 3\O(-2) \stackrel{\psi}{\lra} 3\O(-1) \oplus \O \lra \I_Z(3) \lra 0.
\]
It is clear that the maximal minors of $\psi$ have no common factor.
\end{proof}

\end{document}